\def\@tocline#1#2#3#4#5#6#7{\relax
  \ifnum #1>\c@tocdepth 
  \else
    \par \addpenalty\@secpenalty\addvspace{#2}%
    \begingroup \hyphenpenalty\@M
    \@ifempty{#4}{%
      \@tempdima\csname r@tocindent\number#1\endcsname\relax
    }{%
      \@tempdima#4\relax
    }%
    \parindent\z@ \leftskip#3\relax \advance\leftskip\@tempdima\relax
    \rightskip\@pnumwidth plus4em \parfillskip-\@pnumwidth
    #5\leavevmode\hskip-\@tempdima
      \ifcase #1
       \or\or \hskip 1em \or \hskip 2em \else \hskip 3em \fi%
      #6\nobreak\relax
    \dotfill\hbox to\@pnumwidth{\@tocpagenum{#7}}\par
    \nobreak
    \endgroup
  \fi}
 \numberwithin{equation}{section}
\def\bC{{\mathbb{C}}}
\def\bD{{\mathbb{D}}}
\def\bR{{\mathbb{R}}}
\def\bS{{\mathbb{S}}}
\def\bN{{\mathbb{N}}}
\def\hm{\omega}
\def\om{\Omega}
\def\R{\mathbb{R}}
\def\cH{{\mathcal{H}}}
\def\one{\mathds{1}}
\def\ve{\varepsilon}
\renewcommand{\d}{{\partial}}
\def\lec{\lesssim}
\def\gec{\gtrsim}
\DeclareMathOperator{\diam}{diam}
\def\Tan{\mathop\mathrm{Tan}} 					
\def\dist{\mathop\mathrm{dist}} 						
\def\supp{\mathop\mathrm{supp}}					
\renewcommand{\div}{\mathop\mathrm{div }}			
\newcommand{\ps}[1]{\left( #1 \right)}
\newcommand{\ck}[1]{\left\{#1 \right\}}
\newcommand{\cnj}[1]{\overline{#1}}
\newcommand{\vvs}{\vspace{4mm}}
\def\XXint#1#2#3{{\setbox0=\hbox{$#1{#2#3}{\int}$ }
\vcenter{\hbox{$#2#3$ }}\kern-.58\wd0}}
\def\grad{\nabla}
\theoremstyle{plain}
\newtheorem{theorem}{Theorem}
\newtheorem{lemma}[theorem]{Lemma}
\newtheorem*{thmii}{Theorem II}
\newtheorem*{thmiii}{Theorem III}
\newtheorem*{lemi}{Lemma I}
\newtheorem*{lemii}{Lemma II}
\newtheorem*{cori}{Corollary I}
\theoremstyle{definition}
\newtheorem{definition}[theorem]{Definition}
\newtheorem{remark}[theorem]{Remark}
\newtheorem*{thmi}{Theorem I}
\numberwithin{equation}{section}
\numberwithin{theorem}{section}
\newcommand\eqn[1]{\eqref{e:#1}}
\newcommand\Theorem[1]{Theorem \ref{t:#1}}
\newcommand\Lemma[1]{Lemma \ref{l:#1}}
\newcommand\Definition[1]{Definition \ref{d:#1}}
  \newcommand{\F}{\mathcal{F}}
  \newcommand{\W}{\mathcal{W}}
  \DeclareFontFamily{U}{mathb}{\hyphenchar\font45} 
\DeclareFontShape{U}{mathb}{m}{n}{
      <5> <6> <7> <8> <9> <10> gen * mathb
      <10.95> mathb10 <12> <14.4> <17.28> <20.74> <24.88> mathb12
      }{}
\DeclareSymbolFont{mathb}{U}{mathb}{m}{n}
\DeclareMathSymbol{\toitself}      {3}{mathb}{"FD}  
\begin{document}

\title[Absolute continuity of harmonic measure]{Absolute continuity of harmonic measure for domains with lower regular boundaries}

\author[Murat Akman]{Murat Akman}

\address{{\bf Murat Akman}
\\
Department of Mathematics, University of Connecticut, Storrs, CT 06269-3009} \email{murat.akman "at" uconn.edu}

\author{Jonas Azzam}
\address{{\bf Jonas Azzam}
\\
School of Mathematics, University of Edinburgh, JCMB, Kings Buildings,
Mayfield Road, Edinburgh,
EH9 3JZ, Scotland.}
\email[Jonas Azzam]{j.azzam "at" ed.ac.uk}

\author[Mihalis Mourgoglou]{Mihalis Mourgoglou}

\address{{\bf Mihalis Mourgoglou}
\\
Departament de Matem\`atiques\\
 Universitat Aut\`onoma de Barcelona
\\
Edifici C Facultat de Ci\`encies
\\
08193 Bellaterra (Barcelona), Catalonia
}
\email{mourgoglou "at" mat.uab.cat}

\keywords{harmonic measure, absolute continuity, nontangentially accessible domains, NTA domains, chord-arc domains, chord-arc surfaces, elliptic measure}
\subjclass[2010]{31A15, 28A75, 28A78, 31B05, 35J25}

\abstract{We study absolute continuity of harmonic measure with respect to surface measure on domains $\Omega$ that have large complements. We show that if $\Gamma\subset  \bR^{d+1}$ is Ahlfors regular and splits $ \bR^{d+1}$ into two NTA domains, then $\omega_{\Omega}\ll \cH^{d}$ on $\Gamma\cap \partial\Omega$. This result is a natural generalisation of a result of Wu in \cite{Wu86}.

We also prove that almost every point in $\Gamma\cap\partial\Omega$ is a cone point if $\Gamma$ is a Lipschitz graph. Combining these results and a result from \cite{AHMMMTV}, we characterize sets of absolute continuity with finite $\cH^{d}$-measure both in terms of the cone point condition and in terms of the rectifiable structure of the boundary. This generalizes the results of McMillan in \cite{McM69} and Pommerenke in \cite{Pom86}.

Finally, we also show our first result holds for elliptic measure associated with real second order divergence form elliptic operators with a mild assumption on the gradient of the matrix.
}}

\maketitle

\tableofcontents

\section{Introduction}

\subsection{Background}

Classifying sets of absolute continuity and singularity for harmonic measure with respect to surface measure on pieces of rough domains has been extensively studied for decades. In \cite[Theorem 1; p. 830 and p. 18 in the translation]{Lav36}, Lavrentiev constructed an example of a simply connected domain $\Omega$ in the plane and a set $E\subset \partial \Omega$ with the property that $E$ has zero linear measure and positive harmonic measure with respect to $\Omega$. 
This result was further simplified and strengthened by Carleson in \cite[Theorem. (A)]{Car73} and by McMillan and Piranian in \cite[Theorem 1]{MP73}. Considering this example, it was natural to consider what extra criteria were necessary for absolute continuity to occur. 

McMillan showed in \cite[Theorem 2]{McM69} that for bounded simply connected domains $\Omega\subset \bC$, $\omega_{\Omega}\ll \cH^{1}\ll \omega_{\Omega}$ on the set of cone points, and Pommerenke would later demonstrate in \cite[Corollary 2]{Pom86} that in fact harmonic measure is supported on either the cone points or a set of zero length. This implies that if $\omega_{\Omega}\ll \cH^{1}$ on some subset $E\subset \d\Omega$, then $\omega_{\Omega}$-almost each of those points must be a cone point.

There are also many results that give sufficient conditions for absolute continuity in terms of the geometry of the boundary rather than the geometry of the interior of the domain. It was shown by {\O}ksendal in \cite[p. 471]{Ok80} that if $L$ is a line and $\Omega\subset\mathbb{R}^{2}$ is a simply connected domain and if $E\subset \partial \Omega\cap L$ is a set with vanishing $\cH^{1}$ measure, then $E$ has zero harmonic measure  with respect  to $\Omega$. In \cite[Theorem 3]{KWu82}, Kaufman and Wu generalized this result by showing $L$ can be replaced with a bi-Lipschitz curve. It was also observed in the same article that one cannot replace $L$ with a quasicircle; thus the finite length of this surrogate set $L$ is as important as its geometry. In fact, later Bishop and Jones showed in \cite[Theorem 1]{BJ90} that $L$ can be any curve of finite length. In other words, harmonic measure can be concentrated on set of length zero but this set must be dispersed in the plane in such a way that it is impossible to be contained in a rectifiable curve. 

Note that the set of cone points for a domain is contained in a countable union of Lipschitz graphs, so the results of Kaufmann, Wu, Bishop, and Jones show that one can have weaker conditions that imply absolute continuity. Combined with Pommerenke's theorem, however, the result of Bishop and Jones shows that if $L$ is a Lipschitz curve, then $\omega_{\Omega}$-almost every point in $L\cap \d\Omega$ is a cone point, so in fact if harmonic measure is rectifiable on a subset of the boundary, that forces the domain to be wide open around this set.

In \cite{BJ90}, Bishop and Jones also showed the following.

\begin{theorem}[{\cite[Lemma 8.1]{BJ90}}] \label{t:bj}
There is a curve $\Gamma\subset \bC$ and sets $K\subset E\subset \Gamma$ such that for all $x\in \Gamma$, $y\in E$, and $0<r<\diam \Gamma$,
\[
\cH^{1}(\Gamma\cap B(x,r))\leq C_{1} r ,\]
\[
\cH^{1}(E\cap B(y,r))\geq C_{2} r ,\]
and
\[
\omega_{E^{c}}(K)>0=\cH^{1}(K).\]
\end{theorem}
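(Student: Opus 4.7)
The plan is to construct $\Gamma$, $E$, and $K$ explicitly via an iterative dyadic construction, exploiting the topological difference between this setting and the setting of the previous Bishop--Jones result cited above. That earlier theorem forbids harmonic measure from concentrating on $\mathcal{H}^{1}$-null subsets of finite-length curves for \emph{simply connected} domains. The domain $E^{c}$ here, by contrast, is highly multiply connected: Brownian motion can approach $E$ through gaps in a Cantor-like structure, and this is what will permit the singular concentration we want.

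For the sets $E$ and $\Gamma$: I would take $E$ to be a self-similar planar Ahlfors $1$-regular Cantor set, e.g.\ a rescaled four-corner Cantor set, so that self-similarity immediately gives the lower density bound $\mathcal{H}^{1}(E\cap B(y,r))\geq C_{2}r$ for $y\in E$ at all scales $r\leq \diam E$. The curve $\Gamma$ is then obtained by wiring the $4^{n}$ surviving cells at each dyadic generation $n$ into a single polygonal path with short connecting arcs of total length $O(1)$ per generation, so that $\Gamma$ has finite length. The upper regularity $\mathcal{H}^{1}(\Gamma\cap B(x,r))\leq C_{1}r$ at all points and scales then propagates from the self-similar structure of $E$ together with a scale-by-scale bound on the wiring.

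For the set $K$: I would invoke a dimension-drop for harmonic measure on $E^{c}$. Running Brownian motion from infinity and tracking the sequence of dyadic Cantor cells it successively enters, one shows that at each generation the hitting distribution concentrates on only an exponentially small fraction of the $4^{n}$ cells of side $4^{-n}$. This is a martingale/entropy argument of Makarov type; for self-similar Cantor sets it can be made rigorous using Volberg-type estimates on discrete harmonic measure. The set of ``typical'' hitting points --- those whose dyadic cell sequence obeys the resulting law of large numbers --- is a Borel set $K\subset E$ with $\omega_{E^{c}}(K)>0$, and a direct counting argument (number of typical cells at generation $n$ is $o(4^{n})$, each of diameter $\lec 4^{-n}$) shows $\mathcal{H}^{1}(K)=0$.

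The main obstacle is the compatibility of the three requirements. The wiring that turns $E$ into $\Gamma$ must be short enough at every scale, not merely on average, so that upper Ahlfors regularity holds uniformly, and at the same time it must not destroy the multiple-connectedness of $E^{c}$ on which the dimension-drop argument depends. In practice this forces each connecting arc to lie in a gap of $E$ of its own scale, and a careful inductive bookkeeping is needed to simultaneously bound the connector lengths inside each ball $B(x,r)$, preserve the network of gaps of $E^{c}$ at every scale, and keep the harmonic measure analysis of Step~3 intact after the perturbation from a pure Cantor set to its wired version.
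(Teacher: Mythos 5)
The paper does not supply a proof of this result; it cites it from Bishop and Jones, so your proposal is a blind reconstruction and must be judged on its own. There is a decisive obstruction at the very start. The four-corner Cantor set --- and more generally any genuinely planar, self-similar, Ahlfors $1$-regular Cantor set of positive $\cH^{1}$-measure that is not contained in a line --- is purely $1$-unrectifiable, and purely unrectifiable sets meet every rectifiable curve $\gamma$ in a set with $\cH^{1}(E\cap\gamma)=0$. But the hypotheses of the theorem force $\Gamma$ to be rectifiable: taking $r$ close to $\diam\Gamma$ in $\cH^{1}(\Gamma\cap B(x,r))\leq C_{1}r$ gives $\cH^{1}(\Gamma)\leq C_{1}\diam\Gamma<\infty$, and a compact connected set of finite length is a rectifiable curve. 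Since $E\subset\Gamma$, one would then be forced to conclude $\cH^{1}(E)=\cH^{1}(E\cap\Gamma)=0$, contradicting the lower regularity required of $E$. Thus no unrectifiable Cantor set can play the role of $E$. In fact the set $E$ in the statement is necessarily rectifiable, and this is precisely the content of the theorem: rectifiability of the boundary, in the absence of simple connectivity, does not prevent harmonic measure from charging a length-null set.

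Your wiring estimate fails quantitatively for the same underlying reason. At generation $n$, each of the $4^{n-1}$ parent squares needs about three connecting arcs of length comparable to $4^{-(n-1)}$ to link its four children, contributing connector length of order $3\cdot 4^{n-1}\cdot 4^{-(n-1)}=3$ at that generation. The sum over generations diverges, so the proposed $\Gamma$ has infinite length. This is not a bookkeeping issue that a more careful induction can repair; it is exactly the Jones traveling-salesman obstruction, which says the four-corner Cantor set cannot be covered by any rectifiable curve because its $\beta$-number square function diverges. A correct construction must start from an $E$ that is simultaneously Ahlfors $1$-regular and rectifiable, and then prove the dimension drop for $\omega_{E^{c}}$ in that setting; the Carleson and Makarov--Volberg dimension-drop results you invoke in Step~3 are tailored to self-similar Cantor sets with no rectifiable structure and cannot simply be imported once $E$ is constrained as the theorem demands.
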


Thus, extra assumptions on the domain (like simple connectedness) are necessary as well as assumptions on the structure of $E$.

The higher dimensional version of Bishop and Jones' result fails even with an analogous of connectivity assumption. In \cite[Example, p. 485]{Wu86}, Wu constructed a topological ball $\Omega\subset \bR^{3}$ and a set $E\subset\partial \Omega\cap \bR^{2}$ so that $\mbox{dim}_{\cH}(E)=1$ (which is stronger than $\cH^{2}(E)=0$) but $\omega_{\Omega}(E)>0$. In the same article, Wu proved that, with some extra geometric assumptions on the domain, one can obtain absolute continuity:

\begin{theorem}{\cite[Theorem, p. 486]{Wu86}}
Let $\Omega\subset\mathbb{R}^{d+1}$ be a bounded connected domain satisfying the exterior corkscrew condition. Let $\Gamma$ be a topological $d$-sphere in $\mathbb{R}^{d+1}$, whose interior $\Omega_1$ and exterior $\Omega_2$ are both non-tangentially accessible domains (NTA) such that $\omega_{\Omega_{i}}\ll \cH^{d}|_{\Gamma}$ for $i=1,2$. Then $\omega_{\Omega}\ll \cH^{d}$ on $\partial\Omega\cap\Gamma$.
\label{t:wu}
\end{theorem}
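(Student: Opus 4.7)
The goal is to show that every Borel set $E\subset \Gamma\cap \partial\Omega$ with $\cH^{d}(E)=0$ has vanishing $\omega_{\Omega}$-measure. The hypothesis gives $\omega_{\Omega_{i}}(\cdot,E)\equiv 0$ for $i=1,2$, so the task is to transfer this vanishing from the two NTA reference domains to $\Omega$ via the exterior corkscrew condition. By Harnack chains in $\Omega$ it suffices to prove $\omega_{\Omega}(X_{0},E)=0$ at one convenient interior point $X_{0}$.

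First I would split $E=E_{1}\cup E_{2}$ according to which side of $\Gamma$ an exterior corkscrew of $\Omega$ sits on: for $\xi\in E$ and small $r$, the exterior corkscrew ball $B^{\ast}_{\xi,r}\subset B(\xi,r)\setminus \Omega$ has $(d+1)$-Lebesgue measure $\gtrsim r^{d+1}$, and since $\Gamma$ has vanishing Lebesgue measure, $B^{\ast}_{\xi,r}$ sits predominantly in $\Omega_{1}$ or in $\Omega_{2}$; assign $\xi$ to $E_{i}$ if this occurs in $\Omega_{i}$ along a sequence of scales $r\to 0$. By symmetry it suffices to handle $E_{1}$.

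The central step is a local comparison
\begin{equation*}
\omega_{\Omega}\bigl(X,E\cap B(\xi,r)\bigr)\lesssim \omega_{\Omega_{2}}\bigl(X,E\cap B(\xi,Cr)\bigr)
\end{equation*}
for $\xi\in E_{1}$ at an admissible scale $r$ and $X\in\Omega$ at a corkscrew distance from $\xi$. The argument runs the maximum principle on $V:=\Omega\cap \Omega_{2}\cap B(\xi,Cr)$, a subdomain of $\Omega_{2}$, on which $u=\omega_{\Omega}(\cdot,E\cap B(\xi,r))$ is harmonic. Wiener regularity of $\Omega$ (from the exterior corkscrew) produces non-tangential boundary values $\chi_{E}$ on $\partial V\cap \partial\Omega$ and $\leq 1$ on $\partial V\setminus \partial\Omega\subset (\partial B(\xi,Cr)\cap \Omega)\cup (\Gamma\cap \Omega)$. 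The first contribution is bounded by $\omega_{\Omega_{2}}(\cdot,E\cap B(\xi,r))$ since $V\subset \Omega_{2}$ and $E\subset \partial\Omega_{2}$; the outer piece on $\partial B(\xi,Cr)$ is absorbed by Bourgain's estimate in $\Omega_{2}$ with $C$ large; the inner piece on $\Gamma\cap\Omega$ is disposed of by iterating the same argument on $V':=\Omega\cap \Omega_{1}\cap B(\xi,Cr)$, where the exterior corkscrew sitting in $\Omega_{1}$ supplies a geometric contractive factor $\delta<1$ at each step that drives the iteration to $0$.

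Finally, a $5r$-covering of $E_{1}$ by the balls $B(\xi,r(\xi))$ from the dichotomy, a Vitali extraction, and a Harnack-chain transport of the corkscrew reference point to a fixed $X_{0}\in \Omega$ yield $\omega_{\Omega}(X_{0},E_{1})\lesssim \omega_{\Omega_{2}}(X_{0}',E)=0$; swapping the roles of $\Omega_{1}$ and $\Omega_{2}$ disposes of $E_{2}$, and so $\omega_{\Omega}(X_{0},E)=0$. The main obstacle will be the inner boundary contribution on $\Gamma\cap\Omega$, where $u$ can a priori be close to $1$ and where a single application of the maximum principle gives no gain; the iteration alternating sides of $\Gamma$, each step using the exterior corkscrew on the opposite side, is essential for the contractive estimate.
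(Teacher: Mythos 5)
The paper does not reprove Wu's theorem; it quotes it and then proves a strictly stronger statement (Lemma~I/Theorem~I) replacing the exterior corkscrew by a ``big boundary'' hypothesis, which forces a large departure from Wu's argument. The paper's proof of Lemma~I proceeds by showing
$\sup_{X\in\Gamma\cap\Omega}\omega_{\Omega}^{X}(E)<\gamma<1$
via a sawtooth construction inside one of the two NTA pieces, and then closes with the strong Markov identity (Lemma~B.1) and the maximum-principle observation that $\sup_{X\in\Omega}\omega_{\Omega}^{X}(E)<1$ forces $\omega_{\Omega}(E)=0$ (Lemma~\ref{l:<1}). Your sketch is different in kind: you lean on the exterior corkscrew directly, which is closer in spirit to Wu's original proof but does not extend to the paper's setting, and you try to close with a Vitali covering rather than the strong Markov identity. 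The comparison is worth noting, but the sketch as written has genuine gaps.

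The main structural problem is where you place the dichotomy. You partition $E=E_{1}\cup E_{2}$ according to which side of $\Gamma$ the exterior corkscrew at $\xi\in E$ falls, but the quantity one actually needs to control is $\omega_{\Omega}^{X}(E)$ for \emph{interior} points $X\in\Gamma\cap\Omega$, and there the relevant corkscrew is the one at scale $\dist(X,\partial\Omega)$ near the boundary point closest to $X$, which can sit on either side and changes from point to point. A fixed Borel partition of $E$ cannot track this. Related to that, the ``iteration alternating sides'' is the right heuristic, but it is run as a localized estimate in a single ball $B(\xi,Cr)$: the contraction $\delta<1$ comes from Bourgain's estimate near the corkscrew sitting in $\Omega_{1}\cap B(\xi,r)$, and it only applies to points of $V'$ within a bounded multiple of $r$ of that corkscrew. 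Harmonic measure of $\Gamma\cap\Omega$ within $V'$ can concentrate at points of $\Gamma\cap\Omega$ far from the corkscrew, and there you get no gain; the iteration then does not drive the inner-boundary contribution to zero, only down to some $s\leq \epsilon/\delta>0$.

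The clean way to make this rigorous --- and the way both Wu and the present paper proceed --- is to set $s:=\sup_{X\in\Gamma\cap\Omega}\omega_{\Omega}^{X}(E)$ and run a fixed-point argument using the strong Markov identity
\[
\omega_{\Omega}^{X}(E)=\omega_{\Omega\cap\Omega_{i}}^{X}(E)+\int_{\Gamma\cap\Omega}\omega_{\Omega}^{Z}(E)\,d\omega_{\Omega\cap\Omega_{i}}^{X}(Z),
\]
picking $i$ at each $X$ according to the side the exterior corkscrew occupies at scale $\dist(X,\partial\Omega)$. Since $\omega_{\Omega\cap\Omega_{i}}^{X}(E)\leq \omega_{\Omega_{i}}^{X}(E)=0$ and the corkscrew gives $\omega_{\Omega\cap\Omega_{i}}^{X}(\Gamma\cap\Omega)\leq 1-\delta$ at the nearby corkscrew point, Harnack yields an inequality of the form $s\leq (1-t)+t(1-\delta)s$, hence $s<1$. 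Applying the Markov identity once more from an arbitrary $X_{0}\in\Omega$ gives $\omega_{\Omega}^{X_{0}}(E)<1$ for all $X_{0}$, and Lemma~\ref{l:<1} then gives $\omega_{\Omega}^{X_{0}}(E)=0$. Your Vitali covering and Harnack transport would work if the local estimate actually produced $\omega_{\Omega}(X,E\cap B(\xi,r))=0$, but without the fixed-point step it only produces a bound strictly below $1$, and the conclusion then comes from the Markov identity and Lemma~\ref{l:<1}, not from a covering argument.
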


For the definitions of the corkscrew condition and NTA, see \Definition{CS} and \Definition{nta} below.

Of course now it is necessary to know which NTA domains have absolutely continuous harmonic measures, since an answer to this tells us, via \Theorem{wu}, when harmonic measure for exterior corkscrew domains is absolutely continuous. There are some results giving intrinsic geometric criteria for when this happens, but it seems unlikely that there is a necessary and sufficient geometric condition. Dahlberg showed in \cite[Theorem 1]{Dahl77} that if $\Omega\subset \bR^{d+1}$ is a Lipschitz domain, then $\omega_{\Omega}\ll \cH^{d}|_{\d\Omega} \ll \omega_{\Omega}$. Later, David and Jerison in \cite[Theorem 2]{DJ90} and independently by Semmes in \cite{Sem} extended this to NTA domains with Ahlfors regular boundaries (see also \cite[Theorem 1.8]{AzzHarm} for a local version of this result). In \cite[Theorem 1.2]{Bad12}, it was shown that if $\Omega$ is an NTA domain whose boundary has locally finite $\cH^{d}$-measure, then $\cH^{d}|_{\partial\Omega}\ll\omega$, and  $\omega\ll \cH^d$ on $\Theta$, where
\begin{equation}\label{e:bad}
\Theta:=\left\{x\in\partial\Omega:\, \liminf\limits_{r\to 0} r^{-d}\cH^{d}(\partial\Omega\cap B(x,r)) <\infty\right\}.
\end{equation}
See also \cite{Azz15}, which simplifies some of the technical arguments in \cite{DJ90} and \cite{Bad12}. 

However, in \cite[Theorem 1.2]{AMT15}, the second and third authors along with Tolsa (using a deep result of Wolff \cite{Wolff91}) constructed a two-sided NTA domain $\Omega$ with $\cH^{d}(\d\Omega)<\infty$ but $\omega_{\Omega}\not\ll \cH^{d}|_{\d\Omega}$. See also \cite{A16,LN12} for the p-harmonic version of these results. 



Recently, the second and third author, together with Hofmann, Martell, Mayboroda, Tolsa, and Volberg showed in \cite[Theorem 1.1 (a)]{AHMMMTV} that rectifiability of harmonic measure (rather than rectifiability of the boundary in the classical sense) is in fact {\it necessary}.

\begin{theorem}{\cite[Theorem 1.1(a)]{AHMMMTV}}\label{t:7per}
Let $\Omega\subset\mathbb{R}^{d+1}$ be open and connected and $E\subset\partial\Omega$ with $\cH^{d}(E)<\infty$. If $\omega_{\Omega}\ll \cH^{d}$ on $E$, then $E$ may be covered by countably many Lipschitz graphs up to a set of $\omega_{\Omega}$-measure zero.
\end{theorem}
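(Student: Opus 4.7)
The strategy is to apply the Nazarov--Tolsa--Volberg characterisation of rectifiable measures: a finite Radon measure $\mu$ with positive finite upper $d$-density $\mu$-a.e.\ is $d$-rectifiable, and in particular supported on a countable union of Lipschitz graphs modulo a $\mu$-null set, provided the truncated $d$-dimensional Riesz transforms $R_{\varepsilon}\mu$ are uniformly bounded in $L^{2}(\mu)$. I first reduce to this setting. From $\omega_{\Omega}\ll\cH^{d}$ on $E$ and $\cH^{d}(E)<\infty$ the density $h=d\omega_{\Omega}/d\cH^{d}|_{E}$ is a.e.\ finite, and $\{h=0\}$ is $\omega_{\Omega}$-null and discardable. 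Lusin and Egorov then produce, given $\delta>0$, a compact $F\subset E$ with $\omega_{\Omega}(E\setminus F)<\delta$ on which $c\le h\le C$; setting $\mu=\omega_{\Omega}|_{F}$, the bounds on $h$ translate into positive finite upper and lower $d$-densities $\mu$-a.e.

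\smallskip

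The crux is to prove a uniform $L^{2}(\mu)$ bound for $R_{\varepsilon}\mu$. The bridge to harmonic measure is the Green's function: fixing a pole $x_{0}\in\Omega$ far from $F$ and writing $G(x_{0},\cdot)=\nrj(x_{0}-\cdot)-u^{x_{0}}$ with $\nrj$ the Newtonian kernel (whose gradient is the $d$-dimensional Riesz kernel) and $u^{x_{0}}$ harmonic in $\Omega$ matching the trace of $\nrj(x_{0}-\cdot)$, one sees that $\grad_{y}G(x_{0},y)$ essentially encodes $R\omega_{\Omega}^{x_{0}}(y)$. The plan is then: first, use the upper density finiteness of $\mu$ together with Bourgain-type lower bounds on harmonic measure to locate, at $\mu$-a.e.\ $x\in F$ and every small scale $r>0$, an interior corkscrew $y\in\Omega\cap B(x,r)$ with $\dist(y,\d\Omega)\gtrsim r$ and $G(x_{0},y)\,r^{d-1}\approx\omega_{\Omega}^{x_{0}}(B(x,r))$; second, run a David--Mattila-style stopping-time construction on $F$ to isolate a sub-piece of positive $\mu$-mass on which these corkscrew estimates hold uniformly, effectively simulating a chord-arc environment; third, use the maximum principle and Cotlar/good-$\lambda$ truncation arguments to convert the resulting pointwise bounds on $\grad G$ into the desired uniform $L^{2}(\mu)$ bound on $R_{\varepsilon}\mu$.

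\smallskip

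With the Riesz transform estimate secured, Nazarov--Tolsa--Volberg yields $d$-rectifiability of $\mu$, so $F$ lies in a countable union of Lipschitz graphs modulo a $\cH^{d}$-null (hence $\omega_{\Omega}$-null) set; exhausting $E$ by such $F$'s as $\delta\to 0$ and the thresholds on $h$ range over $(0,\infty)$ finishes the proof. The main obstacle is the Riesz transform bound: in the absence of any a priori geometric hypothesis on $\Omega$ (no NTA, no Ahlfors regularity of $\d\Omega$, no Carleson measure conditions), the standard machinery (uniform rectifiability, $T1/Tb$ theorems) is not available, and the whole technical weight of the proof goes into manufacturing, out of just the density data on $\mu$ and the corkscrew points it forces, a controlled environment in which to run a quantitative singular integral argument.
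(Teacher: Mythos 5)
This theorem is cited in the paper from \cite{AHMMMTV} and is not proved here, so I compare your sketch to the argument in that reference. You correctly identify the architecture: reduce to a piece $F$ where the density $d\omega/d\cH^{d}$ is bounded above and below, pass from $\omega|_{F}$ to the $d$-Riesz transform via the gradient of the Green function with a distant pole, establish a uniform $L^{2}(\mu)$ bound for $\mathcal{R}_{\varepsilon}\mu$, and invoke the Nazarov--Tolsa--Volberg solution of the codimension-one David--Semmes problem to get rectifiability. That skeleton matches \cite{AHMMMTV}.

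However, the description of the crucial middle step is not how the proof actually goes, and the route you describe would not work. The theorem is stated for an arbitrary open connected $\Omega$ with no corkscrew, NTA, or Ahlfors-regularity hypotheses, and you cannot manufacture interior corkscrew points (let alone ``simulate a chord-arc environment'') from density data on $\mu$ alone; the set $F$ may sit on a wildly nonregular boundary through which $\Omega$ is very thin. What \cite{AHMMMTV} actually does is exploit that $|\nabla G(x_{0},\cdot)|$ is subharmonic away from the pole and vanishes on $\d\Omega$, which (together with a Bourgain-type estimate and the upper density bound on $\mu$) yields a pointwise bound $|\nabla G(x_{0},y)|\lesssim \omega(B(x,r))/r^{d}$ on a large fraction of each Whitney-type region near $F$. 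This is fed into a stopping-time construction and, crucially, into the Nazarov--Tolsa--Volberg non-homogeneous $Tb$ theorem for \emph{suppressed kernels} --- not a Cotlar or good-$\lambda$ truncation argument --- to close the $L^{2}(\mu)$ bound. So: the endpoints of your outline are right, but the technical core as you describe it (corkscrews, chord-arc simulation, good-$\lambda$) is not the mechanism, and a proof built along those lines would stall precisely because no geometric regularity of $\Omega$ is available.
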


This is like a higher dimensional version of Pommerenke's theorem, only that now absolute continuity implies rectifiability of harmonic measure (and in fact the existence of a rectifiable set in the boundary of positive $d$-measure) rather than the existence of cone points. The theorem (and also Pommerenke's theorem) are false in higher dimensions without the assumption $\cH^{d}(E)<\infty$: Wolff showed in \cite[Theorem 3]{Wolff91} that there are domains $\Omega\subset \bR^{3}$ for which the harmonic measure of any $2$-dimensional set (like a Lipschitz graph) is zero. 



It is natural to ask about when we alternatively have that $\cH^{d}|_{\d\Omega} \ll \omega_{\Omega}$, and there has been much work on this as well. In \cite[Theorem 1.1(b)]{AHMMMTV}, it was also shown that if $\cH^{d}|_{E}\ll \omega_{\Omega}|_{E}$ for some Borel set $E\subset \d\Omega$, then $E$ is $d$-rectifiable. If $\Omega$ is uniform with rectifiable and lower regular boundary, and $\cH^{d}|_{\d\Omega}$ is a Radon measure, then the third author showed in \cite[Theorem 1.1]{Mou15} that rectifiability implies $\cH^{d}|_{\d\Omega}\ll  \omega_{\Omega}$. Independently and simultaneously in \cite[Theorem 1.2]{ABHM15}, the first author, Badger, Hofmann, and Martell showed that when $\Omega$ is a $1$-sided NTA domain whose boundary $\partial\Omega$ is $d$-Ahlfors regular then $\Omega$ is rectifiable if and only if $\cH^{d}|_{\d\Omega}\ll \omega_{\Omega}$, and in fact it was shown in that this is equivalent to the existence of a few other geometric decompositions of the boundary. Moreover, it was proven that this also held for some more general elliptic measures (Theorem 1.3 in \cite{ABHM15}) rather than just harmonic measure, whereas the techniques in \cite{AHMMMTV}, for example, do not apply to this setting. For the specific class of elliptic measures, see Definition \ref{d:KP} below.
%
%
%
%

The first author, Bortz, Hofmann, and Martell, showed in \cite[Theorem 2.1]{ABHM16} that if $E$ is a closed $d$-rectifiable set satisfying a condition weaker than lower $A$-Ahlfors $d-$regularity condition and having locally finite $\cH^{d}$ measure then any Borel subset of $E$ with positive $\cH^{d}$ measure has non-zero harmonic measure in at least one of the connected components of $\mathbb{R}^{d+1}\setminus E$. This was also shown in {\cite[Theorem 1.4]{Mou16}} but under the assumption that the measure theoretic boundary had full measure in the boundary. Combining their result with \Theorem{7per}, they get the following classification theorem.

\begin{theorem}[{\cite[Theorem 2.9]{ABHM16} }]\label{t:h<w}
Let $\Omega\subset \bR^{d+1}$ be a bounded domain so that $\d\Omega$ has locally finite $\cH^{d}$-measure. Suppose that $\d\Omega$ has the Weak Lower Ahlfors-David regular condition (WLADR), meaning that for $\cH^{d}$-almost every $x\in \d\Omega$, we have 
\begin{equation*}
\limsup_{r\rightarrow 0} \inf\ck{\cH^{d} (\d\Omega\cap B(y,s)): y\in \d\Omega\cap B(x,r),0<s<r}\gtrsim  s^{d}>0.
\end{equation*}
Further, suppose that the interior measure theoretic boundary has full measure, meaning that for $\cH^{d}$-almost every $x\in \d\Omega$ we have 
\[
\frac{\cH^{d+1}(B(x,r)\cap \Omega)}{\cH^{d+1}(B(x,r))}>0.\]
Then $\cH^{d}|_{\d\Omega}\ll \omega_{\Omega}$ if and only if $\d\Omega$ is $d$-rectifiable.
\end{theorem}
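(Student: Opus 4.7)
\emph{Overview and forward direction.} The equivalence is proved in two directions. The implication ``$\cH^{d}|_{\d\Omega} \ll \omega_{\Omega}$ implies $\d\Omega$ is $d$-rectifiable'' is a direct application of \cite[Theorem 1.1(b)]{AHMMMTV}: since $\cH^{d}|_{\d\Omega}$ is locally finite and absolutely continuous with respect to $\omega_{\Omega}$, every Borel subset of $\d\Omega$ of finite $\cH^{d}$-measure can be covered $\omega_{\Omega}$-a.e.\ (hence $\cH^{d}$-a.e.) by countably many Lipschitz graphs, so $\d\Omega$ itself is $d$-rectifiable. This direction uses neither WLADR nor the interior density hypothesis.

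\emph{Converse.} Assume $\d\Omega$ is $d$-rectifiable and let $E \subset \d\Omega$ be Borel with $\cH^{d}(E) > 0$; the goal is $\omega_{\Omega}(E) > 0$. Enumerate the connected components of $\bR^{d+1} \setminus \d\Omega$ as $U_{0} = \Omega, U_{1}, U_{2}, \dots$. Since $\d\Omega$ is closed, $d$-rectifiable, has locally finite $\cH^{d}$-measure, and WLADR is weaker than lower $d$-Ahlfors regularity, \cite[Theorem 2.1]{ABHM16} applies and produces a component $U_{i}$ with $\omega_{U_{i}}(E) > 0$. The remaining task is to promote this to $i = 0$, that is, to rule out any exterior component $U_{i}$, $i \geq 1$, carrying positive harmonic measure on a set where $\omega_{\Omega}$ vanishes. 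Arguing by contradiction, suppose $F \subset \d\Omega$ is Borel with $\cH^{d}(F) > 0$ and $\omega_{\Omega}(F) = 0$; after removing the $\cH^{d}$-null set on which $\Omega$ has zero upper $(d+1)$-density we may assume $\Omega$ has strictly positive upper Lebesgue density at every point of $F$.

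\emph{Main obstacle.} The hardest step is closing the contradiction using only the rectifiable structure, WLADR, and the one-sided interior density at points of $F$. I would carry out a tangent-plane / blow-up analysis: at $\cH^{d}$-a.e.\ $x \in F$, $d$-rectifiability provides an approximate $d$-plane $\Pi_{x}$, while WLADR quantitatively pins $\d\Omega$ close to $\Pi_{x}$ at a sequence of small scales. On such scales the complement $\bR^{d+1}\setminus \d\Omega$ splits locally into at most two connected pieces, one on each side of $\Pi_{x}$, and the positive density of $\Omega$ at $x$ forces $\Omega$ to coincide with one of these two pieces locally. Then $x$ is a half-space-type boundary point for $\Omega$, and a standard barrier argument shows $x \in \supp \omega_{\Omega}$; a Vitali-type covering of $F$ by balls on which the comparison with the half-space harmonic measure is uniform upgrades this pointwise statement to $\omega_{\Omega}(F) > 0$, contradicting the assumption. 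The delicate point is the quantitative passage: extracting a \emph{uniform} local two-component splitting on a $\cH^{d}$-positive subset of $F$ from pointwise rectifiability and pointwise WLADR, which is precisely where the lower regularity assumption, rather than a mere upper $\cH^{d}$-bound on $\d\Omega$, is needed.
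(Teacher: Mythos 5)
First, a contextual point: the paper does not prove this theorem at all. It is quoted from \cite[Theorem 2.9]{ABHM16}, and the text only gestures at the ingredients (namely \cite[Theorem 2.1]{ABHM16} combined with \cite{AHMMMTV}). So there is no proof in the paper against which to compare your attempt line by line; I can only evaluate the proposal on its own merits against the cited sources.

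\emph{Forward direction.} Your conclusion is right, and \cite[Theorem 1.1(b)]{AHMMMTV} is the right citation, but the intermediate reasoning is garbled. You say that $E$ ``can be covered $\omega_{\Omega}$-a.e.\ by countably many Lipschitz graphs'' --- that is the conclusion of \cite[Theorem 1.1(a)]{AHMMMTV}, which requires the \emph{opposite} absolute continuity $\omega_\Omega\ll\cH^d$, not the hypothesis $\cH^d\ll\omega_\Omega$ you have. Fortunately there is nothing to fix: part (b) as stated says directly that $\cH^d|_E\ll\omega_\Omega|_E$ implies $E$ is $d$-rectifiable, so you may drop the Lipschitz-graph detour entirely and apply it to $\d\Omega$ (cut into pieces of finite $\cH^d$-measure if necessary). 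As you correctly note, neither WLADR nor the interior density condition is used here.

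\emph{Converse direction.} Here there is a genuine gap, which you yourself flag but do not close. Two remarks.

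First, the framing ``apply \cite[Theorem 2.1]{ABHM16} to obtain a component $U_i$ with $\omega_{U_i}(E)>0$, then promote to $i=0$'' buys you almost nothing. Theorem 2.1 only asserts that \emph{some} component of $\bR^{d+1}\setminus\d\Omega$ charges $E$; it carries no information about which one. Knowing $\omega_{U_{i}}(E)>0$ for an unknown $i$ does not constrain $\omega_\Omega(E)$ in any obvious way, so the ``promotion'' step has to re-prove the entire converse from scratch for the specific component $\Omega$. In other words, the interior density hypothesis has to be fed into the \emph{proof} of a Theorem 2.1--type statement, forcing the chosen component to be $\Omega$; one cannot use Theorem 2.1 as a black box and then patch it afterward.

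Second, the blow-up sketch jumps at the crucial moment. Granting that at $\cH^d$-a.e.\ $x\in F$ you obtain an approximate tangent plane $\Pi_x$, that WLADR plus locally finite $\cH^d$-measure yields a two-sided Hausdorff closeness of $\d\Omega$ to $\Pi_x$ along a sequence of scales, and that positive interior density pins $\Omega$ to one side of $\Pi_x$ at those scales --- all plausible --- you then conclude only that ``$x\in\supp\omega_\Omega$.'' That is far too weak: in a regular domain \emph{every} boundary point lies in $\supp\omega_\Omega$, yet the conclusion you need is $\omega_\Omega(F)>0$, a statement about a fixed positive-measure set, not about individual points. The missing content is the quantitative, uniform-over-a-positive-measure-subset lower bound on $\omega_\Omega$ of surface balls (a Bourgain/David--Jerison type estimate run in a sawtooth or Semmes-decomposition over a large subset of $F$), together with the bookkeeping that sums it to a lower bound on $\omega_\Omega(F)$. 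Your phrase ``a Vitali-type covering ... upgrades this pointwise statement'' is precisely the step that requires all the work; as written it is a wish, not an argument. This is the heart of \cite[Theorem 2.1 and Theorem 2.9]{ABHM16}, and the proposal does not supply it.
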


See also Theorem A.1 and Theorem A.3 in \cite{ABHM16} for localized version of \Theorem{h<w} and for decomposing $\partial\Omega$ as a rectifiable portion, where surface measure is absolutely continuous with respect to harmonic measure, and a purely $d$-unrectifiable set with
vanishing harmonic measure.

\subsection{Main Results}

Our first main result is a generalization of Wu's theorem for domains that have uniformly large complements rather than exterior corkscrews. 

\begin{definition}\label{d:LBB}
A domain $\Omega\subset \bR^{d+1}$ is said to have {\it big boundary in $K$} for some set $K\subset \bR^{d+1}$ if there is $c_{K}>0$ so that
\begin{equation}\label{e:kcontent}
\cH^{d}_{\infty} ( B\backslash \Omega)\geq c_{K} r_{B}^{d} \mbox{ for all $B$ centered on $K\cap \d\Omega$ with $0<r_{B}<\diam K$}.
\end{equation}
We will say that $\Omega$ has {\it big boundary} if it has big boundary in $\d\Omega$, or in other words,
\begin{equation}\label{e:dcontent}
\cH^{d}_{\infty} ( B\backslash \Omega)\geq c r_{B}^{d} \mbox{ for all $B$ centered on $\d\Omega$ with $r_{B}<\diam \d\Omega$}.
\end{equation}
\end{definition} 

Our theorem also holds more generally for class of elliptic measures $\omega_{\Omega}^{\mathcal{L},X}$   satisfying the following condition taken from \cite{KP01}.

\begin{definition}\label{d:KP}
Let $\delta(X)=\mbox{dist}(X,\partial\Omega)$. We will say that an elliptic operator $\mathcal{L}=-\div \mathcal{A}\grad$ satisfies the {\it Kenig-Pipher condition (or KP-condition)} if $\mathcal{A}=(a_{ij}(X))$ is a uniformly elliptic real matrix that has distributional derivatives such that
\[
\ve_{\Omega}^{\mathcal{L}}(Z):=\sup\{\delta(X)|\grad a_{ij}(X)|^{2}: X\in B(Z,\delta(Z))/2,\;\; 1\leq i,j\leq d+1\}
\]
is a Carleson measure in $\Omega$, by which we mean for all $x\in \d\Omega$ and $r\in (0,\diam \d\Omega)$,
\[
\int_{B(x,r)\cap \Omega}\ve_{\Omega}^{\mathcal{L}}(Z)dZ\leq Cr^{d}.\]
\end{definition}

\def\CorollaryI{Corollary \hyperref[c:cori]{I} }
\def\TheoremI{Theorem \hyperref[t:thmi]{I} }
\def\TheoremIV{Theorem \hyperref[t:thmi]{IV} }
\def\TheoremII{Theorem \hyperref[t:thmi]{II} }
\def\TheoremIII{Theorem \hyperref[t:thmi]{III} }
\def\assumption{Let $\Omega\subset \bR^{d+1}$ be a regular domain with big boundary in some ball $B_{0}$ centered on $\d\Omega$. }
\begin{thmi}\label{t:thmi}{\it 
\assumption Let $\mathcal{L}$ be an elliptic operator satisfying the \hyperref[d:KP]{KP-condition}. If $d=1$ and $\Omega$ is unbounded, assume either that $\infty$ is regular for $\Omega$ or $\omega_{\Omega}^{\mathcal{L}}(\infty)=0$. Suppose $\Gamma\subset \bR^{d+1}$ is $A$-Ahlfors $d-$regular and splits $\bR^{d+1}$ into two NTA domains $\Omega_{1}$ and $\Omega_{2}$. If $E\subset \d\Omega\cap \Gamma\cap B_{0}$ is a Borel set, then
\begin{equation}\label{e:converse}
 \cH^{d}(E)=0 \mbox{ implies } \omega_{\Omega}^{\mathcal{L},X_{0}}(E)=0.
 \end{equation}}
\end{thmi}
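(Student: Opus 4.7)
The aim is to transfer absolute continuity from the elliptic measures on the nice auxiliary NTA domains $\Omega_1,\Omega_2$ to $\omega^{\mathcal{L}}_\Omega$, restricted to $\Gamma\cap\partial\Omega$. The starting point is that, since $\Gamma$ is $A$-Ahlfors $d$-regular and bounds each NTA domain $\Omega_i$, the Kenig--Pipher theorem \cite{KP01} (extending David--Jerison \cite{DJ90} to operators in the KP class) yields $\omega^{\mathcal{L}}_{\Omega_i}\in A_\infty(\cH^{d}|_\Gamma)$, so in particular $\omega^{\mathcal{L},X}_{\Omega_i}(E)=0$ for $i=1,2$ and all poles $X\in\Omega_i$.

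For the main implication, we work with a Perron subsolution $v$ on $\Omega$ satisfying $\limsup_{X\to y} v(X)\leq \one_E(y)$ on $\partial\Omega$, so that $\omega^{\mathcal{L},X_0}_{\Omega}(E)$ equals the supremum of $v(X_0)$ over all such $v$. Restricting $v$ to the subdomain $\Omega\cap\Omega_i$, the maximum principle for $\mathcal{L}$-subsolutions together with domain monotonicity produces, for $X\in\Omega\cap\Omega_i$,
\[
v(X) \;\leq\; \omega^{\mathcal{L},X}_{\Omega\cap\Omega_i}(E) \;+\; \int_{\Gamma\cap\Omega} v(Y)\, d\omega^{\mathcal{L},X}_{\Omega\cap\Omega_i}(Y),
\]
and the first summand vanishes because $\omega^{\mathcal{L},X}_{\Omega\cap\Omega_i}(E)\leq\omega^{\mathcal{L},X}_{\Omega_i}(E)=0$. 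Thus $v$ is entirely controlled by its own values on the interior interface $\Gamma\cap\Omega$.

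The hard part is to upgrade this one-step bound to a quantitative contraction, since iterating across $\Gamma$ is tautological at the interface itself. Here the big boundary hypothesis enters: it yields a Bourgain-type lower bound $\omega^{\mathcal{L},Y}_{\Omega\cap\Omega_i}(\partial\Omega\cap\overline{\Omega_i})\gtrsim 1$ uniformly for $Y$ close to $\partial\Omega$ in $\Omega\cap\Omega_i$, reflecting the fact that the $\cH^{d}$-content of $\partial\Omega$ near each boundary point is substantial on at least one side of $\Gamma$ once the $\cH^{d}$-contribution from $\Gamma$ itself is absorbed (using Ahlfors regularity). The plan is then to cover $E$ by balls $B_j$ with $\sum \cH^{d}(CB_j\cap\Gamma)<\varepsilon$ (possible because $\cH^{d}(E)=0$ and $\Gamma$ is Ahlfors regular), to split the collection according to which side of $\Gamma$ carries the big boundary in each $B_j$, and to apply the Bourgain bound together with a change-of-pole / CFMS-type comparison to obtain a local estimate of the form $\omega^{\mathcal{L},X_0}_\Omega(B_j\cap E)\lesssim \omega^{\mathcal{L}}_{\Omega_{i(j)}}(CB_j\cap\Gamma)$. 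Summing over $j$ and using $\omega^{\mathcal{L}}_{\Omega_i}\ll\cH^{d}|_\Gamma$ yields $\omega^{\mathcal{L},X_0}_{\Omega}(E)\lesssim\varepsilon$, whence $=0$.

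The extension to KP-type operators is then routine: each ingredient (Kenig--Pipher absolute continuity, Bourgain's estimate, CFMS-type comparisons, and the maximum principle) is available at this level of generality via \cite{KP01} and standard adaptations. Finally, the $d=1$ unbounded case requires the stated hypothesis at $\infty$ to guarantee that the Perron solution is uniquely determined by the boundary data on $\partial\Omega$ and that the elliptic measure $\omega^{\mathcal{L}}_\Omega$ does not charge $\infty$, so that all of the above comparisons remain valid globally.
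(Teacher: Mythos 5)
Your reduction via Kenig--Pipher $A_\infty$ and the strong Markov identity correctly identifies two of the three ingredients, and your observation that the one-step Markov bound is tautological at the interface $\Gamma\cap\Omega$ is exactly right. But the step you call ``the hard part'' is in fact where the proof lies, and your proposal asserts rather than proves it. The claimed lower bound $\omega^{\mathcal{L},Y}_{\Omega\cap\Omega_i}(\partial\Omega\cap\overline{\Omega_i})\gtrsim 1$, uniformly in $Y$ near $\partial\Omega$, is \emph{not} a direct consequence of Bourgain's lemma (which lower-bounds the measure of a full surface ball, not a distinguished subset of the boundary), and it is in fact false without first choosing the side $i$ to depend on the scale and location. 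The big boundary hypothesis only gives $\cH^d_\infty(B\setminus\Omega)\gtrsim r_B^d$, and $B\setminus\Omega$ could sit almost entirely on $\Gamma$ itself (recall $\cH^d(B\cap\Gamma)\sim r_B^d$), or entirely on the opposite side of $\Gamma$ from the one you are working in. To extract a uniform lower bound on the ``good'' part of $\partial(\Omega\cap\Omega_i)$, the paper must (i) choose $i$ so that $\cH^d_\infty(\Omega^c\cap\overline{\Omega}_1\cap B'_{Q_0})\gtrsim r^d$, a genuine dichotomy, and (ii) build a sawtooth subdomain $\Omega'\subset\Omega\cap\Omega_1$ (Section 3, Lemmas 3.2--3.8), prove $\cH^d(G)\gtrsim r^d$ for a carefully defined boundary set $G$, and invoke $A_\infty$ for the \emph{sawtooth} (via \Lemma{HM1}, \Lemma{HM2}, and \Theorem{DJ}) to transfer a Hausdorff-content bound to a harmonic-measure bound. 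None of this is subsumed by ``Bourgain + CFMS.''

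Your final covering-and-summing step has a second gap: the proposed local estimate $\omega^{\mathcal{L},X_0}_\Omega(B_j\cap E)\lesssim \omega^{\mathcal{L}}_{\Omega_{i(j)}}(CB_j\cap\Gamma)$ compares elliptic measures on two domains that are not nested ($\Omega\not\subset\Omega_{i(j)}$ in general) and on a domain $\Omega$ with no NTA, corkscrew, or doubling structure, so no CFMS-type change of pole is available. The paper avoids any such local comparison: it establishes the uniform bound $\omega^X_\Omega(E)<\gamma<1$ on $\Gamma\cap\Omega$ and then uses the Markov identity together with \Lemma{<1} (the maximum-principle fact that $\sup_X\omega^X_\Omega(E)<1$ forces $\omega_\Omega(E)=0$), which is a qualitative rather than summable local argument. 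As written, your proposal has a genuine gap at precisely the place Wu's original argument also had to do real work.
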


The result does not hold without the \hyperref[d:KP]{KP-condition}, even in the case that $\Omega$ is a half space and $\Gamma=\d\Omega$  \cite{CFK81,Swe92,Wu94}. Even in the half plane setting, some sort of Dini or Carleson condition on the coefficients is typically required, see \cite{FJK84,FKP91,KP01} and the references therein.

In the case $\mathcal{L}=\Delta$, if $d=1$, then our assumptions imply $\cH^{1}(\d\Omega)>0$, so that $\d\Omega$ is nonpolar \cite[Theorem 11.14, p. 207]{HKM}. Domains with nonpolar boundaries are Greenian by Myrberg's Theorem   \cite[Theorem 5.3.8, p. 133]{AG} and harmonic measures for unbounded Greenian domains give zero measure to $\infty$ \cite[Example 6.5.6, p. 179]{AG}. Thus, we have the following corollary for the case of harmonic measure.

\begin{cori}\label{c:cori}
\assumption Suppose $\Gamma\subset \bR^{d+1}$ is $A$-Ahlfors $d-$regular and splits $\bR^{d+1}$ into two NTA domains $\Omega_{1}$ and $\Omega_{2}$. If $E\subset \d\Omega\cap \Gamma\cap B_{0}$ is a Borel set, then $\omega_{\Omega}\ll \cH^{d}$ on $\d\Omega\cap \Gamma\cap B_{0}$.
\end{cori}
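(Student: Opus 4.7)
The plan is to derive \CorollaryI directly from \TheoremI by specialising the operator to $\mathcal{L}=\Delta$ and verifying the two auxiliary hypotheses of that theorem in this concrete setting.

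The first specialisation is free: for $\mathcal{L}=\Delta$ the coefficient matrix is $\mathcal{A}=I$, so every entry $a_{ij}$ is constant, $\grad a_{ij}\equiv 0$, and consequently $\ve_{\Omega}^{\Delta}\equiv 0$ on $\Omega$. The Carleson requirement of \Definition{KP} is then trivially satisfied, so $\Delta$ obeys the \hyperref[d:KP]{KP-condition}.

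The second and only substantive step is to dispose of the hypothesis at infinity appearing in the borderline case $d=1$ with $\Omega$ unbounded. The plan is to deduce $\omega_{\Omega}(\infty)=0$ by the chain of classical facts quoted in the paragraph preceding the corollary: nonpolarity of $\d\Omega$ from \cite[Theorem 11.14]{HKM}, Greenianness of $\Omega$ via Myrberg's theorem \cite[Theorem 5.3.8]{AG}, and $\omega_{\Omega}(\infty)=0$ for unbounded Greenian domains via \cite[Example 6.5.6]{AG}. The only non-trivial input in this chain is $\cH^{1}(\d\Omega)>0$, which I would extract from the big boundary hypothesis as follows. Applying \eqref{e:kcontent} with $K=B_{0}$ to the ball $B_{0}$ itself gives $\cH^{1}_{\infty}(B_{0}\setminus\Omega)\geq c_{B_{0}}\, r_{B_{0}}>0$, and hence $\cH^{1}(B_{0}\setminus\Omega)>0$. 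Since a compact planar set of vanishing $\cH^{1}$-measure is totally disconnected and has connected complement in $\bR^{2}$, the assumption $\cH^{1}(\d\Omega)=0$ would force the connected domain $\Omega$ to coincide with $\bR^{2}\setminus\d\Omega$, i.e.\ $\Omega^{c}=\d\Omega$, contradicting the positivity of $\cH^{1}(B_{0}\setminus\Omega)$.

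With both auxiliary hypotheses now confirmed, I would invoke \TheoremI with $\mathcal{L}=\Delta$ and with any fixed interior pole $X_{0}$ to conclude that $\cH^{d}(E)=0$ implies $\omega_{\Omega}^{X_{0}}(E)=0$ for every Borel set $E\subset\d\Omega\cap\Gamma\cap B_{0}$; by the usual Harnack change-of-pole, this is precisely the desired absolute continuity $\omega_{\Omega}\ll\cH^{d}$ on $\d\Omega\cap\Gamma\cap B_{0}$. The main (and essentially only) obstacle in the reduction is the planar-topology step used to upgrade the big boundary condition to $\cH^{1}(\d\Omega)>0$ in the $d=1$ case; everything else is a formal specialisation of \TheoremI.
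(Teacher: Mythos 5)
Your proposal is correct and follows exactly the route the paper itself takes: $\mathcal{L}=\Delta$ trivially satisfies the KP-condition, and the $d=1$ issue at infinity is handled via the chain nonpolar $\Rightarrow$ Greenian $\Rightarrow$ $\omega_{\Omega}(\infty)=0$ given in the paragraph immediately before the corollary. The only thing you add beyond what the paper says is the explicit verification that big boundary in $B_0$ forces $\cH^{1}(\d\Omega)>0$ (which the paper asserts without proof), and your topological argument for that is sound, modulo the minor point that $\d\Omega$ need only be assumed closed, not compact; the fact that a closed set of vanishing $\cH^{1}$-measure cannot disconnect $\bR^{2}$ still applies.
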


This corollary is, to our knowledge, also new in the plane, as we have no topological assumptions on $\Omega$ like simple connectedness. This is particularly interesting in light of \Theorem{bj}; while $\omega_{E^{c}}(\Gamma)>0$ for some Ahlfors regular curve, by \TheoremI we must have $\omega_{E^{c}}(\Gamma)=0$ whenever $\Gamma$ is a bi-Lipschitz curve.

The big boundary condition cannot be loosened too much, as one cannot change the $d$ to some $s<d$ in \eqn{kcontent}. Just consider traditional harmonic measure and take any fractal set $E$ in $\bR^{d}$ satisfying $\cH^{s}_{\infty} (E\cap B)\geq c r_{B}^{s}$ for all $B$ centered on $E$ with $r_{B}<\diam E$, and then consider $\Omega=\bR^{d+1}\backslash E$. Then \TheoremI fails with $\Gamma=\bR^{d}$. The Ahlfors regularity assumption on $\Gamma$ cannot be relaxed either, by the counterexample in \cite{AMT15} mentioned earlier just below \eqn{bad}.

Our second main result shows that rectifiability of harmonic measure impies the existence of cone points. Recall that a point $x\in \d\Omega$ is a {\it cone point} for $\Omega$  if there is a vector $v\in \bS^{d}$, $r>0$, and $\alpha>0$ so that 
\[
C(x,v,\alpha,r):= \{y\in B(x,r): (y-x)\cdot v> \alpha |y-x|\}\subset \Omega.\]
A set $\Gamma$ is a {\it Lipschitz graph} if it is a rotation and translation of a set of the form $\{(x,f(x)):x\in \bR^{d}\}$ where $f:\bR^{d}\rightarrow \bR$ is Lipschitz.

\begin{thmii}\label{t:thmii}{\it
\assumption Let $\omega_{\Omega}$ be its harmonic measure and let $\Gamma$ be a Lipschitz graph. Then $\omega_{\Omega}$-almost every point in $\Gamma\cap \d\Omega\cap B_{0}$ is a cone point for $\Omega$.}
\end{thmii}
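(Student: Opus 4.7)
The plan is to reduce the claim to a statement about $\cH^{d}$-measure on $\Gamma$, then to exploit the Lipschitz structure of $\Gamma$ to recover cone points. Since a Lipschitz graph is $A$-Ahlfors $d$-regular and its complement splits into two NTA domains, Corollary~I applies with this $\Gamma$ and gives $\omega_{\Omega}\ll\cH^{d}$ on $\Gamma\cap\d\Omega\cap B_{0}$. Thus it suffices to show that the set $F\subset\Gamma\cap\d\Omega\cap B_{0}$ of non-cone points of $\Omega$ satisfies $\cH^{d}(F)=0$.

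After a rotation I write $\Gamma=\{(y,f(y)):y\in\bR^{d}\}$ for a Lipschitz function $f$ with constant $L$, and let $\Omega_{+},\Omega_{-}$ be the NTA regions above and below $\Gamma$. By the Lipschitz property, for every $x\in\Gamma$ and every $\alpha>L/\sqrt{1+L^{2}}$ the cones $C_{\pm}(x,\alpha,r)$ lie inside $\Omega_{\pm}$. Since $\cH^{d}(\Gamma\cap B_{0})<\infty$ by Ahlfors regularity, the set $\Gamma\cap\d\Omega$ is a $d$-rectifiable set of locally finite $\cH^{d}$-measure, so by standard density theory, at $\cH^{d}$-a.e.\ $x\in\Gamma\cap\d\Omega$ the tangent plane to $\Gamma$ exists and $\Gamma\cap\d\Omega$ has $\cH^{d}$-density one in $\Gamma$ at $x$.

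I would then partition $F=F_{+}\cup F_{-}\cup F_{0}$, where $x\in F_{\epsilon}$ (for $\epsilon\in\{+,-\}$) if some ball $B(x,r)$ satisfies $\Omega\cap\Omega_{-\epsilon}\cap B(x,r)=\emptyset$ (so $\Omega$ is locally one-sided on the $\epsilon$-side), and $F_{0}:=F\setminus(F_{+}\cup F_{-})$. For a density point $x\in F_{+}$, local one-sidedness and openness of $\Omega$ force $\Gamma\cap\Omega\cap B(x,r)=\emptyset$, so density one of $\Gamma\cap\d\Omega$ in $\Gamma$ gives $\cH^{d}((\Gamma\setminus\d\Omega)\cap B(x,r))=o(r^{d})$ with the excess contained in $\bR^{d+1}\setminus\overline{\Omega}$. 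From this I plan to argue, via a fiberwise Fubini-type argument along vertical segments inside the upper cone, that a narrower cone $C_{+}(x,\alpha',r')$ meets $\d\Omega$ in a set of vanishing $\cH^{d}$-measure; connectedness together with one-sidedness and $x\in\d\Omega$ then forces $C_{+}(x,\alpha',r')\subset\Omega$, contradicting $x\in F$, and the case $F_{-}$ is symmetric.

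The main obstacle is the ``two-sided'' set $F_{0}$, where $\Omega$ accumulates on both sides of $\Gamma$ at $x$ at every scale. Here the plan is to pass to the connected components of $\Omega\cap\Omega_{+}$ and $\Omega\cap\Omega_{-}$ whose closures contain $x$; each such component inherits the big boundary property from $\Omega$ and has a piece of $\Gamma$ in its boundary, so Corollary~I applies to each component. One then wants to compare $\omega_{\Omega}$ near $x$ with the harmonic measures of these components via the boundary Harnack principle in $\Omega_{\pm}$ together with the David--Jerison/Semmes comparison $\omega_{\Omega_{\pm}}\approx\cH^{d}|_{\Gamma}$, to conclude $\cH^{d}(F_{0})=0$. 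The difficulty here is that $\Omega$ is not assumed NTA or even corkscrew, so the components of $\Omega\cap\Omega_{\pm}$ can have genuinely complicated geometry, and implementing the harmonic-measure comparison across the interface $\Gamma$ under only the big boundary hypothesis is the hardest step of the proof.
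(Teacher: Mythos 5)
Your opening reduction is where the proposal goes wrong. You observe, correctly, that Corollary I gives $\omega_{\Omega}\ll\cH^{d}$ on $\Gamma\cap\partial\Omega\cap B_{0}$, and you then say ``thus it suffices to show $\cH^{d}(F)=0$'' for the set $F$ of non-cone points. But $\cH^{d}(F)=0$ is a strictly \emph{stronger} assertion than the theorem, and there is no reason to expect it to hold. The theorem only claims $\omega_{\Omega}(F)=0$. Once you know $\omega_{\Omega}\ll\cH^{d}$ with density $g=d\omega_{\Omega}/d\cH^{d}$, the set $\{g=0\}\cap\Gamma\cap\partial\Omega$ has $\omega_{\Omega}$-measure zero but may have positive $\cH^{d}$-measure, and it is precisely at such points that $\Omega$ can be so thin near $x$ (e.g.\ hedged in by many large obstacles inside $\Omega^{c}$, which is perfectly compatible with the big boundary hypothesis) that no cone from $x$ lies in $\Omega$. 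So you cannot hope to show $\cH^{d}(F)=0$; you must discard the $g=0$ points and work $\omega_{\Omega}$-a.e. Your proposal never makes this restriction, and your plan for the ``two-sided'' set $F_{0}$ --- which you concede is the hardest part --- would require you to rule out exactly this phenomenon purely by geometric/Fubini means, which I do not believe can be done.

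The paper's route is different and avoids this trap. It first applies Lemma I (not just Corollary I) to replace $\Omega$ by $\hat\Omega\cap\Omega_{i}$, a bounded big-boundary domain contained in one side $\Omega_{i}$ of the graph, on which $\omega_{\hat\Omega\cap\Omega_{i}}$ of the given set is still positive. Then Lemma II is proved by tangent-measure blowups: one restricts to the subset $E''$ of $\Gamma\cap\partial\Omega$ where the Radon--Nikodym density $g$ is positive and finite (so $\omega_{\Omega}(E\setminus E'')=0$, but possibly $\cH^{d}(E\setminus E'')>0$), uses Lemma~\ref{l:tanexist}, \ref{l:same-tangents}, \ref{l:munu} to show that every tangent measure of $\omega_{\Omega}$ at such a point is a flat measure $c\,\cH^{d}|_{V_{x}}$, and then uses the blowup Lemma~\ref{l:blowup} together with the identity $\partial\{u_{\infty}>0\}=\supp\omega_{\infty}$ and the one-sided containment $\Omega\subset\Omega_{0}$ to deduce that a cone at $x$ eventually lies in $\Omega$. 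None of this is recoverable from your fiberwise Fubini sketch; the boundary Harnack comparison you propose for $F_{0}$ is essentially the content of Lemma I and does not by itself yield cone points --- that is what the Preiss/Kenig--Toro blowup machinery is for.
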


By combining \CorollaryI and \TheoremII with \Theorem{7per}, we obtain the following generalization of the results of McMillan and Pommerenke and completely characterize sets of absolute continuity with finite $\cH^{d}$-measure both in terms of the cone point condition and in terms of the rectifiable structure where $\Omega$ has big boundary.

\begin{thmiii}\label{t:thmiii}{\it
\assumption Let $E\subset \d\Omega\cap B_{0}$ be a Borel set with $\cH^{d}(E)<\infty$. Then the following statements are equivalent:
\begin{enumerate}
\item $\omega_{\Omega}|_{E}\ll \cH^{d}|_{E}$.
\item $E$ may be covered up to $\omega_{\Omega}$-measure zero by countably many Lipschitz graphs.
\item  $\omega_{\Omega}$-almost every point in $E$ is a cone point for $\Omega$.
\end{enumerate}
Moreover, if $F$ is the set of cone points in $\Omega\cap B_{0}$, then 
\begin{equation}\label{e:whw}
\omega_{\Omega}|_{F}\ll \cH^{d}|_{F}\ll \omega_{\Omega}|_{F}.
\end{equation}}
\end{thmiii}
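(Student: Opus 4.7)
The plan is to establish the cyclic chain $(1) \Rightarrow (2) \Rightarrow (3) \Rightarrow (1)$ by combining the three ingredients already developed: \CorollaryI, \TheoremII, and \Theorem{7per}. The crucial structural observation is that any Lipschitz graph is $d$-Ahlfors regular and partitions $\bR^{d+1}$ into its open epi- and hypograph, each of which is NTA; consequently \CorollaryI applies with $\Gamma$ replaced by an arbitrary Lipschitz graph. This is what lets us bootstrap \TheoremII and \CorollaryI from Lipschitz graphs to the (a priori much wilder) set of cone points.

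The implication $(1) \Rightarrow (2)$ is immediate from \Theorem{7per}, which is precisely where the finiteness hypothesis $\cH^{d}(E) < \infty$ enters. For $(2) \Rightarrow (3)$, let $\{\Gamma_{i}\}$ be a countable family of Lipschitz graphs covering $E$ modulo an $\omega_{\Omega}$-null set and apply \TheoremII to each $\Gamma_{i}$; since $E \subset B_{0}$, a countable union shows that $\omega_{\Omega}$-a.e.\ point of $E$ is a cone point. For $(3) \Rightarrow (1)$, I would use the classical fact that the set of cone points of any open set is contained in a countable union of Lipschitz graphs $\{\Gamma_{i}\}$: decompose according to a countable dense choice of aperture $\alpha$, direction $v \in \bS^{d}$, and cone length $r$, noting that each resulting piece is a graph over $v^{\perp}$ with Lipschitz constant at most $\cot\alpha$. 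Given Borel $A \subset E$ with $\cH^{d}(A) = 0$, hypothesis (3) gives $\omega_{\Omega}(A) = \omega_{\Omega}(A \cap F) \leq \sum_{i} \omega_{\Omega}(A \cap \Gamma_{i})$, where $F$ denotes the cone point set; each summand vanishes by \CorollaryI because $A \cap \Gamma_{i} \subset \d\Omega \cap \Gamma_{i} \cap B_{0}$ and $\cH^{d}(A \cap \Gamma_{i}) = 0$.

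For the \emph{moreover} statement, $\omega_{\Omega}|_{F} \ll \cH^{d}|_{F}$ is proved by exactly the same argument as $(3) \Rightarrow (1)$ (no finiteness of $\cH^{d}(F)$ is needed, since absolute continuity only concerns $\cH^{d}$-null sets). The reverse direction $\cH^{d}|_{F} \ll \omega_{\Omega}|_{F}$ is the main obstacle and requires a quantitative local nondegeneracy of $\omega_{\Omega}$ at cone points, as the ingredients of the paper only produce upper regularity. My plan would be: at a cone point $x \in F \cap \Gamma_{i}$ the interior truncated cone together with the local Lipschitz structure of $\Gamma_{i}$ produces, for all sufficiently small $r$, a Lipschitz subdomain $U_{x,r} \subset \Omega \cap B(x,r)$ whose boundary contains a definite $\cH^{d}$-portion of $\Gamma_{i}$. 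Dahlberg's theorem on $U_{x,r}$ yields $\cH^{d}|_{\Gamma_{i} \cap \d U_{x,r}} \ll \omega_{U_{x,r}}$; the subdomain comparison $\omega_{U_{x,r}}^{X} \leq \omega_{\Omega}^{X}$ for $X \in U_{x,r}$ (a direct maximum principle application) and a Harnack chain in $\Omega$ connecting $X$ to the global pole $X_{0}$ combine to give $\omega_{\Omega}^{X_{0}}(\Gamma_{i} \cap B(x,r)) \gec r^{d}$ at $\cH^{d}$-a.e.\ cone point $x$. A standard Vitali differentiation argument would then upgrade this lower density bound to $\cH^{d}|_{F} \ll \omega_{\Omega}|_{F}$.

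The chief difficulty lies in this last step: producing $U_{x,r}$ with uniform Lipschitz parameters along a positive-$\cH^{d}$ subset of cone points (so that Dahlberg's constants are uniform in $x$) and controlling the pole change from $\omega_{U_{x,r}}^{X}$ to $\omega_{\Omega}^{X_{0}}$ without degrading the $r^{d}$ estimate. The big-boundary hypothesis on $\Omega$ is precisely what supplies the Harnack chain needed for this pole transfer.
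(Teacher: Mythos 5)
Your proof of the equivalence of (1), (2), and (3) is correct and matches the paper's argument in substance (the paper happens to organize the implications as $(1)\Rightarrow(2)\Rightarrow(1)$ and $(2)\Leftrightarrow(3)$, but that is cosmetic). The easy half of the ``moreover'' statement, $\omega_{\Omega}|_{F}\ll\cH^{d}|_{F}$, is also correct for the reason you give.

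The gap is in your sketch of $\cH^{d}|_{F}\ll\omega_{\Omega}|_{F}$. You should not aim for a scale-invariant lower bound $\omega_{\Omega}^{X_{0}}(\Gamma_{i}\cap B(x,r))\gec r^{d}$, and in fact you cannot get one: even granting uniform Lipschitz character for $U_{x,r}$ and a quantitative Dahlberg estimate at a local corkscrew pole $X_{x,r}\in U_{x,r}$, the transfer from $X_{x,r}$ to the fixed pole $X_{0}$ costs a multiplicative Harnack constant that degenerates as $r\to 0$, and moreover $\Omega$ need not satisfy a Harnack chain condition at all (big boundary alone does not give one), so a controlled chain between $X_{x,r}$ and $X_{0}$ may not exist. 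Consequently the Vitali-differentiation step, which needs a lower density bound with a uniform constant, does not go through. The fix --- which is what the paper does --- is to avoid pointwise quantitative estimates altogether: decompose $F$ into countably many pieces $F_{n}$ by fixing a countable dense set of cone directions, apertures, and lengths, observe that the union of the corresponding cones over each $F_{n}$ is a single Lipschitz subdomain $\Omega_{n}\subset\Omega$ with $F_{n}\subset\d\Omega_{n}\cap\d\Omega$, and then apply \Theorem{DJ} on $\Omega_{n}$ together with \hyperref[l:CP]{Carleman's Principle} to get $\cH^{d}|_{F_{n}}\ll\omega_{\Omega_{n}}|_{F_{n}}\ll\omega_{\Omega}|_{F_{n}}$. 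Here one only needs the qualitative fact that $\omega_{\Omega}^{X}$ and $\omega_{\Omega}^{X_{0}}$ are mutually absolutely continuous for $X,X_{0}$ in the same component (plain Harnack, no chain condition), not a quantitative pole transfer, so the density and differentiation machinery is unnecessary.
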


Note that the condition $\cH^{d}(E)<\infty$ is crucial, and so we do not recover Pommerenke's theorem in the plane. However, the above version has the advantage of holding in all dimensions and for sets that are not simply connected.\\

%


\subsection{Outline}

In Section \ref{s:prelim}, we recall first some basic notation, the sawtooth construction of NTA domains due to Hofmann and Martell \cite{HM14}, and some preliminarly lemmas about harmonic and elliptic measures that will be used often. The reader unfamiliar with elliptic measures can assume all measures in this paper are harmonic. In Section \ref{s:I}, we prove the main lemma of the paper, which states in some sense that if we look at the harmonic measure of a set $E\subset \d\Omega$ inside the boundary of two NTA domains, then harmonic measure with respect to one of those NTA domains must be large. We then use that to prove \TheoremI. In Section \ref{s:cones}, we use this lemma and introduce some background on the tangent measures of Preiss \cite{Pr87} in order to prove \TheoremII. In Section \ref{sec:III}, we use the previous two theorems along with \Theorem{7per} to give the characterization \TheoremIII.\\
\\
1.4 {\bf Acknowledgments.} The authors are grateful to Raanan Schul and Xavier Tolsa for their helpful discussions and encouragement, as well as Jos\'{e} Mar\'{i}a Martell for pushing us to derive the local version of our result. The third author would like to thank D. Betsakos for answering several questions concerning the strong Markov property of Brownian motion. Research for this article was carried out while the first author M. Akman was visiting the mathematics department at the Universitat Aut\`onoma de Barcelona, the author would like to thank to  the department for its hospitality. The first author was supported by ICMAT Severo Ochoa project SEV-2015-0554, and also acknowledges that the research leading to these results has received funding from the European Research Council under the
European Union's Seventh Framework Programme (FP7/2007-2013)/ ERC agreement no. 615112
HAPDEGMT. J. Azzam  and M. Mourgoglou were supported by the ERC grant 320501 of the European Research Council (FP7/2007-2013).

\section{Preliminaries}
\label{s:prelim}

We will write $a\lesssim b $ if there is a constant $C>0$ so that $a\leq C b$ and $a\lesssim_{t} b$ if the constant depends on the parameter $t$. As usual we write $a\sim b$ and $a\sim_{t} b$ to mean $a\lesssim b \lesssim a$ and 
$a\lesssim_{t} b \lesssim_{t} a$ respectively. We will assume all implied constants depend on $d$ and hence write $\sim$ instead of $\sim_{d}$.

Whenever $A,B\subset\mathbb{R}^{d+1}$ we define
\[
\mbox{dist}(A,B)=\inf\{|x-y|;\, x\in A, \, y\in B\}, \, \mbox{and}\, \, \mbox{dist}(x,A)=\mbox{dist}(\{x\}, A). 
\]
Let $\diam A$ denote the diameter of $A$ defined as
\[
\diam A=\sup\{|x-y|;\, x,y\in A\}.
\]
Whenever $A\subset\mathbb{R}^{d+1}$ and $0<\delta\leq\infty$ we define $(d,\delta)-$\textit{Hausdorff content} of $A$, denoted by $\cH^{d}_{\delta}(A)$, as
\[
\cH^{d}_{\delta}(A)=\inf\left\{\sum(\diam A_i)^{d};\, A\subset\bigcup\limits_{i} A_i, \, (\diam A_i)\leq\delta\right\}.
\]
The $d$-\textit{dimensional Hausdorff measure} of $A$, denoted as $\cH^{d}(A)$, defined as
\[
\cH^{d}(A)=\lim\limits_{\delta\downarrow 0} \cH^{d}_{\delta}(A),
\]
and $\cH^{d}_{\infty}(A)$ is called the {\it $d$-dimensional Hausdorff content} of $A$. 

For a Euclidean ball $B$, we will denote its radius by $r_{B}$.
\subsection{NTA domains and sawtooth regions}
\label{ss:defs} 

\begin{definition}[\bf Ahlfors regular]
\label{d:ADR}
 We say that a closed set $E \subset \mathbb{R}^{d+1}$ is {\it $A$-Ahlfors $d$-regular} if
there is some uniform constant $A$ such that
\begin{align*}
\frac1A\, r^d \leq \cH^{d}(E\cap B(x,r)) \leq A\, r^d \quad \forall\, r\in(0,\diam(E)),\, x \in E.
\end{align*}
\end{definition}

Note that if $E$ is $A$-Ahlfors $d-$regular, then for any $F\subset E$,
\begin{equation}\label{e:hhfin}
\cH^{d}(F)\sim_{A} \cH^{d}_{\infty}(F).\end{equation}
Firstly, we have by definition that $\cH^{d}_{\infty}(F)\leq \cH^{d}(F)$. Conversely, if $A_{i}$ is any cover of $F$, then 
\[
\cH^{d}(F)\leq \sum \cH^{d}(A_{i}\cap E)\leq A\sum (\diam A_{i})^{d}\]
and infimizing over all such covers gives $\cH^{d}(F)\leq A\cH^{d}_{\infty}(F)$, which proves \eqn{hhfin}.

Following \cite{JK82}, we state the definition of Corkscrew condition, Harnack Chain condition, and NTA domains.\\

\begin{definition}[\bf Corkscrew condition]
\label{d:CS}
We say that an open set $\Omega\subset \mathbb{R}^{d+1}$
satisfies the interior {\it $c$-Corkscrew condition} if for some uniform constant $c$, $0<c<1$, and
for every ball $B$ centered on $\partial\Omega$ with
$0<r_{B}<\diam(\partial\Omega)$, there is a ball
$B(X_B,cr_{B})\subset B\cap\Omega$.  The point $X_B\subset \Omega$ is called
a {\it corkscrew point relative to} $B,$ (or, relative to $B$). We note that  we may allow
$r_B<C\diam(\partial\Omega)$ for any fixed $C$, simply by adjusting the constant $c$. If $\Delta=\d\Omega\cap B$ is the corresponding surface ball, we will write $X_{\Delta}=X_{B}$.
\end{definition}
\begin{definition}[{\bf Exterior
Corkscrew condition}]
\label{d:dyadcork}
We say that an open set $\Omega\subset \mathbb{R}^{d+1}$
satisfies the exterior {\it $c$-Corkscrew condition} if for some uniform constant $c$, $0<c<1$, and
for every ball $B$ centered on $\partial\Omega$ with
$0<r_{B}<\diam(\partial\Omega)$,
there is a ball of radius $cr_{B}$ contained in $B\backslash \overline{\Omega}$. 
\end{definition}

\begin{definition}[\bf Harnack Chain condition]
We say that $\Omega$ satisfies the $C$-{\it Harnack Chain condition} if there is a uniform constant $C$ such that
for every $\rho >0,\, \Lambda\geq 1$, and every pair of points
$X,X' \in \Omega$ with $\delta(X),\,\delta(X') \geq\rho$ and $|X-X'|<\Lambda\,\rho$, there is a chain of
open balls
$B_1,\dots,B_N \subset \Omega$, $N\leq C(\Lambda)$,
with $X\in B_1,\, X'\in B_N,$ $B_k\cap B_{k+1}\neq \emptyset$, $C^{-1}\diam (B_k) \leq \dist (B_k,\partial\Omega)\leq C\diam (B_k)$, and $\diam B_{k}\cap B_{k+1}\geq C^{-1}\max\{r_{k},r_{k+1}\}$. Such a sequence is called a {\it Harnack Chain}.
\end{definition}

\begin{definition}[\bf 1-sided NTA domain]
\label{d:1nta}
If $\Omega$ satisfies both the  $C$-Harnack Chain and the $C^{-1}$-Corkscrew conditions, then we say that
$\Omega$ is a {\it 1-sided $C$-NTA domain}.
\end{definition}
\begin{definition}[\bf NTA domain]
\label{d:nta}
We say that a domain $\Omega$ is a {\it $C$-NTA  domain} if
it is a 1-sided $C$-NTA domain and satisfies the $C^{-1}$-exterior corkscrew condition.
\end{definition}

\subsection{Dyadic grids and sawtooths}
\label{ss:grid}

In this subsection, we follow \cite{ABHM15,HM14} and introduce \textit{dyadic grids, sawtooth domains,} and the {\it Carleson box}. We begin by giving a lemma concerning the existence of dyadic grids, which can be found in \cite{DS,of-and-on,Christ-T(b)}.

\begin{lemma}[\bf Existence and properties of the ``dyadic grid'']
\label{l:Christ}
If $E\subset \mathbb{R}^{d+1}$ is $A$-Ahlfors $d$-regular, then there exist
constants $ a_0>0$, $\eta>0$, and $C_1<\infty$, depending only on $A$ and $d$, and for each $k \in \mathbb{Z}$
there exists a collection of open sets (which we will call``cubes'') 
$$
\mathbb{D}_k:=\{Q_{j}^k\subset E: j\in \mathfrak{I}_k\},$$ that are countable unions of relatively open balls in $E$, where
$\mathfrak{I}_k$ denotes some (possibly finite) index set depending on $k$, satisfying the following properties.

\begin{list}{$(\theenumi)$}{\usecounter{enumi}\leftmargin=.8cm
\labelwidth=.8cm\itemsep=0.2cm\topsep=.1cm
\renewcommand{\theenumi}{\roman{enumi}}}

\item $\cH^{d}\ps{E\backslash \bigcup_{j}Q_{j}^k}=0\,\,$ for each
$k\in{\mathbb Z}$.

\item If $m\geq k$ then either $Q_{i}^{m}\subset Q_{j}^{k}$ or
$Q_{i}^{m}\cap Q_{j}^{k}=\emptyset$.

\item For each $(j,k)$ and each $m<k$, there is a unique
$i$ such that $Q_{j}^k\subset Q_{i}^m$.

\item The diameter of each $Q_{j}^k$ is at most $C_12^{-k}$.

\item Each $Q_{j}^k$ contains some surface ball $\Delta \big(x^k_{j},a_02^{-k}\big):=
B\big(x^k_{j},a_02^{-k}\big)\cap E$.

\item $\cH^{d}\left(\left\{x\in Q^k_j:{\rm dist}(x,E\setminus Q^k_j)\leq \tau \,2^{-k}\right\}\right)\leq
C_1\,\tau^\eta\,\cH^{d}\left(Q^k_j\right)$ for all $k$ and $j$ and for all $\tau\in (0,a_0)$.
\end{list}
\end{lemma}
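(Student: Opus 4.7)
The plan is to follow Christ's construction of dyadic cubes on spaces of homogeneous type, specialized to the Ahlfors regular subset $E \subset \bR^{d+1}$. First, for each $k \in \bZ$ I would fix a maximal $2^{-k}$-separated collection $\{x_j^k\}_{j\in\mathfrak{I}_k}$ in $E$; by maximality the balls $\Delta(x_j^k, 2^{-k})$ cover $E$, and the balls $\Delta(x_j^k, 2^{-k-1})$ are pairwise disjoint. Next I would impose a tree structure on these centers by assigning to each $x_j^{k+1}$ a unique parent $x_i^k$ with $|x_j^{k+1} - x_i^k| \leq 2^{-k}$ (breaking ties arbitrarily). The cube $Q_j^k$ is then defined as the relatively open set obtained by taking the union over all descendants $x_\ell^m$ of $x_j^k$ at scales $m \geq k$ of the surface balls $\Delta(x_\ell^m, a_0 2^{-m})$, for a small constant $a_0$ to be chosen later.

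With this construction properties (ii) and (iii) are built into the tree: nesting is immediate from transitivity of the descendant relation, and uniqueness of an ancestor at each coarser scale follows from uniqueness of the parent assignment. Property (iv) is immediate from the telescoping bound $\sum_{\ell \geq k} 2^{-\ell} \lesssim 2^{-k}$. Property (v) is obtained by choosing $a_0$ small enough relative to the separation constants that no center of a different branch can lie within $a_0 2^{-k}$ of $x_j^k$, so the surface ball centered at $x_j^k$ is swallowed by $Q_j^k$. For (i), the only points in $E$ that could fail to be covered at scale $k$ are those lying on the common boundary between adjacent cubes; Ahlfors regularity forces such ``ambiguity'' sets to have $\cH^d$-measure zero, since they are contained in finite unions of spheres intersected with $E$.

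The technical heart of the lemma, and the main obstacle, is property (vi), the \emph{small boundary} or thin-annulus condition. The plan is to control the measure of the set of $x \in Q_j^k$ within distance $\tau 2^{-k}$ of $E \setminus Q_j^k$ by iterating a geometric decay estimate down the tree. Specifically, I would fix a scale $m \geq k$ and show, using only the separation and covering properties of the centers together with the upper and lower Ahlfors regularity bounds on $E$, that among the descendants $x_\ell^m$ of $x_j^k$, only a fixed fraction $0 < \theta < 1$ of their combined $\cH^d$-mass lies near the outer boundary of $Q_j^k$ at resolution $2^{-m}$. Iterating this downward through scales produces geometric decay $\theta^{m-k}$, and summing over those scales $m$ with $2^{-m} \gtrsim \tau 2^{-k}$ yields a bound of the form $\tau^\eta \cH^d(Q_j^k)$ for some $\eta = \eta(\theta) > 0$. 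Choosing the constants $a_0$, $\eta$, and $C_1$ carefully at the end, all depending only on $A$ and $d$, completes the proof. The delicate point is the inductive step: one must verify that the fraction $\theta$ is strictly less than one and independent of scale, which is exactly where the \emph{quantitative} Ahlfors regularity of $E$ is used.
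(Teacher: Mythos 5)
The paper does not prove this lemma; it cites \cite{Christ-T(b),DS,of-and-on} (and \cite{HMMM14} for taking the dyadic ratio to be $1/2$), so you are reconstructing a proof from the literature. Your skeleton---maximal separated nets, a parent map, a tree---is the standard starting point, but the claim that properties (ii) and (iii) are ``built into the tree'' is false for the cubes as you define them, and repairing it is the crux of the whole construction. Transitivity of the descendant relation gives the containment $Q_i^m\subset Q_j^k$ when $x_i^m$ descends from $x_j^k$, but it does \emph{not} give the alternative, namely that cubes from different branches are disjoint. A descendant of $x_j^k$ at scale $m\geq k$ can, by telescoping the parent bound, drift up to distance $\approx 2\cdot 2^{-k}$ from $x_j^k$, while the separation from a sibling center $x_i^k$ is only $\geq 2^{-k}$; descendants of $x_j^k$ can therefore come arbitrarily close to $x_i^k$ and to descendants of $x_i^k$, and the corresponding surface balls overlap for any fixed $a_0>0$. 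Concretely, with $E=\bR$ and nets $2^{-m}\bZ$, the chain $0,\tfrac12,\tfrac34,\tfrac78,\dots$ can all be assigned to the branch of $0$, yet $\Delta(1-2^{-m},a_02^{-m})$ meets $\Delta(1,a_0)\subset Q^0_1$ as soon as $2^{-m}<a_0/(1-a_0)$. Shrinking $a_0$ does not fix this. What Christ actually does is first construct a measurable \emph{partition} of $E$ at each generation, using the tree to resolve competing assignments consistently across scales, then prove the iterated geometric-decay estimate (your $\theta$-argument) to show the ``ambiguous boundary'' sets are small, and only afterward extract open cubes as unions of balls that stay away from this thin boundary.

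Two related gaps. For (i) you assert the uncovered set is ``contained in finite unions of spheres intersected with $E$,'' but the boundary of a countable union of balls need not be a finite union of spheres; in the actual proof (i) is obtained as a consequence of (vi) by letting $\tau\downarrow 0$, not by a separate sphere argument. And your outline of (vi) presupposes that the cubes at a given scale already partition $E$, so that the ``outer boundary at resolution $2^{-m}$'' is well-defined---which is exactly what your definition fails to deliver. The partition and the thin-boundary estimate cannot be separated; they have to be built together, and that is what makes the lemma nontrivial.
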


Some notations and remarks are in order concerning this lemma.

\begin{list}{$\bullet$}{\leftmargin=0.4cm  \itemsep=0.2cm}

\item In the setting of a general space of homogeneous type, this lemma has been proved by Christ
\cite{Christ-T(b)}, with the
dyadic parameter $1/2$ replaced by some constant $\delta \in (0,1)$.
In fact, one may always take $\delta = 1/2$ (cf.  \cite[Proof of Proposition 2.12]{HMMM14}).
In the presence of Ahlfors regular property, the result already appears in \cite{DS,of-and-on}. For geometrically doubling metric spaces, an improved version of these cubes were developed by Martikainen and Hyt\"onen \cite{HM12}.

\item  For our purposes, we may ignore those
$k\in \mathbb{Z}$ such that $2^{-k} \gtrsim {\rm diam}(E)$ whenever $E$ is bounded.

\item  We shall denote by  $\mathbb{D}=\mathbb{D}(E)$ the collection of all relevant
$Q^k_j$. That is, $$\mathbb{D} := \bigcup\limits_{k} \mathbb{D}_k,$$
where the union runs only
over those $k$ such that $2^{-k} \lesssim  {\rm diam}(E)$ whenever $E$ is bounded.

%

\item For a dyadic cube $Q\in \mathbb{D}_k$, we
set $\ell(Q) = 2^{-k}$ and we call this quantity the ``side length''
of $Q$.  Evidently, $\ell(Q)\sim \diam(Q).$
%

\item Properties $(iv)$ and $(v)$ imply that for each cube $Q\in\mathbb{D}_k$,
there exists a point $x_Q\in E$, a Euclidean ball $B(x_Q,r_Q)$ and corresponding surface ball
$\Delta(x_Q,r_Q):= B(x_Q,r_Q)\cap E$ such that
\begin{align*}
& c\ell(Q)\leq r_Q\leq \ell(Q),\\
& \Delta(x_Q,2r_Q)\subset Q \subset \Delta(x_Q,Cr_Q),
\end{align*}
for some uniform constants $c$ and  $C$, and 
\begin{equation}\label{e:bqbrempty}
B(x_{Q},r_{Q})\cap B(x_{R},r_{R})\neq\emptyset \;\; \mbox{ if and only if }\;\; Q=R.
\end{equation} 
We shall denote this ball and surface ball by $B_Q:= B(x_Q,r_Q)$ and $\Delta_Q:= \Delta(x_Q,r_Q)$, respectively,
and we shall refer to the point $x_Q$ as the ``center'' of $Q$.


\end{list}

It will be useful  to dyadicize the Corkscrew condition and to specify
precise Corkscrew constants.
Let us now specialize to the case that  $E=\partial\Omega$ is $d$-Ahlfors regular
with $\Omega$ satisfying the Corkscrew condition.
Given $Q\in \mathbb{D}(\partial\Omega)$, we
shall sometimes refer to a corkscrew point $X_Q$ relative to $Q$, which define to be a corkscrew point $X_\Delta$ relative to the ball
$B_{Q}$ 
We note that $\delta(X_Q) \sim \dist(X_Q,Q) \sim \diam(Q)$.

Following \cite[Section 3]{HM14} we next introduce the notion of \textit{\bf Carleson region} and \textit{\bf discretized sawtooth}.  Given a cube $Q\in\mathbb{D}(\partial\Omega)$, the \textit{\bf discretized Carleson region $\mathbb{D}_{Q}$} relative to $Q$ is defined by
\[
\mathbb{D}_{Q}=\{Q'\in\mathbb{D}(\partial\Omega):\, \, Q'\subset Q\}.
\]
Let $\F$ be family of disjoint cubes $\{Q_{j}\}\subset\mathbb{D}(\partial\Omega)$. The \textit{\bf global discretized sawtooth region} relative to $\F$ is the collection of cubes $Q\in\mathbb{D}$  that are not contained in any $Q_{j}\in\F$;
\[
\mathbb{D}_{\F}:=\mathbb{D}\setminus \bigcup\limits_{Q_{j}\in\F}\mathbb{D}_{Q_{j}}.
\]
For a given $Q\in\mathbb{D}$ the {\bf local discretized sawtooth region} relative to $\F$ is the collection of cubes in $\mathbb{D}_{Q}$ that are not in contained in any $Q_{j}\in\F$;
\[
\mathbb{D}_{\F,Q}:=\mathbb{D}_{Q}\setminus \bigcup\limits_{Q_{j}\in \F} \mathbb{D}_{Q_{j}}=\mathbb{D}_{\F}\cap \mathbb{D}_{Q}.
\]
We also introduce the ``geometric'' Carleson regions and sawtooths. In the sequel, $\Omega \subset \mathbb{R}^{d+1}$ ($d\geq 2$) will be a 1-sided NTA domain with ADR boundary. Let $\mathcal{W}=\W(\Omega)$ denote a collection
of (closed) dyadic Whitney cubes of $\Omega$, so that the cubes in $\mathcal{W}$
form a covering of $\Omega$ with non-overlapping interiors, and  which satisfy

\begin{align*}
\begin{split}
& 4\, {\rm{diam}}\,(I)\leq \dist(4 I,\partial\Omega) \leq  \dist(I,\partial\Omega) \leq 40 \, {\rm{diam}}\,(I),\\
& \diam(I_1)\sim \diam(I_2), \mbox{ whenever $I_1$ and $I_2$ touch.}
\end{split}
\end{align*}
Let $X(I)$ denote the center of $I$, let $\ell(I)$ denote the side length of $I$,
and write $k=k_I$ if $\ell(I) = 2^{-k}$.

Given $0<\lambda<1$ and $I\in\W$ we write $I^*=(1+\lambda)I$ for the ``fattening'' of $I$. By taking $\lambda$ small enough,  we can arrange matters so that, first, $\dist(I^*,J^*) \sim \dist(I,J)$ for every
$I,J\in\W$, and secondly, $I^*$ meets $J^*$ if and only if $\partial I$ meets $\partial J$.
 (Fattening ensures $I^*$ and $J^*$ overlap for
any pair $I,J \in\W$ whose boundaries touch. Thus, the Harnack Chain property holds locally in $I^*\cup J^*$ with constants depending on $\lambda$.)  By picking $\lambda$ sufficiently small, say $0<\lambda<\lambda_0$, we may also suppose that there is $\tau\in(1/2,1)$ such that for distinct $I,J\in\W$, $\tau J\cap I^* = \emptyset$. In what follows we will need to work with dilations $I^{**}=(1+2\,\lambda)I$ and in order to ensure that the same properties hold we further assume that $0<\lambda<\lambda_0/2$.

For every $Q$ we can construct a family $\W_Q^*\subset \W$ and define
\begin{align}
\label{e:whitney3}
U_Q := \bigcup_{I\in\,\mathcal{W}^*_Q} \mbox{int }I^*\,,
\end{align}
where $\int A=A^{\circ}$ denotes the interior of $A$, satisfying the following properties:
$X_Q\in U_Q$ and there are uniform constants $k^*$ and $K_0$ such that
\begin{align}
\label{e:whitney2}
\begin{split}
& k(Q)-k^*\leq k_I \leq k(Q) + k^*\, \quad  \forall\, I\in \mathcal{W}^*_Q,\\
& X(I) \rightarrow_{U_Q} X_Q\,\quad \forall\, I\in \mathcal{W}^*_Q,\\
&\dist(I,Q)\leq K_0\,2^{-k(Q)}=K_{0}\ell(Q) \, \quad \forall\, I\in \mathcal{W}^*_Q\,.
\end{split}
\end{align}
Here $X(I) \rightarrow_{U_Q} X_Q$ means that the interior of $U_Q$ contains all the balls in
a Harnack Chain (in $\Omega$) connecting $X(I)$ to $X_Q$.
The constants  $k^*$, $K_0$ and the implicit constants in the condition $X(I)\to_{U_Q} X_Q$ in \eqn{whitney2}
depend on at most allowable
parameters and on $\lambda$. The reader is referred to \cite{HM14} for full details.

We also recall from \cite[Equation (3.48)]{HM14} that 
\begin{equation}\label{e:XQ}
X_{Q}\in U_{Q} \mbox{ and } X_{R}\in U_{Q} \mbox{ for each child $R$ of $Q$}.
\end{equation}

For a given $Q\in\mathbb{D}$, the {\bf Carleson box} relative to $Q$ is defined by
\[
T_{Q}:=\bigcup\limits_{Q'\in\mathbb{D}_{Q}} U_{Q'}.
\]
For a given family $\F$ of disjoint cubes $\{Q_{j}\}\subset\mathbb{D}$, the {\bf global sawtooth region} relative to $\F$ is
\[
\Omega_{\F}:=\bigcup\limits_{Q'\in\mathbb{D}_{\F}} U_{Q'}.
\]
Finally, for a given $Q\in\mathbb{D}$ we define the {\bf local sawtooth region} relative to $\F$ by
\[
\Omega_{\F,Q}:=\bigcup\limits_{Q'\in\mathbb{D}_{\F,Q}}U_{Q'}.
\]
For later use we recall \cite[Proposition 6.1]{HM14}:
 \begin{align}
\label{p:HM1}
Q\setminus \bigg( \bigcup_{Q_{j}\in \F} Q_{j}\bigg) \subset \partial\Omega \cap\partial\Omega_{\F,Q}
\subset
\overline{Q}\setminus \bigg( \bigcup_{Q_{j}\in \F} Q_{j}^{\circ}\bigg).
 \end{align}

\begin{lemma}[{\cite[Lemma 3.55]{HM14}}] \label{l:HM3}
 Suppose that $\Omega$ is a 1-sided NTA domain with an ADR boundary. Given $Q\in \bD$, there is a ball $B_{Q}'  \subset B_{Q}$, with $r_{B_{Q}'}\sim l(Q) \sim r_{B_{Q}}$, such that
\begin{equation*}
 B'_{Q} \cap \Omega \subset T_Q 
 \end{equation*}
and such that for every pairwise disjoint family $\F \subset \bD$, and for each $Q_{0} \in \bD$ containing $Q$, we have
 \begin{equation*}
 B_{Q}' \cap \Omega_{\F,Q_0} = B_{Q}' \cap \Omega_{\F,Q}.
 \end{equation*}
\end{lemma}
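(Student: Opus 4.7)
The plan is to take $B_Q' = B(x_Q,\eta r_Q)$, where $x_Q$ and $r_Q\sim \ell(Q)$ come from the dyadic grid construction of $\bD$ and $\eta\in(0,1)$ is a small structural constant to be pinned down at the end. With this choice the size statement $r_{B_Q'}\sim \ell(Q)\sim r_{B_Q}$ is automatic, so the whole game reduces to verifying the two inclusions after $\eta$ is chosen small enough.

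For the first inclusion $B_Q'\cap \Omega\subset T_Q$, I would pick $y\in B_Q'\cap \Omega$ and let $I$ be the Whitney cube of $\Omega$ containing $y$. Since $x_Q\in \partial\Omega$ and $|y-x_Q|<\eta r_Q$, the Whitney property yields $\ell(I)\sim \dist(I,\partial\Omega)\lesssim \eta r_Q$. The construction of the families $\mathcal{W}^*_{Q'}$ in \cite{HM14} assigns every Whitney cube $I$ to at least one dyadic cube $Q'\in\bD$ with $I\in \mathcal{W}^*_{Q'}$, and \eqref{e:whitney2} then gives $\ell(Q')\sim \ell(I)$ together with $\dist(I,Q')\leq K_0\ell(Q')$. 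Combining these with the diameter bound $\diam Q'\leq C_1\ell(Q')$ from \Lemma{Christ}, every point of $Q'$ lies within some constant multiple $C\eta r_Q$ of $x_Q$. Choosing $\eta$ small enough that $C\eta<2$ forces $Q'\subset \Delta(x_Q,2r_Q)\subset Q$, and the dyadic dichotomy then promotes set inclusion to $Q'\in \bD_Q$. Hence $y\in I^*\subset U_{Q'}\subset T_Q$.

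For the equality $B_Q'\cap \Omega_{\F,Q_0}=B_Q'\cap \Omega_{\F,Q}$, the inclusion $\supset$ is immediate from $Q\subset Q_0$, which gives $\bD_{\F,Q}\subset \bD_{\F,Q_0}$. The reverse inclusion runs on the same geometric mechanism: any $y$ on the left-hand side lies in $U_{Q''}$ for some $Q''\in \bD_{\F,Q_0}$, hence in some $I^{**}$ with $I\in \mathcal{W}^*_{Q''}$, and the very same chain of estimates from \eqref{e:whitney2} and \Lemma{Christ} forces $\ell(Q'')\lesssim \eta r_Q$ and $Q''\subset B(x_Q, C\eta r_Q)\cap \partial\Omega$. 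For the same $\eta$ this gives $Q''\subset Q$, and combined with $Q''\in \bD_\F$ we obtain $Q''\in \bD_{\F,Q}$, so $y\in \Omega_{\F,Q}$.

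The main technical obstacle is calibrating $\eta$ once and for all: it must absorb the Whitney-cube aspect ratio, the constant $K_0$ appearing in \eqref{e:whitney2}, the diameter constant $C_1$ from \Lemma{Christ}, and the fattening factor $(1+2\lambda)$ hidden inside $I^{**}$, while remaining independent of $Q$, $Q_0$, and $\F$. Once $\eta$ is fixed in terms of these structural parameters alone, both conclusions of the lemma follow uniformly across the grid.
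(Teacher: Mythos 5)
The paper does not give its own proof of this lemma; it is quoted verbatim from~\cite[Lemma 3.55]{HM14} and used as a black box, so there is no in-paper argument against which to compare your write-up. Read on its own merits, your reconstruction is correct and follows the natural geometric mechanism one would expect: take $B_Q'=B(x_Q,\eta r_Q)$; for any $y\in B_Q'\cap\Omega$ the distance $\delta(y)<\eta r_Q$ forces the relevant Whitney cube $I$ (either the one containing $y$, or the one with $y\in \operatorname{int}I^*$ and $I\in\W^*_{Q''}$) to have $\ell(I)\lesssim\eta r_Q$; then \eqref{e:whitney2} together with $\diam Q'\lesssim\ell(Q')$ pushes the assigned dyadic cube into $\Delta(x_Q,2r_Q)\subset Q$; and since membership in $\bD_\F$ is independent of the ambient cube, the localization $\Omega_{\F,Q_0}\cap B_Q'=\Omega_{\F,Q}\cap B_Q'$ drops out.

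Two small points worth flagging, neither fatal. First, your appeal to ``dyadic dichotomy'' is unnecessary: by the paper's definition, $\bD_Q=\{Q'\in\bD:Q'\subset Q\}$ is defined purely by set containment, so once you have $Q'\subset\Delta(x_Q,2r_Q)\subset Q$ you are already done, without invoking the nesting property of the grid. Second, the inclusion $B_Q'\cap\Omega\subset T_Q$ silently relies on the fact that every Whitney cube $I$ with $\ell(I)\lesssim\diam\partial\Omega$ belongs to $\W^*_{Q'}$ for at least one $Q'\in\bD$; this is indeed built into the \cite{HM14} construction (it is what makes $\bigcup_Q U_Q$ cover $\Omega$), but since it is the crux of why no part of $B_Q'\cap\Omega$ can escape $T_Q$, it would be worth citing explicitly rather than folding it into ``the construction assigns.'' With those cosmetic fixes, the argument is complete, and the calibration paragraph correctly identifies the constants ($K_0$, $C_1$, the Whitney aspect ratio, and the fattening parameter) that $\eta$ must dominate.
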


\begin{lemma}[{\cite[Lemma 3.61]{HM14}}]
\label{l:HM1}
Suppose that $\Omega$ is a $1-$sided NTA domain with $d$-Ahflors regular boundary. Then all of its Carleson boxes $T_{Q}$ and $T_{\Delta}$, and the sawtooth regions $\Omega_{\mathcal{F}}$ and $\Omega_{\mathcal{F},Q}$, are also $1-$sided NTA domains with $d$-Ahflors regular boundary. 
\end{lemma}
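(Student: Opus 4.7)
My plan is to verify, for each of the four types of regions $T_Q$, $T_\Delta$, $\Omega_\F$, $\Omega_{\F,Q}$, the three required properties: (a) $d$-Ahlfors regularity of the boundary, (b) the interior corkscrew condition, and (c) the Harnack chain condition. Since all four regions are built in exactly the same way (unions of fattened Whitney cubes $I^*$ associated to a subfamily of cubes $Q' \in \bD(\partial\Omega)$), I would only carry out the argument for, say, the local sawtooth $\Omega_{\F,Q}$ in detail and remark that the other three are handled identically.

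For the corkscrew condition, I would split $\partial \Omega_{\F,Q}$ into the \emph{boundary part} $\partial\Omega \cap \partial \Omega_{\F,Q}$ and the \emph{interior part} $\partial\Omega_{\F,Q}\setminus \partial\Omega$. For $x$ in the interior part, $x$ lies on the face of some fattened Whitney cube $I^*$ with $I\in \W^{*}_{Q'}$ and $Q'\in \bD_{\F,Q}$; since all Whitney cubes touching $I$ have comparable size $\sim \ell(I)$ and, by the construction of $\W^{*}_{Q'}$ for $Q'$ and its parent, a definite portion of a ball around $x$ is already contained in $I^*\cup (\text{neighbors})^*\subset \Omega_{\F,Q}$, we obtain a corkscrew. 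For $x$ in the boundary part, by \eqref{p:HM1} we have $x\in \overline{Q}\setminus \bigcup_{Q_j\in\F}Q_j^\circ$, and for $r$ small (say $r\lesssim \ell(Q)$) one can find $R\in \bD_{\F,Q}$ with $x\in \overline{R}$ and $\ell(R)\sim r$; then the corkscrew $X_R\in U_R\subset \Omega_{\F,Q}$ provided by \eqref{e:XQ} lies in $B(x,Cr)$ at distance $\sim r$ from $\partial\Omega_{\F,Q}$. For $r$ large compared to $\ell(Q)$, use the corkscrew point $X_Q$ directly. Rescaling $r$ by a fixed constant gives a uniform corkscrew constant.

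For the Harnack chain, given $X,X'\in \Omega_{\F,Q}$ with $\delta_{\Omega_{\F,Q}}(X),\delta_{\Omega_{\F,Q}}(X')\gtrsim \rho$ and $|X-X'|\lesssim \Lambda\rho$, first connect each of $X$ and $X'$ within its own fattened Whitney cube $I^*$ to a corkscrew point $X_{Q'}$ using the local property (2.3) that $X(I)\to_{U_{Q'}} X_{Q'}$; then connect $X_{Q'}$ to $X_{R'}$ through the Harnack chain inherited from $\Omega$, while checking that every ball in the chain stays inside the union of fattened Whitney cubes associated to cubes of $\bD_{\F,Q}$ that sit between $Q'$ and $R'$ in the tree. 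This is the main obstacle: one has to verify that the Harnack chain in $\Omega$ never strays into a region that has been cut off by $\F$ or by the restriction to cubes $\subset Q$. The point is that if $\delta_{\Omega_{\F,Q}}(X)\gtrsim \rho$ and $\delta_{\Omega_{\F,Q}}(X')\gtrsim \rho$, then the relevant pieces of $\bD_{\F,Q}$ swept out along the path all have side length $\gtrsim \rho$, and the number of such cubes to traverse is controlled by $\log\Lambda$, so the standard chain in $\Omega$ can be replaced, cube by cube, with a chain through the $U_{Q'}$'s.

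For the $d$-Ahlfors regularity of $\partial\Omega_{\F,Q}$, the upper bound follows by covering $\partial\Omega_{\F,Q}\cap B(x,r)$ by at most $C(r/\ell(I))^d$ faces of Whitney cubes of side $\ell(I)\lesssim r$, summed geometrically, plus the piece of $\partial\Omega$ it contains, which is already upper-$d$-regular. The lower bound follows from the interior corkscrew property just established together with the $(d,\infty)$-Hausdorff content estimate for the complement of a corkscrew (each corkscrew ball forces $\cH^d_\infty\gtrsim r^d$ of boundary through a standard projection/packing argument). Finally, the identical argument, applied to the collection $\bD_Q$ in place of $\bD_{\F,Q}$, or to the global collections $\bD_\F$ and $\bD$, yields the statement for $T_Q$, $T_\Delta$, and $\Omega_\F$.
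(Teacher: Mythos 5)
The paper gives no proof of this lemma; it is imported verbatim from \cite[Lemma 3.61]{HM14} and treated as a black box, so there is no argument here against which to compare your sketch.

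Your outline broadly matches the route one would expect from [HM14], but at least two of its steps are genuine gaps rather than details. In the corkscrew step for a boundary point $x\in\partial\Omega\cap\partial\Omega_{\F,Q}$, you assert that for every small $r$ there exists $R\in\bD_{\F,Q}$ with $x\in\overline{R}$ and $\ell(R)\sim r$. By \eqref{p:HM1}, such an $x$ need only lie in $\overline{Q}\setminus\bigcup_{Q_j\in\F} Q_j^{\circ}$, so $x$ can sit on $\partial Q_j$ for some $Q_j\in\F$; for $r\ll\ell(Q_j)$ the scale-$r$ dyadic cubes whose closures contain $x$ are typically contained in $Q_j$ itself and hence excluded from $\bD_{\F,Q}$. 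One must actually prove that whenever $x$ genuinely belongs to $\partial\Omega_{\F,Q}$ there are cubes of $\bD_{\F,Q}$ at every scale within distance $\lesssim r$ of $x$, and then extract the corkscrew from the associated Whitney region; this is the technical crux of the [HM14] argument and it is elided in your sketch. Likewise, the lower Ahlfors bound for $\partial\Omega_{\F,Q}$ does not follow from the interior corkscrew condition together with a projection/packing argument: such an argument needs exterior corkscrews, which is exactly what the $1$-sided hypothesis does not provide (and which is the separate content of \Lemma{HM2}). The correct route distinguishes balls centered on the interior part of $\partial\Omega_{\F,Q}$, where the $d$-dimensional faces $\partial I^{*}$ supply the mass directly, from balls centered on $\partial\Omega\cap\partial\Omega_{\F,Q}$, where the ADR property of $\partial\Omega$ and the thin-boundary property (vi) of \Lemma{Christ} are used. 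The Harnack-chain step is in the right spirit, but the assertion that ``the relevant pieces of $\bD_{\F,Q}$ swept out along the path all have side length $\gtrsim\rho$'' needs to be deduced from $\dist(X,\partial\Omega_{\F,Q})\gtrsim\rho$ rather than stipulated.
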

\begin{lemma}[{\cite[Lemma 3.62]{HM14}}]
\label{l:HM2}
Suppose that $\Omega$ is a $1-$sided NTA domain with $d$-Ahflors regular boundary. Assume also that $\Omega$ satisfies the exterior Corkscrew condition. Then all of its Carleson boxes $T_{Q}$ and $T_{\Delta}$, and sawtooth regions $\Omega_{\mathcal{F}}$ and $\Omega_{\mathcal{F},Q}$ satisfy the exterior Corkscrew condition.
\end{lemma}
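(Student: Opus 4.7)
It suffices to handle $\Omega_{\F,Q}$, since the Carleson box $T_Q$ corresponds to $\F = \emptyset$, and the surface-ball and global-sawtooth versions are handled by identical reasoning. Fix $x \in \partial \Omega_{\F,Q}$ and $r \in (0, \diam \partial \Omega_{\F,Q})$; the task is to produce a ball of radius $\gtrsim r$ contained in $B(x,r) \setminus \overline{\Omega_{\F,Q}}$. If $x \in \partial\Omega$, then by \eqref{p:HM1} we have $x \in \overline Q$ and $r \lesssim \ell(Q) \leq \diam \partial\Omega$, so the exterior corkscrew hypothesis on $\Omega$ furnishes a ball of radius $\sim r$ in $B(x,r) \setminus \overline{\Omega}$; since $\Omega_{\F,Q} \subset \Omega$, this ball lies automatically in $B(x,r) \setminus \overline{\Omega_{\F,Q}}$. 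Likewise, if $x \in \Omega$ and $r \geq c_1 \dist(x,\partial\Omega)$ for a suitably small constant $c_1$, I pick $y \in \partial \Omega$ nearest to $x$, apply the exterior corkscrew of $\Omega$ to a ball of radius $\sim r$ at $y$, and translate the resulting corkscrew ball inside $B(x,r)$.

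The delicate case is $x \in \Omega$ with $r < c_1 \delta$, where $\delta := \dist(x,\partial\Omega)$: then $B(x,r) \subset \Omega$, and the corkscrew must come entirely from the Whitney structure. Since $x \in \partial\Omega_{\F,Q}$, $x$ lies on $\partial I^*$ for some $I \in \W_{Q'}^*$ with $Q' \in \mathbb{D}_{\F,Q}$. If every Whitney cube $J$ adjacent to $I$ at $x$ satisfied $J \in \W_{Q''}^*$ for some $Q'' \in \mathbb{D}_{\F,Q}$, then the union of fattenings would cover a neighborhood of $x$, contradicting $x \in \partial \Omega_{\F,Q}$. Hence there exists an adjacent Whitney cube $J$ with $J \notin \bigcup_{Q'' \in \mathbb{D}_{\F,Q}} \W_{Q''}^*$, and Whitney regularity forces $\ell(J) \sim \ell(I) \sim \delta$. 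The fattening-separation property $\tau J \cap I^* = \emptyset$ for some $\tau \in (1/2,1)$ then allows me to place a ball of radius $\sim r$ inside $\tau J \cap B(x,r)$ which avoids every fattened cube that appears in the sawtooth, yielding the desired exterior corkscrew.

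The main obstacle is the last step: one must verify that no other fattened cube $\widetilde{I}^* \subset \Omega_{\F,Q}$, coming from some $\widetilde{Q}' \in \mathbb{D}_{\F,Q}$ possibly far from $Q'$, intrudes into $\tau J \cap B(x,r)$. This is controlled by the Whitney property that cubes at uniformly comparable distance from $\partial \Omega$ have uniformly comparable side length, together with the Harnack-chain coherence of the families $\W_{Q''}^*$ encoded in \eqref{e:whitney2}. A careful localization argument---essentially the one in \cite[Lemma 3.62]{HM14}---then closes the proof.
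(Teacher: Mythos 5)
The paper does not prove this lemma; it is cited directly from \cite[Lemma~3.62]{HM14}, with only a remark that the argument for the qualitative exterior corkscrew carries over unchanged to the quantitative version, so your proposal must stand on its own. Before the main issue, note that the inequality in your case split is reversed: ``$r\geq c_1\delta$ for a suitably small $c_1$'' is the same as $\delta\leq r/c_1$, which for small $c_1$ permits $\delta$ to be much \emph{larger} than $r$; in that regime the nearest $y\in\d\Omega$ lies outside $B(x,r)$ and no translated exterior corkscrew of $\Omega$ at $y$ can possibly fit in $B(x,r)$. The translation argument works only when $\delta\lesssim r$, so the threshold should read $\delta\leq c_1 r$ with small $c_1<1$.

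The genuine gap is in the delicate regime $r\ll\delta$. You locate an adjacent Whitney cube $J$ outside the family and propose to place a corkscrew ball of radius $\sim r$ inside $\tau J\cap B(x,r)$. But $x\in\d I^*$ while $\tau J\cap I^*=\emptyset$, and because $I$ and $J$ are adjacent Whitney cubes of side $\sim\delta$ and $\tau\in(1/2,1)$, the geometry of the fattening forces
\[
\dist(x,\tau J)\ \gtrsim\ (1-\tau)\,\ell(J)\ \sim\ \ell(J)\ \sim\ \delta .
\]
When $r\ll\delta$ --- exactly the regime you isolated --- this gives $\tau J\cap B(x,r)=\emptyset$, so there is nothing to place a ball in, let alone one of radius $\sim r$. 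The exterior corkscrew at small scales must come from the \emph{local} piecewise axis-parallel structure of $\d\Omega_{\F,Q}$ at $x$, not from a Whitney cube sitting a distance $\sim\delta$ away: since $x$ lies in the interior of no fattened cube of the family, only a bounded (dimensional) number of such cubes of side $\sim\delta$ meet a $c\delta$-neighborhood of $x$, their faces near $x$ are axis-parallel hyperplanes, and $x\in\d\Omega_{\F,Q}$ prevents these from covering a neighborhood of $x$; the remaining cone at $x$ has solid angle bounded below by a dimensional constant and so contains a ball of radius $\gtrsim r$ in $B(x,r)\setminus\cnj{\Omega_{\F,Q}}$. That localization, which you wave at as ``essentially the one in HM14,'' is the actual heart of the proof, and the $\tau J$ route cannot substitute for it.
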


The original statement spoke of the qualitative exterior corkscrew condition rather than the full corkscrew condition, but of course having the exterior corkscrew condition is stronger and the proofs of these result are identical in this case.

\begin{remark}
\label{r:fatstar}
We also define $T_{Q}^{*},\Omega_{\F}^{*}$, and $\Omega_{\F,Q}^{*}$ the same way but with $U_{Q}^{*}$ in place of $U_{Q}$, where
\[U_{Q}^{*} := \bigcup_{I\in\,\mathcal{W}^*_Q} \mbox{int } I^{**}.\]

Then the statements and lemmas above are also true for $T_{Q}^{*},\Omega_{\F}^{*}, \Omega_{\F,Q}^{*},$ and $U_{Q}^{*}$.
\end{remark}
%
\subsection{Elliptic and harmonic measures}

In this section we assume that $\Omega \subset \R^{d+1}$. If $\Omega$ is unbounded, we denote the extended boundary of $\Omega$ by $\d_\infty \Omega=\d\Omega\cup \{\infty\}$; otherwise, we set $\d_{\infty}\Omega= \d\Omega$. 

From now on, $\mathcal{A}=(a_{ij}(X))_{1\leq j \leq d+1}$ will always be a {\it uniformly elliptic} real matrix in $\Omega$, meaning there is $\lambda>0$ so that
\[
 \mathcal{A}(X) \xi \cdot \xi \geq \lambda |\xi |^2 \;\; \mbox{ for all $\xi \in \R^{d+1}$ and a.e. $X \in \Omega$}\]
with $a_{ij} \in L^\infty(\Omega; \R)$. We define the second order elliptic operator $\mathcal{L} = -\div \mathcal{A} \nabla$ and we will say that a function $u \in W^{1,2}_{loc}(\om)$ is a {\it solution} of the equation $\mathcal{L} u=0$ in $\Omega$ if 

\begin{equation*}
\int \mathcal{A} \nabla u \nabla \Phi=0 \;\; \mbox{ 
for all $\Phi \in C^\infty_0(\om)$}.
\end{equation*} 

We also say that  $u \in W^{1,2}_{loc}(\om)$ is a {\it supersolution} (or subsolution) for $\mathcal L$ in  $\Omega$  if $\int \mathcal{A} \nabla u \nabla \Phi \geq 0$   (or $\int \mathcal{A} \nabla u \nabla \Phi \leq 0$) for all non-negative $\Phi \in C^\infty_0(\om)$.

We next introduce \textit{upper (or lower) Perron solutions} by following \cite[Section 9]{HKM}. To this end, let $f:\d_\infty \Omega \to [-\infty, \infty]$ be a function. The upper class $\mathcal U_f$ (or lower class $\mathcal L_{f}$) of $f$ consists of all functions $u$ such that
\begin{itemize}
\item[(i)] $u$ is a supersolution (or subsolution) for $\mathcal L$ in $\Omega$, 
\item[(ii)] $u$ is bounded below (or above), and
\item[(iii)] $\liminf_{x \to y} u(x) \geq f(y)$ (or  $\limsup_{x \to y} u(x) \leq f(y)$), for all $y \in \d_\infty \Omega$.
\end{itemize} 
The function $\overline{H}_f= \inf\{u: u \in \mathcal{U}_f\}$ is the {\it upper Perron solution} of $f$ in $\Omega$ for the elliptic operator $\mathcal L$ and $\underline{H}_f= \sup\{u: u \in \mathcal{L}_f\}$ is the {\it lower Perron solution}.  If $\mathcal{U}_f=\emptyset$ then we set $\overline{H}_f=\infty$.

If $E \subset \d \Omega$, we define the {\it $\mathcal{L}$-elliptic measure} of $E$ in $\Omega$ with pole at $X\in \Omega$ by
$$\hm(E, \Omega ; \mathcal L)(X) = \overline{H}_{\one_E}(X).$$

We say that a point $x \in \partial_\infty \Omega$ is {\it $\mathcal L$-regular} or just {\it regular} if
$$\lim_{X \to x} \overline{H}_f(X) = f(x),$$
 for every $f \in C(\partial_\infty \Omega)$. 
 Note that, by Wiener's criterion, $x \in \partial \Omega$ is regular if and only if $$\int_0^1 \frac{\textup{cap}(B(x,r) \cap \Omega^c, B(x,2r))}{\textup{cap}(B(x,r),B(x,2r))}\, \frac{dr}{r}=+\infty,$$
 where cap$(\cdot, \cdot)$ stands for the variational $2$--capacity of the condenser $(\cdot, \cdot)$ (see \cite[p. 27]{HKM} for the definition). Note also that by \cite[Lemma 2.14]{HKM}, 
 \[\textup{cap}(B(x,r),B(x,2r)) \approx r^{d-1}.\] 
 Therefore, a point $x \in \partial \Omega$ is  $\mathcal L$-regular if and only if  it is Wiener regular (that is, in the sense of the Laplace operator). Note that if $d \geq 2$ then $\infty$ is always a regular point, while this is not necessarily the case in $\R^2$ (see e.g. \cite[Theorem 6.4.2]{Hel}).

\begin{definition}
A domain $\Omega \subset \R^{d+1}$ is called {\it regular} if every point of $\d_\infty\Omega$ is regular (i.e., if the classical Dirichlet problem is solvable in $\Omega$ for the elliptic operator $\mathcal{L}$). For $K\subset \d\Omega$, we say that $\Omega$ has the {\it capacity density condition (CDC) in $K$} if $ \textup{cap}({B}(x,r) \cap \Omega^c, B(x,2r)) \gtrsim r^{d-1}$, for every $x \in K$ and $r<\diam K$, and that $\Omega$ has the {\it capacity density condition} if it has the CDC in $K=\d\Omega$. 
\end{definition}

\begin{remark}
By Wiener's criterion, it is clear that domains satisfying the CDC are regular for $d \geq 2$.
\end{remark}


Let $\Omega \subset \R^{d+1}$ be a regular domain. If $f \in C(\partial_\infty \Omega)$, then the map $f \mapsto \overline{H}_f$ is a bounded linear functional on $C(\partial_\infty \Omega)$. Therefore, by Riesz representation theorem, there exists a positive measure $\hm_{\Omega}^{\mathcal{L},X}$ (associated to $\mathcal{L}$ and a point $X \in \Omega$) defined on Borel subsets of $\d_\infty \Omega$ so that
$$ \overline{H}_f(X) =\int_{\partial \Omega} f \, d\hm_{\Omega}^{\mathcal{L},X} \;\; \mbox{for all $X \in \Omega$.}$$ 
 It follows from \cite[Theorem 11.1]{HKM} that $\hm_\Omega^{\mathcal{L}, X}(E)=\hm(E, \Omega ; \mathcal L)(X)$. Moreover,  $\omega^{\mathcal{L}, X}(\d_\infty \Omega)=\overline H_1(X) =1$.

\begin{lemma}
If $\Omega\subset \bR^{d+1}$ satisfies \eqn{dcontent}, then it satisfies the CDC.
\end{lemma}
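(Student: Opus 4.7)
The plan is to deduce the CDC directly from the big-boundary hypothesis via the classical lower bound on variational $2$-capacity in terms of Hausdorff content. Namely, for any compact $K \subset \overline{B(x,r)} \subset \bR^{d+1}$ and any $s \in (d-1, d+1]$, one has the standard potential-theoretic estimate
$$
\text{cap}(K, B(x,2r)) \gec r^{d-1-s}\,\cH^{s}_\infty(K).
$$
I would apply this with $s = d$ and $K := \overline{B(x,r)} \cap \Omega^c$ for $x \in \d\Omega$ and $0 < r < \diam\d\Omega$; by \eqref{e:dcontent}, $\cH^{d}_\infty(K) \gec r^{d}$, so the display gives $\text{cap}(K, B(x,2r)) \gec r^{-1}\cdot r^{d} = r^{d-1}$, which is the CDC inequality.

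To justify the capacity-content bound (treating first $d \geq 2$), I would invoke Frostman's lemma to produce a positive Borel measure $\mu$ supported on $K$ satisfying $\mu(B(y,\rho)) \leq \rho^{d}$ for every $y \in \bR^{d+1}$ and $\rho > 0$, with $\mu(K) \gec \cH^{d}_\infty(K)$. The Newtonian potential of $\mu$ is controlled via the layer-cake identity
$$
U^\mu(y) := \int \frac{d\mu(z)}{|y-z|^{d-1}} = (d-1)\int_{0}^{\infty} \frac{\mu(B(y,t))}{t^{d}}\, dt,
$$
and splitting the integral at $t = 2r$ (using the Frostman growth condition for $t < 2r$ and $\supp \mu \subset \overline{B(x,r)}$ for $t \geq 2r$) yields $\sup_{y} U^\mu(y) \lec r$. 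Hence the energy satisfies $\mathcal{E}(\mu) := \int U^\mu\, d\mu \lec r\,\mu(K)$, and by the variational characterization of capacity,
$$
\text{cap}(K, B(x,2r)) \gec \frac{\mu(K)^{2}}{\mathcal{E}(\mu)} \gec \frac{\mu(K)}{r} \gec r^{d-1}.
$$

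For $d = 1$ (ambient $\bR^{2}$), the Newton kernel is logarithmic and the target is $r^{d-1} = 1$; an entirely parallel computation with the kernel $\log(2r/|y-z|)$ in place of $|y-z|^{-(d-1)}$ delivers the same conclusion. There is no real obstacle in any of this, since once Frostman's lemma is in hand the potential and energy bounds are routine, and in practice one can simply quote the capacity-content inequality from a standard reference such as Adams--Hedberg. The only care required is the separate treatment of the low-dimensional case because of the different form of the fundamental solution of the Laplacian in $\bR^{2}$.
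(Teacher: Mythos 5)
Your approach is correct and is essentially the route the paper takes: both deduce the CDC by lower-bounding the variational $2$-capacity of $\overline B(x,r)\cap\Omega^c$ in terms of its $d$-dimensional Hausdorff content and then invoking \eqref{e:dcontent}. The only difference is that the paper cites this capacity--content inequality as a special case of a theorem of Martio (for general $p$-capacity and gauge function $h$), which handles all $d\geq 1$ in one stroke, whereas you reprove the $p=2$, $h(r)=r^d$ case directly via Frostman's lemma, the Newtonian potential and energy estimates, and the variational/Wiener characterization of capacity, at the cost of a separate treatment of $d=1$ owing to the logarithmic kernel.
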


\begin{proof}
This is well known, but we review the details for completeness. Assume first that $h:[0, \infty) \to [0, \infty)$ is a {\it measure function}, i.e.,  a continuous and strictly increasing function such that $h(0)=0$, $\lim_{r \to \infty} h(r)= \infty$ and define 
$$\cH_h(E)= \inf \ck{ \sum_i h(r_i): E \subset \bigcup_i \overline B(x_i, r_i)}.$$
We also denote $\textup{cap}_p$ to be the ordinary variational $p$-capacity of a condenser.

We recall a theorem from \cite{Martio}.

\begin{theorem}[{\cite[Theorem 3.1]{Martio}}]\label{thm:Martio}
Suppose that $K$ is a closed set in $\R^{d+1}$ and $x \in \R^{d+1}$. If $p \in (1, d+1]$ and $h$ is a measure function such that
\begin{equation}\label{e:h-condition}
\int_0^{2r} h(t)^{1/p} t^{-(d+1)/p} \leq A r^{(p-d-1)/p} h(r)^{1/p},
\end{equation} 
for some $A>0$ and for every $r \in(0, r_0]$, then there exists a constant $C>0$ depending on $n, p$ and  $A$, so that
\begin{equation*}
\frac{\cH_h(K \cap \overline{B}(x,r) ) }{h(r)} \leq C\, \frac{ \textup{cap}_p ( K \cap \overline{B} (x,r), B(x,2r) ) }{r^{d+1-p}},
\end{equation*}
for all $r \in (0, r_0]$.
\end{theorem}

If $h(r)=r^d$ and $p=2$  then it is trivial to show that \eqref{e:h-condition} holds for every $r \in (0, \infty)$. Therefore, if we apply Theorem \ref{thm:Martio} for $K= \Omega^c$ and $p=2$, we deduce that \eqn{dcontent} implies the capacity density condition.
\end{proof}

\begin{lemma}[{\cite[Lemma 11.21]{HKM}}]\label{l:bourgain}
Let $\Omega\subset \bR^{d+1}$ be any domain satisfying the CDC condition,  $x_{0}\in \d\Omega$, and $r>0$ so that $\Omega\backslash B(x_{0},2r)\neq\emptyset$. Then 
\begin{equation}\label{e:bourgain}
\hm_{\Omega}^{\mathcal{L},X}(B(x_{0},2r)\geq c >0 \;\; \mbox{ for all }X\in \Omega\cap B(x_{0},r)
\end{equation}
where $c$ depends on $d$ and the constant in the CDC.
\end{lemma}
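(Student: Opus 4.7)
The plan is to prove this classical estimate --- commonly attributed to Bourgain --- by comparing the elliptic measure $u(X) := \hm_{\Omega}^{\mathcal{L},X}(B(x_{0},2r))$ with the capacitary potential of a large subset of $\Omega^c$ sitting near $x_0$. The maximum principle will reduce the task to a pointwise lower bound on this potential, which in turn follows from the CDC via a Green function estimate.

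First I would set $K := \Omega^c \cap \overline{B(x_0, 3r/2)}$ and let $v$ denote the $\mathcal{L}$-capacitary potential of $K$ in $B(x_0, 2r)$, with associated equilibrium measure $\mu$ supported on $K$. By construction $v$ is $\mathcal{L}$-superharmonic on $B(x_0, 2r)$, is $\mathcal{L}$-harmonic on $B(x_0, 2r) \setminus K$, satisfies $v = 1$ quasi-everywhere on $K$ and $v = 0$ continuously on $\partial B(x_0, 2r)$, and has total mass $\mu(K) = \textup{cap}_{\mathcal{L}}(K, B(x_0, 2r))$. Uniform ellipticity makes $\textup{cap}_{\mathcal{L}}$ comparable to the ordinary variational $2$-capacity, so the CDC applied at scale $3r/2$ around $x_0$ yields
\[
\mu(K) \;\gtrsim\; r^{d-1}.
\]

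Next I would represent $v$ via the Green function, $v(X) = \int_K G(X, Y) \, d\mu(Y)$, where $G$ is the Green function for $\mathcal{L}$ on $B(x_0, 2r)$, and invoke the classical pointwise lower bound of Gr\"uter--Widman: $G(X, Y) \gtrsim |X-Y|^{1-d}$ when $d \geq 2$, with the analogous logarithmic lower bound when $d = 1$ combined with the planar CDC $\textup{cap}(K, \cdot) \gtrsim 1$. Since $X \in B(x_0, r)$ and $Y \in K \subset B(x_0, 3r/2)$ force $|X - Y| \leq 5r/2$, integration against $\mu$ gives
\[
v(X) \;\gtrsim\; r^{1-d}\, \mu(K) \;\gtrsim\; 1
\]
with implicit constants depending only on $d$, the ellipticity, and the CDC constant.

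Finally I would apply the maximum principle to the $\mathcal{L}$-harmonic function $u - v$ on $\Omega \cap B(x_0, 2r)$ --- both $u$ and $v$ are $\mathcal{L}$-harmonic there, since $K \cap \Omega = \emptyset$. The CDC makes every $y \in \d\Omega$ Wiener-regular, so $\liminf_{X \to y} u(X) = 1 \geq v(y)$ for every $y \in \partial\Omega \cap B(x_0, 2r)$; on $\Omega \cap \partial B(x_0, 2r)$ we have $u \geq 0 = v$; and the hypothesis $\Omega \setminus B(x_0, 2r) \neq \emptyset$ ensures that $\Omega \cap B(x_0, 2r)$ is a proper subdomain on which $u$ is not identically one. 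Hence $u \geq v$ throughout $\Omega \cap B(x_0, 2r)$, which combined with the previous step gives the claim. The main technical obstacle is really the Gr\"uter--Widman Green function lower bound for a general KP operator $\mathcal{L}$, together with the logarithmic substitute needed when $d=1$; both are known but are nontrivial inputs, while the rest of the argument is the standard potential-theoretic comparison.
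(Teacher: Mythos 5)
The paper does not actually supply a proof of this lemma: it cites it directly from \cite[Lemma 11.21]{HKM} with the remark that the stated form is a corollary of that result, and refers the reader there. So there is no ``paper's own proof'' to compare against line by line, and any correct argument here is extra content, not a match.

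Your route -- compare $u(X)=\omega_\Omega^{\mathcal L,X}(\partial\Omega\cap B(x_0,2r))$ to the $\mathcal L$-capacitary potential $v$ of $K=\Omega^c\cap\overline{B(x_0,3r/2)}$ in $B(x_0,2r)$, and then bound $v$ from below by writing it as a Green potential against the equilibrium measure $\mu$ -- is a sound and standard linear-theory proof, and the endgame maximum-principle comparison $u\ge v$ on $\Omega\cap B(x_0,2r)$ is set up correctly (Wiener regularity of $\partial\Omega$ via CDC gives $\liminf u=1\ge v$ there, and $v$ vanishes continuously on $\partial B(x_0,2r)$ since $K$ stays $r/2$ away from the sphere). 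This differs in spirit from the HKM proof being cited: HKM is written in the nonlinear ($p$-Laplacian type) setting, where there is no Green function, and their argument runs through weak Harnack / capacitary estimates for supersolutions. Your Green-function argument is genuinely simpler but buys less generality (linear only), which is perfectly adequate here.

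Two places where you should tighten things. First, the Gr\"uter--Widman lower bound $G(X,Y)\gtrsim |X-Y|^{1-d}$ is only valid under a constraint of the form $|X-Y|\le \tfrac12\dist(Y,\partial B(x_0,2r))$; in your configuration $\dist(Y,\partial B(x_0,2r))$ can be as small as $r/2$ while $|X-Y|$ can be as large as $5r/2$, so the bound does not apply directly. The fix is routine -- use Gr\"uter--Widman when $|X-Y|\lesssim r$ and a bounded Harnack chain for $G(\cdot,Y)$ inside $B(x_0,7r/4)\setminus B(Y,r/8)$ to carry the bound to more distant $X$ -- but you should say it, since as written the inequality $v(X)\gtrsim r^{1-d}\mu(K)$ is asserted outside the range where the cited estimate holds. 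Second, the $d=1$ case genuinely needs the logarithmic Green function estimate plus the scale-invariant capacity normalization $\mathrm{cap}(B(x,r),B(x,2r))\approx 1$; you flag it but do not carry it out, and since the paper explicitly handles $d=1$ (indeed devotes extra hypotheses to it in Theorem I), it deserves more than a parenthetical. Finally, a small but worth-noting misconception: you worry about the Gr\"uter--Widman estimates ``for a general KP operator,'' but the KP condition plays no role in this lemma. It is a statement about any real uniformly elliptic divergence-form $\mathcal L$ with bounded measurable coefficients on a CDC domain; the Carleson condition on $\nabla a_{ij}$ is only used later for the $A_\infty$ theory (Theorem \ref{t:DJ}).
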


The above is actually a corollary of \cite[Lemma 11.21]{HKM}, and we refer the reader there to the complete statement.

\begin{lemma}[Harnack's inequality, {\cite[Theorem 6.2]{HKM}}] Let $\Omega\subset \bR^{d+1}$ and let $u$ be a solution for $\mathcal{L}$. There is $c=c(\lambda,d)>0$ so that if $2B\subset \Omega$, then 
\[
\sup_{B} u\leq c \inf_{B} u.\]
\end{lemma}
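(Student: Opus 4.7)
The plan is to prove the (nonnegative) Harnack inequality for uniformly elliptic divergence-form operators by Moser iteration. I will assume throughout that $u \geq 0$ is a (weak) solution of $\mathcal{L} u = 0$ in $\Omega$, since otherwise the inequality as stated is false (the hypothesis of nonnegativity is standard and implicit in the cited statement). The core ingredient is the Caccioppoli inequality: testing the equation against $\eta^{2} u^{\beta}$ for a cutoff $\eta$ and a parameter $\beta \in \mathbb{R} \setminus \{-1, 0\}$, together with the ellipticity of $\mathcal{A}$ and the $L^{\infty}$ bound on its entries, yields
\[
\int \eta^{2} |\nabla u|^{2} u^{\beta - 1} \,dX \;\lesssim_{\lambda,\|\mathcal{A}\|_{\infty},\beta}\; \int |\nabla \eta|^{2} u^{\beta+1}\,dX.
\]
This is the engine of the whole proof and is standard.

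Next I would carry out the two halves of Moser's iteration separately. For $\beta > -1$, $\beta \neq 0$, combining the Caccioppoli estimate with the Sobolev inequality on $\mathbb{R}^{d+1}$ and choosing a sequence of nested balls with shrinking radii $r_{k} \downarrow r$ produces, after iteration, the local boundedness estimate
\[
\sup_{B} u \;\leq\; C\,\bigl(\,\textstyle\fint_{\frac{3}{2}B} u^{p}\,dX\bigr)^{1/p}
\]
valid for every $p > 0$, with $C = C(\lambda, d, p)$. Applying the same iteration scheme with $\beta < -1$ to the positive function $u + \varepsilon$ and letting $\varepsilon \downarrow 0$ (using that $u$ is a solution, not merely a supersolution) yields the reverse-type bound
\[
\bigl(\,\textstyle\fint_{\frac{3}{2}B} u^{-p}\,dX\bigr)^{-1/p} \;\leq\; C'\,\inf_{B} u.
\]

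The bridge between positive and negative moments is the main obstacle and requires a separate argument. Here I would take $\beta = -1$, which corresponds formally to testing with $\eta^{2}/u$ and, after integration by parts, gives a Caccioppoli-type bound for $w := \log u$, namely $\int_{B} |\nabla w|^{2} \lesssim r_{B}^{d-1}$. This, together with a Poincaré inequality, shows $w \in \mathrm{BMO}(\tfrac{3}{2}B)$ with a norm depending only on $\lambda$ and $d$. The John–Nirenberg inequality then gives some small $p_{0} = p_{0}(\lambda, d) > 0$ such that
\[
\bigl(\,\textstyle\fint_{\frac{3}{2}B} u^{p_{0}}\bigr)^{1/p_{0}}\bigl(\,\fint_{\frac{3}{2}B} u^{-p_{0}}\bigr)^{1/p_{0}} \;\leq\; C''.
\]

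Finally, chaining the three displays at the specific exponent $p = p_{0}$ yields
\[
\sup_{B} u \;\leq\; C\,\bigl(\textstyle\fint_{\frac{3}{2}B} u^{p_{0}}\bigr)^{1/p_{0}} \;\leq\; C\, C''\,\bigl(\fint_{\frac{3}{2}B} u^{-p_{0}}\bigr)^{-1/p_{0}} \;\leq\; C\, C''\, C'\,\inf_{B} u,
\]
which is the claimed inequality with $c = c(\lambda, d)$. The technical crux is genuinely the John–Nirenberg step, since the Caccioppoli/Sobolev iteration is standard bookkeeping once the test-function identity is set up; the rest amounts to chasing constants through a finite iteration. Since the assumption $2B \subset \Omega$ leaves plenty of room for the cutoffs used in the Caccioppoli inequality on $\tfrac{3}{2}B$, no boundary considerations intervene.
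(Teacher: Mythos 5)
Your Moser-iteration sketch is correct and complete at the level of detail one would expect: the Caccioppoli estimate from testing with $\eta^2 u^\beta$, the two halves of the iteration giving $\sup_B u \lesssim (\fint u^{p})^{1/p}$ and $(\fint u^{-p})^{-1/p} \lesssim \inf_B u$, the $\beta=-1$ (equivalently $\log u$) Caccioppoli bound showing $\log u \in \mathrm{BMO}$, and John--Nirenberg to bridge the positive and negative moments at a small exponent $p_0(\lambda,d)$. The paper itself gives no proof; it cites \cite[Theorem 6.2]{HKM}, which proves the more general statement for quasilinear $\mathcal{A}$-harmonic functions (operators with $p$-growth). Your argument specializes the same circle of ideas to the linear divergence-form case, which is all the paper uses, so the routes are essentially equivalent rather than genuinely different.

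Two remarks worth recording. First, you were right to flag the missing nonnegativity hypothesis: as literally written the lemma is false (take $u$ a nonconstant linear solution of the Laplacian), and the cited source does assume $u \geq 0$; the paper is implicitly inheriting this. Second, your Caccioppoli estimate and hence your final constant depend on the upper ellipticity bound $\|\mathcal{A}\|_\infty$ as well as on $\lambda$ and $d$; the paper's $c = c(\lambda, d)$ is either abusing notation or normalizing $\|\mathcal{A}\|_\infty$, but this does not affect the substance of your proof. One genuinely technical point your sketch elides is that the BMO membership of $\log u$ requires the gradient bound $\int_{B'} |\nabla \log u|^2 \lesssim r_{B'}^{d-1}$ uniformly over all small balls $B'$ inside $\tfrac{3}{2}B$ (not merely the one ball $B$), which is fine here because $2B \subset \Omega$ leaves room, but it should be stated; with that noted, the John--Nirenberg step is sound.
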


Thus, if $B_{1},...,B_{N}$ is a Harnack chain of length $N$ composed of corkscrew balls for a domain $\Omega$, then
\[
\sup_{B_{N}}u\lec_{N,\lambda,d} \inf_{B_{1}}u.\]

 \vvs
\begin{lemma}[Carleman's principle, {\cite[Theorem 11.3]{HKM}}]
If $E\subset \d\Omega_{1}\cap \d\Omega_{2}$ and $\Omega_{1}\subset \Omega_{2}$, then $\hm_{\Omega_{1}}^{\mathcal{L},X}(E)\leq \hm_{\Omega_{2}}^{\mathcal{L},X}(E)$. 
\label{l:CP}
\end{lemma}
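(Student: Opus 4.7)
My plan is to argue directly from the Perron definition of $\hm^{\mathcal{L}}_{\Omega}$. For a fixed $X \in \Omega_{1}$, I would write
\[ \hm_{\Omega_{j}}^{\mathcal{L},X}(E) \;=\; \overline{H}_{\one_{E}}(X) \;=\; \inf\{\,u(X) : u \in \mathcal{U}_{\one_{E}}(\Omega_{j})\,\} \]
and reduce the inequality to showing that the restriction to $\Omega_{1}$ of every $u \in \mathcal{U}_{\one_{E}}(\Omega_{2})$ still belongs to $\mathcal{U}_{\one_{E}}(\Omega_{1})$. If this is established, then the infimum defining $\overline{H}_{\one_{E}}^{\Omega_{1}}(X)$ is taken over a collection containing $\{\,u|_{\Omega_{1}} : u \in \mathcal{U}_{\one_{E}}(\Omega_{2})\,\}$, and the desired inequality $\hm_{\Omega_{1}}^{\mathcal{L},X}(E) \leq \hm_{\Omega_{2}}^{\mathcal{L},X}(E)$ drops out.

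For a given $u \in \mathcal{U}_{\one_{E}}(\Omega_{2})$, I would observe that the supersolution property and the lower bound immediately descend from $\Omega_{2}$ to the subdomain $\Omega_{1}$; the only substantive step is the boundary inequality $\liminf_{X \to y,\, X \in \Omega_{1}} u(X) \geq \one_{E}(y)$ at each $y \in \d_{\infty}\Omega_{1}$. The key preliminary, which I would establish first, is that $u \geq 0$ on all of $\Omega_{2}$. Since $u$ is a bounded-below supersolution in $\Omega_{2}$ and $\liminf_{X \to z} u(X) \geq \one_{E}(z) \geq 0$ at every $z \in \d_{\infty}\Omega_{2}$, the weak minimum principle for supersolutions delivers this nonnegativity.

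With $u \geq 0$ on $\Omega_{2}$ in hand, the boundary check on $\d_{\infty}\Omega_{1}$ would split into two easy cases. If $y \in E$, then the hypothesis $E \subset \d\Omega_{1} \cap \d\Omega_{2}$ forces $y \in \d\Omega_{2}$ as well, so approaching $y$ through $\Omega_{1} \subset \Omega_{2}$ yields $\liminf u(X) \geq 1 = \one_{E}(y)$. If $y \in \d_{\infty}\Omega_{1} \setminus E$, then $\one_{E}(y) = 0$ and nonnegativity of $u$ on $\Omega_{2} \supset \Omega_{1}$ (together with the lower bound of $u$, which handles $y = \infty$ when the domains are unbounded) gives $\liminf u(X) \geq 0$. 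This would complete the verification that $u|_{\Omega_{1}} \in \mathcal{U}_{\one_{E}}(\Omega_{1})$ and hence the proof.

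The only mildly delicate step is the treatment of those boundary points of $\Omega_{1}$ that lie strictly inside $\Omega_{2}$, since for them the hypothesized boundary behavior of $u$ on $\d_{\infty}\Omega_{2}$ gives no direct control; this is precisely where the preliminary nonnegativity of $u$ is invoked, and analogously for the point at infinity in the unbounded case. Everything else should be routine.
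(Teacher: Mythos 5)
The paper does not prove this lemma; it cites it directly from [HKM, Theorem 11.3] (Heinonen–Kilpeläinen–Martio), so there is no in-paper proof to compare against. Your argument from the Perron definition is correct and is essentially the standard proof of Carleman's principle: you establish $u \geq 0$ on $\Omega_2$ via the weak minimum principle (equivalently, apply the paper's Lemma~\ref{l:MP} to $-u$ with the exceptional set $F=\emptyset$), and then the boundary check on $\d_\infty\Omega_1$ splits cleanly according to whether the boundary point lies in $E \subset \d\Omega_1 \cap \d\Omega_2$ (where the liminf through the smaller domain dominates the liminf through $\Omega_2$, which is $\geq 1$), or in $\d_\infty\Omega_1 \setminus E$ (where $\one_E$ vanishes and nonnegativity of $u$ suffices, including boundary points interior to $\Omega_2$ and the point at infinity). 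You correctly handle the degenerate case implicitly as well: if $\mathcal{U}_{\one_E}(\Omega_2)$ is empty then $\overline{H}_{\one_E}^{\Omega_2} \equiv +\infty$ and the inequality is vacuous. This is a clean, self-contained version of the argument the reference supplies.
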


\begin{lemma}[{Strong maximum principle, \cite[Theorem 6.5]{HKM}}]
A nonconstant solution for $\mathcal L$ in $\Omega$ cannot attain its supremum or infimum in $\Omega$.
\end{lemma}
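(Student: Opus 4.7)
The plan is to deduce the strong maximum principle from Harnack's inequality, which has just been recorded in the paper. The central observation is that since $\mathcal{L}$ is linear and $\mathcal{L} u = 0$, the function $v:=M-u$ (where $M=\sup_\Omega u$, assumed finite and attained) is itself a non-negative solution of $\mathcal{L}v=0$. The proof then reduces to showing that the coincidence set $S:=\{x\in\Omega:u(x)=M\}$ is both open and closed in $\Omega$, and hence equals $\Omega$ by connectedness.

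First, I would invoke the De Giorgi--Nash--Moser regularity theory (implicit in this setting, as $\mathcal{A}$ is bounded and uniformly elliptic) to guarantee that any weak solution admits a continuous representative, so $u$ is continuous on $\Omega$. This ensures that $S$ is relatively closed in $\Omega$, and that pointwise statements like ``$u(x_0)=M$'' are meaningful.

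Next, to prove openness of $S$, I would fix $x_0\in S$ and choose $r>0$ small enough that $2B\subset\Omega$ for $B=B(x_0,r)$. Applying Harnack's inequality (as stated in the excerpt) to the non-negative solution $v=M-u$, I get
\[
\sup_{B} v \;\leq\; c\,\inf_{B} v \;\leq\; c\,v(x_0)\;=\;0.
\]
Since $v\geq 0$ on $\Omega$, this forces $v\equiv 0$, i.e.\ $u\equiv M$, on $B$. Hence every point of $S$ has a neighborhood contained in $S$, so $S$ is open. Combined with closedness and connectedness of $\Omega$, this gives $S=\Omega$, i.e.\ $u\equiv M$, contradicting the hypothesis that $u$ is nonconstant. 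The statement about the infimum follows by replacing $u$ with $-u$, which is still a solution.

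The one real technical point — and the step I would expect to need some care — is justifying that a nonnegative weak solution $v$ with $v(x_0)=0$ at a point where $v$ is continuous satisfies $\inf_B v=0$; this is immediate given continuity of $v$, but hinges on the De Giorgi--Nash--Moser continuity theorem for weak solutions with merely $L^\infty$ coefficients, which is the nontrivial input underlying both Harnack's inequality and the argument here. Once continuity is in hand, the remainder is a standard clopen/connectedness argument.
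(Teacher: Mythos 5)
Your proof is correct, and it is the standard derivation that the cited reference \cite{HKM} itself uses: the paper does not reprove this lemma but simply cites \cite[Theorem 6.5]{HKM}, where the strong maximum principle is established precisely by combining interior continuity of weak solutions with Harnack's inequality applied to $M-u$ and a clopen/connectedness argument. Your write-up correctly identifies the two essential ingredients (De Giorgi--Nash--Moser continuity for closedness, Harnack for openness) and handles the reduction for the infimum by passing to $-u$, so there is nothing to add.
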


\begin{lemma}[{\cite[Corollary 11.10]{HKM}}]\label{l:MP}
Suppose that $u$ is bounded above and a subsolution for $\mathcal L$ in $\Omega$. If $\omega_\Omega^{\mathcal{L}}(F)=0$ and 
\begin{equation*}
\limsup_{x \to \xi} u(x) \leq m \,\, \textup{for all} \,\,\xi \in \d_\infty \Omega \setminus F,
\end{equation*}
then $u \leq m$ in $\Omega$.
\end{lemma}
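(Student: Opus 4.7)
The plan is to reduce by shifting to the case $m=0$ (replace $u$ with $u-m$) and then to \emph{absorb} the exceptional set $F$ by a cleverly chosen supersolution whose value at a fixed interior point is arbitrarily small. Set $M:=\sup_\Omega u$. If $M\leq 0$ the conclusion is immediate, so assume $M>0$. Fix $X_0\in\Omega$ and $\ve>0$. The hypothesis $\omega_\Omega^{\mathcal{L}}(F)=0$ means $\overline{H}_{\one_F}(X_0)=0$, and since $\overline{H}_{\one_F}$ is by definition the pointwise infimum over the upper class $\mathcal{U}_{\one_F}$, we may select $v\in\mathcal{U}_{\one_F}$ with $v(X_0)<\ve/M$. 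Since $v$ is a supersolution bounded below with $\liminf_{x\to\xi} v(x)\geq \one_F(\xi)\geq 0$ on all of $\d_\infty\Omega$, the minimum principle for supersolutions yields $v\geq 0$ throughout $\Omega$.

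Next, consider the auxiliary function $w:=u-Mv$. As $u$ is a subsolution and $v$ is a supersolution with $M>0$, $w$ is itself a subsolution for $\mathcal L$; moreover $w\leq u\leq M$, so $w$ is bounded above. Examining the boundary behaviour at each $\xi\in\d_\infty\Omega$: if $\xi\in F$, then $\limsup_{x\to\xi}u(x)\leq M$ and $\liminf_{x\to\xi}v(x)\geq 1$ combine to give $\limsup_{x\to\xi} w(x)\leq M-M\cdot 1=0$; if $\xi\in\d_\infty\Omega\setminus F$, then the hypothesis gives $\limsup_{x\to\xi}u(x)\leq 0$ while $\liminf v\geq 0$, so again $\limsup_{x\to\xi}w(x)\leq 0$. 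Thus $w$ belongs to the lower Perron class $\mathcal{L}_0$ for the zero boundary datum, while the constant function $0$ belongs to the corresponding upper class $\mathcal{U}_0$. The standard comparison $\mathcal{L}_0\leq \mathcal{U}_0$ (the basic inequality underlying the construction of Perron solutions) then yields $w\leq 0$ in $\Omega$; equivalently $u(X_0)\leq Mv(X_0)<\ve$. Since $\ve>0$ was arbitrary, $u(X_0)\leq 0$, and as $X_0\in\Omega$ was arbitrary, $u\leq 0$ in $\Omega$, which is the desired conclusion after undoing the initial shift.

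The main subtlety lies in the behaviour of $u$ near $F$: a priori $u$ could approach $M>m$ along sequences converging to points of $F$, defeating any direct application of a maximum principle. The role of the auxiliary supersolution $v$ is exactly to cancel this possible excess along $F$: because $v$ majorizes $\one_F$ at the boundary in the $\liminf$ sense, the subtraction of $Mv$ absorbs the worst-case value of $u$ at $F$-points, while $\overline{H}_{\one_F}\equiv 0$ permits $v(X_0)$ to be made arbitrarily small at the chosen interior point. This trick depends critically on $u$ being bounded above; without that control, no single constant multiplier $M$ would suffice to counterbalance the behaviour of $u$ on $F$, and a finer argument using polar sets and $2$-capacity would be needed.
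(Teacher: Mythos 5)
Your proof is correct. The paper does not supply its own argument for this lemma (it is cited directly from HKM, Corollary 11.10), so there is no in-text proof to compare against; your blind reconstruction is the standard potential-theoretic one. The device of choosing an upper function $v\in\mathcal{U}_{\one_F}$ with $v(X_0)$ arbitrarily small — possible because $\overline{H}_{\one_F}(X_0)=0$ is an infimum over the upper class — and then subtracting $Mv$ to absorb the possible excess of $u$ near $F$ is exactly the right move, and the final reduction to the basic Perron comparison $\mathcal{L}_0\leq\mathcal{U}_0$ (which is proved with no exceptional set, so there is no circularity with the statement being proved) is sound. A minor technical remark: if, as in HKM, the upper class $\mathcal{U}_{\one_F}$ is taken to consist of $\mathcal{A}$-superharmonic functions rather than $W^{1,2}_{\loc}$ supersolutions, then $u-Mv$ need not a priori lie in $W^{1,2}_{\loc}$, so it is not immediately a subsolution in the weak sense; replacing $v$ by $\min(v,1)$, which remains in $\mathcal{U}_{\one_F}$ (since $\one_F\leq 1$) and is locally bounded, hence a genuine $W^{1,2}_{\loc}$ supersolution, repairs this. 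With the paper's own convention that supersolutions already lie in $W^{1,2}_{\loc}$ this is automatic and the argument goes through as written.
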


From this, we get the following lemma.

\begin{lemma}\label{l:sameboundaryval}
Let $\Omega\subset \bR^{d+1}$ be a regular domain and $u,v$ be solutions to $\mathcal{L}$ in $\Omega$. Suppose $\limsup_{X\to x} u(X)\leq \liminf_{X\to x} v(X)$ for all $x\in \d\Omega$ (and, if $d>1$ and $\Omega$ is unbounded, that $\limsup_{X\to \infty} u(X)\leq \liminf_{X\to \infty} v(X)$) , then  $u\leq v$ in $\Omega$. In particular, if $\lim_{X\to x} u(X)= \lim_{X\to x} v(X)$ for all $x\in \d_{\infty}\Omega$, then $u=v$ in $\Omega$. 
\end{lemma}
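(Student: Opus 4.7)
The plan is to reduce the statement to the maximum principle of Lemma \ref{l:MP} applied to $w := u - v$. First I would note that by linearity $w$ is a solution of $\mathcal{L}$, hence in particular both a sub- and a supersolution, and the hypothesis immediately gives
\[
\limsup_{X \to \xi} w(X) \;\leq\; \limsup_{X \to \xi} u(X) \;-\; \liminf_{X \to \xi} v(X) \;\leq\; 0
\]
for every $\xi \in \d_\infty \Omega$ at which the hypothesis is imposed.

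The only substantive step will be verifying that $w$ is bounded above on $\Omega$, which is needed to invoke Lemma \ref{l:MP}. For this I would fix $\varepsilon > 0$ and use the limsup condition at each such $\xi$ to produce an open neighborhood $U_\xi$ of $\xi$ in the one-point compactification of $\bR^{d+1}$ with $w < \varepsilon$ on $U_\xi \cap \Omega$. Since $\d_\infty \Omega$ is compact in this topology (including $\infty$ when $\Omega$ is unbounded), finitely many $U_{\xi_1},\ldots,U_{\xi_N}$ would cover it, and $K := \Omega \setminus \bigcup_i U_{\xi_i}$ would then be a compact subset of $\Omega$ on which $w$ is bounded by local boundedness of solutions. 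This yields $\sup_\Omega w \leq \max(\varepsilon, \sup_K w) < \infty$, and Lemma \ref{l:MP} applied with $F = \emptyset$ and $m = 0$ then gives $w \leq 0$, i.e., $u \leq v$ in $\Omega$.

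For the ``in particular'' statement, the symmetric hypothesis lets me apply the first conclusion twice, once with the roles of $u$ and $v$ swapped, yielding $u = v$.

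The delicate point I foresee is the $d = 1$ unbounded case, where the condition at $\infty$ is dropped from the hypothesis. There the covering argument fails at $\infty$, since no neighborhood of $\infty$ carries the bound $w < \varepsilon$ a priori; the natural remedy is to absorb $\{\infty\}$ into the exceptional set $F$ of Lemma \ref{l:MP}, which is permissible provided $\omega^{\mathcal{L}}_{\Omega}(\{\infty\}) = 0$ or $\infty$ is a regular boundary point --- conditions that are compatible with the additional hypotheses imposed on $\mathcal L$-elliptic measure in the paper's applications of this lemma.
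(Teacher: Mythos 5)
Your proof takes the same route as the paper's one-line argument: set $h := u - v$, note $\limsup_{X\to\xi} h(X) \leq 0$, and invoke Lemma~\ref{l:MP} with $m = 0$; the ``in particular'' clause comes from applying this twice. Where you go further than the paper is in explicitly checking the \emph{bounded above} hypothesis of Lemma~\ref{l:MP} via a compactness argument on $\d_\infty\Omega$ in the one-point compactification. The paper's proof silently assumes this, so your verification is a genuine and worthwhile addition, and it is correct as long as the $\limsup$ control holds at \emph{every} point of $\d_\infty\Omega$, including $\infty$ when $\Omega$ is unbounded.

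You are right to be uneasy about the $d=1$ unbounded case, but your proposed remedy does not close the gap: moving $\{\infty\}$ into the exceptional set $F$ of Lemma~\ref{l:MP} still leaves the hypothesis that $h$ be bounded above on $\Omega$, and your compactness argument for that boundedness was exactly what used the $\limsup$ control at $\infty$ that is now absent. In fact, the lemma as literally stated is false in that case. Take $\Omega$ the upper half-plane in $\bR^2$; this is a regular domain (and $\infty$ is a regular point with $\omega_\Omega(\{\infty\}) = 0$). Let $v\equiv 0$ and $u(x,y)=y$. Then $u$ is harmonic, $\lim_{X\to x} u(X) = 0 = \lim_{X\to x} v(X)$ for every finite $x\in\d\Omega$, yet $u\not\leq v$. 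The paper's one-line proof shares this gap; the only consistent reading is that the $\limsup$ condition at $\infty$ should be imposed whenever $\Omega$ is unbounded, regardless of $d$, which is what actually holds in the lemma's sole application inside the proof of Lemma~\ref{l:identity-hm}.
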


Indeed, consider $h=u-v$. Then $\limsup_{X\to x}h(X)\leq 0$, and so the previous lemma implies $h\leq 0$ in $\Omega$, and hence $u\leq v$ in $\Omega$. 

%
%


\begin{lemma}[{\cite[Theorem 11.9]{HKM}}]\label{l:Mm}
Let $F$ be a closed subset of $\d_{\infty}\Omega$ where $\Omega\subset \bR^{d+1}$. Let $M\geq m$ and $v(x)$ be a $\mathcal{L}$-subharmonic function such that 
\[
\limsup_{X\rightarrow x\in \d_{\infty}\Omega} v(X) \leq M\one_{E}+m\one_{\d_{\infty}\Omega\backslash E}.\]
Then 
\[
v(X)\leq (M-m)\omega_{\Omega}^{\mathcal{L}}(E)+m \;\; \mbox{ for all }X\in \Omega.\]
\end{lemma}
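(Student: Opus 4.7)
The plan is to leverage the Perron-method definition of $\omega_\Omega^{\mathcal{L}}(E)$ as the pointwise infimum of an upper class of supersolutions, and then compare $v$ to every member of that class via Lemma~\ref{l:MP}. The case $M = m$ is just Lemma~\ref{l:MP} applied directly to $v - m$, so I would assume $M > m$. Setting $f := (M-m)\one_E + m$ on $\d_\infty\Omega$, I would first note that the upper Perron solution is positively homogeneous in the boundary data and commutes with the addition of constants, so
\[
\overline H_f \;=\; (M-m)\,\overline H_{\one_E} + m \;=\; (M-m)\,\omega_\Omega^{\mathcal{L}}(E) + m.
\]
Since $\overline H_f = \inf \mathcal U_f$ pointwise, it suffices to show that $v \leq w$ in $\Omega$ for every $w$ in the upper class $\mathcal U_f$, i.e.\ every $\mathcal{L}$-supersolution $w$ that is bounded below and satisfies $\liminf_{X\to\xi} w(X) \geq f(\xi)$ for all $\xi \in \d_\infty\Omega$.

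Fixing such a $w$, I would apply Lemma~\ref{l:MP} to $\phi := v - w$. This $\phi$ is an $\mathcal{L}$-subsolution (sum of the subsolutions $v$ and $-w$) and is bounded above, since $w$ is bounded below and $v$ may be taken bounded above (either implicitly from the hypotheses on $v$, or a posteriori once the conclusion $v \leq M$ is established). For every $\xi \in \d_\infty\Omega$,
\[
\limsup_{X \to \xi} \phi(X) \;\leq\; \limsup_{X\to\xi} v(X) \,-\, \liminf_{X\to\xi} w(X) \;\leq\; f(\xi) - f(\xi) \;=\; 0.
\]
Lemma~\ref{l:MP}, applied with $F = \emptyset$ and upper bound $0$, then gives $\phi \leq 0$ in $\Omega$, i.e.\ $v \leq w$. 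Taking the infimum over $w \in \mathcal U_f$ yields $v \leq \overline H_f = (M-m)\,\omega_\Omega^{\mathcal{L}}(E) + m$, as required.

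The main obstacle is conceptual rather than computational: one is tempted to directly track the boundary limits of $\omega_\Omega^{\mathcal{L}}(E)$ at points of $\d_\infty\Omega$, but $\one_E$ need not be continuous, and at points of $\d E$ or at irregular boundary points the limit of $\omega_\Omega^{\mathcal{L}}(E)(X)$ may not equal $\one_E(\xi)$; this would break the naive attempt to plug $u := (M-m)\omega_\Omega^{\mathcal{L}}(E) + m$ into the comparison principle against $v$ directly. The route above sidesteps this by working only with the defining upper class $\mathcal U_f$, whose members \emph{do} have the right $\liminf$ at every boundary point by definition. A minor secondary point is the boundedness of $v$ above; this can either be added as a standing assumption or obtained by first applying the argument to a truncation $v \wedge N$ and sending $N \to \infty$.
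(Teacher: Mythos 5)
This lemma is stated in the paper only as a citation to \cite[Theorem 11.9]{HKM}; no proof appears in the text, so there is no in-paper argument to compare yours against. Your route is nonetheless the standard one and surely matches the source: rescale the boundary data to $f=(M-m)\one_E+m$, use the facts that $\overline H_{\lambda g}=\lambda\overline H_g$ for $\lambda\geq 0$ and $\overline H_{g+c}=\overline H_g+c$ to identify $\overline H_f=(M-m)\,\omega_\Omega^{\mathcal{L}}(E)+m$, and then compare $v$ against each $w\in\mathcal U_f$ by applying Lemma~\ref{l:MP} to $v-w$ with $F=\emptyset$. The boundary $\limsup$ computation is correct, and your observation that one must work with the members of $\mathcal U_f$ rather than with $\omega_\Omega^{\mathcal{L}}(E)$ itself (whose boundary limits can misbehave at $\d E$ and at irregular points) is exactly the right one.

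There is, however, a genuine flaw in your fallback plan for the boundedness-above hypothesis. You propose running the argument for $v\wedge N$ and sending $N\to\infty$, but $v\wedge N=\min(v,N)$ is not a subsolution when $v$ is: subsolutions are closed under $\max$ with a constant (dually, supersolutions under $\min$), not under $\min$. A concrete counterexample is $v(x,y)=x^2-y^2$, harmonic in $\bR^2$, for which $\min(v,0)$ equals $0$ at the origin but has strictly negative average over small circles centered there, violating the sub-mean-value inequality. So the truncated function is outside the scope of Lemma~\ref{l:MP}, and this escape route is closed; the ``a posteriori'' alternative you mention is circular. The correct resolution is simply to read boundedness of $v$ above as part of the hypothesis --- this is consistent with Lemma~\ref{l:MP} (quoted from the same source, which explicitly assumes the subsolution bounded above) and with the only application of the present lemma in the paper (Lemma~\ref{l:<12}, where $v=\omega_\Omega^{\mathcal{L}}$ satisfies $0\leq v\leq 1$). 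With that hypothesis made explicit, your proof is correct.
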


\begin{lemma}[{\cite[Lemma 11.16]{HKM}}] \label{l:<1}
Suppose that $\d_\infty\Omega$ is regular and that $E$ is a closed subset of $\d_\infty\Omega$. Then $\omega_{\Omega}^{\mathcal{L}}(E)=0$ if and only if $$\sup_{X\in \Omega}\omega_{\Omega}^{\mathcal{L},X}(E)<1.$$
\end{lemma}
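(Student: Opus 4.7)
The forward direction is essentially immediate. If $\omega_\Omega^{\mathcal{L},X_0}(E)=0$ for one pole, then since $X\mapsto \omega_\Omega^{\mathcal{L},X}(E)$ is a nonnegative $\mathcal{L}$-solution in the connected open set $\Omega$, Harnack's inequality applied along Harnack chains in $\Omega$ forces it to vanish identically (or one invokes the strong maximum principle to conclude that it is either identically $0$ or strictly positive). Hence the supremum is $0<1$.

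For the reverse direction, set $u(X):=\omega_\Omega^{\mathcal{L},X}(E)$ and assume $\alpha:=\sup_{X\in \Omega} u(X)<1$. Then $u$ is a nonnegative bounded solution of $\mathcal{L}u=0$ in $\Omega$, and my plan is to derive $u\equiv 0$ by a self-improvement argument based on \Lemma{Mm}. Specifically, I need to verify the boundary behavior
\[
\limsup_{X\to x} u(X)\leq \alpha\,\mathbf{1}_E(x)+0\cdot \mathbf{1}_{\d_\infty\Omega\setminus E}(x)\qquad\text{for every }x\in \d_\infty\Omega.
\]
At points $x\in E$ this is free from the global bound $u\leq \alpha$. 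At points $x\in \d_\infty\Omega\setminus E$, since $E$ is closed and $x$ is regular (as every point of $\d_\infty\Omega$ is), I would approximate $\mathbf{1}_E$ from above by a decreasing sequence of continuous functions $f_n\geq \mathbf{1}_E$ on $\d_\infty\Omega$ with $f_n(x)\to 0$ (available because $x$ has a neighborhood disjoint from $E$). Comparison of Perron solutions gives $u=\overline H_{\mathbf{1}_E}\leq \overline H_{f_n}=H_{f_n}$, and regularity at $x$ yields $\lim_{X\to x}H_{f_n}(X)=f_n(x)$, so $\limsup_{X\to x}u(X)\leq f_n(x)\to 0$.

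Once these boundary inequalities are in hand, applying \Lemma{Mm} to the subsolution $u$ with $M=\alpha$ and $m=0$ on the closed set $E$ yields
\[
u(X)\;\leq\;(\alpha-0)\,\omega_\Omega^{\mathcal{L},X}(E)+0\;=\;\alpha\,u(X),\qquad X\in \Omega.
\]
Since $\alpha<1$ and $u\geq 0$, this forces $u\equiv 0$, i.e. $\omega_\Omega^{\mathcal{L}}(E)=0$, completing the argument.

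The main technical point I expect to wrestle with is the boundary behavior at points of $\d_\infty\Omega\setminus E$, particularly if $\Omega$ is unbounded and $\infty\in \d_\infty\Omega\setminus E$; there the cutoff approximation has to be carried out in the one-point compactification and relies on the explicit assumption that $\d_\infty\Omega$ (and hence $\infty$) is regular. Everything else is a routine application of the maximum-principle-type estimate already recorded as \Lemma{Mm}.
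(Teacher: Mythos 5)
Your proof is correct. The paper itself does not reproduce an argument for this lemma (it is cited directly to \cite[Lemma 11.16]{HKM}), but the technique you use --- the trivial forward direction, plus the boundary estimate $\limsup_{X\to x} u(X)\leq \alpha\one_E(x)$ fed into \Lemma{Mm} to get $u\leq\alpha u$ and hence $u\equiv 0$ --- is exactly the mechanism the paper deploys when proving the closely parallel \Lemma{<12} (the variant where $\infty$ need not be regular but $\omega_\Omega^{\mathcal L}(\infty)=0$). So you have, in effect, reconstructed the same argument.
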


\begin{lemma}\label{l:<12}
Suppose that $\d\Omega$ is regular (though perhaps not the point at infinity), $\omega_{\Omega}^{\mathcal{L}}(\infty)=0$,  and $E\subset \d\Omega$ is compact. Then $\omega_{\Omega}^{\mathcal{L}}(E)=0$ if and only if 
$$\sup_{X\in \Omega}\omega_{\Omega}^{\mathcal{L},X}(E)<1.$$
\end{lemma}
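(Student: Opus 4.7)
The plan is as follows. The forward direction is immediate: if $\omega_\Omega^{\mathcal{L}}(E)=0$, then $X\mapsto\omega_\Omega^{\mathcal{L},X}(E)$ is the identically zero function and its supremum is $0<1$. For the reverse direction, suppose $M:=\sup_{X\in\Omega}u(X)<1$, where $u(X):=\omega_\Omega^{\mathcal{L},X}(E)$. The strategy is to truncate $\Omega$ by a large ball so that \Lemma{<1} applies, and then kill the truncation error using $\omega_\Omega^{\mathcal{L}}(\infty)=0$. Pick $R$ large enough that $E\subset B_R:=B(0,R)$ and set $\Omega_R:=\Omega\cap B_R$. Then $\d_\infty\Omega_R=\d\Omega_R$ is entirely regular: every $\xi\in\d\Omega\cap\overline{B_R}$ stays regular in $\Omega_R$ because Wiener's criterion is local and $\Omega_R^c\supset\Omega^c$ only has more capacity nearby; every $\xi\in\d B_R\cap\overline{\Omega}$ has an exterior ball. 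By Carleman's principle \Lemma{CP}, $\omega_{\Omega_R}^{\mathcal{L},X}(E)\leq u(X)\leq M<1$ in $\Omega_R$, so \Lemma{<1} applied to the bounded regular domain $\Omega_R$ gives $\omega_{\Omega_R}^{\mathcal{L}}(E)\equiv 0$.

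The key step is to prove the bound $u(X)\leq \tilde\phi_R(X):=\omega_{\Omega_R}^{\mathcal{L},X}(\d B_R\cap\overline{\Omega})$ for $X\in\Omega_R$. Since $\d\Omega$ is regular and $E$ is closed, for every $\xi\in\d\Omega\setminus E$ the indicator $\one_E$ vanishes in a neighborhood of $\xi$ in $\d\Omega$, so $\lim_{X\to\xi}u(X)=0$; trivially $u\leq 1$ everywhere. Setting $F:=E\cup(\d B_R\cap\overline{\Omega})$, a closed subset of $\d\Omega_R$, these observations yield $\limsup_{X\to\xi}u(X)\leq \one_F(\xi)$ at every $\xi\in\d\Omega_R$. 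Applying \Lemma{Mm} in $\Omega_R$ with this $F$, $M'=1$, and $m=0$, I obtain $u(X)\leq \omega_{\Omega_R}^{\mathcal{L},X}(F)$; since $E$ and $\d B_R\cap\overline{\Omega}$ are disjoint (for $R$ strictly larger than the diameter needed) and $\omega_{\Omega_R}^{\mathcal{L},X}(E)=0$, the right-hand side equals $\tilde\phi_R(X)$.

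The final step is to show $\tilde\phi_R(X)\downarrow 0$ as $R\to\infty$. Combining Carleman's principle with the monotonicity $\d\Omega\cap\overline{B_R}\subset\d\Omega\cap\overline{B_{R'}}$ yields $\omega_{\Omega_R}^{\mathcal{L},X}(\d\Omega\cap\overline{B_R})\leq \omega_{\Omega_{R'}}^{\mathcal{L},X}(\d\Omega\cap\overline{B_{R'}})$ for $R'>R$, so $\tilde\phi_R=1-\omega_{\Omega_R}^{\mathcal{L},X}(\d\Omega\cap\overline{B_R})$ is non-increasing in $R$. The pointwise decreasing limit $\tilde\phi_\infty$ is then a bounded nonnegative function and, by Harnack's convergence theorem, $\mathcal{L}$-harmonic in $\Omega$. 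Each $\tilde\phi_R$ tends to $0$ as $X\to\xi$ at any regular $\xi\in\d\Omega\cap B_R$, so the sandwich $\tilde\phi_\infty\leq \tilde\phi_R$ yields $\limsup_{X\to\xi}\tilde\phi_\infty(X)\leq 0$ at every $\xi\in\d\Omega$. Applying \Lemma{MP} with exceptional set $F=\{\infty\}$, which has zero $\mathcal{L}$-harmonic measure by hypothesis, and $m=0$ forces $\tilde\phi_\infty\equiv 0$. Combining everything, $u(X)\leq \tilde\phi_R(X)\to 0$ for every $X\in\Omega$, which is the desired conclusion.

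I expect the main obstacle to be the verification of the pointwise inequality $\limsup_{X\to\xi}u(X)\leq \one_F(\xi)$ used to invoke \Lemma{Mm}. The delicate case is $\xi\in \d_{\d\Omega}E$, where $\one_E$ is discontinuous and no one-sided limit of $u$ need exist; this is resolved by the observation that $\one_F(\xi)=1$ at such $\xi$ (as $\xi\in E\subset F$), so the trivial bound $u\leq 1$ is enough. The remaining subtlety is a secondary technical check that the limit $\tilde\phi_\infty$ is genuinely $\mathcal{L}$-harmonic, which is standard from Harnack's convergence theorem applied on compact subsets of $\Omega$ to the monotone family $\{\tilde\phi_R\}$.
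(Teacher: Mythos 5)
Your argument is correct, but it takes a substantially longer route than the paper's. The paper handles the point at infinity by working directly in $\Omega$ and applying \Lemma{Mm} with the closed set $F = E \cup \{\infty\} \subset \partial_\infty\Omega$, $M = t := \sup_X \omega_\Omega^{\mathcal{L},X}(E)$, and $m=0$. Since points of $\partial\Omega \setminus E$ are regular and $E$ is compact, $\limsup_{X\to\xi}\omega_\Omega^{\mathcal{L},X}(E) = 0$ there; on $E$ and at $\infty$ the $\limsup$ is trivially bounded by $t$. The lemma then gives
\[
\omega_{\Omega}^{\mathcal{L},X}(E)
\leq t\,\omega_{\Omega}^{\mathcal{L},X}(E\cup \{\infty\})
=t\,\omega_{\Omega}^{\mathcal{L},X}(E),
\]
using $\omega_\Omega^{\mathcal{L}}(\infty)=0$ to drop the extra term, and $t<1$ forces $\omega_\Omega^{\mathcal{L},X}(E)=0$. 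That is the whole proof. You instead truncate at $\Omega_R = \Omega\cap B_R$, verify regularity of $\partial\Omega_R$, use Carleman plus \Lemma{<1} to get $\omega_{\Omega_R}^{\mathcal{L}}(E)\equiv 0$, apply \Lemma{Mm} in the bounded domain to bound $u$ by the spherical-cap measure $\tilde\phi_R$, and then run a separate monotone-limit argument with \Lemma{MP} to show $\tilde\phi_R\downarrow 0$, invoking $\omega_\Omega^{\mathcal{L}}(\infty)=0$ at that last stage. Your approach is sound and has the pedagogical advantage of never needing a barrier estimate at $\infty$, but it reconstructs by hand what \Lemma{Mm} already delivers by allowing $\infty$ in the exceptional closed set. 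If you want to streamline your write-up, notice that $E\cup\{\infty\}$ is compact in $\partial_\infty\Omega$, so the single-step application of \Lemma{Mm} with $M=t$ and $m=0$ is available from the start.
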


\begin{proof}
Note that for any $x\in \d\Omega\backslash E$, 
\[
\lim_{X\rightarrow x}\omega_{\Omega}^{\mathcal{L},X}(E)=0\]
since $x$ is a regular point. For $x\in E$,
\[
\limsup_{X\rightarrow x}\omega_{\Omega}^{\mathcal{L},X}(E)\leq t:=\sup_{X\in \Omega}\omega_{\Omega}^{\mathcal{L},X}(E)<1. \]
Thus, \Lemma{Mm} with $v=\omega_{\Omega}^{\mathcal{L}}$ implies 
\[
\omega_{\Omega}^{\mathcal{L},X}(E)
\leq (t-0)\omega_{\Omega}^{\mathcal{L},X}(E\cup \{\infty\})+0
=t\omega_{\Omega}^{\mathcal{L},X}(E) < \omega_{\Omega}^{\mathcal{L},X}(E)\]
which is a contradiction.
\end{proof}

What will be particularly useful for us about Ahlfors regular NTA domains (aside from being able to construct more NTA regions within) is the following result.

\begin{theorem}[\cite{DJ90,KP01}]
For all $A,C>1$, integers $d\geq 1$, and $\ve>0$, there are constants $C_{DJ}=C_{DJ}(A,C,d)>0$ and $\delta=\delta(\ve,A,C,d)>0$ such that the following holds. Let $\Omega\subset \bR^{d+1}$ be a $C$-NTA domain with an $A$-Ahlfors $d$-regular boundary. Let $B_{0}$ be a all centered on $\d\Omega$, $Z_{0}\in \Omega\backslash C_{DJ}B$,  $\omega=\omega_{\Omega}^{\mathcal{L},Z_{0}}$ where $\mathcal{L}$ satisfies the \hyperref[d:KP]{KP-condition}. Then $\omega$ is $A_{\infty}$-equivalent to $\cH^{d}$ on $B_{0}\cap \d\Omega$, meaning whenever $F\subset B\cap \d\Omega$ with $B\subset B_{0}$ centered on $\d\Omega$, we have
\[
\frac{\omega(F)}{\omega(B)}<\delta \;\; \mbox{ implies } \frac{\cH^{d}|_{\d\Omega}(F)}{\cH^{d}|_{\d\Omega}(B)}<\ve\]
and
\[
 \frac{\cH^{d}|_{\d\Omega}(F)}{\cH^{d}|_{\d\Omega}(B)}<\delta \;\; \mbox{ implies }\frac{\omega(F)}{\omega(B)} <\ve.\]
In particular, $\omega\ll \cH^{d}\ll \omega$ on $\d\Omega_{0}$. 
\label{t:DJ}
\end{theorem}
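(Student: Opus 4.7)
The plan is to obtain this theorem by combining the David--Jerison \cite{DJ90} strategy of producing big pieces of Lipschitz (or chord-arc) subdomains of $\Omega$ with the Kenig--Pipher \cite{KP01} extension of Dahlberg's $A_{\infty}$ theorem to the class of operators satisfying the \hyperref[d:KP]{KP-condition}. Since the two displayed implications are symmetric forms of the statement $\omega\in A_{\infty}(\cH^{d}|_{\d\Omega})$ on $B_{0}\cap\d\Omega$, it suffices to prove a quantitative two-sided reverse H\"older type comparison on every surface ball $\Delta=B\cap\d\Omega$ with $B\subset B_{0}$ centered on $\d\Omega$.

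First, I would fix such a $\Delta$ and use the dyadic machinery from Section \ref{ss:grid} to replace $\Delta$ by a comparable dyadic cube $Q\subset\d\Omega$ together with its corkscrew point $X_{Q}$. A change of pole argument in the NTA domain $\Omega$ shows that the harmonic measure with pole at $Z_{0}$, suitably normalized on $B$, is comparable to $\omega_{\Omega}^{\mathcal{L},X_{Q}}$ on $\Delta$; this reduction is standard in NTA theory and uses the Harnack chain condition together with the nondegeneracy estimate \Lemma{bourgain}. Hence it is enough to prove a reverse H\"older inequality for $\omega_{\Omega}^{\mathcal{L},X_{Q}}$ on $\Delta$ with constants independent of $Q$.

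Next, the main geometric step is to construct, inside $\Omega$, a sawtooth/Lipschitz subdomain $\Omega_{Q}'\subset T_{Q}^{*}$ of the Carleson box over $Q$ whose boundary agrees with $\d\Omega$ on a ``big piece'' $G_{Q}\subset Q$, i.e.\ $\cH^{d}(G_{Q})\geq\eta\,\cH^{d}(Q)$ for some $\eta>0$ depending only on the NTA and Ahlfors regularity constants. This is where the David--Jerison machinery (bilateral approximation by Lipschitz graphs for Ahlfors regular NTA boundaries, or the Semmes decomposition) is invoked; the uniform rectifiability of $\d\Omega$ furnished by the NTA hypothesis provides corona-type stopping time regions that one thickens into Lipschitz subdomains. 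On such an $\Omega_{Q}'$ the Kenig--Pipher theorem gives that $\omega_{\Omega_{Q}'}^{\mathcal{L}}\in A_{\infty}(\cH^{d}|_{\d\Omega_{Q}'})$ with constants depending only on the allowable parameters and on the Carleson norm of the coefficient oscillation $\ve_{\Omega}^{\mathcal{L}}$; restricted to $G_{Q}$, this yields a genuine reverse H\"older bound for the Kenig--Pipher kernel of $\Omega_{Q}'$.

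Finally, I would transfer this estimate back to $\Omega$. By Carleman's principle (\Lemma{CP}) applied to $\Omega_{Q}'\subset\Omega$, together with the strong maximum principle and the boundary Hopf/Harnack comparison valid in NTA domains, we obtain $\omega_{\Omega}^{\mathcal{L},X_{Q}}(F)\gtrsim \omega_{\Omega_{Q}'}^{\mathcal{L},X_{Q}}(F)$ for Borel $F\subset G_{Q}$; in the other direction, Carleman directly gives the opposite inequality for subsets of $\d\Omega\cap\d\Omega_{Q}'$. Combining these comparisons with the reverse H\"older bound on $G_{Q}$ produces a ``big piece of $A_{\infty}$'' inside each $Q$, and a now-standard self-improvement (John--Nirenberg type stopping-time argument, or the Bennett--DeVore--Sharpley lemma in the homogeneous space $(\d\Omega,\cH^{d})$) upgrades this to the full $A_{\infty}$ property on $B_{0}\cap\d\Omega$. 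I expect the principal obstacle to be the geometric big-pieces construction together with verifying that the KP Carleson constant for $\mathcal{L}$ passes correctly to the Lipschitz subdomains $\Omega_{Q}'$; the rest of the argument is relatively standard NTA/$A_{\infty}$ bookkeeping.
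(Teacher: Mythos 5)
Your proposal takes essentially the same route as the paper, whose argument for this theorem is only a sketch in Appendix A: prove that the Kenig--Pipher Carleson condition passes to subdomains (this is Lemma \ref{l:Car2to1}, and is exactly the technical point you flag as the main obstacle), then observe that the David--Jerison big-pieces-of-Lipschitz-subdomains argument carries over once Kenig--Pipher's Lipschitz-domain $A_{\infty}$ theorem is substituted for Dahlberg's. Your write-up actually supplies more of the intermediate scaffolding (change of pole, sawtooth/Carleson box localization, Carleman transfer, and the big-pieces-to-full-$A_{\infty}$ self-improvement) than the paper does, but the structure and the two key ingredients are identical.
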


For the case of harmonic measure, the $d=1$ case is due to Lavrentiev \cite{Lav36}, and to David and Jerison for the case of $d>1$ \cite{DJ90}. In \cite{ABHM15}, it was noted that this more general version holds by a modification using a theorem of Kenig and Pipher. We fill in these details in the appendix.


%

%
%

\subsection{Localization of elliptic measure estimates}

In this section we prove a lemma that will allow us to localize our proofs.
 \def\tom{\widetilde{\Omega}}
\begin{lemma}\label{l:reduction}
Let $\Omega\subset \bR^{d+1}$ be a regular domain, either bounded or such that $\infty$ is Wiener regular. Let $B$ be any ball centered on $\d\Omega$ so that $\Omega$ has the CDC in $2B$. Then there is a bounded open set $\tom\subset \Omega$ such that 
\begin{enumerate}
\item $\tom \supseteq B\cap \Omega$
\item $\d\tom \cap \d\Omega= \cnj{B}\cap \d\Omega$
\item If $\Omega$ has the CDC in $2B$, then $\widetilde \Omega$ and any of its connected components has the CDC.
\item If $\Omega$ has big boundary in $2B$, then $\widetilde \Omega$ and any of its connected components has big boundary.
\end{enumerate}
\end{lemma}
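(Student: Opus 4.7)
The strategy is to realize $\widetilde\Omega$ as the intersection of $\Omega$ with a carefully chosen bounded open neighbourhood $V$ of $\overline B$, i.e.
\[
 \widetilde\Omega := V\cap\Omega,\qquad \overline B\subseteq V\subseteq 2B,
\]
where $V$ is engineered so that $V\cap\partial\Omega = \overline B\cap\partial\Omega$ and $\partial V\cap\partial\Omega=\emptyset$. A natural concrete model is to take, for a small parameter $\epsilon>0$,
\[
 V := B\;\cup\;\{X\in 2B\setminus \overline{B}: \dist(X,\overline B)<\epsilon\,\dist(X,\partial\Omega\setminus \overline B)\},
\]
which ``bulges'' slightly outside $\overline B$ into $\Omega\cup(\overline\Omega)^{c}$ along directions transverse to $\partial\Omega$, while remaining pinned to $\overline B$ near the rest of $\partial\Omega$.

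Boundedness and property~(1) are immediate from $V\subseteq 2B$ and $B\subseteq V$. For property~(2), the inclusion $\partial\widetilde\Omega\cap\partial\Omega\subseteq\overline B\cap\partial\Omega$ follows from
\[
 \partial(V\cap\Omega)\cap\partial\Omega \;\subseteq\; (V\cap\partial\Omega)\cup(\partial V\cap\partial\Omega) \;=\; \overline B\cap\partial\Omega,
\]
using the two defining properties of $V$ above. For the reverse inclusion, any $x\in\overline B\cap\partial\Omega$ admits a sequence $X_{n}\in\Omega$ with $X_{n}\to x$ (since $\Omega$ is open and $x\in\partial\Omega$), and one checks directly that $X_{n}\in V$ eventually: if $x\in B$ this is trivial, while for $x\in\partial B\cap\partial\Omega$ the ratio $\dist(X_{n},\overline B)/\dist(X_{n},\partial\Omega\setminus\overline B)$ can be made less than $\epsilon$ by choosing $X_{n}$ whose distance to $\partial\Omega$ is small relative to their distance to $\partial B$.

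For properties~(3) and~(4) the key observation is $\widetilde\Omega\subseteq\Omega$: for every ball $B'$ centred on $\overline B\cap\partial\Omega=\partial\widetilde\Omega\cap\partial\Omega$ we have $B'\setminus\widetilde\Omega\supseteq B'\setminus\Omega$, and both $\cH^{d}_{\infty}$ and the variational $2$-capacity are monotone under inclusion, so the big-boundary bound~\eqref{e:kcontent} and the CDC transfer verbatim from $\Omega$ to $\widetilde\Omega$. For a ball $B'$ centred on the remaining part $\partial\widetilde\Omega\setminus\partial\Omega\subseteq\partial V\cap\Omega$, both conditions hold automatically because $\partial V$ is, by construction, locally a Lipschitz hypersurface in $\Omega$, which has $\cH^{d}$-measure comparable to $r^{d}$ in balls of radius $r$ and splits space into two halves, giving the CDC. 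Each connected component of $\widetilde\Omega$ inherits both bounds because they are local conditions verified at each boundary point.

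The principal obstacle is ensuring property~(2) holds as an \emph{equality} rather than a containment. When $\partial B$ meets $\partial\Omega$ transversally any small enlargement of $\overline B$ works; the delicate case is tangential contact between $\partial B$ and $\partial\Omega$ (which the CDC does not preclude), where the shape of $V$ near $\partial B\cap\partial\Omega$ must be tuned so that $V$ captures all such tangent points in $\partial\widetilde\Omega$ without picking up any extra piece of $\partial\Omega$. The CDC (respectively big-boundary) hypothesis in $2B$ provides precisely the geometric regularity of $\Omega$ near $\overline B\cap\partial\Omega$ needed to carry out this tuning.
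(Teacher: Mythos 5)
Your approach is genuinely different from the paper's: the paper builds $\tom$ out of fattened Whitney cubes of $\Omega$ that meet $B$, whereas you try to cut $\Omega$ with an explicit level set of distance functions. The level-set route is appealingly elementary, but the construction you write down has a real gap, and the hand-waving in the final paragraph does not repair it.

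First, a minor but symptomatic error: the stated design goal ``$V\cap\partial\Omega=\cnj B\cap\partial\Omega$ and $\partial V\cap\partial\Omega=\emptyset$'' is false for your $V$. Points $x\in\partial B\cap\partial\Omega$ are not in $V$ (they are neither in $B$ nor in $2B\setminus\cnj B$), but they are in $\cnj V\setminus V=\partial V$. So $V\cap\partial\Omega=B\cap\partial\Omega$ and $\partial V\cap\partial\Omega=\partial B\cap\partial\Omega$. The union still equals $\cnj B\cap\partial\Omega$, so the forward inclusion in (2) survives, but the ``engineering'' you describe does not happen.

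The serious problem is the reverse inclusion $\cnj B\cap\partial\Omega\subseteq\partial\tom$, which can fail for your $V$. Take $x\in\partial B\cap\partial\Omega$ and suppose $\Omega$ approaches $x$ only from outside $\cnj B$, through a narrow cusp. Concretely, in polar coordinates around the centre of $B$ with $r_B=1$, take $\Omega\cap B(x,\delta)=\{(r,\theta):1<r<1+\delta,\;|\theta|<(r-1)^2\}$. For $X=(r,0)$ in the cusp we have $\dist(X,\cnj B)=r-1$ while $\dist(X,\partial\Omega\setminus\cnj B)\sim(r-1)^2$, so the ratio $\dist(X,\cnj B)/\dist(X,\partial\Omega\setminus\cnj B)\to\infty$ as $r\to1^+$. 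No such point lies in $V$, there are no points of $\Omega\cap B$ near $x$, hence $x\notin\partial\tom$ and (2) fails. This configuration is \emph{not} excluded by the CDC in $2B$: the capacity of $B(x,\rho)\setminus\Omega$ is driven by the half-ball $B(x,\rho)\cap\cnj B$ and is $\gtrsim\rho^{d-1}$ regardless of the cusp. So your closing assertion that ``the CDC... provides precisely the geometric regularity... needed to carry out this tuning'' is exactly where the argument breaks. The paper's Whitney construction avoids this automatically because Whitney cubes of $\Omega$ shrink to follow the cusp, so $\tom$ does accumulate at $x$; a level-set condition tied to a fixed $\epsilon$ does not adapt to the local thinness of $\Omega$.

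Two further points would need work even if (2) were fixed. The claim that $\partial V\cap\Omega$ is locally a Lipschitz hypersurface rests on the zero set of $f(X)=\dist(X,\cnj B)-\epsilon\,\dist(X,\partial\Omega\setminus\cnj B)$ being a Lipschitz graph; this is plausible (one can argue $f$ is strictly increasing at rate $\geq1-\epsilon$ along rays from the centre of $B$, then run a graph-over-the-sphere estimate), but it is not automatic from ``$f$ is Lipschitz'' and you do not supply it. And for the big-boundary bound one needs a lower Ahlfors bound $\cH^d_\infty(\partial V\cap B')\gtrsim r_{B'}^d$, not merely that $\partial V$ is a set of locally finite $\cH^d$ measure; this again relies on the Lipschitz-graph structure, which is the same unproved step. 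The paper instead verifies (3) and (4) directly by splitting on whether $r\gtrless\dist(x,\partial\Omega)$: when the boundary is near it quotes the hypothesis on $\Omega$ in $2B$, and when it is far the relevant piece of $\partial\tom$ is a flat face of a fattened Whitney cube, for which the Hausdorff content and capacity lower bounds are immediate. That is the cleaner route and sidesteps all of the issues above.
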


%

%

 \begin{proof}
If $\Omega\subset 2B$, then we just set $\tom=\Omega$ and we are done, so assume $\Omega\backslash 2B\neq\emptyset$.  

Let $C_{1}>1$ be large and $\W'=\W'(\Omega)$ be the set of maximal dyadic cubes $I$ in $\W(\Omega)$ for which $C_{1}I\subset \Omega$. For $\lambda\in (0,1/2)$ small, let
\[
\widetilde{\W}= \{I\in \W':\;\; I\cap B\neq\emptyset\}\]
 and
\[\tom=\bigcup\{ \mbox{int } (1+\lambda)I: I\in \widetilde{\W}\}.\]
Note that for $I\in \widetilde{\W}$, $\dist(I,\partial\Omega)\geq \frac{(C_{1}-1)}{2}\ell(I)$, and since $B$ is centered on $\d\Omega$ and $I$ is maximal, 
\[
\ell(I)
\leq
\frac{2}{C_{1}-1} \dist(I,\d\Omega) 
\leq \frac{2r_{B}}{C_{1}-1}\]
and so for $C_{1}$ large enough,
\begin{equation}
\label{e:tildeom<3/2}
\widetilde{\Omega}\subset \frac{3}{2}B.
\end{equation}

It is also clear that $\tom \supseteq B\cap \Omega$ and $\d\Omega\cap \tom=\cnj{B}\cap \d\Omega$. 

Let $\hat{\Omega}$ be any connected component of $\tom$ or $\tom$ itself. We will show that $\Omega$ having big boundary or the CDC in $2B$ implies $\hat{\Omega}$ has big boundary or the CDC. Let $c=c_{2B\cap \d\Omega}>0$ be as in \Definition{LBB}. Let $x\in \d\hat{\Omega}$ and 
\begin{equation}\label{e:r<3rb}
0<r<\diam \d \hat{\Omega}\stackrel{\eqn{tildeom<3/2}}{\leq} \diam \tom \leq 3r_{B}.
\end{equation}
We consider the  following two cases $r>10\dist(x,\d\Omega)$ and $r<10\dist(x,\d\Omega)$.  

 \begin{enumerate}
 \item[(1)] Let $r>10\dist(x,\d\Omega)$.
\end{enumerate} 
In this case there is $y\in \d\Omega\cap B(x,r/10)$ and since $x\in \d \widetilde{\Omega}$ and $B\cap \Omega\subseteq \widetilde{\Omega}\subseteq  \frac{3}{2}\cnj{B}$ it follows that
\begin{align*}
\dist(y,B) &   \leq |y-x|+\frac{r_{B}}{2}  
<\frac{r}{10}+\frac{r_{B}}{2} \stackrel{\eqn{r<3rb}}{<} \frac{3r_{B}}{10}+\frac{r_{B}}{2} =\frac{4}{5}r_{B}.
\end{align*}
Hence we know $y\in B(y,r_{B}/5)\subset \d\Omega\cap 2B$ thereupon we conclude that 
 \[
 \cH^{d}_{\infty} (B(x,r)\backslash \hat{\Omega})
 \geq \cH^{d}_{\infty}(B(x,r)\backslash \Omega)
 \geq \cH^{d}_{\infty}(B(y,r/15)\backslash \Omega)
 \geq c(r/2)^{d},\]
 where we used that $B(y,r/15) \subset B(y, r_B/5) \subset 2B$. 
 
 Similarly, if the CDC holds in $2B$, then since $B(y,r/5)\subset B(x,2r)$
  \begin{align*}
  \textup{cap} (B(x,r)\backslash \hat{\Omega},B(x,2r))
&  \geq   \textup{cap}(B(x,r)\backslash \Omega,B(x,2r))\\
&  \geq   \textup{cap}(B(y,r/15)\backslash \Omega,B(x,2r)))\\
&  \gec \textup{cap}(B(y,r/15)\backslash \Omega,B(y,2r/15)))\\
&  \gec (r/15)^{d-1}.\end{align*}
 
\begin{enumerate}
\item[(2)] Let $r<10\dist(x,\d\Omega)$.
\end{enumerate}
 In this case $x\in \d\hat{\Omega}\backslash \d\Omega$ and there is $I\in \widetilde{\W}$ so that $x\in \d (1+\lambda) I$. Then for $\lambda>0$ small enough, $x$ is contained in a $d$-dimensional rectangle $R$ in $\d(1+\lambda )I\cap \d\hat{\Omega}$ with sidelengths comparable to $\ell(I)^{d}$. Thus,
\[
\cH_{\infty}^{d}(B(x,r)\cap \d\hat{\Omega})
\geq \cH^{d}_{\infty}(B(x,\dist(x,\d\Omega)/2)\cap R)
\gec \dist(x,\d\Omega)^{d}
\gec r^{d}.\]
Moreover, $\textup{cap} (B(x,r)\backslash \hat{\Omega},B(x,2r))\gec r^{d-1}$ by Theorem \ref{thm:Martio}.

 This proves that each component has big boundary if $\Omega$ has big boundary in $2B$, and the CDC if $\Omega$ has the CDC in $2B$. The proof that $\Omega$ having the CDC in $2B$ implies $\widetilde \Omega$ has the CDC is similar and leave the details to the reader (since all we have used about $\cH^{d}_{\infty}$ is that it is monotone, and the same goes for capacity).\\

\end{proof}

Here we develop a local version of \Lemma{bourgain}.

\begin{lemma}\label{l:localbourgain}
Let $\Omega\subset \bR^{d+1}$ be a regular domain, either bounded or such that $\infty$ is Wiener regular. Let $B$ be any ball centered on $\d\Omega$ so that $\Omega$ has the CDC in $2B$ and $\d\Omega\backslash 2B\neq\emptyset$. Then
\begin{equation}\label{e:localbourgain}
\omega_{\Omega}^{X}(2B)\gec 1 \;\; \mbox{ for all }X\in \Omega\cap B.\end{equation}
\end{lemma}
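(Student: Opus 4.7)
The plan is to apply the reduction lemma (\Lemma{reduction}) to construct a bounded auxiliary subdomain $\widetilde\Omega\subseteq \Omega$ satisfying the CDC globally, with $\Omega\cap B\subseteq \widetilde\Omega\subseteq \tfrac{3}{2}B$ and crucially $\partial\widetilde\Omega\cap\partial\Omega=\overline{B}\cap\partial\Omega$, and then to apply the global Bourgain lemma (\Lemma{bourgain}) in $\widetilde\Omega$ and push the resulting estimate back to $\Omega$ via Carleman's principle (\Lemma{CP}).

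First I would use \Lemma{CP}, which gives $\omega_\Omega^X(E)\geq \omega_{\widetilde\Omega}^X(E)$ for every Borel $E\subseteq \overline{B}\cap\partial\Omega=\partial\widetilde\Omega\cap\partial\Omega$ and every $X\in\widetilde\Omega$; in particular
\[
\omega_\Omega^X(2B\cap\partial\Omega)\;\geq\;\omega_\Omega^X(\overline{B}\cap\partial\Omega)\;\geq\;\omega_{\widetilde\Omega}^X(\overline{B}\cap \partial\Omega),
\]
so it suffices to prove $\omega_{\widetilde\Omega}^X(\overline{B}\cap\partial\Omega)\gec 1$ for every $X\in \Omega\cap B\subseteq \widetilde\Omega$. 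To obtain this, I would apply \Lemma{bourgain} inside $\widetilde\Omega$ at a point and scale adapted to $X$: when $\dist(X,\partial\Omega)$ is small compared to $r_B$, at the nearest point $y_X\in \overline{B}\cap\partial\Omega\subseteq \partial\widetilde\Omega$ of $X$ with scale $r_X\sim \dist(X,\partial\Omega)$ chosen small enough that $B(y_X,2r_X)\cap\partial\widetilde\Omega\subseteq \overline{B}\cap\partial\Omega$; when $X$ is deep in $\widetilde\Omega$, at the center $x_0\in \partial\widetilde\Omega\cap\partial\Omega$ of $B$ at a scale $r\sim r_B$, followed by an interior Harnack chain within $\widetilde\Omega$ to transfer the estimate to $X$. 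The Bourgain hypothesis $\widetilde\Omega\setminus B(\cdot,2r)\neq\emptyset$ is furnished by the assumption $\partial\Omega\setminus 2B\neq \emptyset$: any boundary point outside $2B$ is accumulated by points of $\Omega$, so $\Omega\setminus 2B\neq\emptyset$, and the Whitney-cube construction of $\widetilde\Omega$ then yields the analogous extension at the relevant scale.

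The hard part will be controlling the ``Whitney ceiling'' $\partial\widetilde\Omega\setminus\partial\Omega$, which lies in the interior of $\Omega$ and is not part of $\partial\Omega$: \Lemma{bourgain} gives a lower bound on $\omega_{\widetilde\Omega}^X(B(y,2r)\cap\partial\widetilde\Omega)$, but one really needs a bound on $\omega_{\widetilde\Omega}^X(\overline{B}\cap\partial\Omega)$. For the small-scale application at $y_X$ one picks $r_X$ small enough that $B(y_X,2r_X)$ avoids the ceiling; the delicate balance is thus between this requirement, the coverage requirement $X\in B(y_X,r_X)$, and the Bourgain non-emptiness condition on $\widetilde\Omega\setminus B(y_X,2r_X)$. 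In the case where these cannot all be simultaneously arranged, I would fall back on the strong Markov decomposition
\[
\omega_\Omega^X(2B\cap\partial\Omega)\;=\;\omega_{\widetilde\Omega}^X(\overline{B}\cap\partial\Omega)+\int_{\partial\widetilde\Omega\setminus\partial\Omega}\omega_\Omega^Z(2B\cap\partial\Omega)\,d\omega_{\widetilde\Omega}^X(Z),
\]
in which the ceiling points $Z$ lie in $\widetilde\Omega\subseteq \tfrac{3}{2}B$, combined with the already-established bound at near-boundary $Z$, to run a self-improvement argument that converts the ceiling contribution into reinforcement rather than loss and closes the proof.
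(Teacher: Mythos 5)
Your overall plan — localize with \Lemma{reduction}, compare $\omega_{\widetilde\Omega}$ to $\omega_\Omega$ via Carleman, and invoke \Lemma{bourgain} on $\widetilde\Omega$ — is exactly the paper's. But you apply \Lemma{reduction} to the ball $B$, whereas the paper applies it to $2B$, and you miss one structural fact about the construction that makes the rest of the argument a one-liner and that renders most of your case analysis (and the strong-Markov ``fallback'') unnecessary.

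The fact you miss is that the Whitney ceiling lies \emph{outside} the ball to which \Lemma{reduction} is applied. If $\widetilde\Omega$ is built from the fattened cubes $(1+\lambda)I$ with $I\in\widetilde{\W}$, i.e.\ $I\cap B'\neq\emptyset$, then any $z\in\partial\widetilde\Omega\cap\Omega$ lies in $\overline{J}$ for some Whitney-type cube $J\in\W'$; if $J\cap B'\neq\emptyset$ then $z\in\overline J\subset\operatorname{int}(1+\lambda)J\subset\widetilde\Omega$, a contradiction, so $J\cap B'=\emptyset$ and hence $z\notin B'$. Applying \Lemma{reduction} to $2B$ therefore gives $2B\cap\partial\widetilde\Omega\subset\partial\Omega$ automatically. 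This has two consequences: (i) Carleman applies directly to give $\omega_{\widetilde\Omega}^X(2B)=\omega_{\widetilde\Omega}^X(2B\cap\partial\Omega)\leq\omega_\Omega^X(2B)$, and (ii) a \emph{single} application of \Lemma{bourgain}, at the center $x_0$ of $B$ with radius $r_B$, gives $\omega_{\widetilde\Omega}^X(2B)\gec 1$ for \emph{every} $X\in\widetilde\Omega\cap B\supseteq\Omega\cap B$, with no case split on $\dist(X,\partial\Omega)$ and no Harnack chain. The Bourgain nondegeneracy $\widetilde\Omega\setminus 2B\neq\emptyset$ follows from $\partial\Omega\setminus 2B\neq\emptyset$ and connectedness of $\Omega$: a path in $\Omega$ from $X$ to a point outside $2B$ meets $\partial(2B)$ at some $p\in\Omega$, and the Whitney cube through $p$ that touches $2B$ belongs to $\widetilde{\W}$, so its fattening pokes out of $2B$.

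Your version, applying \Lemma{reduction} to $B$, has genuine problems even granting the ceiling observation. The small-scale branch asserts that the nearest boundary point $y_X$ of $X\in B$ lies in $\overline B\cap\partial\Omega=\partial\widetilde\Omega\cap\partial\Omega$, but one only has $|y_X-x_0|\leq|y_X-X|+|X-x_0|<2r_B$, so $y_X\in 2B$ and may well lie outside $\overline B$; then $y_X\notin\partial\widetilde\Omega$ and Bourgain cannot be applied there. The deep-$X$ branch invokes an ``interior Harnack chain within $\widetilde\Omega$'', but $\widetilde\Omega$ carries no Harnack-chain hypothesis, so this step is unjustified. Finally, the proposed self-improvement via the strong Markov identity is not closed: the ceiling $Z$'s live in $\widetilde\Omega\subset\tfrac32 B$ but not necessarily in $B$, so the quantity $\omega_\Omega^Z(2B)$ you would feed back into the integral is precisely the quantity you are trying to prove is bounded below, and you have no a priori lower bound for it at such $Z$. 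In short: use $2B$ in \Lemma{reduction}, observe the ceiling lies outside $2B$, and the whole thing collapses to Carleman followed by one application of Bourgain at scale $r_B$.
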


\begin{proof}
Let $\tom\subset \Omega$ be as in \Lemma{reduction} for the ball $2B$, so $\tom$ has the CDC. Then for $X\in B$, by \hyperref[l:CP]{Carleman's Principle} and \Lemma{bourgain}
\[
\omega_{\Omega}^{X}(2B)
\geq \omega_{\tom}^{X}(2B)
\stackrel{\eqn{bourgain}}{\gec}
1.\]
\end{proof}

A consequence of this is the following lemma, which says that if a point in $\Omega$ is close to a point in the interior of a set $F\subset \Omega$, then $\omega_{\Omega}^{X}(F)$ is large.

\begin{lemma}\label{l:chain}
Let $\Omega\subset \bR^{d+1}$ be an open set with the CDC in a ball $2B_{0}$  centered on $\d\Omega$. Let $F\subset \d\Omega$, and $\{B_{j}\}_{j=1}^{N+1}$ are a sequence of balls such that, for some $c>1$, 
\begin{enumerate}
\item $cB_{j}\subset B_{0}$ for all $j=1,...,N+1$,
\item $B_{j}\cap B_{j+1}\neq\emptyset$ for all $j=1,...,N$,
\item $cB_{j}\subset \Omega$ for $j=1,...,N$, 
\item $B_{N+1}\cap \d\Omega\neq\emptyset$,
\item $cB_{N+1}\cap \d\Omega\subset F$.
\end{enumerate}
Let $Y_{j}$ be the centers of the $B_{j}$. Then 
\[
\omega_{\Omega}^{\mathcal{L},Y_{j}}(F)\gec_{c,N}1 \;\; \mbox{ for all } 1\leq j\leq N.
\]
\end{lemma}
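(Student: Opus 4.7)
The plan is to combine \Lemma{localbourgain} (which lower-bounds $\omega(F)$ at interior points close to $cB_{N+1}\cap\d\Omega\subset F$) with iterated Harnack's inequality along the chain of balls $B_{N},B_{N-1},\ldots,B_{j}$. First, fix $x_{0}\in B_{N+1}\cap\d\Omega$ and consider the ball $B'=B(x_{0},(c-1)r_{B_{N+1}}/4)$. By the triangle inequality $2B'\subset cB_{N+1}$, hence $2B'\cap\d\Omega\subset F$; moreover $B'\subset cB_{N+1}\subset B_{0}\subset 2B_{0}$, so $\Omega$ has the CDC in $2B'$. After dismissing the trivial case $\d\Omega\subset 2B'$ (in which $F=\d\Omega$ and $\omega(F)\equiv 1$), \Lemma{localbourgain} yields $\omega_{\Omega}^{\mathcal{L},X}(F)\gec 1$ for all $X\in B'\cap\Omega$, with constant depending only on $d$ and the CDC constant.

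Next, since $cB_{k}\subset\Omega$ for $k\le N$, Harnack's inequality inside $B_{k}$ holds with constant $H=H(c,d)$. For $j\le k<N$ pick $Z_{k}\in B_{k}\cap B_{k+1}\subset\Omega$; applying Harnack inside $B_{k}$ and $B_{k+1}$ yields $\omega_{\Omega}^{\mathcal{L},Y_{k}}(F)\sim_{H}\omega_{\Omega}^{\mathcal{L},Z_{k}}(F)\sim_{H}\omega_{\Omega}^{\mathcal{L},Y_{k+1}}(F)$. Telescoping reduces matters to showing $\omega_{\Omega}^{\mathcal{L},Y_{N}}(F)\gec_{c}1$. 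The key geometric input for this connecting step comes from the radius comparison implicit in $Z_{N}\in B_{N}\cap B_{N+1}$: the inclusion $Z_{N}\in B_{N}$ (together with $cB_N\subset\Omega$) forces $\dist(Z_{N},\d\Omega)\ge (c-1)r_{B_{N}}$, while $Z_{N},x_{0}\in B_{N+1}$ with $x_{0}\in\d\Omega$ force $\dist(Z_{N},\d\Omega)\le|Z_{N}-x_{0}|<2r_{B_{N+1}}$; hence $r_{B_{N}}\le\tfrac{2}{c-1}r_{B_{N+1}}$. This bound allows one to choose a point of $B'\cap B_{N}$ (possibly after enlarging the Bourgain ball $B'$ within the constraint $2B'\subset cB_{N+1}$, the enlargement factor depending only on $c$), at which the Bourgain lower bound from the first step applies; a single Harnack step inside $B_{N}$ then transports the estimate to $Y_{N}$.

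The main obstacle is precisely this connecting step between \Lemma{localbourgain} and the Harnack chain. Since $\Omega$ is only assumed to satisfy the CDC (not the full Harnack chain property), we cannot join an arbitrary interior point to $B'\cap\Omega$ by a Harnack chain, and the ball $B_{N+1}$ itself is not assumed to lie in $\Omega$; thus the proof must work with the specific interior balls provided. The radius comparison $r_{B_{N}}\lesssim_{c}r_{B_{N+1}}$ derived above is exactly what makes it possible to place the Bourgain ball so as to also meet $B_{N}$, after which the rest is routine iterated Harnack with constants depending on $c$, $d$, and $N$.
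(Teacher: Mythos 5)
The overall shape of your argument -- iterated Harnack to reduce to $j=N$, then a Bourgain-type lower bound (\Lemma{localbourgain}) near a boundary point in $cB_{N+1}$ -- is the same as the paper's, and the radius comparison $r_{B_N}\lesssim_c r_{B_{N+1}}$ you derive is correct. But the connecting step you flag as "the main obstacle" really is one, and the radius comparison does not resolve it. The claim that, after enlarging within $2B'\subset cB_{N+1}$, one can always find a point of $B'\cap B_N$, is false in general. An arbitrary $x_0\in B_{N+1}\cap\d\Omega$ can sit on the far side of $B_{N+1}$ from $B_N$, at distance up to $r_{B_N}+2r_{B_{N+1}}$ from $Y_N$, while the constraint $2B'\subset cB_{N+1}$ bounds $r_{B'}<\tfrac{c}{2}r_{B_{N+1}}$. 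Combining this with $|Y_N-x_0|\ge \dist(Y_N,\d\Omega)\ge cr_{B_N}$, the inclusion $B'\cap B_N\neq\emptyset$ would force $(c-1)r_{B_N}<r_{B'}<\tfrac{c}{2}r_{B_{N+1}}$; this is incompatible with $r_{B_N}\sim r_{B_{N+1}}$ once $c$ is not large. Concretely, take $c=2$, $\Omega=B(0,2.8)\subset\R^2$, $B_N=B(0,1)$, $B_{N+1}=B((1.9,0),1)$: then $cB_N\subset\Omega$, $B_N\cap B_{N+1}\neq\emptyset$, $B_{N+1}\cap\d\Omega\neq\emptyset$, but every $x_0\in B_{N+1}\cap\d\Omega$ has $|Y_N-x_0|=2.8$, whereas $r_{B'}+r_{B_N}<\tfrac{c}{2}+1=2$. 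So $B'\cap B_N=\emptyset$ regardless of how you choose $x_0$ or enlarge $B'$.

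What your argument is missing is a mechanism to find a boundary point that is actually reachable from $B_N$ inside $\Omega$, rather than an arbitrary one. The paper does this by constructing a bridging ball $B'$ whose center lies on the segment from $Y_N$ to $Y_{N+1}$, whose boundary contains $\d B_N\cap \d B_{N+1}$, and which is the first such ball (as the center slides from $Y_N$ toward $Y_{N+1}$) to touch $\d\Omega$; this $B'$ is contained in $(B_N\cup B_{N+1})\cap\Omega$. The touching point $\zeta\in\d B'\cap\d\Omega$ then lies inside $B_{N+1}$ and is close to $B'$ -- not merely close to $B_{N+1}$ -- and one arranges $\diam(\d B_N\cap\d B_{N+1})\sim r_{B_N}\sim r_{B_{N+1}}$ so that $B_N\cup B'$ is an NTA domain of comparable scale. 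A Harnack chain inside $B_N\cup B'$ (not merely inside $B_N$) then connects $Y_N$ to a point of $\Omega$ near $\zeta$ where the Bourgain estimate for $B(\zeta,(c-1)r_{B_{N+1}})$ applies. Without a construction of this kind, there is no way to chain $Y_N$ to the Bourgain ball, and your proof does not close.
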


\begin{proof}
Let $\omega^{X}= \omega_{\Omega}^{\mathcal{L},X}$. By adjusting or replacing our balls if necessary, we can assume without loss of generality that our balls also satisfy
\begin{equation*}
\diam (\d B_{N}\cap \d B_{N+1})\sim r_{B_{N}}.
\end{equation*}

There is a ball $B'\subset B_{N}\cup B_{N+1}\cap \Omega$ so that 
\[\d B_{N}\cap \d B_{N+1} \subset \d B'\] 
and $\d B'\cap \d\Omega\neq\varnothing$, see Figure \ref{f:bj}. 
\begin{figure}[!ht]
\includegraphics[width=250pt]{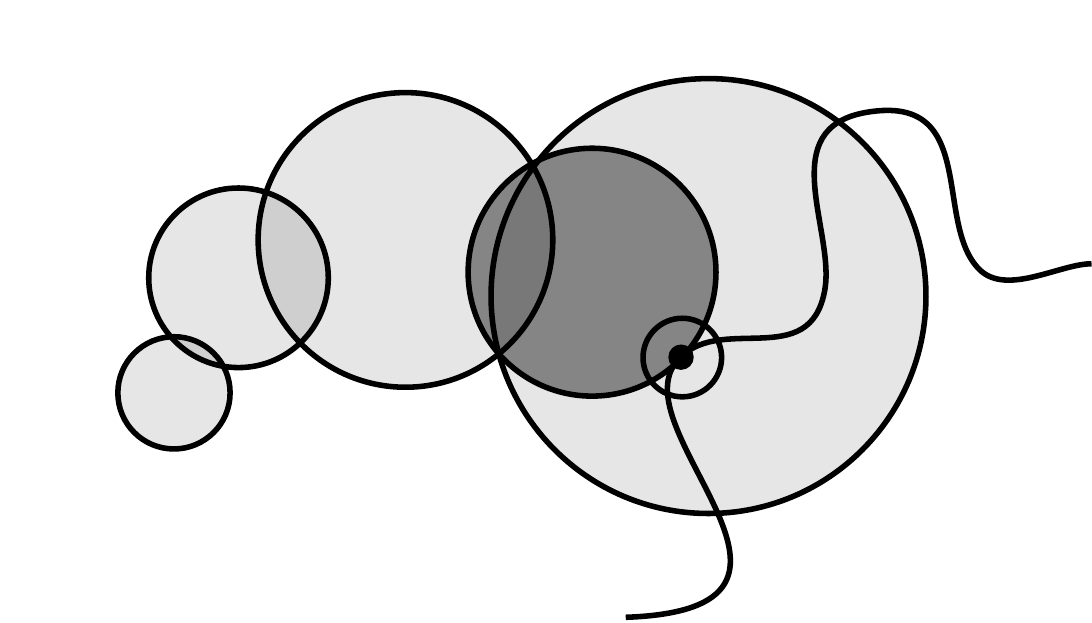}
\begin{picture}(0,0)(250,0)
\put(30,50){$B_{1}$}
\put(75,100){$B_{N}$}
\put(150,110){$B_{N+1}$}
\put(135,75){$B'$}
\put(160,50){$B''$}
\put(120,0){$\d\Omega$}
\end{picture}
\caption{}
\label{f:bj}
\end{figure}
To find this, note that the centers of all balls in $\bR^{d+1}$ that contain $\d B_{N}\cap \d B_{N+1}$ in their boundaries are exactly the infinite line passing through the centers of $B_{N}$ and $B_{N+1}$. Moreover, the ones with centers on the segment between the centers of $B_{N}$ and $B_{N+1}$ are contained in $B_{N}\cup B_{N+1}$. As $B_{N}\subset \Omega$ and $B_{N+1}\not\subset \Omega$, there is a ball $B'$ whose center is on this segment such that $\d B'\cap \d\Omega\neq\varnothing$.

Let 
\[\zeta\in \d B'\cap \d\Omega\cap B_{N+1},\]
and
\[B''=B(\zeta,(c-1)r_{B_{N+1}}) \subset cB_{N+1}.\]
In particular, 
\[B''\cap \d\Omega\subset c B_{N+1} \cap \d\Omega \subset F.\]

Now let $B'''\subset \frac{1}{2} B''\cap B'$ be a ball of radius $ r_{B''}/4$ and let ${\mathcal{Z}} \in \Omega$ denote its center. Since $B_{j}$ is a Harnack chain, $r_{B_{N}}\sim r_{B_{N+1}}$, and by assumption, 
\begin{align*}
r_{B_{N}} &\sim \diam (\d B_{N}\cap \d B_{N+1})\leq \diam B' \\
&\leq \diam B_{N}+\diam B_{N+1}\lec \diam B_{N}.
\end{align*}
Thus, $B_{N}\cup B'$ is itself an NTA domain. Note that by assumptions (2) and (3) and Harnack's inequality, $\omega^{Y_{j}}(F)\sim \omega^{Y_{j+1}}(F)$ for all $j=1,...,N$. Hence, by the Harnack chain condition inside $B_{N}\cup B'$, repeated use of Harnack's inequality on the sets $B_{j}$ for $j=1,...,N$, (1), \Lemma{localbourgain}, and the facts that $B'' \subset \Omega_i$ and ${\mathcal{Z}}\in \frac{1}{2} B''$, we have for $1\leq j\leq N$ that
\begin{align*}
\omega^{Y_{j}}(F)
& \gec_{C,N} \omega^{Y_{N}}(F)
\gec_{\delta} \omega^{\mathcal{Z}}(F)
\geq \omega^{\mathcal{Z}}(B''\cap \d\Omega)
\stackrel{(1) \atop \eqn{localbourgain}}{\gec} 1.
\end{align*}
\end{proof}

\begin{lemma}\label{l:reduction2}
Let $\Omega\subset \bR^{d+1}$ be a regular domain. If $d=1$, assume $\omega_{\Omega}^{\mathcal{L}}(\infty)=0$ or $\infty$ is regular. Let $B$ be any ball centered on $\d\Omega$ so that $\Omega$ has the CDC (or big boundary) in $2B$. If $E\subset B\cap \d\Omega$ is a Borel set with $\omega_{\Omega}^{\mathcal{L},X}(E)>0$ for some $X\in \Omega$, then there is a connected open set $\hat{\Omega}\subset \Omega$ with the CDC (or big boundary) and contained in the component of $\Omega$ containing $X$. There is also $\hat{X}\in \hat{\Omega}$ for which $E\subset \d\Omega\cap \d\hat{\Omega}$ and $\omega_{\hat{\Omega}}^{\mathcal{L},\hat{X}}(E)>0$. 
\end{lemma}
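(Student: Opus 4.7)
The plan is to apply \Lemma{reduction} to build a bounded open neighborhood $\tom\subset\Omega$ of $B\cap\d\Omega$ that inherits the CDC (or big boundary) from $\Omega$, then isolate a connected component $\hat\Omega$ of $\tom$ on whose boundary a suitable compact subset of $E$ has positive elliptic measure. To begin, I would apply \Lemma{reduction} with the given ball $B$, obtaining a bounded open $\tom\subset\Omega$ with $\tom\supseteq B\cap\Omega$, $\d\tom\cap\d\Omega=\cnj B\cap\d\Omega\supseteq E$, and whose components inherit the CDC (resp.\ big boundary). By inner regularity of $\omega_\Omega^{\mathcal L,X}$, replace $E$ by a compact $E'\subset E$ with $\omega_\Omega^{\mathcal L,X}(E')>0$, and put $u(Y):=\omega_\Omega^{\mathcal L,Y}(E')$, a bounded $\mathcal L$-harmonic function on $\Omega$ with $\limsup_{Y\to z}u(Y)\le\one_{E'}(z)$ at every regular $z\in\d_\infty\Omega$.

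Next, I would pick a pole $X_0\in\tom$ with $u(X_0)$ close to $1$ and close to a point $y\in E'$. By \Lemma{<1} (or \Lemma{<12} when $d=1$, using the standing assumption at $\infty$), the positivity of $\omega_\Omega^{\mathcal L,X}(E')$ gives $\sup_\Omega u=1$; choose $Y_n\in\Omega$ with $u(Y_n)\to 1$ and extract a limit $Y^*\in\cnj\Omega\cup\{\infty\}$. If $Y^*\in\Omega$, the strong maximum principle (\Lemma{Mm}) forces $u\equiv 1$ on the connected component of $Y^*$, so I may take $X_0$ arbitrarily close to any chosen $y\in E'$ inside $\tom$ intersected with that component. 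If $Y^*\in\d\Omega$, the regularity of $\Omega$ together with the closedness of $E'$ forces $Y^*\in E'\subset B$, whence $Y_n\in B\cap\Omega\subseteq\tom$ for large $n$ and I set $X_0=Y_n$, $y=Y^*$. The case $Y^*=\infty$ is ruled out by hypothesis. In either admissible case, for any $\epsilon>0$ I obtain $X_0\in\tom$ with $u(X_0)>1-\epsilon$ and $|X_0-y|$ arbitrarily small. Let $\hat\Omega$ denote the connected component of $\tom$ containing $X_0$; by \Lemma{reduction} it satisfies the CDC (resp.\ big boundary) and, since $\hat\Omega\subset\tom\subset\Omega$ and $X_0$ lies in the component of $\Omega$ containing $X$, so does $\hat\Omega$.

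Finally, I would apply \Lemma{Mm} to $u$ on $\hat\Omega$ with $F=E'\cup(\d\hat\Omega\cap\Omega)$, which is closed in $\d\hat\Omega$ because $E'$ is closed and $(\d\hat\Omega\cap\d\Omega)\setminus E'$ is relatively open, $M=1$, $m=0$, to obtain
\[
u(X_0)\le \omega_{\hat\Omega}^{\mathcal L,X_0}(E'\cap\d\hat\Omega)+\omega_{\hat\Omega}^{\mathcal L,X_0}(\d\hat\Omega\cap\Omega).
\]
Since $y$ lies in the relative open interior of $B\cap\d\Omega$, the Whitney construction in the proof of \Lemma{reduction} yields $r_y>0$ with $\tom\cap B(y,r_y)=\Omega\cap B(y,r_y)$, so for $\delta<r_y$ we have $B(y,\delta)\cap\d\hat\Omega\subset\d\Omega$ and hence $\d\hat\Omega\cap\Omega\subset\d\hat\Omega\setminus B(y,\delta)$. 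The CDC makes $y$ a regular point of $\d\hat\Omega$, and a standard localization (via a continuous bump vanishing on $B(y,\delta/2)\cap\d\hat\Omega$ and equal to $1$ outside $B(y,\delta)$, combined with regularity at $y$) gives $\omega_{\hat\Omega}^{\mathcal L,X}(\d\hat\Omega\setminus B(y,\delta))\to 0$ as $X\to y$ in $\hat\Omega$. Taking $X_0$ close enough to $y$ forces $\omega_{\hat\Omega}^{\mathcal L,X_0}(\d\hat\Omega\cap\Omega)<\epsilon$, so $\omega_{\hat\Omega}^{\mathcal L,X_0}(E)\ge\omega_{\hat\Omega}^{\mathcal L,X_0}(E'\cap\d\hat\Omega)\ge 1-2\epsilon>0$; the inclusion $E\subset\d\Omega\cap\d\hat\Omega$ follows because $\tom$ is a neighborhood of $E$ in $\Omega$ with $\hat\Omega$ the relevant component. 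The main obstacle is precisely the reconciliation of two competing demands on $X_0$: $u(X_0)$ close to $1$ forces $X_0$ near some $y\in E'$ (since $u$ decays to $0$ at regular points of $\d\Omega\setminus E'$), while separately $X_0$ must be close enough to $y$ for the localization to annihilate the interior piece $\d\hat\Omega\cap\Omega$ of the boundary. The case analysis on the limit of the maximizing sequence is exactly what shows the two demands are compatible.
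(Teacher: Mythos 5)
Your proposal takes a genuinely different route from the paper. The paper's own proof begins by showing $s:=\sup_{Z\in \partial\tom\cap\Omega}\omega_{\Omega}^{Z}(E)<1$ (via a compactness argument on the far part of $\partial\tom$ plus the Bourgain estimate on the near part), and then argues by contradiction: if $\omega^{X_0}_{\tom}(E)=0$ for every $X_0\in\tom$, then the strong Markov identity of Lemma~\hyperref[l:identity-hm]{B.1} forces $\omega_{\Omega}^{X_0}(E)\le s$ on $\tom$, the maximum principle (\Lemma{MP}) extends the bound to the rest of $\Omega$, and \Lemma{<1} (or \Lemma{<12}) then implies $\omega_{\Omega}(E)=0$, a contradiction. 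You instead construct a pole $X_0\in\tom$ near a point $y\in E'$ where $u=\omega_{\Omega}^{(\cdot)}(E')$ is close to $1$, and then apply \Lemma{Mm} in $\hat\Omega$ together with a regularity/localization argument to kill the contribution of $\partial\hat\Omega\cap\Omega$. This avoids the strong Markov property entirely and is a legitimate alternative, at the cost of a more delicate analysis of where $u$ is near its supremum.

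There are two places where the write-up is not quite right. First, the assertion that ``the case $Y^*=\infty$ is ruled out by hypothesis'' is incorrect in the subcase $d=1$, $\omega_{\Omega}(\infty)=0$ with $\infty$ not Wiener regular: in that case $u$ need not have a limit at $\infty$, and $\omega_{\Omega}(\infty)=0$ alone does not prevent a maximizing sequence from escaping to infinity. The fix is to first show $\sup_{\tom}u=1$ directly: if $u\le 1-\varepsilon_0$ on $\tom$, then for every $\xi\in E'$ one has $\limsup_{X\to\xi}u(X)\le 1-\varepsilon_0$ (since $\Omega\cap B(\xi,r)\subset\tom$ for small $r$), while for $\xi\in\partial\Omega\setminus E'$ the $\limsup$ is $0$; applying \Lemma{MP} with $F=\{\infty\}$ gives $u\le 1-\varepsilon_0$ on all of $\Omega$, contradicting $\sup_\Omega u=1$. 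Then one extracts $Y_n$ inside the bounded set $\tom$, so no limit at infinity can occur. Second, the claim that $F=E'\cup(\partial\hat\Omega\cap\Omega)$ is closed ``because $(\partial\hat\Omega\cap\partial\Omega)\setminus E'$ is relatively open'' does not hold in general: sequences in $\partial\hat\Omega\cap\Omega$ can accumulate on points of $\partial\hat\Omega\cap\partial\Omega\setminus E'$ (for instance near $\partial B\cap\partial\Omega$, where the Whitney scales in the construction of $\tom$ degenerate). The repair is straightforward: replace $F$ by $F'=E'\cup\bigl(\partial\hat\Omega\setminus B(y,\delta/2)\bigr)$ with $\delta<r_y$, which is closed, still satisfies $\limsup_{X\to x}u(X)\le \one_{F'}(x)$ for $x\in\partial\hat\Omega$ (since $B(y,\delta/2)\cap\partial\hat\Omega\subset\partial\Omega$ and any $x$ there outside $E'$ is a regular point with $\limsup u\le 0$), and yields $u(X_0)\le \omega_{\hat\Omega}^{X_0}(E')+\omega_{\hat\Omega}^{X_0}(\partial\hat\Omega\setminus B(y,\delta/2))$, to which your localization argument applies verbatim. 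With these two patches the argument is complete and gives a correct alternative proof.
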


\begin{proof}
To simplify notation, we write $\omega_{\Omega}^{\mathcal{L}}$ as $\omega_{\Omega}$. By inner regularity of harmonic measure, we may assume that $E$ is compact and $\omega_{\Omega}^{X}(E)\in (0,1)$ for some $X\in \Omega$. Without loss of generality, we may assume $\Omega$ is connected, as it is easy to check that this component will also have the CDC or big boundary in $B$ if $\Omega$ does.

%
Let
\[
D_{1}=\{Z\in \d\tom: \dist(Z,\d\Omega)\geq t/2\},\;\; D_{2}=\d\tom\cap \Omega\backslash D_1.\]

By the compactness of $D_{1}$,  
\begin{equation}\label{e:infzind1}
\inf_{Z\in D_{1}}\omega_{\Omega}(E^{c})>0.
\end{equation}
Let $Z\in D_{2}$ and let $Z'\in \d\Omega$ be the closest point in $\d\Omega$ to $Z$, so that $|Z-Z'|<t/2$. Then $B(Z',t)\subseteq E^{c}$, and so
\[
\omega_{\Omega}^{Z}(E^{c})
\geq \omega_{\Omega}^{Z}(B(Z',t))\stackrel{\eqn{localbourgain}}{\gec} 1 \;\; \mbox{ for all }Z\in D_{2}.\]
This and \eqn{infzind1} imply

\begin{equation}\label{e:omegat1}
s:=\sup_{Z\in \d \tom \cap \Omega} \omega_{\Omega}^{Z}(E)<1.
\end{equation}

If $\omega_{\hat{\Omega}}^{X_{0}}(E)=0$ for all $X_{0}\in \hat{\Omega}$, then by Lemma \hyperref[l:identity-hm]{B.1} in the appendix, for any $X_{0}\in \tom\cap \Omega$,

\begin{equation}\label{e:omegat2}
\omega_{\Omega}^{X_{0}}(E)
=\omega_{\tom}^{X_{0}}(E)+ \int_{\d\tom\cap \Omega} \omega_{\Omega}^{X}(E) d\omega_{\tom}^{X_{0}}(X) 
<0+s=s.
\end{equation}

Because each $z\in \d\Omega\backslash \d{\tom}$ is a regular point and $E\subset \d\tom \cap \d\Omega$, we know 
\begin{equation}\label{e:omegat3}
\lim_{\Omega\ni Z \rightarrow z}\omega_{\Omega}^{Z}(E)=0 \;\; \mbox{ for all $z\in \d\Omega\backslash \d{\tom}$}.\end{equation}
If $\omega_{\Omega}^{\mathcal{L}}(\infty)=0$, we can use \eqn{omegat1}, \eqn{omegat3},  and \Lemma{MP} (using the fact that $\omega_{\Omega}(\infty)=0$) to get $\omega_{\Omega}^{X_{0}}(E)\leq s$ for all $X_{0}\in \Omega\backslash \tom$. If $\infty$ is regular, then $ \omega_{\Omega}^{Z}(E)\rightarrow 0$ as $Z\rightarrow \infty$, and so we can  use \Lemma{MP} again to conclude still that $\omega_{\Omega}^{X_{0}}(E)\leq s$ for all $X_{0}\in \Omega\backslash \tom$.

Combining this with \eqn{omegat2}, we know that  $\omega_{\Omega}^{X_{0}}(E)\leq s$ for all $X_{0}\in \Omega$, which by \Lemma{<1} implies $\omega_{\Omega}^{X_{0}}(E)=0$ for all $X_{0}\in \Omega$, which is a contradiction. Thus, there is $X\in \tom$ such that $\omega^{X}_{\tom}(E)>0$. If we set $\hat{\Omega}$ to be the component of $\tom$ containing this $X$, then 
\[\omega_{\hat{\Omega}}^{X}(E)=\omega_{\tom}^{X}(E)>0.\]

\end{proof}

 \section{The Main Lemma and the Proof of \TheoremI}
\label{sec:mainthm}
\label{s:I}

In this section, we will drop the dependence on $\mathcal{L}$ and let $\omega_{\Omega}^{X}$ denote any elliptic measure satisfying the \hyperref[d:KP]{KP-condition}.\\ 

The objective of this section is to prove the following lemma.

\begin{lemi}\label{l:lemI}
\assumption  If $d=1$, assume $\omega_{\Omega}(\infty)=0$ or $\infty$ is regular. Suppose $\Gamma\subset \bR^{d+1}$ is $A$-Ahlfors $d-$regular and splits $\bR^{d+1}$ into two NTA domains $\Omega_{1}$ and $\Omega_{2}$. If $E\subset \d\Omega\cap \Gamma\cap B_{0}$ is a Borel set with $\omega_\Omega^{X_{0}}(E)>0$, then there are $X_{i}\in \Omega_{i}\cap \Omega$ for $i=1,2$ in the same component of $\Omega$ as $X_{0}$ so that $\omega_{\Omega\cap \Omega_{1}}^{X_{1}}(E)+\omega_{\Omega\cap \Omega_{2}}^{X_{2}}(E)>0$.
\end{lemi}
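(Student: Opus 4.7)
The approach is a proof by contradiction. We assume that $\omega_{\Omega \cap \Omega_i}^{X_i}(E)=0$ for every $X_i$ lying in the component of $\Omega \cap \Omega_i$ contained in the $X_0$-component of $\Omega$, for both $i=1,2$; the goal is then to deduce $\omega_\Omega^{X_0}(E)=0$, contradicting the hypothesis. A preliminary reduction via \Lemma{reduction2} lets us replace $\Omega$ by a bounded connected subdomain $\widehat\Omega \subset \Omega$ with $E \subset \partial\widehat\Omega$, which still has big boundary (hence the CDC), and $\omega_{\widehat\Omega}^{\widehat X_0}(E)>0$ for some $\widehat X_0 \in \widehat\Omega$; so we may assume $\Omega$ itself is bounded and connected. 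Set $u(X) := \omega_\Omega^X(E)$; it is $\mathcal{L}$-harmonic on $\Omega$ with $0 \le u \le 1$ and $u(X_0) > 0$.

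For any connected component $V$ of $\Omega \cap \Omega_i$ one has $E \subset \partial\Omega \cap \partial V$ and $\partial V \setminus \partial\Omega \subset \Gamma \cap \Omega$, so the decomposition identity \hyperref[l:identity-hm]{Lemma B.1} of the appendix gives, for $X \in V$,
\begin{equation*}
u(X) \;=\; \omega_V^X(E) \;+\; \int_{\Gamma \cap \Omega \cap \partial V} u(Z)\, d\omega_V^X(Z)
\;=\; \int_{\Gamma \cap \Omega \cap \partial V} u(Z)\, d\omega_V^X(Z),
\end{equation*}
the last equality by our contradiction hypothesis. In other words, $u|_V$ is the $\mathcal L$-harmonic extension into $V$ of the data $u|_{\Gamma \cap \Omega \cap \partial V}$ together with $0$ on $\partial\Omega \cap \partial V$.

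The plan is to iterate this identity, ping-ponging between the two sides of $\Gamma$: each application pushes the mass of $u$ across $\Gamma$, and the big-boundary hypothesis should leak a definite fraction of it onto $\partial\Omega$ (where $u$'s effective boundary data has been forced to vanish). Quantitatively, the big boundary of $\Omega$ in $B_0$ yields the CDC there, so \Lemma{localbourgain} combined with the chaining mechanism of \Lemma{chain} provides a uniform lower bound $\omega_V^X(\partial\Omega \cap \partial V) \gtrsim 1$ whenever $X$ is a corkscrew distance from $\partial\Omega \cap \partial V$, which would give the desired geometric contraction of $\sup_\Omega u$ and force $u \equiv 0$.

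The principal obstacle is that the exit-to-$\partial\Omega$ estimate is \emph{not} uniform in $X \in V$: a point inside $V$ close to $\Gamma$ but far from $\partial\Omega$ has $\omega_V^X$ overwhelmingly supported on $\Gamma \cap \partial V$, so a single iteration of the identity can lose essentially no mass. To bypass this I would organize the iterations along a dyadic decomposition of $\Gamma$, attaching to each dyadic cube $Q \subset \Gamma$ and each side $i$ the Whitney regions $U_Q^i \subset \Omega_i$ from Section \ref{ss:grid} and the associated Hofmann--Martell sawtooth subdomains. By \Lemma{HM1} these sawtooths are NTA with Ahlfors regular boundary, so \Theorem{DJ} supplies $A_\infty$-equivalence of their harmonic measures with $\cH^d|_\Gamma$. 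Coupling the big-boundary $\cH^d_\infty$-content of $\partial\Omega \cap Q$ against this $A_\infty$-equivalence converts the non-uniform per-iteration loss into a uniform loss \emph{per dyadic generation}, yielding $\sup_\Omega u \le (1-c)^n \to 0$ and the desired contradiction.
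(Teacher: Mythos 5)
Your sketch correctly identifies the high-level architecture the paper uses: the reduction via Lemma~\ref{l:reduction2} to a bounded connected $\Omega$ with big boundary, the deployment of the decomposition identity (Lemma~\hyperref[l:identity-hm]{B.1}) in the subdomains $\Omega\cap\Omega_i$, the Bourgain-type lower bound (Lemma~\ref{l:localbourgain}) together with the chaining mechanism (Lemma~\ref{l:chain}), the Hofmann--Martell sawtooth machinery (Lemma~\ref{l:HM1}), and Theorem~\ref{t:DJ} as the crucial input. You also correctly isolate the principal obstacle, namely that $\omega_V^X(\partial\Omega\cap\partial V)$ is not bounded below uniformly in $X$. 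That much is a faithful reconstruction of the ingredients.

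There are, however, two issues. First, the endgame is genuinely different: you propose an iterated ``ping-pong'' producing geometric decay $\sup u \leq (1-c)^n \to 0$, whereas the paper proves a single one-shot bound $\omega_\Omega^X(E)\leq\gamma<1$ for all $X\in\Gamma\cap\Omega$ (by passing to a corkscrew point $Y^i$ at the scale $r=\dist(X,\partial\Omega)$ and then applying the decomposition identity exactly once), and then invokes Lemma~\ref{l:<1} to upgrade $\sup_\Omega\omega_\Omega(E)<1$ to $\omega_\Omega(E)=0$. The iteration you describe is underspecified --- it is not clear what ``per dyadic generation'' means or along what tree one iterates --- and the paper's route renders it unnecessary. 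Second, and more seriously, there is a misconception in your proposed ``coupling'' step: you speak of the big-boundary $\cH^d_\infty$-content of $\partial\Omega\cap Q$ with $Q\subset\Gamma$, but the big boundary condition \eqn{dcontent} bounds the content of $B\setminus\Omega=\Omega^c\cap B$, not of $\partial\Omega\cap\Gamma$, which can have $\cH^d$-measure zero even under the hypotheses. The entire difficulty of the paper's Case 2 (the case where $\partial\Omega$ near $X$ in $\Omega_1$ hugs $\Gamma$ within distance $\varepsilon r$) is precisely to extract, from the content of $\Omega^c\cap\overline\Omega_1$, a set $G$ of large $\cH^d$-measure on the boundary of a carefully tailored sawtooth $\Omega'\subset\Omega\cap\Omega_1$; this involves the family $\F$ of stopping-time cubes where $U_Q$ first meets $\partial\Omega$, the balls $B^Q$ of Lemma~\ref{l:3.2}, and the decomposition \eqn{dcomp} of $\Omega^c\cap\overline\Omega_1$ into the pieces $\partial\Omega\cap\partial\Omega_1$, $\Omega^c\cap\Omega_1$, and $(\Omega^c)^\circ\cap\partial\Omega_1$. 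Your sketch defers exactly this --- the hard content of the proof --- without indicating how the dyadic decomposition of $\Gamma$ alone could see the part of $\Omega^c$ that lives strictly inside $\Omega_1$ and off of $\Gamma$.
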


\TheoremI follows immediately since \Theorem{DJ} implies that if $\cH^{d}(E)=0$ for some $E\subset \Gamma\cap \d\Omega$, then $\omega_{\Omega\cap \Omega_{i}}^{X_{i}}(E)\leq \omega_{ \Omega_{i}}^{X_{i}}(E) =0$ for $i\in \{1,2\}$, and then Lemma \hyperref[l:lemI]{I } implies $\omega_{\Omega}^{X_{0}}(E)=0$. \\

For the remainder of this section, we focus on proving Lemma \hyperref[l:lemI]{I}. The beginning of the proof follows that in \cite{Wu86}, which in turn has its roots in \cite{McM69}, but then we take a large departure at around the time Wu uses the exterior corkscrew condition, which we are not assuming to hold for $\Omega$.\\

We claim that it suffices to prove the lemma for the case that $\Omega$ is connected and bounded with big boundary. Indeed, if we prove this case, then for the general case, we just need to pick a ball $B_{0}'$ with $2B_{0}'\subseteq B_{0}$ and $\omega_{\Omega}^{X}(E\cap B_{0}')>0$, then \Lemma{reduction2} implies we may find $\hat{\Omega}\subseteq \Omega$ connected, bounded, in the same component of $\Omega$ containing $X_{0}$, and with big boundary so that $E\cap B_{0}'\subseteq \d\hat{\Omega}$ and $\omega_{\hat{\Omega}}^{X}(E)>0$ for some $x\in \hat{\Omega}$. Then, assuming we can prove the lemma for this case, there is $i\in \{1,2\}$ and $X_{i}\in \Omega_{i}\cap \tilde{\Omega}$ so that $0<\omega_{\Omega_{i}\cap \hat{\Omega}}^{X_{i}}(E)$, and then $0<\omega_{\Omega_{i}\cap {\Omega}}^{X_{i}}(E)$ by  \hyperref[l:CP]{Carleman's Principle}. 

Thus, without loss of generality we may assume that $\Omega$ is bounded and has big boundary.\\
 
Let $\Gamma$ and $\Omega$ be as in Lemma \hyperref[l:lemI]{I}. Let $E\subset\partial\Omega\cap\Gamma$ be a Borel set with $\omega^{X_{0}}_{\Omega}(E)>0$ but
\begin{equation}\label{e:ww=0}
\omega_{\Omega\cap \Omega_{1}}^{X_{1}}(E)+\omega_{\Omega\cap \Omega_{2}}^{X_{2}}(E)=0 \;\; \mbox{ for all $X_{i}\in \Omega_{i}\cap \Omega$, $i=1,2$.}
\end{equation}
 Our goal now is to show that there is $\gamma\in (0,1)$ so that
\begin{equation}\label{e:<gamma}
\omega_{\Omega}^{X}(E)<\gamma \;\; \mbox{ for }\;\; X\in \Gamma\cap \Omega.
\end{equation}
If this is the case, then by Lemma \hyperref[l:identity-hm]{B.1} in the appendix, if $X_{0}\in \Omega\cap \Omega_{1}$, 

\begin{align*}
\omega^{X_{0}}_{\Omega}(E)
& =\omega_{\Omega\cap \Omega_{1}}^{X_{0}}(E)+\int_{\Gamma\cap \Omega} \omega^{X}_{\Omega}(E)d\omega^{X_{0}}_{\Omega\cap \Omega_{1}}(X)
 \stackrel{\eqn{ww=0}\atop \eqn{<gamma}}{<} 0+\gamma=\gamma<1
\end{align*} 
Similarly, we have that $\omega^{X_{0}}_{\Omega}(E)<\gamma<1$ for all $X_{0}\in \Omega\cap \Omega_{2}$, which along with \eqref{e:<gamma} implies $\omega^{X_{0}}_{\Omega}(E)<\gamma<1$ for all $X_{0}\in \Omega$. 
Since $\Omega$ is regular, by \Lemma{<1}, $\omega^{X_{0}}_{\Omega}(E)=0$ for all $X_{0}\in \Omega$ { (for this we have to assume $E$ is closed, but $\omega^{X_{0}}_{\Omega}(E')<\gamma$ for any closed subset $E'\subset E$, and so we still get $\omega^{X_{0}}_{\Omega}(E)=0$ by inner regularity of harmonic measure)}, and so we get a contradiction, proving the theorem.\\

Now we focus on proving \eqn{<gamma}. Let $X\in \Gamma\cap \Omega$ and $r=\dist(X,\d\Omega).$ Since $\Omega_{i}$ are $C$-NTA domains, if $c=C^{-1}$, there are balls 
\[
B(Y^{i},cr)\subset \Omega_{i}\cap B(X,r)\, \, \mbox{for}\, \, i=1,2.\] 
Let 
\begin{align}
\label{e:balli}
B^{i}= B\ps{Y^{i},\frac{cr}{2}}.
\end{align}
We claim it is enough to show that there is $\eta\in (0,1)$ so that
\begin{equation}\label{e:<eta}
\min_{i=1,2}\sup_{Y\in B^{i}}\omega_{\Omega\cap \Omega_{i}}^{Y}(\Gamma\cap \Omega)<\eta.
\end{equation}
Indeed, note that by the Harnack chain condition, there is $t\in (0,1)$ depending only on $C$ so that
\[
\omega_{\Omega}^{X}(E^{c})>t\omega_{\Omega}^{Y}(E^{c}) \mbox{ for all }Y\in B^{i}.
\]
Hence, it follows that for all $Y\in B_{i}$, $i=1,2,$ we have
\begin{align}\label{e:<1-t}
\omega_{\Omega}^{X}(E) &=1-\omega_{\Omega}^{X}(E^{c}) <1-t\omega_{\Omega}^{Y}(E^{c})\\ \notag
&=(1-t)+t\omega_{\Omega}^{Y}(E).
\end{align}
If the minimum in \eqref{e:<eta} is attained for $i=1$, then if $Y \in B_1$, by \eqref{e:<1-t} and \eqref{e:identity-hm}, we have that
\begin{align*}
\omega^{X}_{\Omega}(E)
& 
<(1-t)+t\omega_{\Omega}^{Y}(E)\\
& =(1-t)+t\ps{\omega_{\Omega\cap \Omega_{1}}^{Y}(E)+\int_{\Gamma\cap \Omega} \omega^{Z}_{\Omega}(E)d\omega^{Y}_{\Omega\cap \Omega_{1}}(Z)}\\
& \stackrel{\eqn{ww=0}\atop \eqn{<eta}}{<}
(1-t)+t\ps{0+ \eta}=(1-t)+t\eta<1.
\end{align*} 
The same holds if the minimum in \eqref{e:<eta} is attained for $i=2$. Thus, this finishes the proof of \eqn{<gamma} and the claim. We now focus on showing \eqn{<eta}. \\

For $i=1,2,$ we denote
\[
\Omega^{i}=\Omega_{i}\cap \Omega\]
and
\[\omega_{i}=\omega_{\Omega^{i}}.\]
Note that
\begin{align}
\d\Omega^{i}=\d(\Omega_{i}\cap \Omega) 
& =(\d\Omega_{i} \cap \Omega)\cup (\d\Omega\cap \Omega_{i})\cup (\d\Omega_{i}\cap \d\Omega) \notag \\
& =(\Gamma\cap \Omega)\cup (\d\Omega\cap \Omega_{i}) \cup  (\Gamma\cap \d\Omega)
\label{e:omegaidecomp}
\end{align}
Indeed, let $x\in \bR^{d+1}$. We split into three cases.
\begin{enumerate}
\item Suppose first that $B(x,r)\cap \Omega^{c}=\emptyset$ for some $r>0$, then $B(x,r)\subseteq \Omega$ and so for all $r'<r$, $B(x,r')\cap \Omega^{i}=B(x,r')\cap \Omega_{i}$ and $B(x,r')\backslash \Omega^{i}=B(x,r')\backslash \Omega_{i}$, hence $x\in \d\Omega^{i}$ if and only if $x\in \d\Omega_{i}$, and since $x\in \Omega$ in this case, this is true if and only if $x\in \d\Omega_{i}\cap \Omega$. \item Similarly, if there is $r>0$ so that $B(x,r)\cap \Omega_{i}^{c}=\emptyset$, then $x\in \d\Omega\cap \Omega_{i}$ if and only if $x\in \d\Omega^{i}$. 
\item If $B(x,r)\cap \Omega_{i}^{c}\neq\emptyset$ and $B(x,r)\cap \Omega^{c}\neq\emptyset$ for all $r>0$, then 
\begin{equation}
\label{e:xincomcomi}
x\in \cnj{\Omega_{i}^{c}}\cap \cnj{\Omega^{c}}.
\end{equation}
 If $x\in \d\Omega^{i}$, then $x\in \cnj{\Omega}\cap \cnj{\Omega_{i}}$, and \eqn{xincomcomi} implies $x\in \d\Omega\cap \d\Omega_{i}$. Conversely, if $x\in \d\Omega\cap \d\Omega_{i}$, then $B(x,r)\cap \Omega\neq\emptyset$ and $B(x,r)\cap \Omega_{i}\neq\emptyset$ for all $r>0$ and so $x\in \d\Omega^{i}$. 
 \end{enumerate}
 This completes the proof of \eqn{omegaidecomp}.\\

A consequence of \eqn{omegaidecomp} is that
\begin{equation}\label{e:boundary}
\d\Omega^{i}\backslash (\Gamma\cap \Omega) = (\d\Omega\cap \Omega_{i})  \cup (\Gamma\cap \d\Omega).
\end{equation}

Let $Y^{i}$ be the center of $B^{i}$ where $B^{i}$ is as in \eqn{balli}. To show \eqn{<eta}, by Harnack chains, it suffices to show that
\begin{equation}\label{e:eta2}
\min_{i=1,2} \omega_{i}^{Y^{i}}(\d\Omega^{i}\backslash (\Gamma\cap \Omega))\gec 1.
\end{equation}

Let $M_{0}>2$ to be decided later and recall that $X \in \Gamma \cap \Omega$, $r=\dist(X, \partial \Omega)$, and $Y^{1}$ is the center of $B^{1}$ defined as in \eqn{balli}. Suppose that 
\begin{equation}\label{e:notclose}
\mbox{ there is }Z\in \d\Omega\cap B(X,M_{0}r)\cap \Omega_{1}\mbox{ so that }\dist(Z,\Gamma)\geq \ve r,
\end{equation}
see Figure \ref{f:case1}.
\begin{figure}[!ht]
\includegraphics[width=300pt]{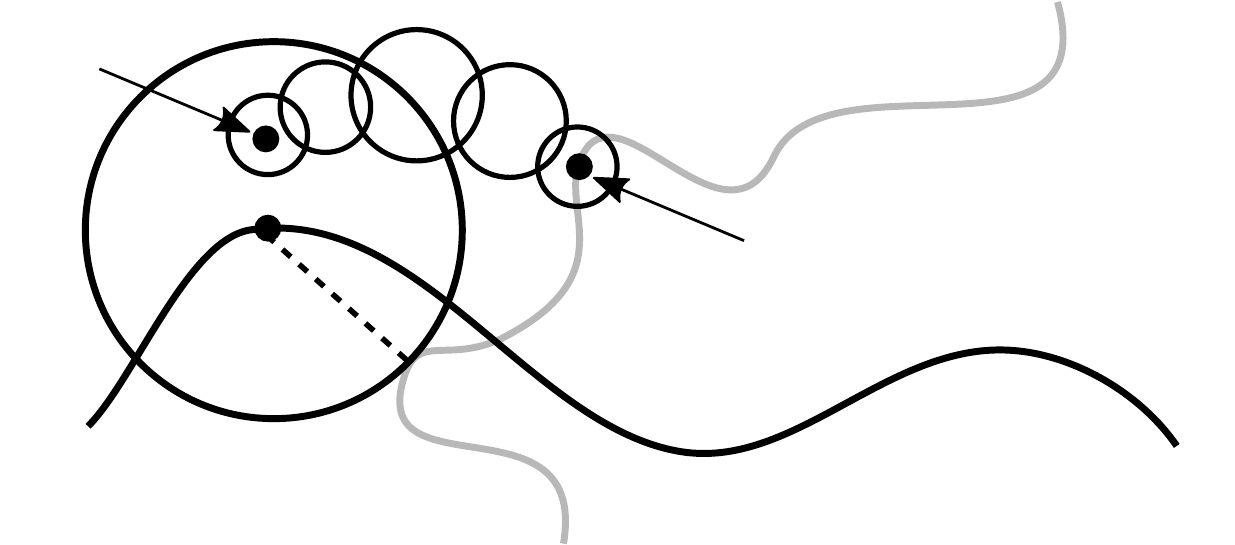}
\begin{picture}(0,0)(300,0)
\put(225,50){$\Gamma$}
\put(225,120){$\d\Omega$}
\put(75,45){$r$}
\put(50,60){$X$}
\put(5,110){$Y^{1}$}
\put(180,70){$Z$}
\put(100,127){$B_{j}$}
\end{picture}
\caption{}
\label{f:case1}
\end{figure}

Let now $\{B_{i}\}_{i=1}^{N}$ be a Harnack chain in $\Omega_{1}$ from the center $Y^{1}$ of $B^{1}$ to $Z$, so $N\lec_{M_{0},C,\ve} 1$. Let $j$ be the smallest integer for which $B_{j}\cap \d\Omega\neq\emptyset$. By \Lemma{chain}, we have 

\[
\omega^{Y^{1}}_{1}(\d\Omega^{1}\backslash (\Gamma\cap \Omega))
\gec_{C,\ve,M_{0}} 1.\]
Thus, \eqn{eta2} holds in this case (i.e. when \eqn{notclose} holds) and we can repeat the same for when \eqn{notclose} holds with $\Omega_{2}$ in place of $\Omega_{1}$ to get the same result. %
Hence, from now on, we will assume instead of \eqn{notclose} that
\begin{equation}\label{e:close}
\dist(Z,\Gamma)< \ve r \mbox{ for all }Z\in \d\Omega \cap B(X,M_{0}r) \cap \Omega_{1}.
\end{equation} 
$x_{0}\in \d\Omega\cap \d B(X,r)$. Let $\bD=\bD(\Gamma)$ be dyadic subdivision of $\Gamma$.  We can arrange our cubes so that there is $Q_{0}\in \bD$ with the property that (using \eqn{close})
\[
B(X,2r)\cap \Gamma\subset \Delta(x_{Q_{0}},Cr_{Q_{0}}), \;\; r\sim r_{Q_{0}},\;\;  \mbox{ and }x_{Q_{0}}\in B(x_{0},\ve r) \]
where $\Delta(x_{Q_{0}},Cr_{Q_{0}})$ is a surface ball.  By \eqn{dcontent}, and for $\ve>0$ small enough, we have
\[
\cH^{d}_{\infty}( B_{Q_{0}}'\backslash \Omega )\gec r_{B_{Q_{0}}'}^{d} \sim r^{d},\]
where $B_{Q_{0}}'$ is as in \Lemma{HM3}. Since $\bR^{d+1}=\cnj{\Omega}_{1} \cup \cnj{\Omega}_{2}$, we may assume without loss of generality that 
\begin{equation}\label{e:om1content}
\cH^{d}_{\infty}(\Omega^{c} \cap \cnj{\Omega}_{1}\cap B_{Q_{0}}')\gec r^{d}.\end{equation}

\def\T{\mathcal{T}}

We now pick $M_{0}$ large enough (depending only on the NTA constants and $d$) so that $B(X,M_{0}r)\supseteq T_{Q_{0}}$ and $\ve$ small enough so that by \eqn{close}
\begin{equation*}
U_{Q_{0}}\cap \d\Omega=\emptyset
\end{equation*}
(where  $T_{Q_{0}}$ is defined right after \eqn{XQ} and $U_{Q_{0}}$ is as in \eqn{whitney3}). It is not hard to check that under our assumptions it holds that $U_{Q_{0}}\subset \Omega$. Indeed, since $\Omega_{1}$ is NTA, we may find a path in $B(X,M_{0}r)\cap \Omega_{1}$ between $Y^{1}$ and $U_{Q_{0}}$ that is at least $cr>0$ away from $\d\Omega_{1}=\Gamma$ where $c$ depends on the NTA constant, so for $\ve>0$ small enough (depending on $c$), this path will avoid $\d\Omega$. Since $Y^{1}\in \Omega$, this means $U_{Q_{0}}\subseteq \Omega$ as well. 

Let $\F$ be the (disjoint) maximal cubes $Q\subset Q_{0}$ for which 
\[
U_{Q}\subset \Omega\cap \Omega_{1}\]
but there is a child $Q'$ of $Q$ for which 
\[
U_{Q'}\not\subset \Omega\cap \Omega_{1}.\]
Maximality of cubes in $\F$ and $U_{Q'}\subset \Omega_{1}$ imply that
\[
U_{Q'}\cap \d\Omega\neq\emptyset.
\] 
Also let $\T\subset \bD_{\F,  Q_0}$ be the maximal cubes contained in $Q_{0}\backslash \d\Omega$ which do not contain any cubes from $\F$. Note that by definition $\bD_{\F, Q_0}$ is the local discretized sawtooth region relative to $\F$ which is the collection of cubes in $\bD_{Q}$ that are not contained in any $Q\in \F$. Therefore, cubes in $\F \cup\T$ forms a disjoint family. Let $\F'$ be all children of cubes in $\F\cup \T$, so that 
\[
\bD_{\F', { Q_0}}=\{Q:Q\supseteq R\mbox{ for some }R\in \F\cup \T\}.\] 
Now set
\[
\Omega'=\Omega_{\F',Q_{0}}\subset \Omega_{1}\]
where $\Omega_{\F',Q_{0}}$ is the local sawtooth region relative to $\F'$ in $Q_0$ as defined right after \eqn{XQ}.

Note that
\begin{equation}\label{e:UinO}
U_{Q}\subset \Omega \;\; \mbox{ for } \;\; Q\in \bD_{\F', Q_0}
\end{equation}
and thus 
\[
\Omega'\subset \Omega\cap\Omega_{1}.\]


\begin{lemma}
Let $\Omega'$ be as above. Then $\Omega'\neq\emptyset$.
\end{lemma}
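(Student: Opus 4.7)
The plan is to verify directly that the root cube $Q_{0}$ itself lies in the index set $\bD_{\F',Q_{0}}$ of the sawtooth region; this will place the nonempty open set $U_{Q_{0}}$ inside $\Omega'$ and finish the proof. Unpacking the standard definition $\bD_{\F',Q_{0}} = \bD_{Q_{0}} \setminus \bigcup_{Q' \in \F'}\bD_{Q'}$, one has $Q_{0} \in \bD_{Q_{0}}$ trivially, so it suffices to check that $Q_{0} \not\subseteq Q'$ for every $Q' \in \F'$.

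By construction, each $Q' \in \F'$ is a child of some $R \in \F \cup \T$, and each such $R$ satisfies $R \subseteq Q_{0}$ (the cubes in $\F$ are specified to lie in $Q_{0}$, while $\T \subseteq \bD_{\F,Q_{0}} \subseteq \bD_{Q_{0}}$). Since $Q'$ is a dyadic child of $R$, one has $Q' \subsetneq R \subseteq Q_{0}$, so $Q' \subsetneq Q_{0}$ and in particular $Q_{0} \not\subseteq Q'$. This gives the desired membership $Q_{0} \in \bD_{\F',Q_{0}}$, and consequently $U_{Q_{0}} \subseteq \Omega'$.

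To conclude, I would invoke \eqn{whitney2}, which guarantees that $\W^{*}_{Q_{0}}$ contains at least one Whitney cube—for instance, the one meeting the corkscrew point $X_{Q_{0}}$—so $U_{Q_{0}}$ contains $\mathrm{int}\, I^{*}$ for some $I \in \W^{*}_{Q_{0}}$ and is in particular a nonempty open subset of $\Omega_{1}$. Combining, $\Omega' \supseteq U_{Q_{0}} \neq \emptyset$. The argument is essentially a bookkeeping check of the definitions: no appeal to the big-boundary content estimate \eqn{om1content} or to \Lemma{HM3} is needed, since $Q_{0}$ sits above every possible stopping cube of $\F'$ regardless of whether $\F \cup \T$ is empty or not. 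The only place where one must be a little careful is distinguishing between the general definition of $\bD_{\F',Q_{0}}$ and the stopping-time shorthand $\{Q : Q \supseteq R,\ R \in \F\cup\T\}$ stated in the text, since the two coincide only when $\F \cup \T$ nontrivially covers $Q_{0}$; the check above uses only the former.
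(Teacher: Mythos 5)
Your argument is formally correct for the literal statement, but it is a different proof from the paper's and it misses the actual content of this step.

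You observe, correctly, that under the general definition $\bD_{\F',Q_0}=\bD_{Q_0}\setminus\bigcup_{Q'\in\F'}\bD_{Q'}$ from the preliminaries the top cube $Q_0$ always survives, since every cube in $\F'$ is a child of something in $\F\cup\T\subset\bD_{Q_0}$ and hence a proper subcube of $Q_0$; so $U_{Q_0}\subset\Omega'$ automatically. That is a valid and more elementary check of the lemma as literally stated. The paper, by contrast, argues by contradiction: assuming $\F=\emptyset$ forces $U_Q\cap\d\Omega=\emptyset$ for every $Q\in\bD_{Q_0}$, which makes $B_{Q_0}'\cap\d\Omega\cap\Omega_1=\emptyset$ and contradicts the big-boundary content bound \eqn{om1content}. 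This yields $\F\neq\emptyset$ (and then $Q_0\notin\T$, $Q_0\notin\F'$).

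The distinction matters because $\F\neq\emptyset$ is the real product of this step, and your shortcut does not establish it. You yourself flag the issue: the displayed identity $\bD_{\F',Q_0}=\{Q:Q\supseteq R\ \text{for some}\ R\in\F\cup\T\}$ is only the standard sawtooth definition when $\F\cup\T$ is a genuine (nonempty, covering) stopping family for $Q_0$, and in particular it is simply false when $\F\cup\T=\emptyset$, in which case $\{Q:Q\supseteq R\}=\emptyset$ while $\bD_{\Q_0}\setminus\emptyset=\bD_{Q_0}$. The paper's contradiction argument is precisely what rules out the degenerate situation and thereby justifies the ``so that'' characterization that the subsequent lemmas (the construction of the balls $B^Q$ for $Q\in\F$ in \Lemma{3.2}, the content estimates \eqn{content1} and \eqn{woops}) rest on. If you replaced the paper's proof by yours, the lemma statement would be proved but the non-emptiness of $\F$ would be left unestablished, and the gap would simply reappear downstream. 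So: a correct proof of the sentence, but not a substitute for the paper's argument.
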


\begin{proof}
In case $\F=\emptyset$ then $U_{Q}\cap \d\Omega=\emptyset$ for all $Q\in \bD_{Q_{0}}$, and so
\[
B_{Q_{0}}'\cap \d\Omega\cap \Omega_{1}
\subseteq B_{Q_{0}}\cap \Omega_{1}
\subseteq \Omega_{\F,Q_{0}}\subseteq (\d\Omega)^{c}\]
Hence, $B_{Q_{0}}'\cap \d\Omega\cap \Omega_{1}=\emptyset$, which contradicts \eqn{om1content}. Thus, $\F\neq\emptyset$, which implies that $Q_{0}\not\in \T$ since $Q_{0}$ trivially contains a cube in $\F$. Hence, $Q_{0}\not\in \F'$, and so $\Omega'\neq\emptyset$.
\end{proof}

See Figure \ref{f:tw} for the rest of this proof.  

\begin{figure}[!ht]
\includegraphics[width=300pt]{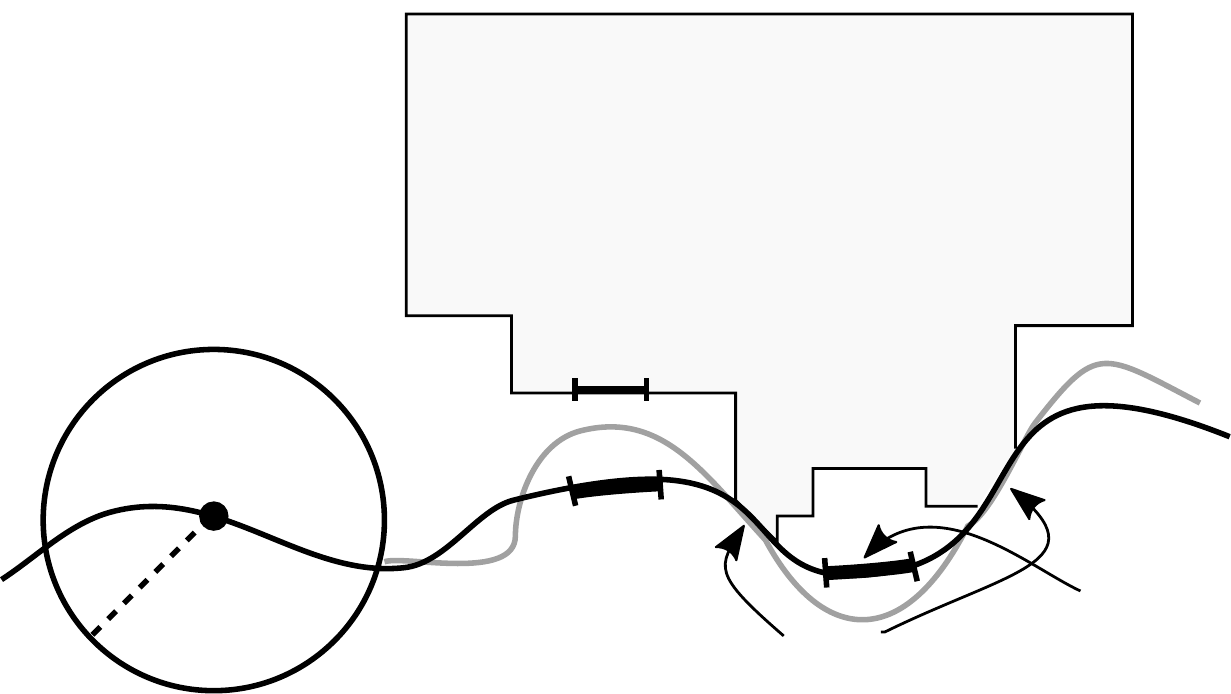}
\begin{picture}(0,0)(300,0)
\put(200,125){$\Omega'$}
\put(40,25){$X$}
\put(130,35){$Q\in \F$}
\put(20,25){$r$}
\put(137,80){$B^{Q}$}
\put(275,50){$\Gamma$}
\put(280,80){$\d\Omega$}
\put(175,0){$\d\Omega'\cap \d\Omega$}
\put(250,15){$R\in \T$}
\end{picture}
\caption{The sawtooth region $\Omega'$ is constructed by adding Whitney regions $U_{Q}$ which do not intersect $\d\Omega$ (corresponding to cubes in $\F$) and which do not get too close to large gaps in $\Gamma\backslash \d\Omega$ (corresponding to cubes in $\T$).}
\label{f:tw}
\end{figure}

For the next lemma, let $\Omega_{Q}^{*}$ be as in remark \ref{r:fatstar} relative $Q$.

\begin{lemma}\label{l:3.2}
For every $Q\in \F$ there is a ball $B^{Q}$ centered on $\d\Omega'$ so that
\begin{equation}
B^{Q}\subset \Omega_{Q}^{*},
\label{e:BinQ}
\end{equation}
\begin{equation}\label{e:rB}
r_{B^{Q}}\sim \ell(Q),
\end{equation}

\begin{equation}
\sum_{Q\in \F} \one_{B^{Q}}\lec \one_{\Omega_{1}},
\label{e:sumB}
\end{equation}

and 
\begin{equation}
\omega_{1}^{Y}(\d\Omega^{1}\backslash (\Gamma\cap \Omega))\gec \one_{B^{Q}\cap \d\Omega'}.
\label{e:>B}
\end{equation}
\end{lemma}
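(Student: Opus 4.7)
The strategy is to construct, for each $Q\in\F$, a ball $B^{Q}$ of radius comparable to $\ell(Q)$ by exploiting the ``bad'' child $Q'$ of $Q$ whose Whitney region $U_{Q'}$ meets $\d\Omega$ (guaranteed by the definition of $\F$), and then to verify the four required properties using the Whitney structure together with \Lemma{chain}.

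For the construction, fix $Q\in\F$ and such a child $Q'$, and pick $Z_{Q}\in U_{Q'}\cap\d\Omega$. By \eqn{XQ}, the corkscrew point $X_{Q'}$ lies in $U_{Q}\subset \Omega'$, whereas $Z_{Q}\in\d\Omega\subset\bR^{d+1}\setminus\Omega'$ since $\Omega'\subset\Omega$. Because $U_{Q'}$ is connected (every interior center $X(I)$ is joined to $X_{Q'}$ by a Harnack chain inside $U_{Q'}$, cf.\ \eqn{whitney2}), there is a continuous arc $\gamma\subset U_{Q'}$ from $X_{Q'}$ to $Z_{Q}$. Let $y_{Q}$ be the first exit point of $\gamma$ from $\Omega'$, so that $y_{Q}\in \d\Omega'\cap\overline{U_{Q'}}$, and set $B^{Q}:=B(y_{Q},c\ell(Q))$ for a small constant $c>0$. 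Since $\overline{U_{Q'}}=\bigcup_{I\in\W_{Q'}^{*}}\overline{I^{*}}\subset\bigcup_{I\in\W_{Q'}^{*}}\text{int}\,I^{**}=U_{Q'}^{*}$, the point $y_{Q}$ lies at distance $\gec \lambda\ell(Q)$ from $\d U_{Q'}^{*}$, so for $c$ small enough $B^{Q}\subset U_{Q'}^{*}\subset \Omega_{Q}^{*}$, giving (1). Property (2) is immediate from the choice of $c$.

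For (3), since $y_{Q}\in\overline{U_{Q'}}$ we have $\dist(y_{Q},\Gamma)\sim\ell(Q)$, so an overlap $B^{Q_{1}}\cap B^{Q_{2}}\neq\emptyset$ with $Q_{1},Q_{2}\in\F$ forces $\ell(Q_{1})\sim\ell(Q_{2})$ and $\dist(Q_{1},Q_{2})\lec\ell(Q_{1})$; since the cubes in $\F$ are pairwise disjoint, only a uniformly bounded number of such neighbors can occur at any fixed scale, which together with $B^{Q}\subset\Omega_{Q}^{*}\subset\Omega_{1}$ yields $\sum_{Q\in\F}\one_{B^{Q}}\lec\one_{\Omega_{1}}$. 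For (4), fix $Y\in B^{Q}\cap\d\Omega'\cap \Omega^{1}$. Since $Z_{Q}\in U_{Q'}\subset\Omega_{1}$ and $U_{Q'}$ stays at distance $\sim\ell(Q)$ from $\Gamma$, there exists $t\sim\ell(Q)$ with $B(Z_{Q},t)\subset\Omega_{1}$, whence $B(Z_{Q},t)\cap\d\Omega^{1}=B(Z_{Q},t)\cap\d\Omega\subset \d\Omega^{1}\setminus(\Gamma\cap\Omega)$. Combining the NTA Harnack-chain property of $\Omega_{1}$ with the big boundary hypothesis on $\Omega$ (which supplies corkscrews for $\Omega^{1}$ near $Z_{Q}$ because $\Omega^{1}$ inherits the CDC from $\Omega$ and $\Omega_{1}$), one builds a bounded-length chain of balls in $\Omega^{1}$ connecting $Y$ (which is at distance $\lec\ell(Q)$ from $Z_{Q}$) to a corkscrew point of $\Omega^{1}$ inside $B(Z_{Q},t)$, and then \Lemma{chain} applied with $F=\d\Omega^{1}\setminus(\Gamma\cap\Omega)$ yields $\omega_{1}^{Y}(\d\Omega^{1}\setminus(\Gamma\cap\Omega))\gec 1$. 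The main obstacle is precisely this last Harnack chain: because $U_{Q'}$ may meet $\d\Omega$ in a complicated way, the chain must stay in $\Omega^{1}$ while remaining uniformly separated from $\d\Omega$, and this is where the big boundary hypothesis on $\Omega$ (guaranteeing enough room inside $\Omega^{1}$ near $Z_{Q}$) plays its essential role.
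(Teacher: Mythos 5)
Your construction of $B^{Q}$ by taking the first exit point $y_{Q}$ of an arc $\gamma\subset U_{Q'}$ from $\Omega'$ differs from the paper's, and while it gives properties \eqn{BinQ}, \eqn{rB}, \eqn{sumB}, the argument for \eqn{>B} has a genuine gap. You write that the big boundary hypothesis on $\Omega$ ``supplies corkscrews for $\Omega^{1}$ near $Z_{Q}$'' and ``guarantees enough room inside $\Omega^{1}$ near $Z_{Q}$,'' but the big boundary condition \eqn{dcontent} only controls $\cH^{d}_{\infty}(B\setminus\Omega)$, i.e., the \emph{exterior} of $\Omega$; it gives the CDC for $\Omega^{1}$ but says nothing about whether $\Omega^{1}=\Omega\cap\Omega_{1}$ has interior room. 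Near $Z_{Q}\in\d\Omega$, $\Omega$ may be arbitrarily thin, so there need not exist a corkscrew ball for $\Omega^{1}$ inside $B(Z_{Q},t)$, and the Harnack chain you want from $Y$ to such a ball may simply not exist inside $\Omega^{1}$. This is exactly the step you flag as ``the main obstacle,'' and it is not resolved.

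The paper avoids this by splitting into two cases according to whether $\dist(\d\Omega'\cap\Omega_{Q},\d\Omega\cap\Omega_{1})$ is below or above $\eta\ell(Q)$. In the ``close'' case the ball $B^{Q}$ is placed within $2\eta\ell(Q)$ of a point $Z\in\d\Omega\cap\Omega_{1}$, and Bourgain's estimate \Lemma{bourgain} on $B(Z,4\eta\ell(Q))$ gives \eqn{>B} directly with no chain at all. In the ``far'' case the paper builds the chain not arbitrarily but as a genuine Harnack chain inside the auxiliary NTA domain $\Omega_{Q}$ (not in $\Omega^{1}$), running from the center of $U_{Q}$ to a corkscrew ball $B'$ near $Z\in U_{Q'}\cap\d\Omega$; the distance hypothesis of this case is precisely what lets one track where the chain first meets $\d\Omega'$ and then invoke \Lemma{chain}. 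Your single-arc construction does not control $\dist(y_{Q},\d\Omega)$, so you can land in either regime without the tools to handle it, and the ``far'' regime in particular requires the NTA structure of the sawtooth pieces ($\Omega_{Q}$), not a bare continuous arc. Without the case split and the Harnack-chain structure from $\Omega_{Q}$, \eqn{>B} is not established.
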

\begin{proof}
  For $Q\in \bD$, let
 $\F_{Q}$ be the collection of children of $Q$ and $\F_{Q}'$ all the grandchildren of $Q$ (so that $\bD_{\F_{Q}',Q}=\{Q\}\cup \F_{Q}$). Set
\[
\Omega_{Q}=\Omega_{\F_{Q}',Q}=  U_{Q} \cup  \bigcup_{R \in \F_{Q}} U_{R}.\]
{  By \Lemma{HM1} and \Lemma{HM2}, $\Omega_{Q}$ is also an NTA domain with Ahlfors regular boundary and $\Omega_{Q}\subset \Omega_{1}$. }

Note that if $Q\in \F$, there is a child $Q'$ of $Q$ so that $U_{Q'}\cap \d\Omega\neq\emptyset$, hence $\Omega_{Q}\cap \d\Omega\neq\emptyset$. Let $\eta>0$ be small enough so that
\begin{equation}\label{e:oqfar}
\dist(\Omega_{Q},\Gamma)>4\eta\ell(Q)  \mbox{ for all }Q\in \bD.\end{equation}

For $Q\in \F$, we pick $B^{Q}$ as follows.\\

{\bf Case 1:} Suppose 

\begin{equation*}
\dist(\d\Omega'\cap \Omega_{Q},\d\Omega\cap \Omega_{1})< \eta \ell(Q).
\end{equation*}
Let $Z\in \d\Omega\cap \Omega_{1}$ be such that
\[
\dist(Z,\d\Omega'\cap \Omega_{Q})< \eta \ell(Q).\]
See Figure \ref{f:damn}.a..

Let 
\[Z_{Q}\in \d\Omega'\cap \Omega_{Q}\cap B(Z,\eta\ell(Q))\] 
and set $B^{Q}=B(Z_{Q},\eta\ell(Q))$ (so we clearly have \eqn{rB} in this case). By \eqn{oqfar}, 
\[
B^Q
\subset B\ps{Z,2\eta \ell(Q)}\subset \Omega_{1}.\]

By \eqn{boundary} and \eqn{oqfar}
\begin{equation}\label{e:bz4eta}
B\ps{Z,4\eta\ell(Q)}\cap \d\Omega^{1}
=B\ps{Z,4\eta\ell(Q)}\cap (\Omega_{1}\cap \d\Omega)
\subseteq (\d\Omega^{1}\backslash (\Gamma\cap \Omega)).
\end{equation}
Let $Y\in B^{Q}\cap \d\Omega'$.  Since $B^{Q}\subset B(Z,2\eta\ell(Q))$, we know  $Y\in B\ps{Z,2\eta \ell(Q)}$, and since $Z\in \d\Omega\cap \Omega_{1}\subseteq \d\Omega^{1}$, we have
\[
\omega_{1}^{Y}(\d\Omega^{1}\backslash (\Gamma\cap \Omega))
\stackrel{\eqn{bz4eta}}{\geq} 
\omega_{1}^{Y}\ps{B\ps{Z,4\eta\ell(Q)}} \stackrel{\eqn{bourgain}}{\gec} 1\]
which proves \eqn{>B} in this case. Because each $B^{Q}$ is centered at a point in $\Omega_{Q}$, for $\eta$ small enough, we can guarantee  by definition of $\Omega_{Q}^{*}$ that \eqn{BinQ} holds for this case as well.
\begin{figure}[!ht]
\includegraphics[width=300pt]{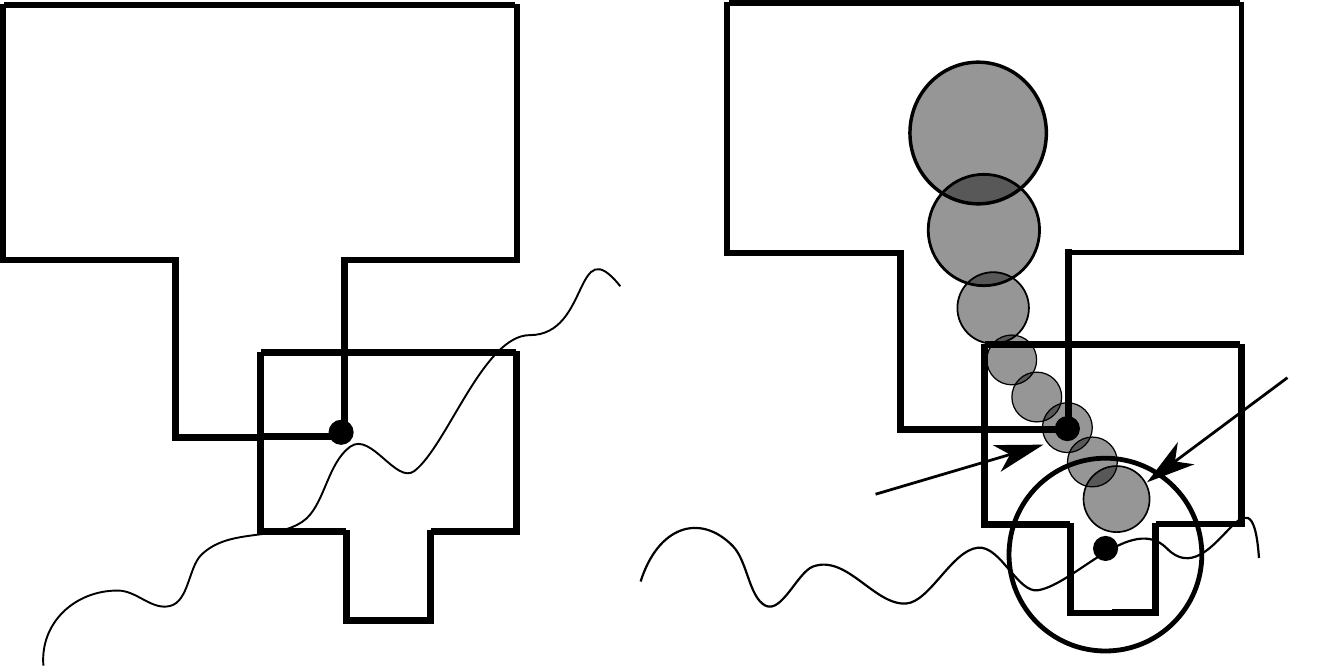}
\begin{picture}(0,0)(300,0)
\put(15,20){$\d\Omega$}
\put(145,35){$\d\Omega$}
\put(242,15){$Z$}
\put(77,52){$Z_{Q}$}
\put(50,125){$U_{Q}$}
\put(100,15){$U_{Q'}$}
\put(180,30){$Z_{Q}$}
\put(210,120){$B_{1}$}
\put(285,60){$B'$}
\put(75,0){a.}
\put(210,0){b.}
\end{picture}
\caption{}
\label{f:damn}
\end{figure}

{\bf Case 2:} Now suppose 

\begin{equation}\label{e:c2}
\dist(\d\Omega'\cap \Omega_{Q},\d\Omega\cap \Omega_{1})\geq  \eta \ell(Q).
\end{equation}

Note that by the properties of cubes $Q\in \F$, there is $Q'\in Q$ a child of $Q$ so that $U_{Q'}\cap \d\Omega\neq\emptyset$. By \eqn{oqfar} and \eqn{c2}, we can pick
\[
Z\in U_{Q'}\cap \d\Omega\subset \Omega_{Q}\cap \d\Omega\]
so that  
\begin{equation}\label{e:bzino1}
B(Z,\eta\ell(Q))\subset \Omega_{1}\backslash \Omega'.
\end{equation}
See Figure \ref{f:damn}.b.. Let $B'\subset U_{Q'}\cap B(Z,\eta\ell(Q)/2)$ be a corkscrew ball, so $r_{B'}\sim \eta \ell(Q)$.

Let $B_{1},...,B_{N}$ be a Harnack chain in $\Omega_{Q}$ from the center of $U_{Q}$ to the center of $B'$ so that $N\lec 1$. Let $Y_{1},...,Y_{N}$ denote their centers.  If we set $B_{N+1}=B(Z,\eta \ell(Q)/2)$, then $B_{N+1}$ is a corkscrew ball for $\Omega_{1}$ by \eqn{bzino1}, and so $B_{1},...,B_{N+1}$ form a Harnack chain in $\Omega_{1}$ with $B_{N+1}\cap \d\Omega\neq\emptyset$. Moreover, since $B_{1}$ contains a point in $U_{Q}\subset\Omega'$, and 
\[
B_{N+1}\subset B' \subset B(Z,\eta\ell(Q)) \stackrel{\eqn{bzino1}}{\subset} (\Omega')^{c},\] 
there is $j\leq N$ such that $B_{j}\cap \d\Omega'\neq\emptyset$, and so by applying  \Lemma{chain} to the chain $B_{j},...,B_{N+1}$ and using the fact that 
\[
B_{N+1}\cap \d\Omega^{1}=B_{N+1}\cap \d\Omega\subseteq \d\Omega^{1}\backslash (\Gamma\cap \Omega),\] 
we get
\begin{equation}\label{e:w1y}
\omega_{1}^{Y_{j}}(\d\Omega^{1}\backslash (\Gamma\cap \Omega))\gec 1.
\end{equation}
Since the $\{B_{i}\}_{i=1}^{N}$ form a Harnack chain in $\Omega_{Q}$, we know 
\[
\dist(B_{j},\Omega_{Q}^{c})\geq r_{B_{j}}/C\sim \ell(Q)\]
and so if we fix $Z_{Q}\in B_{j}\cap \d\Omega'\subseteq \d U_{Q}$, we have that, for $\eta$ small enough,
\[
B^{Q}:=B(Z_{Q},\eta \ell(Q))\subset \Omega_{Q}^{*}\subset \Omega_{1}\]
and so \eqn{BinQ} holds. Therefore, for $Y\in B^{Q}$, by Harnack's inequality,
\[
\omega_{1}^{Y}(\d\Omega^{1}\backslash (\Gamma\cap \Omega))\gec \omega_{1}^{Y_{j}}(\d\Omega^{1}\backslash (\Gamma\cap \Omega))\stackrel{\eqn{w1y}}{\gec} 1\]
which proves \eqn{>B} in this case. Again, \eqn{rB} holds by definition of $B^{Q}$.\\

For $B^{Q}$ chosen as in either case, we have by \eqn{BinQ} that
\[
B^{Q}\subset \Omega_{Q}^{*}:=\Omega_{\F_{Q}',Q}^{*}=U_{Q}^{*} \cup \bigcup_{R \in \F_{Q}} U_{R}^{*}.\]
Thus, since the $U_{Q}^{*}$ have bounded overlap in $\Omega_{1}$,
\[
\sum_{Q\in \F} \one_{B^{Q}} \leq \sum_{Q\in \bD}\one_{\Omega_{Q}^{*}}\lec \sum_{Q\in \bD} \one_{U_{Q}^{*}}\lec \one_{\Omega_{1}}\]
which proves \eqn{sumB}.

\end{proof}

\begin{lemma}\label{l:G}
Let $Q_0$ be the cube as chosen right after \eqn{close}, $\Omega'$ be as defined before \eqn{UinO}, and let $B^{Q}$ be as in \Lemma{3.2}. Define
\[
G= \ps{\d\Omega' \cap  Q_{0}}\cup \ps{\bigcup_{Q\in \F} B^{Q}\cap \d\Omega'}.\]

Then
\begin{equation}\label{e:atleastg}
\omega_{1}^{Y}(\d\Omega^{1}\backslash (\Gamma\cap \Omega))
\gec \omega_{\Omega'}^{Y}(G) \;\; \mbox{for $Y\in \Omega'$}.
\end{equation}
\end{lemma}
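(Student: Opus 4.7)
The plan is to set $u(Y) := \omega_{1}^{Y}(\d\Omega^{1}\setminus(\Gamma\cap\Omega))$, which is a bounded $\mathcal{L}$-solution on $\Omega^{1}\supset\Omega'$, represent $u$ on $\Omega'$ via its boundary trace along $\d\Omega'$, and lower-bound that trace on the two pieces making up $G$. Since by \Lemma{HM1} and \Lemma{HM2} the sawtooth $\Omega'$ is itself an NTA domain with Ahlfors regular boundary, every point of $\d\Omega'$ is Wiener regular, so the Perron solution on $\Omega'$ with the pointwise boundary trace of $u$ coincides with $u$ on $\Omega'$ by \Lemma{sameboundaryval}, giving
\[
u(Y)=\int_{\d\Omega'} u^{*}\,d\omega_{\Omega'}^{Y},\qquad Y\in\Omega'.
\]
Using \eqn{boundary}, the trace $u^{*}$ is read off as follows: the set $F:=\d\Omega^{1}\setminus(\Gamma\cap\Omega)=(\d\Omega\cap\Omega_{1})\cup(\Gamma\cap\d\Omega)$ consists of Wiener-regular points of $\Omega^{1}$ (the first piece by the big boundary hypothesis on $\Omega$, the second by Ahlfors regularity of $\Gamma$), and $\Gamma\cap\Omega$ also consists of regular points of $\Omega^{1}$ via the CDC coming from $\Gamma$. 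Hence $u^{*}=1$ on $\d\Omega'\cap F$, $u^{*}=0$ on $\d\Omega'\cap(\Gamma\cap\Omega)$, and $u^{*}=u$ on $\d\Omega'\cap\Omega^{1}$ by interior continuity of $u$.

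Next I would split $G=G_{1}\sqcup G_{2}$ with $G_{1}:=\bigcup_{Q\in\F} B^{Q}\cap\d\Omega'$, which sits inside $\d\Omega'\cap\Omega^{1}$ (each $B^{Q}\subset\Omega_{Q}^{*}\subset\Omega_{1}$ lies at positive distance from $\Gamma$), and $G_{2}:=\d\Omega'\cap Q_{0}\subset\Gamma\cap\overline{\Omega}$. Inequality \eqn{>B} then gives $u^{*}=u\gec 1$ on $G_{1}$, while $u^{*}=1$ on $G_{2}\cap F=G_{2}\cap(\Gamma\cap\d\Omega)$. The delicate step is to show that $G_{2}\setminus F=\d\Omega'\cap Q_{0}\cap\Omega$ is $\omega_{\Omega'}^{Y}$-null. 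To do so, I would use \eqref{p:HM1}, which places $\d\Omega'\cap\Gamma$ inside $\overline{Q}_{0}\setminus\bigcup_{Q\in\F'}Q^{\circ}$. If $z\in Q_{0}\cap\Omega$, then at all sufficiently small dyadic scales $k$ the cube $Q_{z}^{k}\in\bD(\Gamma)$ containing $z$ is close enough to $z$ that $U_{Q_{z}^{k}}\subset\Omega\cap\Omega_{1}$ (by openness of $\Omega$ at $z$ and the size of the Whitney regions), so $z$ lies in no cube of $\F$; by maximality $Q_{z}^{k}$ eventually sits in some $\widetilde Q\in\T$, and thus $z$ belongs to a cube of $\F'$. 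Because $z\notin Q^{\circ}$ for any $Q\in\F'$, the point $z$ must sit on the boundary of a cube of $\bD(\Gamma)$. These boundaries form a countable $\cH^{d}$-null union by property (vi) of \Lemma{Christ} together with Ahlfors regularity, so $G_{2}\setminus F$ has $\cH^{d}|_{\d\Omega'}$-measure zero, and applying \Theorem{DJ} on the NTA sawtooth $\Omega'$ (so that $\omega_{\Omega'}^{Y}\ll\cH^{d}|_{\d\Omega'}$) yields $\omega_{\Omega'}^{Y}(G_{2}\setminus F)=0$.

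Putting everything together, the integral bound becomes
\[
u(Y)\geq\int_{G_{1}}u^{*}\,d\omega_{\Omega'}^{Y}+\int_{G_{2}\cap F}u^{*}\,d\omega_{\Omega'}^{Y}\gec\omega_{\Omega'}^{Y}(G_{1})+\omega_{\Omega'}^{Y}(G_{2}\cap F)=\omega_{\Omega'}^{Y}(G),
\]
which is \eqn{atleastg}. The main obstacle in this scheme is exactly the $\cH^{d}$-nullity of $G_{2}\setminus F$: one needs the tree structure of the stopping-time construction of $\F,\T,\F'$ to force the points of $\d\Omega'\cap Q_{0}\cap\Omega$ onto dyadic cube boundaries on $\Gamma$, and then to combine Ahlfors regularity of $\d\Omega'$ with \Theorem{DJ} to upgrade this geometric $\cH^{d}$-null statement into an $\omega_{\Omega'}^{Y}$-null one, at which point the integral representation closes the argument.
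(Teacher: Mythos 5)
Your high-level strategy matches the paper's: lower bound $u(Y):=\omega_{1}^{Y}(\d\Omega^{1}\setminus(\Gamma\cap\Omega))$ on $\Omega'$ by showing its boundary behavior on $\d\Omega'$ dominates $\one_{G}$ up to a constant. The framing differs, though. The paper works directly with the Perron upper class: take any supersolution $\Phi$ admissible for $\one_{\d\Omega^{1}\setminus(\Gamma\cap\Omega)}$ on $\Omega^{1}$, note that $\Phi\geq u$ in $\Omega^{1}$, use \eqn{>B} plus continuity of $\omega_{1}$ to get $\liminf_{\Omega'\ni Y\to Z}\Phi(Y)\gec 1$ on $\bigcup B^{Q}\cap\d\Omega'$, invoke the inclusion $\d\Omega'\cap Q_{0}\subset\d\Omega^{1}\setminus(\Gamma\cap\Omega)$ to get the same on $\d\Omega'\cap Q_{0}$, and conclude $\Phi$ is (up to a constant) an upper function for $\one_{G}$ on $\Omega'$; infimizing gives \eqn{atleastg}. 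You instead aim for a pointwise integral representation $u(Y)=\int_{\d\Omega'}u^{*}\,d\omega_{\Omega'}^{Y}$. That is a legitimate alternative, but it quietly demands more: you must know that $u$ has a boundary trace $u^{*}$ against which the representation holds, which is not automatic since $\one_{\d\Omega^{1}\setminus(\Gamma\cap\Omega)}$ is discontinuous and $u$ need not extend continuously to all of $\d\Omega'\cap\d\Omega^{1}$. The paper's $\liminf$-based Perron argument sidesteps precisely this issue; if you prefer the representation route you should replace it with an appeal to \Lemma{Mm} (which only needs $\limsup$ bounds), or argue the representation through a resolutivity/approximation statement rather than through \Lemma{sameboundaryval}, which presumes two-sided boundary limits.

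On the substantive geometric point, your effort to control $\d\Omega'\cap Q_{0}\cap\Omega$ is well-aimed: the paper's chain $\d\Omega'\cap Q_{0}\subset\d\Omega'\cap\Gamma\subset\d\Omega^{1}\setminus(\Gamma\cap\Omega)$ hides exactly the claim that $\d\Omega'\cap\Gamma\cap\Omega=\emptyset$, which it asserts without proof. Your argument via \eqref{p:HM1} and the tree structure of $\F,\T,\F'$ to place these hypothetical points on dyadic cube boundaries, and then kill them with Ahlfors regularity of $\d\Omega'$ and \Theorem{DJ}, is a valid way to close the gap. In fact a slightly sharper version of your own stopping-time reasoning gives that the set is genuinely empty, not merely $\cH^{d}$-null: if $z\in\Gamma\cap\Omega\cap Q_{0}$ with $\delta=\dist(z,\d\Omega)>0$, any cube $R\in\F\cup\T$ with $\ell(R)+\dist(z,R)\ll\delta/K_{0}$ would have a parent still satisfying all the defining conditions of $\T$ (and not of $\F$), contradicting maximality; hence every $Q\in\bD_{\F',Q_{0}}$ near $z$ has $\ell(Q)\gec\delta$, so $U_{Q}$ stays $\gec\delta$ away from $z$ and $z\notin\overline{\Omega'}$. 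One more caution: your assertion that $G_{1}\subset\d\Omega'\cap\Omega^{1}$ is not justified as stated — in Case 1 of \Lemma{3.2} the ball $B^{Q}$ is centered within $\eta\ell(Q)$ of a point of $\d\Omega\cap\Omega_{1}$, so $B^{Q}$ is not contained in $\Omega$ and $B^{Q}\cap\d\Omega'$ may meet $\d\Omega^{1}$; this is another place where the $\liminf$ formulation is safer than a pointwise trace.
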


\begin{proof}
Let $\Phi$ be a superharmonic function in $\Omega^{1}$ so that for all $Z\in \d\Omega^{1}$,
\begin{equation}\label{e:liminf}
\liminf_{\Omega_{1}\ni Y\rightarrow Z} \Phi(y) \geq \one_{\d\Omega^{1}\backslash (\Gamma\cap \Omega)}(Z).\end{equation}
Then by the definition of harmonic measure using the Perron method,
\[
\Phi(Y)\geq \omega_{1}^{Y}(\d\Omega^{1}\backslash (\Gamma\cap \Omega)) \;\; \mbox{ for }\;\; Y\in \Omega^{1}.\]
Note that if $Q \in \F$ and $Z\in B^{Q}\cap \d\Omega'$, then by Lemma \ref{l:3.2}, and since $\omega_{1}$ is continuous at $Z$,
\[
\liminf_{\Omega'\ni Y\rightarrow Z} \Phi(Y) 
\geq \omega_{1}^{Z}(\d\Omega^{1}\backslash (\Gamma\cap \Omega))\gec 
1.\]

And if $Z\in  \d\Omega'\cap Q_{0}$, then because
\[
\d\Omega'\cap Q_{0}\subset \d\Omega'\cap \Gamma 
\subset \d\Omega^{1}\backslash (\Gamma\cap \Omega)
,\] 
we already have by \eqn{liminf} that
\[
\liminf_{\Omega'\ni Y\rightarrow Z} \Phi(Y) \gec 1,\]
and thus $\Phi$ is also an upper function for $G$ in $\Omega'$, hence
\[
\Phi(Y) \geq \omega_{\Omega'}^{Y}(G) \;\; \mbox{ for }Y\in \Omega'.\]
Infimizing over all upper functions $\Phi$ for $\one_{\d\Omega^{1}\backslash (\Gamma\cap \Omega)}$ completes the proof.
\end{proof}

%
%

%

\begin{lemma}
\label{l:lQ}
Let $\F$ and $Q_0$ be as in \Lemma{G} and let and $B'_{Q_{0}}$ be defined as in \Lemma{HM3} associated to $Q_0$. Then we have
\begin{equation}\label{e:content1}
\sum_{Q\in \F} \ell(Q)^{d}
\gec \cH^{d}_{\infty}(\Omega^{c} \cap \Omega_{1}\cap B_{Q_{0}}') 
.\end{equation}
\end{lemma}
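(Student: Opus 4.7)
The plan is to build an explicit cover of $\Omega^{c}\cap \Omega_{1}\cap B'_{Q_{0}}$ by balls indexed by $\F$, each of diameter comparable to $\ell(Q)$, and then read off the content bound directly from the definition of $\cH^{d}_{\infty}$. This reduces the whole statement to a short stopping-time observation: every point of $\Omega^{c}$ inside the Carleson box $T_{Q_{0}}$ should sit inside the Whitney region of some cube whose first ``good'' ancestor belongs to $\F$.

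First, I would fix $z\in \Omega^{c}\cap \Omega_{1}\cap B'_{Q_{0}}$. Since $z\in B'_{Q_{0}}\cap \Omega_{1}$, \Lemma{HM3} places $z$ in $T_{Q_{0}}=\bigcup_{R\in \bD_{Q_{0}}} U_{R}$, so I may pick $R\in \bD_{Q_{0}}$ with $z\in U_{R}$. Because $z\in \Omega^{c}$, such an $R$ automatically satisfies $U_{R}\not\subset \Omega\cap \Omega_{1}$.

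The central step is now a stopping-time argument along the chain of ancestors of $R$ inside $\bD_{Q_{0}}$. The property $U_{\,\cdot\,}\subset \Omega\cap \Omega_{1}$ holds at $Q_{0}$ (this was verified in the lines following \eqn{close}, using the NTA structure of $\Omega_{1}$ to connect $Y^{1}$ to $U_{Q_{0}}$ while staying in $\Omega$ for $\ve$ small) and fails at $R$, so there is a first ancestor $Q$ of $R$ with $U_{Q}\subset \Omega\cap \Omega_{1}$. The child $Q'$ of $Q$ containing $R$ must then satisfy $U_{Q'}\not\subset \Omega\cap \Omega_{1}$, and by the very definition of $\F$ we get $Q\in \F$. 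Combining $R\subset Q\subset B(x_{Q},C\ell(Q))$ with the Whitney estimates \eqn{whitney2} (which give $\ell(I)\sim \ell(R)\leq \ell(Q)$ and $\dist(I,R)\lec \ell(R)$ for every $I\in \W^{*}_{R}$) yields
\[
z\in U_{R}\subset B(x_{Q},C'\ell(Q))
\]
for a uniform $C'$ depending only on allowable parameters. Hence $\{B(x_{Q},C'\ell(Q))\}_{Q\in \F}$ covers $\Omega^{c}\cap \Omega_{1}\cap B'_{Q_{0}}$, and the definition of Hausdorff content gives
\[
\cH^{d}_{\infty}(\Omega^{c}\cap \Omega_{1}\cap B'_{Q_{0}})\leq \sum_{Q\in \F}\bigl(2C'\ell(Q)\bigr)^{d}\lec \sum_{Q\in \F}\ell(Q)^{d}.
\]

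I do not foresee a serious obstacle: the stopping-time existence is just finiteness of the ancestor chain, the containment $U_{Q_{0}}\subset \Omega\cap \Omega_{1}$ has already been installed in the setup of $\F$, and no refinement such as bounded overlap of the cover or packing control on $\F$ is required, since the statement asks only for a one-sided content inequality.
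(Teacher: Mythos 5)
Your argument follows the same strategy as the paper: both proofs rest on the fact that every Whitney region $U_{R}$ meeting $\Omega^{c}$ must sit underneath a cube of $\F$, so that the Carleson boxes $\{T_{Q}\}_{Q\in\F}$ (or balls $B(x_{Q},C'\ell(Q))$, which is the same thing up to constants) cover $\Omega^{c}\cap\Omega_{1}\cap B'_{Q_{0}}$. The paper phrases this set-theoretically via $T_{Q_{0}}=\bigl(\bigcup_{Q\in\F}T_{Q}\bigr)\cup\bigl(\bigcup_{Q\in\bD_{\F,Q_{0}}}U_{Q}\bigr)$ together with the observation that $U_{Q}\cap\Omega^{c}=\emptyset$ for $Q\in\bD_{\F,Q_{0}}$; you phrase it pointwise through an explicit stopping-time along an ancestor chain. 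These are genuinely the same argument.

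There is, however, a small but real gap in your stopping-time step. You take $Q$ to be the \emph{first} (i.e.\ smallest) ancestor of $R$ with $U_{Q}\subset\Omega\cap\Omega_{1}$ and conclude ``by the very definition of $\F$ we get $Q\in\F$.'' That conclusion does not follow: $\F$ consists of the \emph{maximal} cubes $Q\subset Q_{0}$ satisfying $U_{Q}\subset\Omega\cap\Omega_{1}$ and possessing a bad child. Your $Q$ satisfies both defining properties, but it need not be maximal — an ancestor $\tilde{Q}\supsetneq Q$ may also satisfy them (for instance via a different bad child not lying on the chain from $R$). In that case $Q\notin\F$ and the ball $B(x_{Q},C'\ell(Q))$ is simply not part of your proposed cover. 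The fix is straightforward: every cube satisfying the two properties is contained in a maximal such cube $\tilde{Q}\in\F$ (since the ancestor chain in $\bD_{Q_{0}}$ is finite), and then $z\in U_{R}\subset B(x_{\tilde{Q}},C''\ell(\tilde{Q}))$ for a slightly larger constant $C''$, since $R\subset Q\subset\tilde{Q}$ and the Whitney estimates control $\dist(U_{R},R)\lesssim\ell(R)\leq\ell(\tilde{Q})$. With that replacement your cover is indexed by $\F$ as required and the content estimate follows as you wrote it.
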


\begin{proof}
By \Lemma{HM3}
\[
B_{Q_{0}}' \cap \Omega_{1} \subset T_{Q_{0}}\cap \Omega_{1}.\]
By definition, if $Q\in \bD_{\F,Q_{0}}$, then $U_{Q}\cap \Omega^{c}=\emptyset$. Hence,
\begin{align*}
B_{Q_{0}}' \cap\Omega_{1} \cap \Omega^{c}
& \subset T_{Q_{0}}\cap \Omega_{1}\cap \Omega^{c}\\
&=\ps{\bigcup_{Q\in \F}T_{Q}
\cup \bigcup_{Q\in \bD_{\F,Q_{0}}}U_{Q}} \cap  \Omega_{1} \cap \Omega^{c}\\
& = \bigcup_{Q\in \F}T_{Q}\cap  \Omega_{1} \cap \Omega^{c}.\end{align*}
Thus,
\begin{equation*}
\sum_{Q\in \F} \ell(Q)^{d}
\gec \sum_{Q\in \F} (\diam {T_{Q}})^{d}
\gec \cH^{d}_{\infty}(\Omega^{c} \cap \Omega_{1}\cap B_{Q_{0}}').
\end{equation*}

\end{proof}
%
%

\begin{lemma}
\label{l:Hcbdd}
Let $\F$ and $B'_{Q_{0}}$ be as in \Lemma{lQ}. Then one has
\begin{align}\label{e:woops}
\cH^{d}(\d\Omega'\cap \d\Omega_{1}\cap B_{Q_{0}}')+ \sum_{Q\in \F} \ell(Q)^{d}
& \gec \cH^{d}\ps{ \d\Omega'\cap \d\Omega_{1} \cap B_{Q_{0}}' \cup \bigcup_{Q\in \F}Q} \notag \\
& \geq \cH^{d}(\d\Omega\cap \d\Omega_{1}\cap B_{Q_{0}}').\end{align}
\end{lemma}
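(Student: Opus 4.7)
The plan is to prove the two inequalities of \eqn{woops} in reverse order; the second (set-inclusion) inequality is the substantive step, and the first reduces to subadditivity of $\cH^{d}$ together with the $A$-Ahlfors $d$-regularity of $\Gamma$.

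For the lower inequality I would first establish the set containment
\[
\d\Omega\cap \d\Omega_{1}\cap B_{Q_{0}}'
\;\subseteq\;
\bigl(\d\Omega'\cap \d\Omega_{1}\cap B_{Q_{0}}'\bigr)\cup \bigcup_{Q\in\F} Q,
\]
from which the desired Hausdorff-measure inequality follows by monotonicity. Fix $x$ in the left-hand side. Since $B_{Q_{0}}'\subset B_{Q_{0}}$ by \Lemma{HM3}, since $B_{Q_{0}}\cap \Gamma\subset \Delta(x_{Q_{0}},2r_{Q_{0}})\subset Q_{0}$ by the properties of the dyadic cube $Q_{0}$ from \Lemma{Christ}, and since $\d\Omega_{1}=\Gamma$, we have $x\in Q_{0}$. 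If $x$ is in some cube of $\F$, the inclusion holds trivially. Otherwise, I intend to show $x\notin \bigcup_{Q'\in \F'}Q'$, so that \eqref{p:HM1} applied in the $1$-sided NTA domain $\Omega_{1}$ (whose boundary is $\Gamma$) to the family $\F'$ places $x\in \d\Omega_{1}\cap \d\Omega_{\F',Q_{0}}= \d\Omega_{1}\cap \d\Omega'$. The dyadic nesting gives that every child of a cube in $\F$ sits inside its parent, so $x\notin \bigcup_{Q\in\F} Q$ already excludes children of $\F$; and every $R\in\T$ is by construction contained in $Q_{0}\setminus \d\Omega$, so $x\in \d\Omega$ rules out $x$ being in any $R\in\T$ or any of its children. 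As $\F'$ consists precisely of the children of $\F\cup \T$, this gives $x\notin \bigcup_{Q'\in \F'} Q'$, as needed.

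The first inequality I would dispatch by subadditivity, writing
\[
\cH^{d}\!\ps{\d\Omega'\cap \d\Omega_{1}\cap B_{Q_{0}}' \cup \bigcup_{Q\in \F}Q}
\leq \cH^{d}(\d\Omega'\cap \d\Omega_{1}\cap B_{Q_{0}}') + \sum_{Q\in\F}\cH^{d}(Q),
\]
and then combining $Q\subset \Delta(x_{Q},C\ell(Q))$ from \Lemma{Christ} with the $A$-Ahlfors $d$-regularity of $\Gamma$ to obtain $\cH^{d}(Q)\lec \ell(Q)^{d}$ for each $Q\in \F$. Disjointness of $\F$ ensures nothing is lost in the splitting.

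I expect the only real obstacle to be the bookkeeping in the set-containment step: one must use in an essential way that $\T$ was designed so that each of its cubes is disjoint from $\d\Omega$, for otherwise $\F'$ could contain cubes covering $x$ and \eqref{p:HM1} would fail to force $x$ into $\d\Omega'$. Once that inclusion is secured, the remainder of the argument is monotonicity, countable subadditivity, and the standard comparison $\cH^{d}(Q)\sim \ell(Q)^{d}$ for dyadic cubes on an Ahlfors regular set.
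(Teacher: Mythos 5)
Your proof is correct. Both your argument and the paper's reduce the second inequality to a set containment, and the key observation is the same in both: cubes in $\T$ are disjoint from $\d\Omega$, so a point of $\d\Omega\cap\d\Omega_{1}$ lying outside $\bigcup_{Q\in\F}Q$ cannot belong to any cube of $\F'$. The difference is in how you land in $\d\Omega'$ once that is established. The paper invokes \Lemma{Christ}~(i) to argue that $\cH^{d}$-a.e.\ $x\in Q_{0}$ is tracked through a full tower of dyadic cubes, then concludes that if that tower never meets $\F'$ it stays in $\bD_{\F',Q_{0}}$ and hence accumulates on $\d\Omega'$, yielding a containment only up to $\cH^{d}$-null sets. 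You instead invoke \eqref{p:HM1} (Proposition~6.1 of \cite{HM14}) applied to $\Omega_{1}$ and the disjoint family $\F'$, which directly gives $Q_{0}\setminus\bigcup_{Q'\in\F'}Q'\subset\d\Omega_{1}\cap\d\Omega'$, so your containment is an honest pointwise inclusion, slightly cleaner than the paper's a.e.\ statement. The ingredients are essentially the same (\eqref{p:HM1} itself rests on the same dyadic structure), but citing the proposition spares you the null-set bookkeeping. The first inequality is handled exactly as in the paper: subadditivity plus $\cH^{d}(Q)\sim\ell(Q)^{d}$ from Ahlfors regularity.
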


\begin{proof}
By \Lemma{Christ} (i), {$\cH^{d}$} almost every $x\in Q_{0}$ is contained in either a cube from $\F'$ (and so in a cube from $\T\cup \F$) or every cube in $Q_{0}$ containing $x$ is in $\bD_{\F', Q_0}$. Hence, for almost every $x\in \d\Omega\cap \d\Omega_{1}$, $x$ cannot be in a cube from $\T$ by definition (since $x$ is in a cube from $T$ only if that cube is in $Q_{0}\backslash \d\Omega$), so it must be in a cube from $\F$ or infinitely many cubes from $\bD_{\F', Q_0}$, and in the latter case, we must have that $x\in \d\Omega'\cap \d\Omega_{1}$ by construction. Thus, we have shown that { $\cH^{d}$} almost every point in $\d\Omega\cap \d\Omega_{1}$ is in ${ \d\Omega'\cap \d\Omega_{1} \cup \bigcup_{Q\in \F}Q}$, which proves the second inequality. The first inequality follows since
\[
\cH^{d}\ps{\bigcup_{Q\in \F}Q}
\leq \sum_{Q\in \F} \cH^{d}(Q)
\sim \sum_{Q\in \F} \ell(Q)^{d}.\]
\end{proof}

\begin{lemma}
Let $\F$ and $B'_{Q_{0}}$ be as in \Lemma{Hcbdd}. Then we have 
\begin{equation}\label{e:long}
\cH^{d}_{\infty}(\Omega^{c} \cap \Omega_{1}\cap B_{Q_{0}}') \gec \cH^{d}_{\infty}((\Omega^{c})^{\circ}\cap \d\Omega_{1}\cap B_{Q_{0}}')  .
\end{equation}
\end{lemma}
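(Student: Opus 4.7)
The plan is to establish the content inequality via a Frostman-type mass transport: build a Frostman measure of dimension $d$ on the boundary set $E:=(\Omega^{c})^{\circ}\cap\partial\Omega_{1}\cap B'_{Q_{0}}$, push it into the interior set $A:=\Omega^{c}\cap\Omega_{1}\cap B'_{Q_{0}}$ by replacing each $x\in E$ with a nearby corkscrew ball of $\Omega_{1}$ lying inside $A$, verify that the resulting measure on $A$ remains Frostman of dimension $d$, and invoke the quantitative converse of Frostman's Lemma to conclude $\cH^{d}_{\infty}(A)\gec \cH^{d}_{\infty}(E)$.

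Since $\Gamma=\partial\Omega_{1}$ is $A$-Ahlfors $d$-regular, I would take as source measure $\nu:=\cH^{d}|_{E}$. Ahlfors regularity immediately yields the Frostman bound $\nu(B(z,r))\lec r^{d}$ for every ball, while \eqref{e:hhfin} applied to $E\subset\Gamma$ gives $\nu(E)=\cH^{d}(E)\sim\cH^{d}_{\infty}(E)$, so no separate appeal to Frostman's Lemma is needed on the source side.

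For each $x\in E$, set $r_{x}:=\tfrac{1}{10}\min\bigl(\dist(x,\Omega),\dist(x,(B'_{Q_{0}})^{c})\bigr)$, a positive Lipschitz function of $x$ because $x\in(\Omega^{c})^{\circ}\cap B'_{Q_{0}}$; by construction $B(x,10r_{x})\subset(\Omega^{c})^{\circ}\cap B'_{Q_{0}}$. Applying the corkscrew condition for $\Omega_{1}$ at $x\in\partial\Omega_{1}$ at scale $r_{x}$, I may select (measurably in $x$) a corkscrew point $Y_{x}\in\Omega_{1}\cap B(x,r_{x})$ with $B(Y_{x},c_{0}r_{x})\subset\Omega_{1}\cap B(x,r_{x})\subset A$, where $c_{0}$ is the NTA corkscrew constant of $\Omega_{1}$. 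Letting $\lambda_{x}$ be the normalized $(d+1)$-dimensional Lebesgue measure on $B(Y_{x},c_{0}r_{x})$, I define the transported measure $\mu:=\int_{E}\lambda_{x}\,d\nu(x)$, a Radon measure supported in $A$ with total mass $\mu(A)=\nu(E)\sim\cH^{d}_{\infty}(E)$.

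The heart of the proof, and the main technical obstacle, is to verify $\mu(B(z,r))\lec r^{d}$ uniformly in $z,r$. Since $\lambda_{x}(B(z,r))\neq 0$ only when $x\in B(z,r+(1+c_{0})r_{x})$, I would split the $x$-integral into the regimes $r_{x}\leq r$ and $r_{x}>r$. When $r_{x}\leq r$, the bound $\lambda_{x}(B(z,r))\leq 1$ together with $x\in B(z,4r)$ gives a contribution at most $\nu(B(z,4r))\lec r^{d}$. When $r_{x}>r$, the volume ratio yields $\lambda_{x}(B(z,r))\leq (r/c_{0}r_{x})^{d+1}$, and a standard dyadic decomposition $r_{x}\in(2^{k}r,2^{k+1}r]$ combined with the Frostman bound $\nu(B(z,C2^{k}r))\lec 2^{kd}r^{d}$ produces a $k$-th shell contribution bounded by $2^{-k(d+1)}\cdot 2^{kd}r^{d}=2^{-k}r^{d}$, summing to $O(r^{d})$. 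Once the Frostman condition for $\mu$ is in hand, the cover-based converse of Frostman's Lemma (that $\mu(U)\lec(\diam U)^{d}$ applied to any efficient cover of $A$) gives $\cH^{d}_{\infty}(A)\gec\mu(A)\gec\cH^{d}_{\infty}(E)$, which is \eqref{e:long}.
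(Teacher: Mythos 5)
Your proof is correct, and it takes a genuinely different route from the paper's. The paper argues combinatorially: it selects a family of maximal dyadic cubes $Q_j\in\bD(\Gamma)$ sitting inside $(\Omega^c)^\circ\cap\d\Omega_1\cap B'_{Q_0}$, replaces each $Q_j$ by the spherical cap $\d B_{Q_j}\cap\Omega_1$ (a subset of the target set $\Omega^c\cap\Omega_1\cap B'_{Q_0}$ of comparable $d$-content, thanks to the corkscrew and Harnack-chain conditions for $\Omega_1$), observes that the balls $B_{Q_j}$ are pairwise disjoint by \eqn{bqbrempty}, and then uses the Ahlfors regularity of $\d\Omega_1\cup\bigcup\d B_{Q_j}$ together with \eqn{hhfin} to pass between $\cH^d$ and $\cH^d_\infty$. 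Your argument achieves the same transfer of content in a continuous, measure-theoretic way: you take $\nu=\cH^d|_E$ as a $d$-Frostman measure (again using Ahlfors regularity of $\Gamma$, the same ingredient), smear it out to a measure $\mu$ on the target set via normalized Lebesgue measure on corkscrew balls $B(Y_x,c_0 r_x)$ at a pointwise-chosen scale $r_x$, verify the Frostman condition for $\mu$ by the dyadic-shell estimate, and close with the elementary converse-Frostman inequality $\mu(A)\lesssim\cH^d_\infty(A)$. Both arguments use Ahlfors regularity of $\Gamma$ and the interior corkscrew for $\Omega_1$ as the essential hypotheses; yours sidesteps the Christ dyadic-cube machinery and the auxiliary Ahlfors-regular surface $\d\Omega_1\cup\bigcup\d B_{Q_j}$, at the modest cost of the $\mu(B(z,r))\lesssim r^d$ verification, which you carry out correctly. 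The only place you should add a remark is the measurability of the selection $x\mapsto Y_x$; this is routine (and can be avoided altogether by replacing $\lambda_x$ with normalized Lebesgue measure on the set $\{Y\in B(x,2r_x)\cap\Omega_1:\dist(Y,\Gamma)>c_0 r_x/2\}$, which is Borel in $x$ and still has volume $\sim r_x^{d+1}$ by the corkscrew condition), but it is worth acknowledging.
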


\begin{proof}
Let $Q_{j}$ be maximal cubes in $(\Omega^{c})^{\circ}\cap \d\Omega_{1}\cap B_{Q_{0}}'$ for which $\cnj{B_{Q_{j}}}\subset  B_{Q_{0}}'$. Then by \Lemma{Christ} (i), they cover almost all of $(\Omega^{c})^{\circ}\cap \d\Omega_{1}\cap B_{Q_{0}}'$. It is not hard to show that $\d\Omega_{1}\cup \bigcup \d B_{Q_{j}}$ is also Ahlfors regular and $\cH^{d}(\d B_{Q_{j}}\cap \Omega_{1})\sim r_{Q_{j}}^{d}$ by the Harnack chain and corkscrew conditions for $\Omega_{1}$. Also recall from \eqn{bqbrempty} that the $B_{Q_{j}}$ are disjoint. Hence,
\begin{align*}
\cH^{d}_{\infty}((\Omega^{c})^{\circ}\cap \d\Omega_{1}\cap B_{Q_{0}}')
& \stackrel{\eqn{hhfin}}{\sim} \cH^{d}((\Omega^{c})^{\circ}\cap \d\Omega_{1}\cap B_{Q_{0}}')  
 =\sum \cH^{d}(Q_{j}) \\
 & \sim \sum r_{Q_{j}}^{d}
 \sim \sum \cH^{d}(\d B_{Q_{j}}\cap \Omega_{1})\notag \\
& \stackrel{\eqn{bqbrempty}}{=}  \cH^{d}\ps{\bigcup \d B_{Q_{j}}\cap \Omega_{1}}
 \stackrel{\eqn{hhfin}}{\sim}\cH^{d}_{\infty}\ps{\bigcup \d B_{Q_{j}}\cap \Omega_{1}}\notag \\
& \leq \cH^{d}_{\infty}((\Omega^{c})^{\circ}\cap \Omega_{1}\cap B_{Q_{0}}')
 \leq \cH^{d}_{\infty}(\Omega^{c} \cap \Omega_{1}\cap B_{Q_{0}}').
\end{align*}
\end{proof}

\begin{lemma}
Let $G$ be as in \Lemma{G}. Then
\[
\cH^{d}(G)\gec r^{d}.
\]
\end{lemma}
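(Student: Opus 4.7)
The plan is to show that the second piece $\bigcup_{Q\in \F} B^{Q}\cap \d\Omega'$ of $G$ already has $d$-dimensional Hausdorff measure $\gec r^{d}$; we will not need to use the first piece $\d\Omega'\cap Q_{0}$ at all.

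First, I would observe that by \Lemma{HM1}, since $\Omega_{1}$ is NTA with $d$-Ahlfors regular boundary, the sawtooth region $\Omega' = \Omega_{\F', Q_{0}}$ is itself 1-sided NTA with $d$-Ahlfors regular boundary. In particular, $\d\Omega'$ is an Ahlfors $d$-regular set. Combined with \eqn{BinQ} and \eqn{rB} of \Lemma{3.2} --- which say each $B^{Q}$ is centered on $\d\Omega'$ (it is centered at the point $Z_{Q}\in \d\Omega'$) and has radius $r_{B^{Q}}\sim \ell(Q) \leq \ell(Q_{0})\sim r \lec \diam \d\Omega'$ --- the Ahlfors regularity gives
\[
\cH^{d}(B^{Q}\cap \d\Omega') \sim r_{B^{Q}}^{d} \sim \ell(Q)^{d} \quad \text{for each } Q\in \F.
\]

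Next, I would use the bounded overlap estimate \eqn{sumB}, which says $\sum_{Q\in \F} \one_{B^{Q}}\lec \one_{\Omega_{1}}$. Restricting this pointwise inequality to $\d\Omega'$ and integrating against $\cH^{d}|_{\d\Omega'}$ yields
\[
\sum_{Q\in \F} \cH^{d}(B^{Q}\cap \d\Omega') \lec \cH^{d}\ps{\bigcup_{Q\in\F} B^{Q}\cap \d\Omega'} \leq \cH^{d}(G).
\]

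Finally, combining the previous two displays with \Lemma{lQ} and \eqn{om1content}:
\[
\cH^{d}(G) \gec \sum_{Q\in \F} \ell(Q)^{d} \gec \cH^{d}_{\infty}(\Omega^{c}\cap \Omega_{1}\cap B_{Q_{0}}') \gec r_{B_{Q_{0}}'}^{d} \sim r^{d},
\]
which is the desired bound. The only subtlety is verifying that $r_{B^{Q}}$ is comparable to $\ell(Q)$ uniformly and remains bounded by $\diam \d\Omega'$ so that the Ahlfors lower bound applies at scale $r_{B^{Q}}$; this is immediate from the construction since $\ell(Q)\leq \ell(Q_{0})$ and $\diam \d\Omega' \gec \ell(Q_{0})$. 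No cases or additional geometric arguments beyond the already established lemmas are needed.
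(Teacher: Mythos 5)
There is a genuine gap in the final chain of inequalities. You invoke \eqn{om1content} to conclude $\cH^{d}_{\infty}(\Omega^{c}\cap\Omega_{1}\cap B_{Q_{0}}')\gec r^{d}$, but \eqn{om1content} is a lower bound on $\cH^{d}_{\infty}(\Omega^{c}\cap\cnj{\Omega}_{1}\cap B_{Q_{0}}')$, i.e., with the \emph{closure} of $\Omega_{1}$. Since $\Omega^{c}\cap\Omega_{1}\subset\Omega^{c}\cap\cnj{\Omega}_{1}$, the implication you need runs in the wrong direction: a lower bound on the content of the larger set gives nothing about the smaller set. By the decomposition \eqn{dcomp}, the set in \eqn{om1content} splits into three pieces, two of which live on $\d\Omega_{1}=\Gamma$ and are not captured by $\Omega^{c}\cap\Omega_{1}$. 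It is entirely possible for $\cH^{d}_{\infty}(\Omega^{c}\cap\Omega_{1}\cap B_{Q_{0}}')$ to be tiny (or zero) while \eqn{om1content} still holds — for example, the part of $\Omega^{c}$ near $X$ could be concentrated on $\Gamma$ rather than inside the open set $\Omega_{1}$. In that scenario $\F$ can be very small, so your bound $\sum_{Q\in\F}\ell(Q)^{d}\gec r^{d}$ simply fails, and your plan of ignoring the first piece $\d\Omega'\cap Q_{0}$ of $G$ breaks down.

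This is precisely why the paper's proof keeps the piece $\d\Omega'\cap Q_{0}$: it carries the contribution from $\Gamma\cap\d\Omega$, and via Lemma \ref{l:Hcbdd} (\eqn{woops}) one recovers $\cH^{d}(\d\Omega\cap\d\Omega_{1}\cap B_{Q_{0}}')$ from $\cH^{d}(\d\Omega'\cap\d\Omega_{1}\cap B_{Q_{0}}')+\sum_{Q\in\F}\ell(Q)^{d}$. Combined with \eqn{content1} (which handles the interior piece $\Omega^{c}\cap\Omega_{1}$) and \eqn{long} (which handles $(\Omega^{c})^{\circ}\cap\d\Omega_{1}$), one assembles all three pieces of \eqn{dcomp} and only then applies \eqn{om1content}. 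Your shortcut skips the two boundary pieces, so it cannot close the argument; you need the full decomposition.
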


\begin{proof}
First, we record a few facts. By \Lemma{HM1}, $\d\Omega'$ is Ahlfors regular, so 
\begin{equation}\label{e:B^Qsimr}
\cH^{d}(B^{Q}\cap \d\Omega')\sim r_{B^{Q}}^{d} \stackrel{\eqn{rB}}{\sim} \ell(Q)^{d}.
\end{equation}
Since
\begin{equation}\label{e:dcomp}
\Omega^{c}\cap \cnj{\Omega_{1}}
=(\d\Omega^{c}\cap \d\Omega_{1})\cup (\Omega^{c}\cap \Omega_{1})\cup ((\Omega^{c})^{\circ}\cap \d\Omega_{1}).
\end{equation}
and because the $B^{Q}$ have bounded overlap by \eqn{sumB}, we get
\begin{align*}
\cH^{d}(G)
& \gec \cH^{d}(\d\Omega'\cap Q_{0})+ \sum_{Q\in \F} \cH^{d}(B^{Q}\cap \d\Omega')\\
& \stackrel{\eqn{B^Qsimr}}{\gec}   \cH^{d}(\d\Omega'\cap \d\Omega_{1}\cap B_{Q_{0}}')+ \sum_{Q\in \F} \ell(Q)^{d}\\
& \geq  \frac{1}{2}\ps{ \cH^{d}(\d\Omega'\cap\Omega_{1}\cap B_{Q_{0}}')+ \sum_{Q\in \F} \ell(Q)^{d}} 
 + \frac{1}{2} \sum_{Q\in \F} \ell(Q)^{d} \\
& \stackrel{ \eqn{woops}}{\gec} \cH^{d}(\d\Omega\cap \d\Omega_{1}\cap B_{Q_{0}}')+  \frac{1}{2} \sum_{Q\in \F} \ell(Q)^{d}\\
&  \stackrel{\eqn{content1}}{\gec} 
\cH^{d}(\d\Omega\cap \d\Omega_{1}\cap B_{Q_{0}}')+
\cH^{d}_{\infty}(\Omega^{c}\cap  \Omega_{1} \cap B_{Q_{0}}')\\
& \stackrel{\eqn{long}}{\gec} \cH_{\infty}^{d}(\d\Omega^{c}\cap  \d\Omega_{1}\cap B_{Q_{0}}')+ \cH^{d}_{\infty}(\Omega^{c}\cap   \Omega_{1}\cap B_{Q_{0}}' )\\ 
& \qquad + \cH^{d}_{\infty}((\Omega^{c})^{\circ}\cap \d\Omega_{1}\cap B_{Q_{0}}')
\\
& \stackrel{\eqn{dcomp}}{\geq } \cH^{d}_{\infty}(\Omega^{c}\cap \cnj{\Omega_{1}}\cap B_{Q_{0}}')
\stackrel{\eqn{om1content}}{\gec} r^{d} .
\end{align*}
\end{proof}

%

Pick a ball $B$ centered on $G$ of radius $c\frac{\diam \Omega'}{2C_{DJ}}$ (where $c$ is the interior corkscrew constant for $\Omega'$ and $C_{DJ}$ depends on the NTA and Ahlfors regularity constants for $\Omega'$) such that
\[
\cH^{d}(B\cap G)\gec \cH^{d}(G)\gec r^{d} .\]

Since $\Omega'$ is Ahlfors regular, $r^{d}\sim \cH^{d}(B\cap \d\Omega')$, and so $\cH^{d}(G\cap B )/\cH^{d}(B\cap \d\Omega')\gec 1$. Let $X_{B}$ be a corkscrew point in $B\cap \Omega'$. Pick $Z_{0}\in \Omega'$ so that
\[
B(Z_{0},c \diam \Omega') \subset \Omega'\]
where again $c$ is the interior corkscrew constant for $\Omega'$.

By our choice of $B$, $Z_{0}\not\in C_{DJ}B$, and so \Theorem{DJ} and the Harnack chain condition imply 
\[
\omega_{\Omega'}^{Z_{0}}(G\cap B)\gec \omega_{\Omega'}^{Z_{0}}(B)\sim \omega_{\Omega'}^{X_{B}}(B)\gec 1.\]

Finally, let $B_{1},...,B_{N}$ be a Harnack chain in $\Omega_{1}$ from $Y^{1}$ to $Z_{0}$ and let $Y_{j}$ denote the center of $B_{j}$ where $Y^{1}$ is as in \eqn{balli}. Then $N\lec 1$ and so $r_{B_{i}}\sim r_{B_{1}}$ for $1\leq i\leq N$. Thus, by \eqn{close}, for $\ve>0$ small enough depending on the NTA constants for $\Omega_{1}$, we can guarantee that
\[
\dist(B_{i},\d\Omega)\gec r_{B_{i}} \]
and since $B_{i}$ is a Harnack chain in $\Omega_{1}$, we already have 
\[
\dist(B_{i},\d\Omega_{1})\gec r_{B_{i}},\]
so in particular,
\[
\dist(B_{i},\d\Omega^{1})\gec r_{B_{i}}\]
Thus, using Harnack's inequality and \Lemma{G}, we get that for all $Y\in B_{1}$,
\begin{align*}
\omega_{\Omega^{1}}^{Y}(\d\Omega^{1}\backslash (\Gamma\cap \d\Omega))
& \gec \omega_{\Omega^{1}}^{Y_{B_{1}}}(\d\Omega^{1}\backslash (\Gamma\cap \d\Omega))\\
&\gec \omega_{\Omega^{1}}^{Z_{0}}(\d\Omega^{1}\backslash (\Gamma\cap \d\Omega))\\
& \stackrel{\eqn{atleastg}}{\gec} \omega_{\Omega'}^{Z_{0}}(G)
\geq  \omega_{\Omega'}^{Z_{0}}(G\cap B)
\gec 1.
\end{align*}

This proves \eqn{eta2}, and thus completes the proof of \TheoremI.

\section{The Proof of \TheoremII}
\label{s:cones}

\TheoremII will follow quickly from Lemma \hyperref[l:lemI]{I } and the following lemma.

\begin{lemii}\label{l:lemII}
Let $\Omega\subset \bR^{d+1}$ be a bounded domain with big boundary and assume $\Omega$ is contained in a domain $\Omega_{0}$ whose boundary is a Lipschitz graph. If $\omega_{\Omega}^{X_{0}}(\d\Omega_{0}\cap \d\Omega)>0$ for some $X_{0}\in \Omega$, then $\omega_{\Omega}$-almost every point in $\d\Omega_{0}\cap \d\Omega$ is a cone point for $\Omega$.
\end{lemii}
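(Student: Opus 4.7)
The approach is to argue by contradiction using a blow-up procedure in the spirit of Preiss' tangent measure theory, coupled with \hyperref[l:lemI]{Lemma I}. Because $\d\Omega_{0}$ is a Lipschitz graph it is $d$-Ahlfors regular and splits $\bR^{d+1}$ into two NTA domains, so \hyperref[c:cori]{Corollary I} gives $\omega_{\Omega}\ll \cH^{d}$ on $E:=\d\Omega_{0}\cap \d\Omega\cap B_{0}$; denote by $g$ the resulting Radon--Nikodym density. Let $F\subset E$ be the set of $x\in E$ that are not cone points of $\Omega$, and assume toward contradiction that $\omega_{\Omega}(F)>0$, so that $\cH^{d}(F)>0$ as well.

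The next step is to choose a good blow-up point $x\in F$: one where $g(x)\in(0,\infty)$, where $\d\Omega_{0}$ has a tangent hyperplane $T_{x}$, and where (by Preiss-style differentiation of $\omega_{\Omega}$ against $\cH^{d}|_{\d\Omega_{0}}$) every tangent measure of $\omega_{\Omega}$ at $x$ is a positive constant multiple of $\cH^{d}|_{T_{x}}$. After an isometry we take $x=0$, $T_{x}=\{x_{d+1}=0\}$, and $\Omega\subset\Omega_{0}$ approximating the lower half-space near $0$. Choose an aperture $\alpha_{0}$ depending only on the Lipschitz constant so that $C(y,-e_{d+1},\alpha_{0},r)\subset \Omega_{0}$ for every $y\in\d\Omega_{0}$ close to $0$. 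Since $0\in F$, for every $n$ the cone $C(0,-e_{d+1},\alpha_{0},1/n)$ meets $\Omega^{c}$, and we extract points $z_{n}\in \d\Omega\cap C(0,-e_{d+1},\alpha_{0},r_{n})$ with scales $r_{n}\to 0$ and $|z_{n}|\sim r_{n}$. The big-boundary hypothesis then upgrades each $z_{n}$ to a quantitative amount of complement,
\[
\cH_{\infty}^{d}\!\left(B(z_{n},c r_{n})\setminus \Omega\right)\gec r_{n}^{d},
\]
with the containing ball sitting at distance $\gec \alpha_{0}r_{n}$ from $T_{x}$ in the blown-up picture at scale $r_{n}$.

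The final step, which is also the principal obstacle, is to convert this geometric fact into a quantitative lower bound on harmonic measure at scale $r_{n}$. Concretely, the goal is to produce constants $\delta,c_{0}>0$ and compact sets $K_{n}\subset B(0,Cr_{n})$ with $\dist(K_{n},T_{x})\geq \delta r_{n}$ such that
\[
\omega_{\Omega}^{X_{0}}(K_{n})\geq c_{0}\,\omega_{\Omega}^{X_{0}}(B(0,Cr_{n}))
\]
for infinitely many $n$. This is exactly the sort of estimate that \hyperref[l:lemI]{Lemma I} produces when applied inside a localized NTA auxiliary built around the graph $\Gamma=\d\Omega_{0}$: positive $d$-dimensional complement of $\Omega$ on one of the two NTA sides of $\Gamma$ forces a positive proportion of the local harmonic measure to live off $\Gamma$. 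Rescaling by $r_{n}$ and passing to a weak-$\ast$ limit then produces a tangent measure of $\omega_{\Omega}$ at $x$ charging a compact set disjoint from $T_{x}$, contradicting the flat tangent measure statement and proving the lemma. The main technical difficulty is uniformity of constants under the blow-up: the auxiliary NTA region witnessing \hyperref[l:lemI]{Lemma I} has to be produced at each scale $r_{n}$, and the Bourgain/Harnack-chain arguments must be run locally within a region split by $\d\Omega_{0}$, which in turn relies on a localized version of the sawtooth construction of \Section{I}.
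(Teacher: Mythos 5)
Your opening steps match the paper's: apply \TheoremI (i.e. Lemma~I together with \Theorem{DJ}) to get $\omega_{\Omega}\ll\cH^{d}$ on $E=\d\Omega_{0}\cap\d\Omega$, then use the a.e.\ differentiability of the Lipschitz graph and Preiss' lemmas (\Lemma{tanexist}, \Lemma{munu}, \Lemma{same-tangents}) to single out a set of full $\omega_{\Omega}$-measure of points $x$ at which every tangent measure of $\omega_{\Omega}$ is a flat measure on a plane $V_{x}$. That is precisely the paper's Lemma~\ref{l:E''}.

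Where you diverge is the step you flag yourself as ``the principal obstacle.'' You want to convert the big-boundary estimate $\cH^{d}_{\infty}(B(z_{n},cr_{n})\setminus\Omega)\gec r_{n}^{d}$ into a scale-$r_{n}$ lower bound $\omega_{\Omega}^{X_{0}}(K_{n})\gec\omega_{\Omega}^{X_{0}}(B(0,Cr_{n}))$ with $K_{n}$ at distance $\gec\delta r_{n}$ from $T_{x}$, and you attribute this to ``exactly the sort of estimate that Lemma~I produces when applied inside a localized NTA auxiliary.'' This is not what Lemma~I says: Lemma~I is the \emph{qualitative} statement that $\omega_{\Omega}^{X_{0}}(E)>0$ forces $\omega_{\Omega\cap\Omega_{i}}^{X_{i}}(E)>0$ for some $i$; the quantitative bound $\eqn{eta2}$ that appears inside its proof concerns $\omega_{\Omega\cap\Omega_{i}}$ evaluated at corkscrew points $Y^{i}$ adapted to a base point $X\in\Gamma\cap\Omega$, not $\omega_{\Omega}^{X_{0}}$ of a set located at a fixed angular distance from $T_{x}$ at arbitrarily small scales. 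Passing from the former to the latter (via the strong Markov identity of Lemma~\hyperref[l:identity-hm]{B.1}, doubling, and uniformity of the auxiliary sawtooths as $r_{n}\to 0$) is exactly the part you do not supply, and it is genuinely nontrivial for a domain that is only assumed to have big boundary.

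The paper avoids this entirely by blowing up the Green function rather than attempting a direct harmonic-measure estimate. The AMT16 lemma (\Lemma{blowup}) produces a harmonic limit $u_{\infty}$ with $\int\phi\,d\omega_{\infty}=\int\Delta\phi\,u_{\infty}$, and \eqn{uw} gives $\d\{u_{\infty}>0\}=\supp\omega_{\infty}=V_{x}$. The only inputs then are soft: $u$ vanishes on the outward ray $\{x+tv_{x}:t>0\}\subset\Omega_{0}^{c}$, so $u_{\infty}=0$ on that ray, and a segment argument plus $\d\{u_{\infty}>0\}=V_{x}$ forces $u_{\infty}\equiv 0$ on the whole half-space $H_{x}^{+}$. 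If $x$ were not a cone point, the points $X_{j}\in C'(x,r_{j})\cap\Omega^{c}$ rescale to $Y_{j}\to Y\in\cnj{C'(0,1)}\subset H_{x}^{-}$ with $u_{\infty}(Y)=0$; since $u_{\infty}\not\equiv 0$ and is supported in $\cnj{H_{x}^{-}}$, another segment argument again crosses $V_{x}$ inside $H_{x}^{-}$, a contradiction. The big-boundary hypothesis enters only to guarantee $\Delta$-regularity (\Definition{defdelta}), which is the hypothesis of the blow-up lemma; it is not used to manufacture quantitative complement at each scale. In short: your outline is plausible, but the central quantitative estimate is asserted rather than proved, and the paper sidesteps that difficulty with the Green-function blow-up and a purely qualitative maximum-principle argument.
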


\begin{proof}[Proof of \TheoremII]
Suppose there is $F\subset \Gamma\cap \d\Omega$ with $\omega_{\Omega}^{X_{0}}(F)>0$ but no point in $F$ is a cone point for $\Omega$. By \Lemma{reduction2}, we may find a connected open set $\hat{\Omega}\subset \Omega$ bounded with big boundary such that $\omega_{\hat{\Omega}}^{\hat{X}}(F)>0$ for some $\hat{X}\in \hat{\Omega}$ in the same component of $\Omega$ as $X_{0}$.

Let $\Omega_{1}$ and $\Omega_{2}$ be the components of $\Gamma^{c}$. Since they are both NTA domains and $\Gamma$ is Ahlfors regular (by virtue of being a Lipschitz graph), Lemma \hyperref[l:lemI]{I } implies there is $i\in \{1,2\}$ and $X_{i}\in \hat{\Omega}\cap \Omega_{i}$ so that 
\begin{equation*}
\omega_{\hat{\Omega}\cap \Omega_{i}}^{X_{i}}(F)>0.
\end{equation*}

Now we can apply Lemma \hyperref[l:lemII]{II}--where we have $\hat{\Omega}\cap \Omega_{i}$ in place of $\Omega$, $F$ in place of $E$, and $\Omega_{i}$ in place of $\Omega_{0}$---to conclude that if $F'\subset F$ are the cone points for $\hat{\Omega}\cap \Omega_{i}$, then $\omega_{\hat{\Omega}\cap \Omega_{i}}^{X_{i}}(F')>0$. By containment, we also know that they are also cone points for ${\Omega}$. By \hyperref[l:CP]{Carleman's Principle},
\[
0<\omega_{\hat{\Omega}\cap \Omega_{i}}^{X_{i}}(F')
\leq \omega^{X_{i}}_{\hat{\Omega}}(F') \leq \omega^{X_{i}}_{{\Omega}}(F').\]
Since $X_{i}$ is in the same component of $\Omega$ as $X$, this also implies $\omega_{\Omega}^{X_{0}}(F')>0$, and thus the set of cone points for $\Omega$ has positive $\omega_{\Omega}^{X_{0}}$-measure, which is a contradiction. \\

\end{proof}

The rest of this section is devoted to proving Lemma \hyperref[l:lemII]{II}, but before we do so, we recall some background on the tangent measures of David Preiss \cite{Pr87}.

For $x,y\in\bR^{d+1}$ and $r>0$, define
$$T_{x,r}(y) := \frac{y-x}{r}.$$
Note that $T_{x,r}(B(x,r))=B(0,1)$. Given a Radon measure $\mu$, the notation $T_{x,r}[\mu]$ stands for the image measure of $\mu$ by $T_{x,r}$. That is,
$$T_{x,r}[\mu](A) = \mu(rA+x),\qquad A\subset\bR^{d+1}.$$

\begin{definition}  
Let $\mu$ be a Radon measure in $\bR^{d+1}$. We say that $\nu$ is a {\it tangent measure} of $\mu$ at a point $x\in\bR^{d+1}$, denoted as $\nu\in \Tan(\mu,x)$, if
$\nu$ is a non-zero Radon measure on $\bR^{d+1}$ and there are sequences $\{r_i\}_i$ and $\{c_i\}_i$ of positive numbers, with $r_i\to0$, so that $c_i\,T_{x,r_{i}}[\mu]$ converges weakly to $\nu$ as $i\to\infty$.
\end{definition}

\begin{lemma} \cite[Theorem 14.3]{Mattila} Let $\mu$ be a Radon measure on $\bR^{d+1}$. If $x\in \bR^{d+1}$ and 
\begin{equation*}
\limsup_{r\rightarrow 0} \frac{\mu(B(x,2r))}{\mu(B(x,r))}<\infty.
\end{equation*}
then every sequence $\{r_{i}\}_{i}$ with $r_{i}\downarrow 0$ contains a subsequence (denoted $\{r_i\}_i$ again) such that the measures $T_{x,r_{i}}[\mu]/\mu(B(x,r_{i}))$ converges to a measure $\nu\in \Tan(\mu,x)$. 
\label{l:tanexist}
\end{lemma}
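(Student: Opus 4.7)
The plan is to combine weak-$*$ compactness of Radon measures with the asymptotic doubling hypothesis to produce a nontrivial tangent measure.

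First I would set $\mu_i := T_{x,r_i}[\mu]/\mu(B(x,r_i))$ and establish uniform local mass bounds. By hypothesis, choose $r_0>0$ and $C<\infty$ so that $\mu(B(x,2r))\leq C\mu(B(x,r))$ for all $r<r_0$. Iterating $k$ times gives $\mu(B(x,2^{k}r))\leq C^{k}\mu(B(x,r))$ whenever $2^{k-1}r<r_0$. Given any fixed $R>0$, let $k$ be the smallest integer with $2^{k}\geq R$; then for $i$ large enough that $2^{k-1}r_i<r_0$, one has
\[
\mu_i(B(0,R))=\frac{\mu(B(x,Rr_i))}{\mu(B(x,r_i))}\leq C^{k}=:C_R,
\]
so the sequence $\{\mu_i\}$ is uniformly bounded on every compact subset of $\bR^{d+1}$.

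Next I would invoke the standard weak-$*$ compactness result for Radon measures (Banach--Alaoglu applied to the dual of $C_c(\bR^{d+1})$, or equivalently \cite[Theorem~1.23]{Mattila}): a sequence of Radon measures that is uniformly bounded on each compact set has a subsequence converging in the vague topology, i.e.\ there exists a Radon measure $\nu$ on $\bR^{d+1}$ such that $\int\varphi\,d\mu_i\to\int\varphi\,d\nu$ for every $\varphi\in C_c(\bR^{d+1})$. Applying this for the exhaustion $B(0,n)$, $n\in\bN$, and performing a diagonal extraction gives a single subsequence (still denoted $\{r_i\}$) along which $\mu_i\to\nu$ vaguely.

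The remaining step is to verify that $\nu\neq 0$, so that $\nu\in\Tan(\mu,x)$. By the normalization, $\mu_i(\overline{B(0,1)})\geq\mu(B(x,r_i))/\mu(B(x,r_i))=1$. Since $\overline{B(0,1)}$ is compact, the Portmanteau-type inequality for vague convergence yields $\nu(\overline{B(0,1)})\geq\limsup_i\mu_i(\overline{B(0,1)})\geq 1$, so $\nu\neq 0$. The only delicate point I expect is keeping straight the semicontinuity direction for vague convergence (upper semicontinuity on compact sets rather than lower semicontinuity on open sets), since mass could otherwise in principle escape to the boundary of $B(0,1)$; choosing the closed ball in the lower bound neatly sidesteps this.
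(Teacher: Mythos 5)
Your proof is correct, and it is essentially the standard argument (the one in Mattila's book, which the paper cites without reproving): iterate the asymptotic doubling bound to get uniform local mass bounds, extract a vaguely convergent subsequence via weak-$*$ compactness and diagonalization, and confirm nondegeneracy via the normalization $\mu_i(B(0,1))=1$ together with the upper semicontinuity of vague convergence on the compact set $\overline{B(0,1)}$. The semicontinuity direction you used, namely $\nu(K)\geq\limsup_i\mu_i(K)$ for compact $K$, is indeed the right one, so the closing step is sound.
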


\begin{lemma} \label{l:same-tangents}
\cite[Lemma 14.5]{Mattila}
Let $\mu$ be a Radon measure on $\bR^{d+1}$ and $A$ a  measurable set. Suppose $x\in \supp \mu$ is a point of density for $A$, meaning
\[\lim_{r\rightarrow 0} \frac{\mu(B(x,r)\backslash A)}{\mu(B(x,r))}=0.\]
If $c_{i} T_{x,r_{i}}[\mu] \rightarrow \nu\in \Tan(\mu,x)$, then so does $c_{i} T_{x,r_{i}}[\mu]|_{A}$. In particular, this holds for $\mu$ almost every $x\in A$. 
\end{lemma}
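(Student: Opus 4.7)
The plan is to prove the first assertion by showing that the complementary push-forwards $c_{i}T_{x,r_{i}}[\mu|_{A^{c}}]$ converge weakly to the zero measure; since $\mu = \mu|_{A} + \mu|_{A^{c}}$, subtracting from the given convergence $c_{i}T_{x,r_{i}}[\mu] \rightarrow \nu$ then yields $c_{i}T_{x,r_{i}}[\mu|_{A}] \rightarrow \nu$, which is the conclusion. Testing against a continuous compactly supported $\varphi$ with $\supp\varphi \subset B(0,R)$, one has
\[
\left| \int \varphi \, d\bigl(c_{i}T_{x,r_{i}}[\mu|_{A^{c}}]\bigr) \right| \;\leq\; \|\varphi\|_{\infty}\, c_{i}\, \mu\bigl(B(x, R r_{i}) \setminus A\bigr),
\]
so it suffices to show that for each fixed $R>0$ the right-hand side tends to $0$ as $i \rightarrow \infty$.

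The key step will be to factor
\[
c_{i}\, \mu(B(x, R r_{i}) \setminus A) \;=\; \frac{\mu(B(x, R r_{i}) \setminus A)}{\mu(B(x, R r_{i}))} \cdot c_{i}\, \mu(B(x, R r_{i})).
\]
The first factor goes to $0$ by the density-point hypothesis, using $Rr_{i} \rightarrow 0$ and the fact that $x \in \supp \mu$ makes the denominators positive for large $i$. For the second factor, which equals $c_{i}T_{x,r_{i}}[\mu](B(0,R))$, I would fix some $R' > R$ with $\nu(\partial B(0,R'))=0$; this is possible because a Radon measure charges only countably many origin-centered spheres. Weak convergence of $c_{i}T_{x,r_{i}}[\mu]$ together with the portmanteau theorem then give $c_{i}T_{x,r_{i}}[\mu](\overline{B(0,R')}) \rightarrow \nu(\overline{B(0,R')}) < \infty$, bounding the second factor uniformly in $i$. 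The product therefore tends to $0$, completing the proof of weak convergence.

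For the \emph{in particular} clause, I would invoke the Lebesgue--Besicovitch differentiation theorem for Radon measures on $\bR^{d+1}$: for any $\mu$-measurable set $A$, $\mu$-almost every $x \in A$ satisfies $\mu(B(x,r)\setminus A)/\mu(B(x,r)) \rightarrow 0$, and such an $x$ necessarily belongs to $\supp\mu$. The first part then applies to any sequence of rescalings producing an element of $\Tan(\mu,x)$. The only mildly technical point in the argument is the uniform boundedness of the rescaled masses on $B(0,R)$, handled above by the boundary-mass-zero choice of $R'$; everything else is a routine manipulation of push-forwards, and I do not anticipate a serious obstacle.
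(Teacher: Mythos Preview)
Your proof is correct. The paper does not actually prove this lemma; it simply cites \cite[Lemma 14.5]{Mattila} and remarks that the stated version ``follows by an inspection of the proof (in particular the last two lines).'' Your argument---showing that $c_{i}T_{x,r_{i}}[\mu|_{A^{c}}]\rightarrow 0$ weakly by factoring the mass on $B(x,Rr_{i})\setminus A$ into a density quotient times a uniformly bounded quantity---is precisely the standard proof one finds in Mattila, so there is nothing to compare.

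One minor simplification: for the uniform bound on $c_{i}\,\mu(B(x,Rr_{i}))$ you do not need to select $R'>R$ with $\nu(\partial B(0,R'))=0$ and invoke portmanteau. It is enough to fix any $\psi\in C_{c}(\bR^{d+1})$ with $\one_{B(0,R)}\leq\psi\leq 1$ and observe
\[
c_{i}\,\mu(B(x,Rr_{i})) = c_{i}T_{x,r_{i}}[\mu](B(0,R)) \leq \int \psi\, d(c_{i}T_{x,r_{i}}[\mu]) \longrightarrow \int \psi\, d\nu < \infty,
\]
which gives the bound directly from the definition of weak convergence.
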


The above lemma is not stated as such in \cite{Mattila}, but it follows by an inspection of the proof (in particular the last two lines).

\begin{lemma}[{\cite[Lemma 14.6]{Mattila}}]\label{l:munu}
Let $\mu,\nu$ be Radon measures such that $\mu=g\nu$ for some non-negative locally $\nu$ integrable function $g$ in $\bR^{d+1}$. Then for $\nu$-almost every $x\in \bR^{d+1}$, $\Tan(\mu,x)=\Tan(\nu,x)$. In particular, if $\nu\ll \mu$, then for $\nu$-almost every $x\in \bR^{d+1}$, $\Tan(\mu,x)=\Tan(\nu,x)$.
\end{lemma}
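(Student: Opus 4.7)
The plan is to combine two observations: (i) by the Lebesgue differentiation theorem for Radon measures, the density $g$ is ``infinitesimally constant'' on small balls around $\nu$-almost every $x$; and (ii) tangent sets are cones under positive rescaling (if $\sigma = \lim c_i\,T_{x,r_i}[\nu]$ then $c\sigma = \lim(cc_i)\,T_{x,r_i}[\nu]$ for every $c>0$, since the scaling constants $c_i$ in the definition are free). Together, the constant factor $g(x)$ produced when comparing rescalings of $\mu=g\nu$ and $\nu$ gets absorbed into the free scaling $c_i$, forcing $\Tan(\mu,x)$ and $\Tan(\nu,x)$ to coincide.

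Concretely, let $\mathcal{L}$ denote the $\nu$-full set of Lebesgue points at which
\[
\lim_{r\to 0}\frac{1}{\nu(B(x,r))}\int_{B(x,r)} |g(y)-g(x)|\,d\nu(y)=0,
\]
and fix $x\in\mathcal{L}$ with $g(x)>0$. The change-of-variables identity $T_{x,r}[g\nu]=g(x+r\,\cdot)\,T_{x,r}[\nu]$ gives, for $\varphi\in C_c(\bR^{d+1})$ supported in $B(0,R)$,
\[
\left|\int\varphi\,d(c_i T_{x,r_i}[\mu])-g(x)\!\int\varphi\,d(c_i T_{x,r_i}[\nu])\right|\le \|\varphi\|_\infty\cdot c_i\,\nu(B(x,r_iR))\cdot\frac{1}{\nu(B(x,r_iR))}\int_{B(x,r_iR)}|g-g(x)|\,d\nu,
\]
whose last factor tends to $0$ by choice of $x$. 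If $c_i T_{x,r_i}[\nu]\to\sigma$ weakly, then $c_i\nu(B(x,r_iR))$ is uniformly bounded (dominate $\mathbf{1}_{B(0,R)}$ by a continuous bump and apply weak convergence), the error term vanishes, and $c_i T_{x,r_i}[\mu]\to g(x)\sigma$; replacing $c_i$ by $g(x)^{-1}c_i$ then puts $\sigma\in\Tan(\mu,x)$. Conversely, if $c_i T_{x,r_i}[\mu]\to\tau\ne 0$, pick $R$ with $\tau(B(0,R))\in(0,\infty)$ and $\tau(\partial B(0,R))=0$; then $c_i\mu(B(x,r_iR))\to\tau(B(0,R))$, and the Lebesgue-point property yields
\[
\frac{c_i\mu(B(x,r_iR))}{c_i\nu(B(x,r_iR))}=\frac{1}{\nu(B(x,r_iR))}\int_{B(x,r_iR)} g\,d\nu\longrightarrow g(x)>0,
\]
so $c_i\nu(B(x,r_iR))$ is uniformly bounded. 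A second application of the displayed estimate forces $c_i T_{x,r_i}[\nu]\to\tau/g(x)$, and hence $\tau\in\Tan(\nu,x)$.

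The main obstacle I expect is precisely the uniform bound on $c_i\nu(B(x,r_iR))$ in the reverse inclusion: this is where the Lebesgue-point hypothesis does the real work, converting the automatic mass bound on $c_i\mu(B(x,r_iR))$ (coming from the assumed weak convergence) into one for $\nu$ through the density ratio. The case $g(x)=0$ is dispatched separately: under the ``in particular'' hypothesis $\nu\ll\mu$, the set $\{g=0\}$ is $\mu$-null and hence $\nu$-null, so the preceding argument already covers a $\nu$-full set of points; the general first statement follows by applying the same reasoning on $\{g>0\}$ (equivalently, $\mu$-almost everywhere), where the displayed identities apply verbatim.
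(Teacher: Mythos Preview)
The paper does not give its own proof of this lemma; it is simply quoted from Mattila's book \cite[Lemma 14.6]{Mattila} and used as a black box. Your argument is essentially the standard proof found there: use the Lebesgue differentiation theorem to make $g$ infinitesimally constant, push the error through the weak convergence, and absorb the resulting factor $g(x)$ into the free scaling constants $c_i$. The handling of the reverse inclusion via the mass-ratio trick (bounding $c_i\nu(B(x,r_iR))$ through the ratio $\mu(B)/\nu(B)\to g(x)>0$) is exactly the delicate point, and you deal with it correctly.

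One small remark on the final paragraph: as literally stated in the paper, the first sentence of the lemma asserts equality of tangent cones for \emph{$\nu$-almost every} $x$, but your argument (and Mattila's) only gives this on $\{g>0\}$, which is a $\mu$-full set, not necessarily a $\nu$-full one. This is a minor imprecision in the paper's transcription of Mattila's lemma rather than a flaw in your proof; the ``in particular'' clause (where $\nu\ll\mu$ forces $\{g=0\}$ to be $\nu$-null) is the version actually needed, and you prove that cleanly.
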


\begin{definition} \label{d:defdelta}
A domain $\Omega\subsetneq \bR^{d+1}$ is {\it $\Delta$-regular} if there is 
$R>0$ so that 
\begin{equation*}
 \sup_{x \in\d\Omega} \,\sup_{X\in \d B(x,r/2)\cap\Omega} \omega_{B\cap \Omega}^{X}(\d B(x,r)\cap \Omega) <1 \mbox{ for }r\in (0,R).
 \end{equation*}
 \end{definition}

By \Lemma{bourgain}, any domain satisfying \eqn{dcontent} is $\Delta$-regular. \\

Here we recall a truncated version of a lemma from \cite{AMT16}. It is a generalization of similar results that first appeared in the works of Kenig, Preiss, and Toro, who first noted the connections between tangent measure techniques and studying harmonic measure (see \cite{KPT09,KT99,KT06}).

\begin{lemma}[{\cite[Lemma 5.9]{AMT16}}]\label{l:blowup}
Let $\Omega\subset \bR^{d+1}$ be a $\Delta$-regular domain. Let $\omega=\omega_{\Omega}^{X_{0}}$ for some $X_{0}\in \Omega$. Let $x\in \d\Omega$ and
$\omega_\infty\in \Tan(\omega,x)$, with
$\{c_{j}\}_{j}$ with $c_{j}\geq 0$, and $\{r_{j}\}_{j}$ with $r_{j}\rightarrow 0$ such that $\omega_{j}=c_{j}T_{x,r_{j}}[\omega]\rightarrow \omega_{\infty}$. Let $\Omega_{j}=T_{x,r_{j}}(\Omega)$. Then there is a subsequence and a closed set $\Sigma\subset \bR^{d+1}$ such that 
\begin{enumerate}[(a)]
\item $\d\Omega_{j}\cap K\rightarrow \Sigma\cap K$ in the Hausdorff metric for any compact set $K$.
\item $\Sigma^{c}=\Omega_{\infty}\cup \mbox{ext}(\Omega_{\infty})$ where $\Omega_{\infty}$ is a nonempty open set and $\mbox{ext}(\Omega_{\infty})$ is also open but possibly empty. Further, they
satisfy that for any ball $B$ with $\cnj{B}\subset \Omega_{\infty}$, a neighborhood of $\cnj{B}$ is contained in $\Omega_{j}$ for all $j$ large enough.
\item $\supp \omega_{\infty}\subset \Sigma$. 
\item Let $u(X)=G_{\Omega}(X,X_{0})$ on $\Omega$ and $u(X)=0$ on $\Omega^{c}$, where $G_{\Omega}$ is the Green function for $\Omega$. Set
\[u_{j}(X)=c_{j}\,u(Xr_{j}+x)\,r_{j}^{d-1}.\]
Then $u_{j}$ converges uniformly on compact subsets of $\bR^{d+1}$ to a nonzero function $u_{\infty}$ that is harmonic on $\Omega_{\infty}$ such that 
for any smooth compactly supported function $\phi$,
 \begin{equation}\label{e:ibp}
 \int_{\d\Omega} \phi \,d\omega_{\infty}=\int_{\Omega} \Delta\phi \,u_{\infty}\,dX
 \end{equation}
 \end{enumerate}
 \end{lemma}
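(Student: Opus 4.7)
The plan is to handle the four conclusions in the natural order, using a diagonalization/compactness argument to extract the necessary subsequence along which everything converges.

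First, for (a), I would use that closed subsets of $\bR^{d+1}$ are compact in the Hausdorff metric on compact sets (Blaschke selection). Writing $\d\Omega_j = T_{x,r_j}(\d\Omega)$, for each compact $K\subset \bR^{d+1}$ the sets $\d\Omega_j\cap K$ admit a subsequential Hausdorff limit; applying a diagonal argument over $K=B(0,n)$, $n\in\bN$, yields a closed $\Sigma\subset\bR^{d+1}$ such that $\d\Omega_j\cap K\to \Sigma\cap K$ in Hausdorff distance on every compact $K$. For (b), I would set
\[
\Omega_\infty := \{Y\in\bR^{d+1}\setminus\Sigma : \text{some neighborhood of }Y\text{ is contained in }\Omega_j\text{ for all large }j\},
\]
and $\mathrm{ext}(\Omega_\infty)$ analogously with $\Omega_j^c$. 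The key nontriviality of $\Omega_\infty$ comes from $x\in\supp\omega$ together with $\Delta$-regularity: since $\omega_j\to\omega_\infty\neq 0$, there is positive mass of $\omega_j$ in $B(0,1)$ for all large $j$, and by Bourgain-type lower bounds (Lemma \ref{l:bourgain}, using $\Delta$-regularity via the CDC), this forces interior points of $\Omega$ at scale $r_j$ near $x$; rescaling and passing to a further subsequence produces a limit point that lies in $\Omega_\infty$, and by Hausdorff convergence one sees this limit point has a neighborhood eventually in every $\Omega_j$.

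For (c), fix $Y\notin\Sigma$ and choose $r>0$ with $B(Y,r)\cap\Sigma=\varnothing$; by (a), $B(Y,r)\cap\d\Omega_j=\varnothing$ for all large $j$, so $\omega_j(B(Y,r/2))=0$ eventually, and weak convergence (applied to a bump supported in $B(Y,r)$) forces $\omega_\infty(B(Y,r/2))=0$. This gives $\supp\omega_\infty\subset\Sigma$. For (d), the rescaled functions $u_j(X)=c_j r_j^{d-1}u(r_j X+x)$ are nonnegative and harmonic on $\Omega_j$; the correct normalization $c_j$ is the one forcing $\omega_j$ to converge, and a standard computation with the Riesz formula relating Green's function and harmonic measure shows this is exactly the normalization needed for $u_j$ to have uniformly bounded $L^1$ mass on compacts. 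By interior Harnack/elliptic estimates on compact subsets of $\Omega_\infty$ (which eventually lie in $\Omega_j$ by (b)) one extracts a further subsequence converging locally uniformly to a nonnegative harmonic limit $u_\infty$ on $\Omega_\infty$. The identity \eqref{e:ibp} comes from scaling the Green's identity
\[
\int_{\d\Omega}\psi\,d\omega^{X_0} \;=\; \psi(X_0) + \int_{\Omega}u(X)\,\Delta\psi(X)\,dX
\]
applied to $\psi(X):=\phi(T_{x,r_j}(X))$ for $\phi\in C_c^\infty$, multiplying through by $c_j$, and noting that the $\psi(X_0)$ term vanishes in the limit once $r_j$ is small enough that $X_0$ leaves the rescaled support of $\phi$; the remaining terms converge by weak convergence of $\omega_j$ and locally uniform convergence of $u_j$, respectively.

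The nontriviality of $u_\infty$ (stated in (d)) is what forces $\Omega_\infty\neq\varnothing$ to be proved simultaneously: if no interior limit existed, the rescaled Green functions would collapse to zero on every compact set, but then \eqref{e:ibp} would force $\omega_\infty=0$, contradicting the hypothesis. The main obstacle is obtaining the \emph{uniform} local control on $u_j$ required to pass to a harmonic limit on $\Omega_\infty$: one must verify that the normalization $c_j$ of the harmonic measure simultaneously normalizes the Green function, which hinges on a quantitative Bourgain-type lower bound provided exactly by the $\Delta$-regularity hypothesis (so that $\omega(B(x,r))$ and $r^{d-1}G(X_r,X_0)$ are comparable along the sequence, where $X_r$ is an interior point at scale $r$). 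Without this the Green functions could degenerate or blow up in the limit.
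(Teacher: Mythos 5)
You have correctly identified the overall structure of the argument — diagonalization to get (a), the definition of $\Omega_\infty$ and $\mathrm{ext}(\Omega_\infty)$ via eventual containment to get (b), the Hausdorff-avoidance argument for (c), and the rescaled Green's identity combined with compactness for (d). It is worth noting that the paper itself does \emph{not} prove this lemma: it is quoted verbatim from \cite[Lemma 5.9]{AMT16}, and the paper only remarks on the small modification (via \cite[Lemma 3.2]{AH08}) needed to extend the $d>1$ proof to $d=1$. So your sketch is being compared against the argument in the cited external source rather than against a self-contained proof in this paper.

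That said, there is a genuine gap in your treatment of (d). The lemma asserts that $u_j\to u_\infty$ uniformly on compact subsets of \emph{all of} $\bR^{d+1}$, whereas your argument (via interior Harnack and elliptic estimates on sets that eventually lie inside $\Omega_j$) only produces locally uniform convergence on compact subsets of $\Omega_\infty$. The distinction matters: in the very next lemma in the paper (the one establishing $\d\{u_\infty>0\}=\supp\omega_\infty$) one applies the strong maximum principle to $u_\infty$ on balls that straddle $\Sigma$, and one needs $u_\infty$ to be defined and continuous up to $\Sigma$, with $u_\infty=0$ there. Passing the convergence through $\Sigma$ requires an equicontinuity estimate for the family $\{u_j\}$ near $\d\Omega_j$, i.e.\ a scale-invariant boundary H\"older estimate $u_j(Y)\lesssim (\dist(Y,\d\Omega_j)/R)^\alpha\sup_{B(z,R)}u_j$. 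This is precisely the role of the $\Delta$-regularity hypothesis (or, when derived from \eqref{e:dcontent}, of the CDC): it gives the Bourgain-type decay estimate that yields boundary H\"older continuity uniformly along the rescalings. You invoke $\Delta$-regularity only for the lower bound (nontriviality of $u_\infty$), but you also need it for the \emph{upper}-bound/equicontinuity side; without it the family $\{u_j\}$ could oscillate near $\Sigma$ and you could not extract the global locally uniform limit.

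A secondary point: to compare the Green function normalization with the harmonic measure normalization you implicitly invoke ``an interior point at scale $r$,'' but $\Delta$-regularity alone does not give an interior corkscrew. What you actually need (and what suffices) is a point $X_r\in\Omega\cap B(x,r)$ with $\dist(X_r,\d\Omega)\gtrsim$ some fixed fraction of $r$ along a subsequence; this existence must itself be extracted from the nondegeneracy of $\omega_j$ together with the boundary decay, and it is worth making this step explicit rather than treating it as automatic.
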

The above is a truncated verison of the original theroem. Moreover, the original theroem was stated for $d>1$, bu the part that we have cited holds for $d=1$ as well. Referring to their paper, the only place where the assumption that $d>1$ was used was in order to use \cite[Lemma 4.1 (4.7)]{AMT16}, but this inequality holds also for $d=1$ by \cite[Lemma 3.2]{AH08} and the maximum principle as in the proof of \cite[Lemma 3.5]{AH08}.  We refer the reader to \cite{AMT16} for the complete details.
 
 \begin{lemma} Under the assumptions of \Lemma{blowup}, 
 \begin{equation}\label{e:uw}
 \d\{u_{\infty}>0\}=\supp \omega_{\infty}.
 \end{equation}
 \end{lemma}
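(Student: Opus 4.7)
The identity \eqref{e:ibp} says that $\omega_{\infty}=\Delta u_{\infty}$ in the distributional sense on $\bR^{d+1}$. The plan is to combine this with part (c) of the blow-up lemma (namely $\supp\omega_{\infty}\subset \Sigma$) together with the fact that $u_{\infty}$ vanishes on $\Sigma$ in order to pin down the support of $\omega_{\infty}$ as the topological boundary of the (open) positivity set $\{u_{\infty}>0\}$.

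First I would collect some basic properties of $u_{\infty}$. Non-negativity is immediate, since $u=G_{\Omega}(\cdot,X_{0})\geq 0$ on $\Omega$ and $u\equiv 0$ on $\Omega^{c}$. Continuity follows from our big boundary hypothesis, which gives the CDC and hence Wiener regularity at every boundary point of $\Omega$, so that $u$ is continuous on $\bR^{d+1}\setminus\{X_{0}\}$; since the singular point $T_{x,r_{j}}(X_{0})$ escapes to infinity, each $u_{j}$ is continuous on any fixed compact set for $j$ large, and the uniform convergence in (d) gives $u_{\infty}\in C(\bR^{d+1})$. Finally I claim $u_{\infty}\equiv 0$ on $\Sigma$: given $y\in \Sigma$, the Hausdorff convergence in (a) produces $y_{j}\in \d\Omega_{j}$ with $y_{j}\to y$; because $y_{j}r_{j}+x\in \d\Omega$ is Wiener regular, $u_{j}(y_{j})=0$, and uniform convergence of $u_{j}$ to $u_{\infty}$ on compacts together with continuity of $u_{\infty}$ forces $u_{\infty}(y)=\lim_{j}u_{j}(y_{j})=0$.

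For the inclusion $\supp\omega_{\infty}\subset \d\{u_{\infty}>0\}$: by (c) we already have $\supp\omega_{\infty}\subset \Sigma\subset \{u_{\infty}=0\}$, so it suffices to show $\omega_{\infty}(\Int\{u_{\infty}=0\})=0$. For any $\phi\in C_{c}^{\infty}(\Int\{u_{\infty}=0\})$, $u_{\infty}\equiv 0$ on $\supp\phi$, hence $\int u_{\infty}\,\Delta\phi\,dX=0$, and \eqref{e:ibp} gives $\int\phi\,d\omega_{\infty}=0$. Since $\omega_{\infty}$ is a positive Radon measure, this forces $\omega_{\infty}(\Int\{u_{\infty}=0\})=0$. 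Thus $\supp\omega_{\infty}\subset \{u_{\infty}=0\}\setminus \Int\{u_{\infty}=0\}=\d\{u_{\infty}=0\}=\d\{u_{\infty}>0\}$, where the last identity is the general fact $\d A=\d A^{c}$ (and $\{u_{\infty}=0\}$ is closed by continuity).

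For the reverse inclusion, I would argue by contradiction: suppose $x\in \d\{u_{\infty}>0\}$ but $\omega_{\infty}(B(x,r))=0$ for some $r>0$. Then \eqref{e:ibp} gives $\int u_{\infty}\,\Delta\phi\,dX=0$ for every $\phi\in C_{c}^{\infty}(B(x,r))$, so $u_{\infty}$ is distributionally harmonic in $B(x,r)$; Weyl's lemma upgrades this to classical harmonicity. Since $x\in \d\{u_{\infty}>0\}\subset \{u_{\infty}=0\}$ we have $u_{\infty}(x)=0$, and then the strong maximum principle, applied to the non-negative harmonic function $u_{\infty}$ on the connected ball $B(x,r)$, forces $u_{\infty}\equiv 0$ on $B(x,r)$, contradicting $x\in \cnj{\{u_{\infty}>0\}}$. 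The main delicate step in this whole plan is establishing $u_{\infty}\equiv 0$ on $\Sigma$ in the preliminary paragraph: it requires pairing the boundary vanishing $u_{j}\equiv 0$ on $\d\Omega_{j}$ with both the Hausdorff convergence of $\d\Omega_{j}$ to $\Sigma$ in (a) and the Wiener regularity supplied by the big boundary condition; everything else reduces to Weyl's lemma and the strong maximum principle.
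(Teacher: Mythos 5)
Your proof is correct and follows essentially the same route as the paper: both directions hinge on \eqref{e:ibp}, with the forward inclusion coming from $\supp\omega_{\infty}\subset\Sigma\subset\{u_{\infty}=0\}$ together with the observation that $\omega_{\infty}$ cannot charge a ball on which $u_{\infty}$ vanishes identically, and the reverse inclusion coming from harmonicity of $u_{\infty}$ off $\supp\omega_{\infty}$ combined with the strong maximum principle. Your write-up is in fact slightly more careful than the paper's at a couple of points — you explicitly verify $u_{\infty}\equiv 0$ on $\Sigma$ via the Hausdorff convergence in (a) and Wiener regularity (a fact the paper's first-inclusion step tacitly relies on when it treats $u_{\infty}$ as harmonic on a ball inside $\{u_{\infty}>0\}$), you invoke Weyl's lemma to pass from distributional to classical harmonicity, and your use of the strong maximum principle cleanly covers the degenerate case $u_{\infty}\equiv 0$ on the ball — but the underlying ideas coincide.
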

 
 \begin{proof}
 Let $x\in \supp \omega_{\infty}$ and suppose there is a ball $B\subset \{u_{\infty}>0\}$ containing $x$. Let $\phi$ be any smooth function $\phi$ supported in $B$ so that $\phi(x)>0$. Then by Green's theorem,
 \[
0< \int \phi d\omega_{\infty} \stackrel{\eqn{ibp}}{=} \int \Delta \phi u_{\infty}=0
\]
which is  a contradiction. We obtain a similar contradiction more easily if there is a ball $B\subset \{u_{\infty}=0\}$ containing $x$. Thus, all balls containing $x$ must intersect both $\{u_{\infty}=0\}$ and $\{u_{\infty}>0\}$, hence $x\in \d\{u_{\infty}>0\}$, which implies $\supp \omega_{\infty}\subset \d\{u_{\infty}>0\}$. 

Now let $x\in \d\{u_{\infty}>0\}$ and suppose there is $B\subset (\supp \omega_{\infty})^{c}$ containing $x$. Then for any smooth function $\phi$ supported in $B$, we have
\[\int \Delta \phi u_{\infty} \stackrel{\eqn{ibp}}{=} \int \phi d\omega_{\infty}=0.\]
Thus, $u_{\infty}$ is harmonic in $B$, and since it is nonnegative and continuous up to the boundary, it achieves its minimum only at some point in $\d B$ by the strong maximum principle, hence $u_{\infty}>0$ in $B$. However, as $x\in \d\{u_{\infty}>0\}$, $B\cap \{u_{\infty}=0\}\neq\emptyset$, and so $u_{\infty}=0$ somewhere in $B$ which is a contradiction. Thus, every ball containing $x$ intersects $\supp \omega_{\infty}$, which implies $x\in \supp \omega_{\infty}$ since $\supp \omega_{\infty}$ is closed. Hence, $\d\{u_{\infty}>0\}\subset \supp \omega_{\infty}$, and we are done.
 \end{proof}

We now proceed with the proof of \TheoremII. By \TheoremI, $\omega_{\Omega}\ll \cH^{d}$ on $E:=\d\Omega_{0}\cap \d\Omega$. Let $E'\subset E$ be such that $\omega_{\Omega}(E\backslash E')=0$ and 
\[
\omega_{\Omega}|_{E'}\ll \cH^{d}|_{E'} \ll \omega_{\Omega}|_{E'}.\]
\begin{lemma}\label{l:E''}
There is $E''\subset E'$ so that $\omega_{\Omega}(E\backslash E'')=0$ and for all $x\in E''$ there is a $d$-plane $V_{x}$ so that for every sequence $\{r_{j}\}_j$ with $r_{j}\downarrow 0$, we may pass to a subsequence so that $T_{x,r_{j}}[\omega_{\Omega}]/\omega_{\Omega}(B(x,r_{j}))$ converges weakly to a multiple of $\cH^{d}|_{V_{x}}$.
\end{lemma}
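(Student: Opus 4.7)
The plan is to identify the tangent measures of $\omega_{\Omega}$ at $\omega_{\Omega}$-a.e.\ point of $E'$ as positive multiples of $\cH^{d}|_{V_{x}}$, where $V_{x}$ is the approximate tangent plane to $\Gamma$ at $x$, and then deduce the stated subsequential convergence from \Lemma{tanexist}. The ingredients are the rectifiability of $\Gamma$ (a Lipschitz graph), the mutual absolute continuity $\omega_{\Omega}|_{E'}\sim \cH^{d}|_{E'}$, and the three tangent-measure lemmas \Lemma{tanexist}, \Lemma{same-tangents}, and \Lemma{munu}. By Radon--Nikodym, write $\omega_{\Omega}|_{E'}=f\,\cH^{d}|_{E'}$ with $0<f<\infty$ holding $\cH^{d}|_{E'}$-a.e.\ (equivalently $\omega_{\Omega}|_{E'}$-a.e.).

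I would take $E''\subset E'$ to be the set of $x\in E'$ at which the following three conditions all hold: (i) $\Gamma$ admits a unique approximate tangent $d$-plane $V_{x}$ with $d$-density $\alpha_{d}$ at $x$ (valid $\cH^{d}$-a.e.\ by the $d$-rectifiability of the Lipschitz graph $\Gamma$, via Rademacher); (ii) $x$ is a Lebesgue density point of $E'$ with respect to both $\cH^{d}|_{\Gamma}$ (a doubling measure since $\Gamma$ is Ahlfors regular as a Lipschitz graph) and the Radon measure $\omega_{\Omega}$ on $\bR^{d+1}$; (iii) $f$ is differentiable at $x$ with respect to $\cH^{d}|_{\Gamma}$ with $0<f(x)<\infty$ (Lebesgue differentiation in the doubling space $(\Gamma,\cH^{d}|_{\Gamma})$). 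Each of (i)--(iii) holds off an $\cH^{d}$-null subset of $E'$, hence off an $\omega_{\Omega}$-null subset by mutual absolute continuity, so $\omega_{\Omega}(E\setminus E'')=0$ upon combining with $\omega_{\Omega}(E\setminus E')=0$.

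For $x\in E''$, chaining (i)--(iii) yields the density estimate $\omega_{\Omega}(B(x,r))/r^{d}\to f(x)\alpha_{d}$ as $r\to 0$, via the sequence of comparisons $\omega_{\Omega}(B(x,r))\sim \omega_{\Omega}(B(x,r)\cap E')\sim f(x)\cH^{d}(B(x,r)\cap E')\sim f(x)\cH^{d}(\Gamma\cap B(x,r))\sim f(x)\alpha_{d}r^{d}$. In particular $\omega_{\Omega}$ satisfies $\limsup_{r\to 0}\omega_{\Omega}(B(x,2r))/\omega_{\Omega}(B(x,r))=2^{d}<\infty$, so \Lemma{tanexist} applies at $x$: any sequence $r_{j}\downarrow 0$ admits a subsequence along which $T_{x,r_{j}}[\omega_{\Omega}]/\omega_{\Omega}(B(x,r_{j}))$ converges weakly to a nonzero $\nu\in\Tan(\omega_{\Omega},x)$. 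To identify $\nu$, I would apply \Lemma{same-tangents} to $\omega_{\Omega}$ with $A=E'$ (using the $\omega_{\Omega}$-density in (ii)) to get $\Tan(\omega_{\Omega},x)=\Tan(\omega_{\Omega}|_{E'},x)$, then \Lemma{munu} to $\omega_{\Omega}|_{E'}=f\,\cH^{d}|_{E'}$ to get $\Tan(\omega_{\Omega}|_{E'},x)=\Tan(\cH^{d}|_{E'},x)$, and finally \Lemma{same-tangents} once more to $\cH^{d}|_{\Gamma}$ with $A=E'$ (using the $\cH^{d}|_{\Gamma}$-density in (ii)) to get $\Tan(\cH^{d}|_{E'},x)=\Tan(\cH^{d}|_{\Gamma},x)$. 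The last set consists precisely of the positive multiples of $\cH^{d}|_{V_{x}}$ by the rectifiability of $\Gamma$ at $x$ from (i), so $\nu$ has the required form.

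The main obstacle is the bookkeeping that each of (i)--(iii) defines a set of full $\omega_{\Omega}$-measure in $E'$, since we must pass between $\cH^{d}$-a.e.\ and $\omega_{\Omega}$-a.e.\ statements (once for the tangent-plane/density conditions and again for the Lebesgue density of $E'$ inside both measures), leaning on the mutual absolute continuity at each switch. Once the set $E''$ is properly identified, the rest of the argument is a direct chain of three tangent-measure identifications and a single application of \Lemma{tanexist}.
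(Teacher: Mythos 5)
Your proof is correct and follows essentially the same route as the paper's: identify the approximate tangent plane $V_x$ via rectifiability of the Lipschitz graph, check the doubling condition at $\omega_\Omega$-a.e.\ point of $E'$ using the mutual absolute continuity $\omega_\Omega|_{E'}\sim\cH^d|_{E'}$ and Lebesgue differentiation, then chain \Lemma{tanexist}, \Lemma{same-tangents}, and \Lemma{munu} to identify the tangent measures. Your write-up spells out the three tangent-measure identifications and the density bookkeeping somewhat more explicitly than the paper does, but the underlying argument is the same.
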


\begin{proof}
Because $\d\Omega_{0}$ is the graph of an almost everywhere differentiable function, for almost every $x\in \Gamma$, there is a $d$-plane $V_{x}$ so that 
 \begin{equation}\label{e:tantan}
  \Tan(\cH^{d}|_{\d\Omega_{0}},x)=\{c\cH^{d}|_{V_{x}}: c>0\}.
 \end{equation}

Note that $d\omega_{\Omega}|_{E'}=gd\cH^{d}$ for some measurable function $g$ that is positive and finite almost everywhere on $E'$ and zero everywhere else, so by the Lebesgue density theorem, for almost every $x\in E'$,
\[
\lim_{r\rightarrow 0}
\frac{\omega_{\Omega}(E'\cap B(x,r))}{\cH^{d}(B(x,r))}=g(x)\in (0,1)\]
and so
\begin{multline*}
\limsup_{r\rightarrow 0} \frac{\omega_{\Omega}(E'\cap B(x,2r))}{\omega_{\Omega}(E'\cap B(x,r))}\\
=\limsup_{r\rightarrow 0} 
\frac{\omega_{\Omega}(E'\cap B(x,2r))}{\cH^{d}(B(x,2r))}
\frac{\cH^{d}(B(x,2r))}{\cH^{d}(B(x,r))}
\frac{\cH^{d}(B(x,r))}{\omega_{\Omega}(E'\cap B(x,r))}
<\infty.\end{multline*}
Hence, \Lemma{tanexist}, \eqn{tantan}, and our choice of $E'$  imply that for any sequence $\{r_j\}_j$ with $r_{j}\downarrow 0$ we may pass to a subsequence so that $T_{x,r_{j}}[\omega_{\Omega}]/\omega_{\Omega}(B(x,r_{j}))$ converges weakly to a multiple of $\cH^{d}|_{V_{x}}$. We now let $E''$ be the set of $x\in E'$ for which this occurs, which is almost all of $E'$.
\end{proof}

Now we will show that each $x\in E''$ is a cone point. Fix $x\in E''$ and  let $v_{x}\in \bS^{d}$ be the vector normal to $V_{x}$ such that 
 \begin{equation}\label{e:x+tv}
 \{x+tv_{x}:t>0\}\subset \Omega_{0}^{c}.\end{equation}
 Set 
 \[
 H_{x}^{\pm}=\{y\in \bR^{d}: \pm y\cdot v_{x}>0\}\]
 so that $V_{x}^{c}= H_{x}^{+}\cup H_{x}^{-}$.
 
Let
\[
C'(x,r)=C(x,-v_{x},1/2,r)\backslash C(x,-v_{x},1/2,r/2)
\]
where $C(\cdot, \cdot, \cdot, \cdot)$ is defined as above \TheoremII. Suppose there was $r_{j}\downarrow 0$ so that for all $j$ we could find 
\[
X_{j}\in C'(x,r_{j})\cap \Omega^{c}\neq \emptyset.\]
By \Lemma{E''}, we may pass to a subsequence so that
\begin{equation*}
\omega_{j}= T_{x,r_{j}}[\omega_{\Omega}]/\omega_{\Omega}(B(x,r))\rightarrow \omega_{\infty} \neq 0
\end{equation*}
and 
\begin{equation}\label{e:supwv}
\supp \omega_{\infty}=V_{x}.
\end{equation}
Pass to a further subsequence so that the conclusions of \Lemma{blowup} hold. By \eqn{x+tv}, $u=0$ on $ \{x+tv_{x}:t>0\}$, and thus we know $u_{j}=0$ on $\{tv_{x}:t>0\}$. Since $u_{j}\rightarrow u_{\infty}$ uniformly on compact subsets, we also know $u_{\infty}=0$ on $\{tv_{x}:t>0\}\subset H_{x}^{+}$. If $u_{\infty}(X)>0$ for some $X\in H_{x}^{+}$, then the line segment between $X$ and $v_{x}$ is contained in $H_{x}^{+}$ and intersects with
\[
\d \{u_{\infty}>0\}\stackrel{\eqn{uw}}{=}\supp \omega_{\infty}\stackrel{\eqn{supwv}}{=}V_{x}\subset (H_{x}^{+})^{c},\]
which is a contradiction. Thus, 
\begin{equation}\label{e:ufin0}
\mbox{$u_{\infty}=0$ on all of $H_{x}^{+}$}.
\end{equation}

Let \[
Y_{j}=T_{x,r_{j}}(X_{j})
\in C'(0,1).\]
We may pass to a further subsequence so that 
\[
Y_{j}\rightarrow Y\in \cnj{C'(0,1)} \subset H_{x}^{-}.\]
Since $X_{j}\in \Omega^{c}$, we know $u(X_{j})=0$ and hence $u_{j}(Y_{j})=0$ as well. Since $u_{j}\rightarrow	u_{\infty}$ uniformly on compact sets, we know $u_{\infty}(Y)=0$. Because $\omega_{\infty}\neq 0$, $u_{\infty}$ is not identically zero, but  \eqn{ufin0} implies there is $W\in H_{x}^{-}$ so that $u_{\infty}(W)>0$. But then the line segment between $Y$ and $W$ is contained in $H_{x}^{-}$ and intersects 
\[
\d \{u_{\infty}>0\}\stackrel{\eqn{uw}}{=}\supp \omega_{\infty}\stackrel{\eqn{supwv}}{=}V_{x}\subset (H_{x}^{-})^{c},\]
which leads us to another contradiction. Therefore, we now know that for $r>0$ sufficiently small, $C'(x,r)\cap \Omega^{c}=\emptyset$, which implies that $C(x,r)\subset \Omega$ for $r>0$ small enough. Thus, $x$ is a cone point.\\

\section{The Proof of \TheoremIII}\label{sec:III}

Now we prove \TheoremIII. \\

{\bf (1)$\Rightarrow$(2):} This is just \Theorem{7per}. \\

{\bf (2)$\Rightarrow$(1):} Suppose $E$ is covered by countably many Lipschitz graphs up to harmonic measure zero. Since each graph is the boundary of a two-sided NTA domain with Ahlfors regular boundary, by \TheoremI we get $\omega_{\Omega}\ll \cH^{d}$  on each Lipschitz graph, and thus on all of $E$. \\

{\bf (3)$\Rightarrow$(2):} It is well known that the set of cone points can be covered by countably many Lipschitz graphs, see for example \cite[Lemma 15.13]{Mattila}. \\

{\bf (2)$\Rightarrow$(3):} Assume $E\subset \d\Omega$ can be covered up to $\omega_{\Omega}^{X_{0}}$-measure zero by countably many Lipschitz graphs $\Gamma_{i}$. Then $\omega_{\Omega}$-almost every point in $\Gamma_{i}\cap \d\Omega$ is a cone point by \TheoremII, and thus $\omega_{\Omega}$-almost every point in $E$ is a cone point. \\

Finally, we prove \eqn{whw}. Firstly, we have already shown that $\omega_{\Omega}\ll \cH^{d}$ on $F$ since $F$ is contained in a countable union of Lipschitz graphs, so we need only show $\cH^{d}\ll \omega_{\Omega}$ on $F$.

It is not hard to show that there are domains $\Omega_{i}\subset \Omega$ whose boundaries are a finite union of Lipschitz graphs such that 
\begin{equation}\label{e:FinO}
F\subset \bigcup \d\Omega_{i}.\end{equation}
By \hyperref[l:CP]{Carleman's Principle} and \Theorem{DJ}, since Lipschitz domains are NTA domains with Ahlfors regular boundaries, 
\[
\cH^{d}|_{\d\Omega_{i}\cap F}
\ll \omega_{\Omega_{i}}|_{\d\Omega_{i}\cap F}
\ll \omega_{\Omega}|_{\d\Omega_{i}\cap F}\]
and thus by \eqn{FinO}, $\cH^{d}\ll \omega_{\Omega}$ on $F$.

%
%
%
%
%
%
%
%

\appendix

\section{Generalizing David-Jerison with Kenig-Pipher}

The goal of this section is to sketch a proof of \Theorem{DJ}. In the rest of this section we will explore an example of a class of elliptic operators with the \hyperref[d:KP]{KP-condition} is satisfied, any sub-NTA domain with Ahlfors regular boundary also has that elliptic measure is $A_{\infty}$-equivalent to $\cH^{d}$.


For a domain $\Omega \subset \R^{d+1}$,  $Z\in \Omega$ and a uniformly elliptic matrix $\mathcal{A}$, we recall

\[
\ve_{\Omega}^{\mathcal{L}}(Z) :=\sup \{ \dist(X, \partial \Omega) |\nabla \mathcal{A}(X)|^2 : X \in B(Z, \dist(Z, \partial \Omega)/2)\},\]
where we abuse notation by setting
\[
|\nabla \mathcal{A}(X)|:=\max_{1\leq i,j\leq d+1}|\nabla a_{ij}(X)|.\]

\begin{lemma}\label{l:Car2to1}
Let  $\Omega_1 \subset \Omega_2$ and let the matrix $\mathcal{A}$ be uniformly elliptic in $\Omega_2$ so that its distributional derivatives satisfy
\begin{equation}\label{e:KP-Car}
\frac{1}{r^n} \int_{B(x,r) \cap \Omega_2}  \varepsilon_{\Omega_2}^{\mathcal{L}}(Z)\,dZ \leq C,
\end{equation}
for any $x \in \partial \Omega_2$ and $r \in (0, \diam \Omega_2)$. Here $dZ$ stand for the $(d+1)$-dimensional Lebesgue measure. Then \eqref{e:KP-Car} also holds with $\Omega_{1}$ in place of $\Omega_{2}$. 
\end{lemma}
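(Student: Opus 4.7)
The plan is to reduce the Carleson estimate for $\Omega_1$ to the hypothesized one on $\Omega_2$ via a pointwise comparison of the densities, together with a case analysis depending on the location of $x\in\d\Omega_1$ relative to $\d\Omega_2$. The first step is the basic monotonicity $\varepsilon_{\Omega_1}^{\mathcal{L}}(Z)\le \varepsilon_{\Omega_2}^{\mathcal{L}}(Z)$ on $\Omega_1$: since $\Omega_1\subset\Omega_2$, we have $\delta_1(Z):=\dist(Z,\d\Omega_1)\le \delta_2(Z):=\dist(Z,\d\Omega_2)$ for every $Z\in\Omega_1$, so $B(Z,\delta_1(Z)/2)\subset B(Z,\delta_2(Z)/2)$ and $\delta_1(X)\le\delta_2(X)$ pointwise, and taking suprema in the definition of $\varepsilon$ yields the claim.

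Now fix $x\in\d\Omega_1$ and $r\in(0,\diam\Omega_1)$; since $\Omega_1\subset\Omega_2$ we have $x\in\overline{\Omega_2}$. If $x\in\d\Omega_2$, monotonicity together with the hypothesis immediately gives $\int_{B(x,r)\cap\Omega_1}\varepsilon_{\Omega_1}^{\mathcal{L}}\le\int_{B(x,r)\cap\Omega_2}\varepsilon_{\Omega_2}^{\mathcal{L}}\lesssim r^d$, with a minor adjustment when $r$ is comparable to $\diam\Omega_2$. Otherwise $x\in\Omega_2$; set $\rho:=\delta_2(x)>0$ and pick $y\in\d\Omega_2$ with $|x-y|=\rho$. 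If $r\ge \rho/2$, the inclusion $B(x,r)\subset B(y,3r)$ reduces the integral to the hypothesis at $(y,3r)$, losing only a factor of $3^d$.

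The remaining (and delicate) case is $x\in\Omega_2$ with $r<\rho/2$, where $B(x,r)$ lies deep inside $\Omega_2$ and the crude monotonicity is too wasteful. Here I would upgrade the pointwise comparison: for $Z\in B(x,r)\cap\Omega_1$ one has $\delta_1(Z)\le r\le \rho/2$, so every $X\in B(Z,\delta_1(Z)/2)$ satisfies $\delta_1(X)\le \tfrac32\delta_1(Z)\le 3r/2$ while $\delta_2(X)\ge \rho/4$. Factoring $\delta_1(X)|\nabla\mathcal{A}(X)|^2 = (\delta_1/\delta_2)(X)\cdot \delta_2(X)|\nabla\mathcal{A}(X)|^2$ and taking suprema gives the improved bound $\varepsilon_{\Omega_1}^{\mathcal{L}}(Z)\lesssim (r/\rho)\,\varepsilon_{\Omega_2}^{\mathcal{L}}(Z)$ throughout $B(x,r)\cap\Omega_1$.

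To close the argument, I would combine the improved comparison with the pointwise bound $\varepsilon_{\Omega_2}^{\mathcal{L}}\lesssim 1/\rho$ on $B(x,r)$, which yields $\int_{B(x,r)\cap\Omega_1}\varepsilon_{\Omega_1}^{\mathcal{L}}\lesssim (r/\rho)(1/\rho)r^{d+1} = r^{d+2}/\rho^2 \le r^d$, since $r<\rho/2$. I expect the main obstacle to lie in this last pointwise bound: morally it is forced by the Carleson hypothesis $\int_{B(y,2\rho)\cap\Omega_2}\varepsilon_{\Omega_2}^{\mathcal{L}}\le C\rho^d$ combined with the quasi-constancy of $\varepsilon_{\Omega_2}^{\mathcal{L}}$ on balls of radius $\ll\delta_2$ (since its defining sup-ball has radius $\sim\delta_2$), but proving this quasi-constancy rigorously requires a careful covering argument, as the sup-balls $B(Z,\delta_2(Z)/2)$ at nearby $Z$'s are not nested.
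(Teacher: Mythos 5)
Your case analysis matches the paper's, and your improved pointwise comparison $\varepsilon_{\Omega_1}^{\mathcal{L}}(Z)\lesssim(r/\rho)\,\varepsilon_{\Omega_2}^{\mathcal{L}}(Z)$ in the regime $r<\rho/2$ is correct. The gap is exactly where you flag it: the pointwise bound $\varepsilon_{\Omega_2}^{\mathcal{L}}\lesssim 1/\rho$ does \emph{not} follow from the Carleson hypothesis, which controls only averages; and the proposed rescue via ``quasi-constancy'' of $\varepsilon_{\Omega_2}^{\mathcal{L}}$ on balls of radius $\ll\delta_2$ is also false in general. Nothing in the Carleson condition prevents $|\nabla\mathcal{A}|$ from having large, thin spikes, so two nearby points $Z,Z'$ with comparable $\delta_2$ can have wildly different values of $\varepsilon_{\Omega_2}^{\mathcal{L}}$ simply because one sup-ball catches a spike and the other does not. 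So as written the final estimate does not close.

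The correct repair --- and what the paper actually does --- is to exploit that for $Z\in B(x,r)\cap\Omega_1$ the defining sup-ball $B(Z,\delta_1(Z)/2)$ is contained in $B(x,2r)$, which in turn sits inside $B(Y,\delta_2(Y)/2)$ for \emph{every} $Y\in B(x,\rho/4)$, since the latter ball has radius $\gtrsim\rho\gg r$. Combined with $\delta_1(X)\le r$ and $\delta_2(X)\gtrsim\rho\ge r$ on $B(x,2r)$, one gets the uniform bound $\varepsilon_{\Omega_1}^{\mathcal{L}}(Z)\le\varepsilon_{\Omega_2}^{\mathcal{L}}(Y)$ for all such $Z$ and all such $Y$; in other words,
\[
\sup_{Z\in B(x,r)\cap\Omega_1}\varepsilon_{\Omega_1}^{\mathcal{L}}(Z)
\ \le\ \inf_{Y\in B(x,\rho/4)}\varepsilon_{\Omega_2}^{\mathcal{L}}(Y).
\]
An infimum is always bounded by the average, and the average of $\varepsilon_{\Omega_2}^{\mathcal{L}}$ over $B(x,\rho/4)\subset B(z,2\rho)$ (with $z\in\partial\Omega_2$ closest to $x$) is $\lesssim 1/\rho$ by the hypothesis. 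Multiplying by $|B(x,r)|\sim r^{d+1}$ and dividing by $r^d$ gives $\lesssim r/\rho\lesssim 1$, which closes the case. So the essential idea you were missing is that the \emph{sup over the small ball} is dominated by the \emph{inf over the large ball}, and it is the infimum --- not a pointwise sup bound --- that the Carleson condition controls via averaging.
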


This is sketched in \cite[Section 3.2]{ABHM15}, but we provide some details here.

\begin{proof}
Let us first assume that $\xi \in \partial \Omega_1 \cap \Omega_2$ and $r \leq  \frac{\dist(\xi, \partial \Omega_2)}{80}$. Then, if $Z\in B(\xi,r) \cap \Omega$, we have that for any $Y \in B(\xi,  \dist(\xi, \partial \Omega_2)/4)$,

\begin{align*}
\varepsilon_{\Omega_1}^{\mathcal{L}}(Z) &\leq r \sup_{X \in B(\xi, 2r)} |\nabla \mathcal{A}(X)|^2 \\
&\leq  \sup\{ \dist(X, \partial \Omega_2) |\nabla \mathcal{A}(X)|^2 :X \in B(Y, \dist(Y, \partial \Omega_2)/2)\}\\
&= \varepsilon_{\Omega_2}^{\mathcal{L}}(Y),
\end{align*}
 Using this we get that

\begin{align*}
\frac{1}{r^{d}}& \int_{B(\xi,r) \cap \Omega_1}  \varepsilon_{\Omega_1}^{\mathcal{L}}(Z)\,d Z 
\lesssim r\inf_{Y\in B(\xi,\d\Omega_{2})/4} \ve^{\mathcal{L}}_{\Omega_{2}}(Y)\\
& \lesssim \frac{r}{\dist(\xi, \partial \Omega_2)^{d+1}}  \int_{B(\xi,  \dist(\xi, \partial \Omega_2)/4) \cap \Omega_2}  \varepsilon_{\Omega_2}^{\mathcal{L}}(Y) \,d Y.
\end{align*}
If $z \in \partial \Omega_2$ such that $ \dist(\xi, \partial \Omega_2)=|z-\xi|$, the latter integral is bounded by a constant multiple of

\begin{align*}
\frac{1}{\dist(\xi, \partial \Omega_2)^{d}}  \int_{B(z,  2\dist(\xi, \partial \Omega_2)) \cap \Omega_2}  \varepsilon_{\Omega_2}^{\mathcal{L}}(Y) \,d Y  \lesssim 1,
\end{align*}
where the last inequality follows from \eqref{e:KP-Car}. \\

Assume now that $\xi \in  \partial \Omega_1 \cap \Omega_2$ and $r \in ( \frac{\dist(\xi, \partial \Omega_2)}{80}, \diam \Omega_1)$, and let $z \in \partial \Omega_2$ such that $ \dist(\xi, \partial \Omega_2)=|z-\xi|$. Now it is clear that $B(\xi, 2r) \subset B(z, 82 r)$ and arguing as before we can prove that \eqref{e:KP-Car} holds for $\Omega_2$. This concludes our proof since in the case $\xi \in  \partial \Omega_1 \cap \partial \Omega_2$ the result follows trivially.

\end{proof}

Recall now the following theorem.

\begin{theorem}[{\cite[Theorem 2.6]{KP01}}]
Let $\mathcal{L}=\div \mathcal{A}\grad$ be an elliptic operator satisfying the \hyperref[d:KP]{KP-condition} in $\Omega$ and let $\Omega\subset \bR^{d+1}$ be a bounded Lipschitz domain. Then the elliptic measure associated to $\mathcal{L}$ is in $A_{\infty}(\cH^{d}|_{\d\Omega})$. 
\end{theorem}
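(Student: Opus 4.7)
The plan is to follow the Kenig--Pipher strategy: first flatten the Lipschitz boundary via a bi-Lipschitz pullback to the half-space, then establish an $L^2$ comparison between the square function and the non-tangential maximal function of $\widetilde{\mathcal{L}}$-solutions on $\bR^{d+1}_+$, and finally extract $A_\infty$ from that comparability. For the reduction, pull $\mathcal L$ back to $\bR^{d+1}_+$ by the regularized graph map $\Phi(x,t)=(x,\,ct+\theta_t\ast\varphi(x))$, where $\varphi$ parametrizes $\partial\Omega$ and $\theta_t$ is a smooth mollifier at scale $t$. A direct computation as in Section 2 of \cite{KP01} shows that the pulled-back matrix $\widetilde{\mathcal{A}}$ is uniformly elliptic and still satisfies the \hyperref[d:KP]{KP-condition}, now on $\bR^{d+1}_+$ (the gain of $t$ in the Carleson bound for $\nabla\widetilde{\mathcal{A}}$ is exactly what the convolution structure of $\Phi$ produces). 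Since $\Phi$ is bi-Lipschitz, $\omega^{\mathcal L}_\Omega \in A_\infty(\cH^d|_{\partial\Omega})$ is equivalent to the analogous statement for $\widetilde{\mathcal L}$ on $\bR^{d+1}_+$ with respect to $d\sigma=dx$, so I would work on the half-space from here on.

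For the main step, let $u$ solve $\widetilde{\mathcal L}u=0$ in $\bR^{d+1}_+$ with smooth compactly supported boundary data $f$. Multiply the equation by $t\,\partial_t u$ and integrate by parts over a sawtooth region. After cancellation, the principal terms give, schematically,
\[
\int_{\bR^{d+1}_+}|\nabla u|^2\,dx\,dt \;\lesssim\; \int_{\bR^d}|f|^2\,dx \;+\; \cE,
\]
where the error satisfies
\[
|\cE| \;\lesssim\; \int_{\bR^{d+1}_+} |\nabla u|^2 \, t\, |\nabla \widetilde{\mathcal A}|^2\,dx\,dt.
\]
The \hyperref[d:KP]{KP-condition} makes $t|\nabla\widetilde{\mathcal A}|^2\,dx\,dt$ a Carleson measure, so Carleson's embedding theorem dominates $\cE$ by the $L^2$ norm of an $L^2$-averaged non-tangential maximal function of $\nabla u$, which by Caccioppoli and standard tent-space comparisons is bounded by $\|S(u)\|_{L^2(\bR^d)}^2$. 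Localizing this argument to Carleson tents over boundary balls and summing yields the global comparison $\|N(u)\|_{L^2(\bR^d)} \sim \|S(u)\|_{L^2(\bR^d)}$ for $\widetilde{\mathcal L}$.

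From the $S$--$N$ equivalence one extracts $L^p$-solvability of the Dirichlet problem for $\widetilde{\mathcal L}$ for some $p<\infty$, which is dual to $\omega^{\widetilde{\mathcal L}}\in B_{p'}(dx)\subset A_\infty(dx)$; pushing this forward via $\Phi$ gives $\omega^{\mathcal L}_\Omega \in A_\infty(\cH^d|_{\partial\Omega})$. The main obstacle is the absorption of $\cE$ in the Rellich estimate: the Carleson norm of $t|\nabla\widetilde{\mathcal A}|^2$ is fixed but not assumed small, so one cannot simply hide $\cE$ on the left. The standard remedy is a stopping-time/good-$\lambda$ decomposition of $\bR^{d+1}_+$ into Carleson boxes on which $|\nabla\widetilde{\mathcal A}|\,t^{1/2}$ is pointwise small outside an exceptional set of controlled Carleson mass; summing the resulting local inequalities, and exploiting the John--Nirenberg self-improvement of Carleson measures, closes the loop. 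It is precisely at this step that the quantitative form of the \hyperref[d:KP]{KP-condition} (rather than a mere Dini condition) is used.
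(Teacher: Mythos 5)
The paper does not give a proof of this theorem: it is quoted verbatim from Kenig--Pipher \cite[Theorem 2.6]{KP01} and used as an external black box in the appendix (where the point being made is that feeding it into the David--Jerison argument yields \Theorem{DJ}). There is therefore no internal proof to compare your sketch against; what you have written is a précis of the cited reference, not a gap-filler for this paper.

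As such a précis, the outline is broadly faithful: the reduction to $\bR^{d+1}_+$ via the Dahlberg--Kenig--Stein map $\Phi(x,t)=(x,ct+\theta_t*\varphi(x))$, the observation that the convolution-in-$t$ structure is what guarantees the pulled-back coefficients still satisfy the Carleson gradient bound, the target $\|N(u)\|_{L^2}\sim\|S(u)\|_{L^2}$ for the flattened operator, and the passage from $L^{p}$ solvability to $\omega\in B_{p'}\subset A_\infty$ are all genuinely part of the Kenig--Pipher strategy. Two caveats on the middle step, though. First, the multiplier Kenig--Pipher actually integrate against is not the bare $t\,\partial_t u$; their identity is built around a more elaborate vector field adapted to the matrix $\widetilde{\mathcal A}$ (together with an auxiliary ``averaged'' solution on Whitney scales), and the structure of that choice is what makes the error term come out Carleson-controlled rather than merely formally small. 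Second, the absorption of $\cE$ for a large (non-small) Carleson norm is not a simple good-$\lambda$/John--Nirenberg self-improvement as you suggest: the actual mechanism is a stopping-time (Corona-type) decomposition combined with a comparison to a frozen-coefficient operator on each stopping region, which is a substantively different device. So your sketch is plausible in shape but loose in exactly the place where the theorem is hard; it should be read as a pointer to \cite{KP01} rather than as a self-contained argument.
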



One can show that the same result holds in NTA domains with Ahlfors regular boundary. Indeed, if one uses \cite{KP01} instead of Dahlberg's result and Lemma \ref{l:Car2to1}, the arguments of \cite{DJ90} carry over to the elliptic case and give \Theorem{DJ}.


\section{The strong Markov property}

The aim of this section is to prove the following identity.

\begin{lemma} 
\label{l:identity-hm}

Let  $\Omega_1$ and  $\Omega_2$ be open subsets of $\R^{d+1}$ so that $\Omega_1 \subset \Omega_2$. Suppose every point in $\d_{\infty}\Omega_{1}$ is regular for $\Omega_{1}$ and every point in $\d\Omega_{2}\cap \d\Omega_{1}$ is regular for $\Omega_{2}$. If $\Omega_{1}$ is unbounded, also assume $\infty$ is regular for $\Omega_{2}$. If $E$ is a Borel subset of $\d \om_2$, then for all $X \in \om_1$,
\begin{equation}\label{e:identity-hm}
\omega^{\mathcal{L},X}_{\Omega_2} (E) = \omega^{\mathcal{L},X}_{\Omega_1} (E) +\int_{\partial
\Omega_1 \setminus \partial \Omega_2} \omega^{\mathcal{L},Y}_{\Omega_2} (E)  \,d \omega^{\mathcal{L},X}_{\Omega_1}.
\end{equation}
\end{lemma}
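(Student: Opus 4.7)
The plan is to use the strong Markov property of the diffusion process $(X_{t})_{t\geq 0}$ associated with the operator $\mathcal{L}$, whose existence for uniformly elliptic $\mathcal{L}$ is standard (by Stroock--Varadhan's solution to the martingale problem, or by Dirichlet form theory under the KP condition; for $\mathcal{L}=\Delta$ this is Brownian motion). Let $\bP^{X}$ denote the law of the diffusion started at $X$, and for an open set $U$ let $\tau_{U}:=\inf\{t\geq 0:X_{t}\notin U\}$, which is a stopping time for the natural filtration $(\mathcal{F}_{t})$. Under the regularity assumptions on $\d_{\infty}\Omega_{1}$, on $\d\Omega_{2}\cap\d\Omega_{1}$, and (when $\Omega_{1}$ is unbounded) on $\infty$ for $\Omega_{2}$, the classical probabilistic representation of the Perron solution yields $\tau_{\Omega_{i}}<\infty$ $\bP^{X}$-a.s.\ and identifies the law of $X_{\tau_{\Omega_{i}}}$ with $\omega^{\mathcal{L},X}_{\Omega_{i}}$. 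Since $\Omega_{1}\subset\Omega_{2}$ one has $\tau_{\Omega_{1}}\leq\tau_{\Omega_{2}}$, with equality exactly on the event $\{X_{\tau_{\Omega_{1}}}\in\d\Omega_{2}\}=\{X_{\tau_{\Omega_{1}}}\in\d\Omega_{1}\cap\d\Omega_{2}\}$.

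The main computation partitions the target event according to the location of $X_{\tau_{\Omega_{1}}}$:
\[
\omega^{\mathcal{L},X}_{\Omega_{2}}(E)=\bP^{X}(X_{\tau_{\Omega_{2}}}\in E,\,X_{\tau_{\Omega_{1}}}\in\d\Omega_{2})+\bP^{X}(X_{\tau_{\Omega_{2}}}\in E,\,X_{\tau_{\Omega_{1}}}\notin\d\Omega_{2}).
\]
On the first event $X_{\tau_{\Omega_{2}}}=X_{\tau_{\Omega_{1}}}$, so this term equals $\omega^{\mathcal{L},X}_{\Omega_{1}}(E\cap\d\Omega_{2})=\omega^{\mathcal{L},X}_{\Omega_{1}}(E)$; here we use that $\omega^{\mathcal{L},X}_{\Omega_{1}}$ is supported on $\d\Omega_{1}$ while $E\subset\d\Omega_{2}$. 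On the second event $X_{\tau_{\Omega_{1}}}\in\d\Omega_{1}\setminus\d\Omega_{2}\subset\Omega_{2}$; applying the strong Markov property at the stopping time $\tau_{\Omega_{1}}$ gives
\[
\bP^{X}\bigl(X_{\tau_{\Omega_{2}}}\in E \mid \mathcal{F}_{\tau_{\Omega_{1}}}\bigr)=\omega^{\mathcal{L},X_{\tau_{\Omega_{1}}}}_{\Omega_{2}}(E) \quad\text{on }\{X_{\tau_{\Omega_{1}}}\notin\d\Omega_{2}\},
\]
and integrating against the law of $X_{\tau_{\Omega_{1}}}$ produces $\int_{\d\Omega_{1}\setminus\d\Omega_{2}}\omega^{\mathcal{L},Y}_{\Omega_{2}}(E)\,d\omega^{\mathcal{L},X}_{\Omega_{1}}(Y)$. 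Summing the two contributions yields \eqref{e:identity-hm}.

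The main obstacle is the probabilistic setup for general uniformly elliptic $\mathcal{L}$: constructing the strong Markov diffusion and proving that $X_{\tau_{U}}$ has law $\omega^{\mathcal{L},X}_{U}$ at every regular boundary point of $U$, a statement that blends Dynkin's formula with a Wiener-type criterion. A purely analytic alternative is to define $u(X)$ as the right-hand side of \eqref{e:identity-hm}, verify that $u$ is a bounded $\mathcal{L}$-solution on $\Omega_{1}$ (its two summands being a Perron solution and the Poisson integral against $\omega^{\mathcal{L},X}_{\Omega_{1}}$ of a bounded Borel function of $Y$), and then invoke \Lemma{sameboundaryval} once one checks that $u$ and $\omega^{\mathcal{L},\cdot}_{\Omega_{2}}(E)$ have the same boundary limits at every regular point of $\d_{\infty}\Omega_{1}$. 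Matching the limits at points of $\d\Omega_{1}\cap\d\Omega_{2}$ for a general Borel $E$ is the delicate step; a standard reduction to closed $E$ via inner regularity of elliptic measure, together with the fact that the Perron solution with upper-semicontinuous data $\one_{E}$ attains the boundary value $\one_{E}$ at regular points, removes this difficulty.
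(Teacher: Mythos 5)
Your primary argument is probabilistic, using the strong Markov property at the stopping time $\tau_{\Omega_1}$, and it is mathematically sound given the machinery it invokes: once the $\mathcal{L}$-diffusion is constructed as a strong Markov (Hunt) process, $\tau_{\Omega_1}\le\tau_{\Omega_2}$ with equality on $\{X_{\tau_{\Omega_1}}\in\d\Omega_2\}$, the partition and the strong Markov step are all correct, and the identification of the exit law with $\omega^{\mathcal{L},X}_{\Omega_i}$ at regular boundary points is where the regularity hypotheses earn their keep. The paper itself acknowledges this route at the start of the proof (``follows from the strong Markov property of Brownian motion'') but \emph{deliberately declines it}, remarking that it supplies ``an analytic proof that also works for elliptic measures.'' The trade-off is precisely what you flag as ``the main obstacle'': building the diffusion for a general divergence-form $\mathcal{L}$ with merely bounded measurable coefficients and proving that the hitting distribution agrees with the Perron--Wiener elliptic measure is a nontrivial block of Dirichlet-form/probabilistic-potential-theory technology that the rest of the paper does not otherwise need. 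The analytic proof trades this for elementary measure-theoretic manipulations.

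Your analytic fallback is close in spirit to what the paper does, but one of its stated ingredients is wrong. You claim that ``the Perron solution with upper-semicontinuous data $\one_E$ attains the boundary value $\one_E$ at regular points.'' That is false in general: for example, if $E=\{x\}$ is a single (regular) boundary point of a disc in $\bR^2$, the Perron solution of $\one_E$ is identically $0$, not $1$ at $x$. What is true at a regular point $x$ for u.s.c.\ data $f$ is only the one-sided bound $\limsup_{X\to x}\overline{H}_f(X)\le f(x)$; the matching lower bound requires continuity of $f$ at $x$. The paper's proof is engineered around exactly this obstruction: it first reduces to compact $E$ by inner regularity, then approximates $\one_E$ from above by \emph{continuous} compactly supported $\phi_j\downarrow\one_E$, sets $u_j(X)=\int\phi_j\,d\omega^X_{\Omega_2}$ and $v_j(X)=\int u_j\,d\omega^X_{\Omega_1}$, and then the comparison $u_j=v_j$ on $\Omega_1$ is a clean application of \Lemma{sameboundaryval}, because $u_j$ really is continuous up to $\cnj{\Omega_2}$ (hence on $\cnj{\Omega_1}$) by regularity of $\d\Omega_2\cap\d\Omega_1$ for $\Omega_2$; afterwards one lets $j\to\infty$ by monotone convergence (twice). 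If you want to carry out the analytic route rather than cite the probabilistic machinery, you need this continuous-approximation step rather than invoking boundary attainment for the raw indicator, and you also need the final passage from compact $E$ to Borel $E$ (done in the paper with a countable dense family of poles to make the approximating compacts work simultaneously for $\omega^{Y_j}_{\Omega_1}$).
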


For the case of harmonic measure, this is well known (see for example  \cite{Bou87}) and follows from the strong Markov property of Brownian motion. Here, we supply an analytic proof that also works for elliptic measures.

\begin{proof}
We will drop the exponent $\mathcal{L}$ for easier reading. Let $E\subset \d\Omega_{2}$ be any compact set and let $\phi_{j}$ be a decreasing sequence of continuous compactly supported functions so that $0\leq \phi_{j}\leq 1$ and $\phi_{j}\downarrow \one_{E}$ pointwise everywhere. Let 
\[
u_{j}(X)=\int \phi_{j} d\omega_{\Omega_{2}}^{X}
\;\;\; \mbox{and}\;\;\; 
v_{j}(X)=\int u_{j} d\omega_{\Omega_{1}}^{X}.\]
We claim that
\begin{equation}\label{e:v=u}
v_{j}(X)=u_{j}(X) \;\; \mbox{ for all }X\in \Omega_{1}.\end{equation}
Indeed, since all points in $\d_{\infty}\Omega_{1}$ are regular, by \Lemma{sameboundaryval}, we need to show that 
\[
\lim_{X\rightarrow x} v_{j}(X) = \lim_{X\rightarrow x} u_{j}(X) \mbox{ for all }x\in \d_{\infty}\Omega_{1}.\]
Let $x\in \d_{\infty}\Omega_{1}$. Since $u_{j}$ is continuous in $\Omega_{2}$, it is continuous in $\d\cnj{\Omega_{1}}\cap \Omega_{2}$, and since $\phi_{j}$ is continuous and  every point in $\d_{\infty}\Omega_{1}$ is regular, $u_{j}$ extends continuously to $\cnj{\Omega_{2}}$ and thus also to $\cnj{\Omega_{1}}$. Thus, $v_{j}$ is also continuous in $\cnj{\Omega_{1}}$ and with the same boundary values except perhaps at $\infty$. Hence, we need only show that $u_{j}$ is continuous at $\infty$. For this, we just observe that $\lim_{\Omega_{2}\ni X\rightarrow\infty} u_{j}(X)=0$ for $i=1,2$ since $\infty$ is regular for $\Omega_{2}$, and so clearly $\lim_{\Omega_{1}\ni X\rightarrow\infty} u_{j}(X)=0$.  This proves \eqn{v=u}. Thus, for $X\in \Omega_{1}$,
\begin{align*}
u_{j}(X) 
 & = v_{j}(X)
 =\int_{\d\Omega_{1}}u_{j}d\omega_{\Omega_{1}}^{X}
   =\int_{\d\Omega_{1}\cap\Omega_{2}} u_{j}d\omega_{\Omega_{1}}^{X}
 + \int_{\d\Omega_{1}\cap\d\Omega_{2}} u_{j}d\omega_{\Omega_{1}}^{X}\\
 & = \int_{\d\Omega_{1}\cap\Omega_{2}} u_{j}d\omega_{\Omega_{1}}^{X}
 +\int_{\d\Omega_{1}\cap\d\Omega_{2}} \phi_{j}d\omega_{\Omega_{1}}^{X}.
 \end{align*}
 Since $\phi_{j}\downarrow \one_{E}$, by the monotone convergence theorem, $u_{j}(X)\downarrow \omega_{\Omega}^{X}(E)$ pointwise everywhere, and so also by the monotone convergence theorem twice (once with $u_{j}$ and once again with $\phi_{j}$)
  \begin{align*}
  \omega_{\Omega_{2}}^{X}(E)
&   = \lim_j \left( \int_{\d\Omega_{1}\cap\Omega_{2}} u_{j}\,d\omega_{\Omega_{1}}^{X}
 +\int_{\d\Omega_{1}\cap\d\Omega_{2}} \phi_{j}\,d\omega_{\Omega_{1}}^{X} \right)\\
&  = \int_{\d\Omega_{1}\cap\Omega_{2}} \omega_{\Omega_{2}}^{Y}(E) \,d\omega_{\Omega_{1}}^{X}(Y)
 +\omega_{\Omega_{1}}^{X}(E).
 \end{align*}
 
 This proves the lemma for $E\subset \d\Omega_{2}$ compact. Now let $E\subset \d\Omega_{2}$ be an arbitrary Borel set. Let $\{Y_{j}\}$ be a countable dense set in $\Omega_{1}$ so that $Y_{0}=X$. For $j\in \bN$, pick $E_{ij}\subset E$ compact so that $\omega_{\Omega_{1}}^{Y_{j}}(E\backslash E_{ij})<i^{-1}$ and $\omega_{\Omega_{2}}^{X}(E\backslash E_{i1})<i^{-1}$.  Then by continuity, $\omega_{\Omega_{1}}^{Y}(E\backslash \bigcup_{j} E_{ij})<i^{-1}$ for all $i$. Hence, if we enumerate the sets $\{E_{ij}\}=\{E^{k}\}$ and let $E_{k}=\bigcup_{\ell=1}^{k} E^{\ell}$, then each $E_{k}$ is compact and $\omega_{\Omega_{1}}^{Y}(E_{k})\rightarrow \omega_{\Omega_{1}}^{Y}(E)$ and $\omega_{\Omega_{2}}^{X}(E_{k})\rightarrow \omega_{\Omega_{2}}^{X}(E) $. We now apply the lemma to the compact set $E_{k}$ and use the monotone convergence theorem.

%
%

\end{proof}

\bibliographystyle{alpha}

\newcommand{\etalchar}[1]{$^{#1}$}
\def\cprime{$'$}

\end{document}